\newcommand{\scrf}{\ensuremath{\mathsf{F}}}
\newcommand{\Loc}{\operatorname{Loc}}
\newcommand{\wt}{\operatorname{wt}}
\newcommand{\conv}{\operatorname{conv}}
\newcommand{\Mod}{\operatorname{-mod}}
\newcommand{\supp}{\operatorname{supp}}
\newcommand{\relint}{\operatorname{rel int}}
\newcommand{\fp}{\mathfrak{p}}
\newcommand{\aff}{\operatorname{aff}}
\newcommand{\St}{\operatorname{Stab}}
\newcommand{\rk}{\operatorname{rk}}
\newcommand{\commen}[1]{}
\newcommand{\ind}{\operatorname{ind}}
\newcommand{\g}{\mathfrak{g}}
\newcommand{\n}{\mathfrak{n}}
\newcommand{\h}{\mathfrak{h}}
\newcommand{\fl}{\mathfrak{l}}
\newcommand{\halpha}{\check{\alpha}}
\newcommand{\hpi}{\check{\pi}}
\newcommand{\hrho}{\check{\rho}}
\newcommand{\calp}{\mathcal{P}}
\newcommand{\R}{\ensuremath{\mathbb{R}}}
\newcommand{\C}{\ensuremath{\mathbb{C}}}
\newcommand{\Q}{\ensuremath{\mathbb{Q}}}
\newcommand{\Z}{\ensuremath{\mathbb{Z}}}
\newcommand{\lie}[1]{\ensuremath{\mathfrak{#1}}}
\newcommand{\comment}[1]{}
\newtheorem{theo}[equation]{Theorem}
\newtheorem{lem}[equation]{Lemma}
\newtheorem{cor}[equation]{Corollary}
\newtheorem{pro}[equation]{Proposition}
\theoremstyle{definition}
\newtheorem{defn}[equation]{Definition}
\newtheorem{re}[equation]{Remark}
\newtheorem{example}[equation]{Example}
\numberwithin{equation}{section}
\numberwithin{figure}{section}
\begin{document}
\title[Faces of highest weight modules and the universal Weyl
polyhedron]{Faces of highest weight modules and\\ the universal Weyl
polyhedron}

\author{Gurbir Dhillon}
\address[G.~Dhillon]{Department of Mathematics, Stanford University,
Stanford, CA 94305, USA}
\email{\tt gsd@stanford.edu}

\author{Apoorva Khare}
\address[A.~Khare]{Department of Mathematics, Indian Institute of
Science, Bangalore 560012, India}
\email{\tt khare@math.iisc.ernet.in}

\date{\today}

\subjclass[2010]{Primary: 17B10; Secondary: 17B67, 22E47, 17B37, 20G42,
52B20}

\keywords{Highest weight module, parabolic Verma module, Weyl polytope,
integrable Weyl group, 
$f$-polynomial, universal Weyl polyhedron} 

\begin{abstract}
Let $V$ be a highest weight module over a Kac--Moody algebra
$\mathfrak{g}$, and let $\conv V$ denote the convex hull of its weights.
We determine the combinatorial isomorphism type of $\conv V$, i.e.~we
completely classify the faces and their inclusions.
In the special case where $\mathfrak{g}$ is semisimple, this brings
closure to a question studied by Cellini and Marietti [\textit{IMRN}
2015] for the adjoint representation, and by Khare [\textit{J.~Algebra}
2016; \textit{Trans.~Amer.~Math.~Soc.} 2017] for most modules. The
determination of faces of finite-dimensional modules up to the Weyl group
action and some of their inclusions also appears in previous works of
Satake [\textit{Ann.~of Math.} 1960],
Borel and Tits [\textit{IHES Publ.~Math.} 1965],
Vinberg [\textit{Izv.~Akad.~Nauk} 1990],
and Casselman [\textit{Austral.~Math.~Soc.} 1997].

For any subset of the simple roots, we introduce a remarkable convex
cone which we call the universal Weyl polyhedron,
which controls the convex hulls of all modules parabolically induced from
the corresponding Levi factor. Namely, the combinatorial isomorphism type
of the cone stores the classification of faces for all such highest
weight modules, as well as how faces degenerate as the highest weight
gets increasingly singular. To our knowledge, this cone is new in finite
and infinite type.

We further answer a question of Michel Brion, by showing that the
localization of $\conv V$ along a face is always the convex hull of the
weights of a parabolically induced module.
Finally, as we determine the inclusion relations between faces
representation-theoretically from the set of weights, without recourse to
convexity, we answer a similar question for highest weight modules over
symmetrizable quantum groups.
\end{abstract}
\maketitle

\settocdepth{section}
\tableofcontents

\section{Introduction}

\textit{Throughout the paper, unless otherwise specified $\g$ is a
Kac--Moody algebra over $\mathbb{C}$ with triangular decomposition $\n^-
\oplus \h \oplus \n^+$ and Weyl group $W$, and $V$ is a $\g$-module of
highest weight $\lambda \in \h^*$.}

In this paper we study and answer several fundamental convexity-theoretic
questions related to the weights of $V$ and their convex hull, denoted
$\wt V$ and $\conv V$ respectively. In contrast to previous approaches in
the literature, our approach is to use techniques from representation
theory to directly study $\wt V$ and deduce consequences for $\conv V$.

To describe our results and the necessary background, first suppose $\g$
is of finite type and $V$ is integrable. In this case, $\conv V$ is a
convex polytope, called the Weyl polytope, and is well understood. In
particular, the faces of such Weyl polytopes have been determined by
Vinberg \cite[\S 3]{Vin} and Casselman \cite[\S 3]{Casselman}, and as
Casselman notes, are implicit in the earlier work of Satake \cite[\S
2.3]{Satake} and Borel--Tits \cite[\S 12.16]{Borel-Tits}. The faces are
constructed as follows.
Let $I$ index the simple roots of $\g$, and $e_i, f_i, h_i, \ i \in I$
the Chevalley generators of $\g$. Given $J \subset I$, denote by $\fl_J$
the corresponding Levi subalgebra generated by $e_j, f_j, \ \forall j \in
J$ and $h_i, \ \forall i \in I$. Also define the following distinguished
set of weights:
\begin{equation}
\wt_J V := \wt U(\fl_J) V_{\lambda},
\end{equation}

\noindent where $V_\lambda$ is the highest weight space. The
aforementioned authors showed that every face of the Weyl polytope for
$V$ is a Weyl group translate of the convex hull of $\wt_J V$ for some
$J \subset I$.

To complete the classification of faces, it remains to determine the
redundancies in the above description. Formally, consider the
\textit{face map}:
\begin{equation}\label{Efacemap}
\scrf_V : W \times 2^I \to 2^{\wt V}, \qquad \scrf_V(w,J) := w(\wt_J V),
\end{equation}

\noindent where $2^S$ denotes the power set of a set $S$. As $\wt_J V$
and its convex hull determine one another, computing the redundancies is
the same as computing the fibers of the face map. 
The determination of the fibers for the restriction $\scrf_V(1,-)$ is
implicit in the above works of Vinberg and Casselman.
A complete understanding of the fibers of $\scrf_V(-,-)$ was achieved
very recently by Cellini--Marietti \cite{cm} for the adjoint
representation, and subsequently, by Khare \cite{Kh2} for all highest
weight modules. While the fibers of the face map were understood for all
highest weight modules $V$, it was not known whether all faces were of
the above form.

In this paper we resolve both parts of the above problem for any highest
weight module $V$ over a Kac--Moody algebra $\g$. While the answer takes
a similar form to the known cases in finite type, the proof of the first
part requires a substantially different argument, as prior methods fail
at several crucial junctures:
\begin{enumerate}[(i)]
\item A key input in the known cases of the classification of faces of
$\conv V$ was that every face is the maximizer of a linear functional,
which was deduced from polyhedrality. However, even the polyhedrality of
$\conv V$ was not known in general in finite type.

\item We show in related work \cite{DK} that indeed $\conv V$ is always a
polyhedron in finite type. However as we show below, for $\g$ of infinite
type, $\conv V$ is rarely polyhedral, generically locally polyhedral, and
in some cases not locally polyhedral. In particular, one cannot \textit{a
priori} appeal to a maximizing functional to understand a face.

\item Finally, even given a face cut out by a maximizing functional, the
possibility of the functional to fall outside the Tits cone makes the
usual analysis inapplicable.
\end{enumerate}

In this paper we provide an infinitesimal analysis of the faces that
avoids invocation of functionals, completing the first part of the
classification of faces. As a consequence, every face is \textit{a
posteriori} cut out by a maximizing functional lying in the Tits cone.
For the second part, i.e.~computing the redundancies, we give a direct
representation theoretic argument which in particular simplifies the
existing arguments in finite type.

Moreover, we construct a remarkable convex cone, which we call the
universal Weyl polyhedron $\calp_{univ}$, whose combinatorial isomorphism
class controls the convex hulls of weights of all highest weight modules
which are parabolically induced from a fixed Levi. Namely,
(i) the classification of faces of $\calp_{univ}$ is equivalent to that
of all such highest weight modules, and (ii) inclusions of faces of
$\calp_{univ}$ control how faces of such highest weight modules
degenerate as the highest weight specializes.

\section{Statement of results}\label{S2}

Let $\g$ be a Kac--Moody algebra and $V$ a highest weight $\g$-module.
We now state the first part of the classification of faces of $\conv V$.

\begin{theo}\label{ffffaces}
Denote by $I_V$ the {\em integrability} of $V$, i.e.:
\[
I_V := \{i \in I: f_i \text{ acts locally nilpotently on } V \}.
\]

For each standard Levi subalgebra $\fl_J \subset \g$, the locus $\conv
U(\fl_J) V_\lambda$ is a face of $\conv V$. These account for every face
of $\conv V$ which contains an interior point in the $I_V$ dominant
chamber. An arbitrary face $F$ of $\conv V$ is in the $W_{I_V}$ orbit of
a unique such face.
\end{theo}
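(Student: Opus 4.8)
\emph{Proof strategy.} The assertion bundles three statements: (a) each $\conv U(\fl_J)V_\lambda$ is a face of $\conv V$; (b) these are precisely the faces whose relative interior meets the $I_V$-dominant chamber; (c) an arbitrary face is a $W_{I_V}$-translate of a unique such face. I would establish them in this order; (b) carries the real weight.

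\textbf{Step 1: $\conv U(\fl_J)V_\lambda$ is a face.} Abbreviate $\wt_J V := \wt U(\fl_J)V_\lambda$, so that $\conv U(\fl_J)V_\lambda = \conv\wt_J V$. As the simple roots are linearly independent, pick a linear functional $\varphi$ with $\varphi(\alpha_j)=0$ for $j\in J$ and $\varphi(\alpha_i)=1$ for $i\in I\setminus J$. Any weight of $V$ is $\mu=\lambda-\sum_{i\in I}c_i\alpha_i$ with $c_i\in\Z_{\geq 0}$, so $\varphi(\mu)=\varphi(\lambda)-\sum_{i\notin J}c_i\leq\varphi(\lambda)$, with equality iff $\supp(\lambda-\mu)\subseteq J$. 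Now $V=U(\n^-)V_\lambda$ gives $V_{\lambda-\beta}=U(\n^-)_{-\beta}V_\lambda$, and by PBW, splitting $\n^-$ into $\n^-_J$ and the negative root spaces outside $\Delta^+_J$, one has $U(\n^-)_{-\beta}=U(\n^-_J)_{-\beta}$ whenever $\supp\beta\subseteq J$; hence $\{\mu\in\wt V:\supp(\lambda-\mu)\subseteq J\}=\wt_J V$. Therefore $\varphi$ attains its supremum over $\conv V$, and the exposed face $\{x\in\conv V:\varphi(x)=\varphi(\lambda)\}$ equals $\conv\wt_J V$ (the value set of $\varphi$ on $\wt V$ being discrete near its maximum, this is unaffected by passing to closed convex hulls).

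\textbf{Step 2: these exhaust the faces meeting the dominant chamber, and every face is obtained up to $W_{I_V}$.} Since $V$ is $I_V$-integrable and $e_i$ is locally nilpotent on every highest weight module, $W_{I_V}$ acts on $V$; hence $\wt V$ is $W_{I_V}$-stable, $\conv V$ lies in the Tits cone of $W_{I_V}$, and $W_{I_V}$ permutes faces of $\conv V$. Let $F$ be a nonempty face. The face property gives $F=\conv(F\cap\wt V)$, so $F$ has a relative interior point $x$ that is a finite convex combination of weights lying on $F$; since $x$ lies in the Tits cone, some element of $W_{I_V}$ carries it into the closed $I_V$-dominant chamber, and we replace $F$ by its image. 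The heart of the matter is an \emph{infinitesimal analysis at $x$}: one computes the tangent cone of $\conv V$ at $x$, namely the closed cone on $\wt V-x$, by deciding direction by direction --- via $\mathfrak{sl}_2$-theory for $i\in I_V$ and the PBW description of $\wt V$ for the remaining $i$ --- which of $\pm\alpha_i$ re-enter $\conv V$ at $x$. This identifies the lineality space of the tangent cone as the $\R$-span of $\{\alpha_j:j\in J\}$, where $J\subseteq I$ collects the ``two-sided'' directions at $x$; hence it identifies the minimal face of $\conv V$ through $x$ --- which is $F$ --- and, dualizing, produces a functional exposing $F$ that agrees with the functional of Step 1 for this $J$, so $F=\conv\wt_J V$. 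This argument sidesteps the classical one --- choose a supporting functional for $F$, conjugate it into the dominant chamber by $W$, and read off $J$ --- which is unavailable here because $\conv V$ is generally not polyhedral and a supporting functional of a face may fall outside the Tits cone; instead the exposing functional is obtained \emph{a posteriori}, from the intrinsic local combinatorics of $\wt V$, and automatically lies in the Tits cone. I expect this step to be the main obstacle: it demands a complete, case-free understanding of $\wt V$ near an arbitrary $I_V$-dominant point.

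\textbf{Step 3: uniqueness.} If two faces $\conv\wt_J V$ and $\conv\wt_{J'}V$ lie in one $W_{I_V}$-orbit and each has relative interior meeting the closed $I_V$-dominant chamber, choose dominant relative interior points $x$ and $wx$ of them with $w\in W_{I_V}$; as the closed dominant chamber is a fundamental domain for $W_{I_V}$ on the Tits cone, $x=wx$, whence the two faces share a relative interior point and so coincide. Combined with Step 2, this yields the stated uniqueness.
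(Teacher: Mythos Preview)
Your Step 1 is correct and is exactly the paper's Lemma \ref{arefaces}. Step 3 has a gap: from $w\,\conv\wt_J V = \conv\wt_{J'}V$ and a dominant interior point $x$ of $\conv\wt_J V$, the translate $wx$ is interior to $\conv\wt_{J'}V$ but need not be dominant, so the fundamental-domain argument does not apply. The paper's Proposition \ref{orbit} repairs this by first proving that \emph{every} interior point of $\wt_\fl$ can be carried into the $I_V$-dominant chamber by the smaller group $W_{I_V\cap J}$, which stabilises $\wt_\fl$; one then pre-composes $w$ by an element of this subgroup before invoking the fundamental domain.

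The genuine gap is in Step 2. Your plan is to compute the lineality space of $T_x(\conv V)$ at an arbitrary dominant interior point $x$ of $F$ by testing, for each $i$, whether $\pm\alpha_i$ re-enter $\conv V$ from $x$. Two problems: first, $x$ is typically not a weight but only a real convex combination of weights, so neither $\mathfrak{sl}_2$-theory nor the PBW description of $\wt V$ gives direct information at $x$; second, even granting that check, you would only obtain $\R\pi_J\subseteq\text{(lineality space)}$ --- the reverse inclusion, that the lineality space is spanned by simple roots at all, is essentially the content of the theorem and does not follow from a direction-by-direction test.

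The paper's proof of Theorem \ref{cf} takes a different route that sidesteps both issues. The Ray Decomposition (Proposition \ref{ray}) forces $F$ to contain some $w\lambda$, so after translation one may assume $\lambda\in F$; then $T_\lambda F$ is a face of $T_\lambda V$, and Lemma \ref{tcone} identifies $T_\lambda V=\conv M(\lambda,\,I_V\cap\lambda^\perp)$. This reduces the question to a parabolic Verma $M(\lambda,J)$ with $J\subsetneq I$. For this module the Ray Decomposition exhibits $\conv V$ as the hull of rational rays, so $F$ has a \emph{rational} interior point; dilation makes it integral, say $v_0=\lambda-\sum_{j\in J'}n_j\alpha_j-\sum_{i\in I'}n_i\alpha_i$ with all $n_j,n_i>0$. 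The proof concludes by showing $v_0$ is interior to $\wt_{\fl_{J'\sqcup I'}}$, using the Integrable Slice Decomposition (Proposition \ref{slice}) and \emph{induction on $|I|$} applied to the slice over $\lambda-\sum_{i\in I'}n_i\alpha_i$, followed by a product-neighborhood argument to incorporate the non-integrable directions $I'$. None of this structure --- the reduction to the tangent cone at $\lambda$ rather than at $x$, the passage to a parabolic Verma, the rationality and dilation trick, or the induction on rank --- is present in your outline, and your acknowledged ``main obstacle'' is where all of it lives.
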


For $\g$ of finite type, this classification was obtained by Satake
\cite{Satake}, Borel--Tits \cite{Borel-Tits}, Vinberg \cite{Vin}, and
Casselman \cite{Casselman} for $V$ finite-dimensional, and by Khare
\cite{Kh1} for many infinite-dimensional modules. Theorem \ref{ffffaces}
settles the remaining cases in finite type, and is to our knowledge new
in infinite type. As explained above, a proof in this setting requires
new ideas. Interestingly, our proof, which proceeds by induction on the
rank of $\g$, uses the consideration of non-integrable modules in an
essential way, even for the case of integrable modules.

The classification of inclusion relations between faces, akin to the one
in finite type \cite{cm,Kh2}, is our next main result. By Theorem
\ref{ffffaces}, it suffices to study the fibers of the face map $\scrf_V
: W_{I_V} \times 2^I \to 2^{\wt V}$, defined as in \eqref{Efacemap}.

\begin{theo}\label{Tinclusions}
For each $J \subset I$ there exist sets $J_{\min} \subset J \subset
J_{\max} \subset I$ such that:
\begin{align*}
\wt_J V \subset \wt_{J'} V \quad &\ \Longleftrightarrow \quad J_{\min}
\subset J',\\
\wt_J V = \wt_{J'} V \quad &\ \Longleftrightarrow \quad J_{\min}
\subset J' \subset J_{\max}.
\end{align*}

\noindent In particular, in the poset $2^I$ under inclusion, the fibers
of the face map $\scrf_V(1,-)$ are intervals.
Moreover, the fibers of $\scrf_V(-,J)$ are left cosets of $W_{I_V \cap
J_{\max}}$ in $W_{I_V}$. Given the final assertion of Theorem
\ref{ffffaces}, this determines the fibers of the map $\scrf_V(-,-)$.
\end{theo}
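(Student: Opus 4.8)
The plan is to reduce the theorem to a combinatorial description of the sets $\wt_J V=\wt U(\fl_J)V_\lambda$, for which I would use their identification with the weights of the parabolic Verma module $M_{\fl_J}(\lambda,J\cap I_V)$ over the Levi $\fl_J$ (the set of weights of a highest weight module depending only on the highest weight and the integrability). Call a connected component $C$ of the Dynkin subdiagram on $J$ \emph{active} if some $c\in C$ has $c\notin I_V$ or $\langle\lambda,\alpha_c^\vee\rangle\neq0$, and \emph{inactive} otherwise; call $i\in I$ \emph{inactive} if $i\in I_V$ and $\langle\lambda,\alpha_i^\vee\rangle=0$. The basic local fact is that an inactive component $C$ can be deleted from $J$ without changing $\wt_J V$: each $f_c$ ($c\in C$) kills $V_\lambda$, so every word in these generators annihilates $V_\lambda$, and since $\fl_C$ commutes with $\fl_{J\setminus C}$ the algebra $U(\fl_C)$ acts by scalars on $U(\fl_{J\setminus C})V_\lambda$; dually, by the same commuting-subalgebras argument, an inactive $i\notin J$ that is not adjacent to any active component of $J$ may be adjoined to $J$ without changing $\wt_J V$.

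Accordingly, set $J_{\min}:=$ the union of the active components of $J$, and $J_{\max}:=J_{\min}\cup\{i\in I:\ i\text{ inactive and not adjacent to }J_{\min}\}$; then $J_{\min}\subseteq J\subseteq J_{\max}$ and, by the local fact, $\wt_J V=\wt_{J_{\min}}V=\wt_{J_{\max}}V$. For the first displayed equivalence: if $J_{\min}\subseteq J'$ then $\fl_{J_{\min}}\subseteq\fl_{J'}$ gives $\wt_J V=\wt_{J_{\min}}V\subseteq\wt_{J'}V$; conversely the support set $\bigcup_{\mu\in\wt_{J'}V}\supp(\lambda-\mu)$ equals $J'_{\min}$ (from the parabolic‑Verma description: along any path of pairwise‑adjacent vertices issuing from an active vertex, the partial sums of the corresponding simple roots occur as $\lambda-\wt$), so $\wt_J V\subseteq\wt_{J'}V$ forces $J_{\min}=J_{\min}(J)\subseteq J'_{\min}\subseteq J'$. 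For the second equivalence, $\wt_J V=\wt_{J'}V$ forces $J_{\min}(J)=J_{\min}(J')$ by the previous sentence, and since $J_{\max}$ is visibly a function of $J_{\min}$ alone, also $J_{\max}(J)=J_{\max}(J')\supseteq J'$; the reverse implications are immediate from $\wt_J V=\wt_{J_{\min}}V=\wt_{J_{\max}}V$ and monotonicity. In particular the fibers of $\scrf_V(1,-)$ are exactly the intervals $[J_{\min},J_{\max}]$ in $2^I$.

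For the coset statement put $K:=I_V\cap J_{\max}$; since $\wt_J V=\wt_{J_{\max}}V$ is the weight set of $U(\fl_{J_{\max}})V_\lambda$, which is integrable along $K$, the group $W_K$ stabilizes it, and it remains to show $\operatorname{Stab}_{W_{I_V}}(\wt_J V)=W_K$. I would first treat the rank‑one case: for $i\in I_V$, $s_i$ stabilizes $\wt_{J_{\min}}V$ iff $i\in J_{\max}$. If $i\in J_{\min}$ this is integrability; if $i\notin J_{\min}$ is not adjacent to $J_{\min}$, then $i\in J_{\max}$ iff $i$ is inactive, in which case $s_i$ fixes $\lambda$ and each $\alpha_j$ ($j\in J_{\min}$) hence fixes $\wt_{J_{\min}}V\subseteq\lambda+\Z\pi_{J_{\min}}$ pointwise, whereas if $i$ is active then already $s_i\lambda\notin\lambda-\Z_{\geq 0}\pi_{J_{\min}}$; and if $i$ is adjacent to some $j\in J_{\min}$, choosing $\mu\in\wt_{J_{\min}}V$ with $\alpha_j\in\supp(\lambda-\mu)$ gives $\langle\mu,\alpha_i^\vee\rangle>0$, so $s_i\mu$ acquires a positive $\alpha_i$‑coefficient and leaves $\wt_{J_{\min}}V$. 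To pass to arbitrary $u\in\operatorname{Stab}_{W_{I_V}}(\wt_J V)$: the $0$‑dimensional faces of $\conv\wt_J V$ are $W_K\lambda$ (apply Theorem \ref{ffffaces} to the Levi $\fl_{J_{\max}}$ and the module $U(\fl_{J_{\max}})V_\lambda$, of integrability $K$), so $u\lambda=w\lambda$ for some $w\in W_K$; replacing $u$ by $w^{-1}u$ (still a stabilizer, as $w\in W_K$) we may assume $u$ fixes $\lambda$, i.e. $u\in W_{I_\lambda}$ with $I_\lambda:=\{i\in I_V:\langle\lambda,\alpha_i^\vee\rangle=0\}$. Then $u$ preserves the affine hull $\lambda+\operatorname{span}_\R\pi_{J_{\min}}$ of $\wt_{J_{\min}}V$ and fixes $\lambda$, so it preserves $\operatorname{span}_\R\pi_{J_{\min}}$; a Coxeter‑theoretic argument — passing to the minimal‑length element of $uW_{I_\lambda\cap K}$ (legitimate since $W_{I_\lambda\cap K}\subseteq W_K$ stabilizes) and using the exchange condition to reduce to the rank‑one case — then forces $u\in W_{I_\lambda\cap K}\subseteq W_K$. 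Hence $\operatorname{Stab}_{W_{I_V}}(\wt_J V)=W_{I_V\cap J_{\max}}$, the fibers of $\scrf_V(-,J)$ are its left cosets, and combining with the first two parts and the uniqueness clause of Theorem \ref{ffffaces} — whereby $w(\conv\wt_J V)=w'(\conv\wt_{J'}V)$ iff $\wt_J V=\wt_{J'}V$ and $w^{-1}w'\in W_{I_V\cap J_{\max}}$ — this determines every fiber of $\scrf_V(-,-)$.

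The step I expect to be the main obstacle is the converse in the coset statement: showing that no element of $W_{I_V}$ stabilizes $\wt_J V$ beyond the subgroup generated by the stabilizing simple reflections. The chain of reductions above (extreme points to force $u\lambda=\lambda$, then the affine hull to force $u$ to preserve $\operatorname{span}\pi_{J_{\min}}$, then minimal coset representatives) is arranged precisely to peel this down to the rank‑one computation, but the closing Coxeter step — that a minimal representative of $W_{I_\lambda}/W_{I_\lambda\cap K}$ which preserves $\operatorname{span}\pi_{J_{\min}}$ and fixes $\lambda$ must be trivial — is the delicate point, amounting to the assertion that this setwise stabilizer is a standard parabolic subgroup. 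A secondary subtlety is the support/path lemma of the second step: it is routine from the parabolic‑Verma picture, but degenerates in isolation at non‑integrable simple roots, where $\langle\lambda,\alpha_i^\vee\rangle$ may be negative and one must argue directly with the $e_i$‑action rather than invoke $\mathfrak{sl}_2$‑theory.
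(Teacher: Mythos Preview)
Your treatment of the inclusion and equality statements is essentially the paper's: both reduce to the identity $\aff\wt_{\fl_J}=\lambda+\R\pi_{J_{\min}}$, proved via the path argument from an active node together with the observation that $\n^-_{J\setminus J_{\min}}$ commutes with $\n^-_{J_{\min}}$ and annihilates $V_\lambda$.

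The divergence is in the stabilizer computation, and the obstacle you flag is real. After your reduction to $u\in W_{I_\lambda}$ preserving $\operatorname{span}_\R\pi_{J_{\min}}$, you need that this setwise stabilizer is the standard parabolic $W_{(I_\lambda\cap J_{\min})\sqcup J_d}$. This does not follow formally from the exchange condition applied to a minimal coset representative: writing $u=u's_i$ with $\ell(u')<\ell(u)$ gives no immediate leverage, since neither factor need individually preserve the span, and moreover $\R\pi_{J_{\min}}$ is typically \emph{not} spanned by simple roots of the ambient Coxeter system $(W_{I_\lambda},I_\lambda)$ --- the active nodes of $J_{\min}$ lie outside $I_\lambda$ by definition. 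The statement you want about setwise stabilizers of such subspaces is true but requires its own argument; your sketch does not supply one.

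The paper sidesteps this by arranging for \emph{pointwise} rather than setwise fixing. Instead of reducing via extreme points to $u\lambda=\lambda$, it shows (from the chamber decomposition $F=\bigcup_{w\in W_{J_V^{\min}}}(F\cap wD)$ and a dimension count) that after adjusting by some $w\in W_{J_V^{\min}}$ the stabilizing element $\tilde w$ fixes $F\cap D$ pointwise. Then for each $i\in J_{\min}$ it exhibits a concrete dominant point $v_i\in F\cap D$ of the form $\lambda-\sum_k c_k\alpha_{i_k}$ with all $c_k>0$ along a path terminating at $i$. Since $\tilde w$ fixes each $v_i$, Proposition~\ref{tits}(1) places $\tilde w$ in the standard parabolic $W_{I_V\cap v_i^\perp}$; intersecting over all $i\in J_{\min}$ yields $\tilde w\in W_{J_d}$ directly. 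The key idea your proposal is missing is that isotropy of a dominant \emph{point} is automatically standard parabolic, whereas the setwise stabilizer of a subspace is not, so one should engineer the reduction to land in the former situation.
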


Explicit formulas for $J_{\min}, J_{\max}$ are presented before Theorem
\ref{incl} and after Remark \ref{Rremark}, respectively.
In particular, we determine the $f$-polynomial of $\conv V$,
cf.~Proposition \ref{ffpoly}.
Thus, Theorems \ref{ffffaces} and \ref{Tinclusions} together settle the
classification problem for faces of arbitrary highest weight modules,
over all Kac--Moody algebras.

Modulo the action of the Weyl group, Theorems \ref{ffffaces} and
\ref{Tinclusions} are essentially statements about the actions of the
standard Levi subalgebras on the highest weight line. In this light, they
admit a natural extension to quantum groups, which we prove in Section
\ref{Squantum}.

As shown below, the sets $J_{\min}, J_{\max}$, and therefore the
combinatorial isomorphism class of $\conv V$, only depend on $\lambda$
through $\{ i \in I_V : (\halpha_i, \lambda) = 0 \}$. Motivated by this,
we study deformations of the convex hulls $\conv V$ for highest weight
modules with fixed integrability. To interpolate between such convex
hulls, we fix $J \subset I$, and consider {\em Weyl polyhedra}, which are
simply convex combinations of the convex hulls of highest weight modules
with integrability $J$.
We study a natural notion of a \textit{family} of Weyl polyhedra over a
convex base $S$, and show that such families admit a classifying space.

\begin{theo}\label{Trep}
Consider the contravariant functor $\mathsf{F}$ from convex sets to sets,
which takes a convex set to all families of Weyl polyhedra over it, and
morphisms to pullbacks of families. Then $\mathsf{F}$ is representable.
\end{theo}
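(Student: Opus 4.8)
The plan is to exhibit the representing object explicitly and then verify its universal property by a Yoneda-type argument. Recall that a contravariant functor $\mathsf{F}$ on the category of convex sets (the morphisms being the affine maps) is representable exactly when there is a convex set $R$ together with a universal family $\mathcal{U} \in \mathsf{F}(R)$ such that every family over every convex set $S$ is the pullback of $\mathcal{U}$ along a \emph{unique} morphism $S \to R$; naturality in $S$ is then automatic. Taking $S$ to be a point forces the underlying set of $R$ to be $\mathsf{F}(\mathrm{pt})$, namely the set of all Weyl polyhedra of integrability $J$, so the first task is to realize this set as a convex set.

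I would analyze a single Weyl polyhedron $P = \sum_k c_k\, \conv V_k$, with each $V_k$ a highest weight module of integrability $J$ and highest weight $\lambda_k$. By Theorem \ref{ffffaces} the body $\conv V_k$ is determined by $\lambda_k$ and $J$; by Theorems \ref{ffffaces} and \ref{Tinclusions} its face poset — and the degeneration data $(J_{\min},J_{\max})$ attached to each face — depends on $\lambda_k$ only through the vanishing set $\{i \in J : (\halpha_i,\lambda_k) = 0\}$, while the metric placement of the faces depends affinely on the numbers $(\halpha_i,\lambda_k)$. Grouping terms, $P$ is thus encoded by a point of a ``naive'' parameter cone stratified by these vanishing sets, and Theorem \ref{Tinclusions} specifies exactly which points give the same $P$. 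I would then \emph{define} $R$ to be the quotient of this naive cone by that equivalence — which is precisely the parameter data carried by the universal Weyl polyhedron $\calp_{univ}$ — so that tautologically the rule $r \mapsto P_r$ is a bijection from $R$ onto the set of Weyl polyhedra. Since Weyl polyhedra are closed under Minkowski convex combinations, $R$ inherits the structure of a convex set, and one checks the bijection $R \cong \mathsf{F}(\mathrm{pt})$ respects convex combinations.

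Next I would take for the universal family $\mathcal{U}$ over $R$ the tautological rule $r \mapsto P_r$, encoded concretely by $\calp_{univ}$, whose slices recover the $P_r$; checking that this is a family in the paper's sense reduces to the requisite convexity and local polyhedrality of $\calp_{univ}$, which follow from its explicit description together with the fact, from Theorem \ref{ffffaces}, that each $\conv V$ is cut out by the maximizing functionals associated to its faces $\conv U(\fl_J) V_\lambda$. For the universal property, given a family $\mathcal{W} = \{P_s\}_{s\in S}$ each fiber $P_s$ is a Weyl polyhedron, hence equals $P_{\phi_{\mathcal{W}}(s)}$ for a unique $\phi_{\mathcal{W}}(s) \in R$; I would show $\phi_{\mathcal{W}} \colon S \to R$ is a morphism of convex sets. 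Conversely $\phi \mapsto \phi^{*}\mathcal{U}$ turns a morphism into a family, the two assignments are visibly mutually inverse, and both are natural in $S$ because pullback is; this gives $\mathsf{F} \cong \mathrm{Hom}_{\mathrm{Conv}}(-,R)$.

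I expect the crux to be showing that $\phi_{\mathcal{W}}$ is affine — equivalently, that a Minkowski convex combination of Weyl polyhedra corresponds under the classification to the convex combination of their parameters in $R$. Concretely, one must prove that the redundancy data $(J_{\min},J_{\max})$ of $\sum_k c_k\, \conv V(\lambda_k)$ is governed in a stratified-linear way by the pairings $(\halpha_i,\lambda_k)$ — this is exactly the assertion that inclusions of faces of $\calp_{univ}$ control how faces of highest weight modules degenerate as the highest weight specializes — and, more delicately, that the strata fit together into a bona fide convex set rather than merely a union of convex pieces. This rests on the additivity of $\wt_J$ under the appropriate operation on highest weight modules (so that $\conv$ of a ``tensor product'' is the Minkowski sum of the $\conv$'s) together with the face classification of Theorems \ref{ffffaces} and \ref{Tinclusions}; granting it, the remainder of the argument is formal.
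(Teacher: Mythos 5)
Your formal skeleton (realize $\mathsf{F}(\mathrm{pt})$, the set of Weyl polyhedra, as a convex set $R$, equip it with a tautological family, and check that the classifying map $s \mapsto \phi_{\mathcal{W}}(s)$ is a morphism of convex sets) is the same as the paper's, but your construction of $R$ contains a genuine confusion. Theorem \ref{Tinclusions} classifies when two standard parabolic faces $\wt_J V$, $\wt_{J'} V$ of a \emph{fixed} module coincide; it says nothing about when two highest weights give the same convex hull, which is the redundancy you propose to quotient by. In fact there is no redundancy to kill: every Weyl polyhedron is of the form $P(\lambda,J) = \conv\bigcup_{w \in W_J,\, i \in I \setminus J} w(\lambda - \R^{\geqslant 0}\alpha_i)$ with $\lambda \in D_J$, and $\lambda$ is recovered from the shape as its unique vertex in the $J$ dominant chamber, so the representing object is simply $D_J$ with the injective parametrization $\lambda \mapsto P(\lambda,J)$; the stratification by vanishing sets only governs the face posets of the fibers, which is a separate matter (Theorem \ref{Tuniversal}), not the moduli problem. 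Your definition of $R$ as the parameter data carried by $\calp_{univ}$ is moreover circular, since $\calp_{univ}$ is only defined once representability is established; and a quotient of a cone by an abstract equivalence is not automatically a convex set in the sense used here (a subset of a real vector space), so the convex structure needs the explicit highest-weight embedding in any case.

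The crux you correctly isolate --- that Minkowski convex combinations of Weyl polyhedra correspond to convex combinations of their highest weights, i.e.\ $\sum_i t_i P(\lambda_i,J) = P(\sum_i t_i \lambda_i, J)$, Equation \eqref{combo} of the paper --- is exactly the lemma the paper's proof turns on, and it is left unproved in your proposal. The route you sketch does not suffice as stated: a tensor-product (Cartan component) argument only addresses integral highest weights and integer, hence after rescaling rational, coefficients, whereas Weyl polyhedra involve arbitrary real coefficients $t_i$ and, after combination, non-integral $J$-dominant weights; the paper instead proves the identity by a direct calculation with the ray description above, which handles all real $t_i$ at once. Without this identity you get neither that the fibers of the tautological family over $D_J$ exhaust all Weyl polyhedra nor that $\phi_{\mathcal{W}}$ is a morphism of convex sets, so the argument has a real gap. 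Incidentally, local polyhedrality of the total space is not part of the definition of a family and need not (and in general cannot, cf.\ Proposition \ref{Ppoly}) be verified; only the two conditions in the definition are relevant.
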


If $B_{univ}$ represents $\mathsf{F}$, we obtain a universal family of
Weyl polyhedra $\calp_{univ} \to B_{univ}$. Concretely, one may take
$B_{univ}$ to be a parabolic analogue of the dominant chamber, so that
$\calp_{univ} \to B_{univ}$ extends the usual parametrization of modules
by their highest weight.  We study the convex geometry of the total space
$\calp_{univ}$, and show a tight connection between its combinatorial
isomorphism type and the previous results on the classification of faces.

\begin{theo}\label{Tuniversal}
For $J \subset I$ as above, $\calp_{univ}$ is a convex cone.
Write $\mathcal{F}$ for the set of faces of $\calp_{univ}$.
Then $\mathcal{F}$ can be partitioned:
\[
\mathcal{F} = \bigsqcup_{J' \subset J} \mathcal{F}_{J'},
\]
so that each $\mathcal{F}_{J'}$, ordered by inclusion, is isomorphic to
the face poset of $\conv V$, where $V$ is any highest weight module with
integrability $J$ and highest weight $\lambda$ with $J' = \{ j \in J:
(\halpha_j, \lambda) = 0 \}$. 
\end{theo}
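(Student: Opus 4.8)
The plan is to build $\calp_{univ}$ very explicitly from the representing object $B_{univ}$ of Theorem \ref{Trep}, and then to read off its faces by combining the local structure of Weyl polyhedra with the face classification of Theorems \ref{ffffaces} and \ref{Tinclusions}. First I would fix the concrete model: take $B_{univ}$ to be the parabolic analogue of the dominant chamber, i.e.\ the cone of $\lambda \in \h^*_{\R}$ (modulo the appropriate kernel) with $(\halpha_j,\lambda) \geq 0$ for $j \in J$, and set $\calp_{univ}$ to be the total space of the universal family, whose fiber over $\lambda$ is $\conv V_\lambda$ for the (maximal) highest weight module of integrability $J$ and highest weight $\lambda$. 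Homogeneity of the construction under simultaneous scaling of $\lambda$ and the fiber coordinate gives that $\calp_{univ}$ is a cone; convexity follows because a convex combination of points $(\lambda_1, v_1)$ and $(\lambda_2, v_2)$ with $v_i \in \conv V_{\lambda_i}$ lies over $t\lambda_1 + (1-t)\lambda_2$ and, using that $\wt_{J'} V_\lambda = \wt U(\fl_{J'})V_\lambda$ depends on $\lambda$ affinely in the relevant sense (the weights of $U(\fl_{J'})V_\lambda$ are $\lambda$ minus a fixed monoid of roots, independent of $\lambda$), the combination stays in $\conv V_{t\lambda_1 + (1-t)\lambda_2}$. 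This ``monoid independent of $\lambda$'' observation is the technical backbone of the whole argument and should be isolated as a lemma early on.

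Next I would stratify $B_{univ}$ by which $(\halpha_j,\lambda) = 0$: for each $J' \subset J$, let $B_{J'} \subset B_{univ}$ be the relatively open face on which $\{ j \in J : (\halpha_j,\lambda)=0 \} = J'$. The $B_{J'}$ partition $B_{univ}$, and by the ``depends on $\lambda$ only through $J'$'' assertion already noted in the excerpt (the one motivating the definition of $\calp_{univ}$), the combinatorial isomorphism type of the fiber $\conv V_\lambda$ is constant along $B_{J'}$. I would then define $\mathcal{F}_{J'}$ to be the set of faces of $\calp_{univ}$ whose relative interior projects into $B_{J'}$ (equivalently, faces lying ``over'' $B_{J'}$). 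Since every face of a cone has a well-defined relative interior landing in a unique stratum of the base, this does partition $\mathcal{F}$. The content is then to identify $\mathcal{F}_{J'}$, as a poset under inclusion, with the face poset of $\conv V$ for $V$ of integrability $J$ and singularity-locus $J'$.

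For that identification I would argue in two directions. Given a face $G$ of $\conv V_\lambda$ for $\lambda \in B_{J'}$ — which by Theorem \ref{ffffaces} is a $W_{J}$-translate of $\conv U(\fl_{J''})V_\lambda$ for a unique standard one — I would show its ``universal thickening'' $\widetilde{G} := \bigcup_{\mu \in B_{J'}} (\text{corresponding face of } \conv V_\mu)$, together with the cone over the base directions, is a face of $\calp_{univ}$: it is cut out by the same supporting functional that cuts out $G$ in each fiber (such a functional exists and lies in the Tits cone by Theorem \ref{ffffaces}), extended to be zero in the base directions along $B_{J'}$. Conversely, a face $H \in \mathcal{F}_{J'}$ meets each fiber over $B_{J'}$ in a face, which by Theorem \ref{ffffaces} must be one of the standard ones (up to $W_J$), and the assignment $H \mapsto H \cap (\text{fiber over a fixed }\lambda \in B_{J'})$ inverts the thickening. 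That these two maps are mutually inverse and inclusion-preserving, and that the $J_{\min}, J_{\max}$ bookkeeping of Theorem \ref{Tinclusions} makes the fiberwise face genuinely well-defined (independent of which $\lambda \in B_{J'}$ and which representative in the $W_J$-orbit), gives the poset isomorphism $\mathcal{F}_{J'} \cong \{\text{faces of } \conv V\}$.

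The main obstacle I anticipate is the ``conversely'' direction: showing that an arbitrary face $H$ of the cone $\calp_{univ}$ is forced to be a thickening of a fiberwise face, rather than something that degenerates badly as $\lambda$ moves within $B_{J'}$ or approaches $\partial B_{J'}$. Because $\conv V_\lambda$ is typically not polyhedral in infinite type (item (ii) of the introduction), $\calp_{univ}$ need not be polyhedral either, so I cannot simply invoke finiteness of faces; I will need the infinitesimal/local analysis of faces underlying Theorem \ref{ffffaces}, applied now to the total space, to control how a face of $\calp_{univ}$ restricts to each fiber and to rule out faces that ``tilt'' into the base in a way not captured by the stratification. Handling the boundary behavior — a face over $B_{J'}$ whose closure dips into $\mathcal{F}_{J''}$ for $J'' \supsetneq J'$ — is exactly where inclusions of faces of $\calp_{univ}$ encode the degeneration of faces as $\lambda$ becomes more singular, so this obstacle is also the crux of the ``tight connection'' the theorem asserts, and I would treat it last and most carefully.
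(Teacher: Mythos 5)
Your overall plan---stratify the base by the singularity type of the highest weight, thicken fiberwise faces over strata, and then show every face of the total space arises this way---is the same as the paper's, but several load-bearing steps are wrong or missing. First, the lemma you call the ``technical backbone'' is false: the weights of $U(\fl_{J'})V_\lambda$ are \emph{not} $\lambda$ minus a monoid independent of $\lambda$; in the integrable directions the hull truncates in a $\lambda$-dependent way (already for $\lie{sl}_2$ with $J=I$ the fiber is the segment from $-\lambda$ to $\lambda$, not $\lambda$ minus a fixed cone). What convexity of the total space actually requires is the Minkowski identity $\sum_i t_i P(\lambda_i,J)=P\left(\sum_i t_i\lambda_i,J\right)$ of Equation \eqref{combo}, proved from the Ray Decomposition; homogeneity of that description is what makes $\calp_{univ}$ a cone. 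Second, the universal family lives over all of $D_J$, i.e.\ over real, generally non-integral highest weights, so its fibers are the Weyl polyhedra $P(\lambda,J)$ and not convex hulls of modules. You cannot quote Theorems \ref{ffffaces} and \ref{Tinclusions} fiberwise; the classification of faces and of their inclusions has to be re-established for $P(\lambda,J)$ (this is Theorems \ref{cf2} and \ref{ef2}, where for instance the dimension count via lowering operators is replaced by the ray decomposition \eqref{rayface} of each face).

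Your construction of faces and, above all, the converse direction are also incomplete. A thickening over an \emph{open} stratum $B_{J'}$ is not a face: for $\lie{sl}_2$, $J=I$, the set $\{(\lambda,\lambda):\lambda>0\}$ violates the face property against the segment joining $(0,0)$ and $(2,2)$; one must take the union over the closed face ${}^\perp J'$ of $D_J$. Likewise the proposed exposing functional, ``the fiberwise functional extended by zero in the base directions,'' does not support the thickened set, since the fiberwise maximum of $\sum_{i\notin J''}\check\omega_i$ varies with the base point (and the naive extension is unbounded on the cone); the correct device is to pull back the base face, which is already a face of the total space, and then expose inside it with $(s,x)\mapsto\sum_i(\check\omega_i,\lambda(s)-x)$. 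Most seriously, the statement you defer as the ``main obstacle''---that \emph{every} face of $\calp_{univ}$ is such a thickening---is precisely the content of the theorem, and naming the difficulty is not an argument. The paper's proof of this (Theorem \ref{Tface}) takes an interior point $(s,x)$ of a given face, moves $x$ into the $J$ dominant chamber by $W_J$, applies the fiberwise classification to see that $x$ is interior to a standard parabolic face of $P(\lambda(s),J)$, and then builds a product neighborhood inside $\wt_{\fl,F}$ from the ray decomposition \eqref{rayface} and affine independence, concluding by a dimension count and Lemma \ref{f2}. Without this (or an equivalent) argument, and without the face classification for non-integral $\lambda$, the proposal does not establish the theorem.
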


We in fact show more than is stated in Theorem \ref{Tuniversal}: the
remaining inclusions of faces are determined by how faces of $\conv V$
degenerate as the highest weight specializes, cf. Proposition \ref{ezpz}.
In particular, we determine the $f$-polynomial of $\calp_{univ}$,
cf.~Proposition \ref{PFpoly}. We are unaware of a precursor to Theorem
\ref{Tuniversal} or its aforementioned strengthening in the literature. 

Theorem \ref{Tuniversal} is proved by using the projection to $B_{univ}$.
More precisely, along the interior of each face of $B_{univ}$, the
restriction of $\calp_{univ}$ is a `fibration', and we use this to
construct faces of $\calp_{univ}$ from `constant' families of faces along
the fibres.

Finally, we explain a question of Michel Brion on localization of faces,
which we answer in this paper.

To do so, we introduce a notion of localization in convex geometry. If
$E$ is a real vector space, and $C \subset E$ a convex subset, then for
$c \in C$ to study the `germ' of $C$ near $c$ one may form the tangent
cone $T_c C$, given by all rays $c + tv, v \in E, t \in
\R^{\geqslant 0}$ such that for $0 < t \ll 1, c + tv \in C$.
Given a face $F$ of $C$, one can similarly form the `germ' of $C$ along
$F$, by setting $\Loc_F C$ to be the intersection of the tangent cones
$T_f C, \forall f \in F$. It may be helpful to think of tangent cones as
convex analogues of tangent spaces and $\Loc_F C$ a convex analogue of
the normal bundle of a submanifold. 

Let $\g$ be of finite type, $\lambda$ a regular dominant integral weight,
and $L(\lambda)$ the corresponding simple highest weight module. Brion
observed that for any face $F$ of $\conv L(\lambda)$, $\Loc_F \conv
L(\lambda)$ is always the convex hull of a parabolic Verma module for a
Category $\mathcal{O}$ corresponding to the triangular decomposition $w
\n^- w^{-1} \oplus \h \oplus w \n^+ w^{-1},$ for some $w \in W$. Further,
if $\lambda \in F$, then one could take $w = e$. Brion asked whether this
phenomenon persists for a general highest weight module $V$. 

In answering Brion's question we will prove the equivalence of several
convexity-theoretic properties of the convex hull of a highest weight
module, which we feel is of independent interest.
In particular, the following result explains the subtlety in classifying
faces of $\conv V$ for $\g$ of infinite type, as alluded to in the
introduction.

\begin{theo}\label{clo}
Let $\g$ be indecomposable of infinite type and $V$ a highest weight
$\g$-module. Then $\conv V$ is a polytope if and only if $V$ is
one-dimensional, and is otherwise a polyhedron if and only if $I_V$
corresponds to a Dynkin diagram of finite type. If $V$ is not
one-dimensional, the following are equivalent:
\begin{enumerate}
\item The intersection of $\conv V$ with any polytope is a polytope.

\item The tangent cone $T_\lambda V$ is closed, and
$\displaystyle \conv V = \bigcap_{w \in W_{I_V}} T_{w \lambda} V$.

\item $\conv V$ is closed.

\item $\lambda$ lies in the interior of the $I_V$ Tits cone.
\end{enumerate}
\end{theo}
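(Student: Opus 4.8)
### Proof Proposal for Theorem \ref{clo}

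The plan is to first separate the elementary structural claims from the equivalences, then prove the equivalences by establishing a cycle of implications. For the structural claims, $\conv V$ is a polytope iff $\wt V$ is finite; since $V$ has highest weight $\lambda$ and $\g$ is of infinite type, $\wt V$ is finite precisely when $V$ is one-dimensional (any nontrivial Chevalley generator $f_i$ acting non-nilpotently produces an infinite string, and if all $f_i$ act nilpotently then $I_V = I$ would force $\g$ to be of finite type since $\g$ is indecomposable). For the polyhedrality dichotomy, I would invoke the companion result \cite{DK} in finite type applied to the Levi $\fl_{I_V}$: the ``integrable directions'' contribute a polytope-like (Weyl-polytope) part, while the non-integrable simple roots contribute genuine conical directions; $\conv V$ is then polyhedral iff the integrable part is a genuine finite polytope, i.e. iff $I_V$ is of finite type. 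This is essentially a reduction to the Levi subalgebra $\fl_{I_V}$ together with the observation that outside $I_V$ the weights descend in free ``Verma-like'' cones.

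For the equivalences, I would run the cycle $(4) \Rightarrow (2) \Rightarrow (1) \Rightarrow (3) \Rightarrow (4)$, with the bulk of the work in $(4) \Rightarrow (2)$ and $(3) \Rightarrow (4)$. For $(4) \Rightarrow (2)$: when $\lambda$ is interior to the $I_V$ Tits cone, the stabilizer $W_{I_V}$-action is ``proper'' near $\lambda$ in the relevant sense, so the tangent cone $T_\lambda V$ is a finitely-generated (hence closed) cone — here I would use Theorem \ref{ffffaces} to identify $T_\lambda V$ with a cone spanned by the $\wt_J V$ for $J \ni$ the relevant simple roots, together with the negative root directions; and the identity $\conv V = \bigcap_{w \in W_{I_V}} T_{w\lambda} V$ follows because every weight of $V$ lies in every such tangent cone (weights only go ``down'' from each $w\lambda$), while conversely the Tits cone interiority guarantees that the $W_{I_V}$-translates of the dominant region cover enough to pin down $\conv V$ exactly — this is where the hypothesis is used essentially. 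The implication $(2) \Rightarrow (1)$ is then formal: an intersection of finitely-or-locally-finitely-many closed tangent cones, intersected with a polytope, stays polyhedral and bounded. For $(1) \Rightarrow (3)$, closedness is immediate since a set whose intersection with every polytope is a (closed) polytope is closed (exhaust by balls).

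The main obstacle, and the heart of the theorem, is $(3) \Rightarrow (4)$: showing that if $\conv V$ is closed then $\lambda$ must lie in the \emph{interior} of the $I_V$ Tits cone. I would argue the contrapositive: if $\lambda$ lies on the boundary of (or outside) the $I_V$ Tits cone, I would produce a sequence of weights, or convex combinations of weights, whose limit is not a weight combination — concretely, taking $\lambda$ on a wall-of-infinite-type face, the $W_{I_V}$-orbit of $\lambda$ has an accumulation point (this is exactly the failure of local finiteness of the Tits cone at the boundary), and the corresponding faces $w(\wt_{J} V)$ ``escape to infinity'' in a direction that yields a limit point of $\conv V$ lying outside $\conv V$ itself. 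Making this precise requires a careful analysis of which functionals are maximized on $\conv V$ and showing the supremum is not attained; I expect to use the parabolic partial sum description of $\wt V$ (weights are $\lambda$ minus $\mathbb{Z}_{\geq 0}$-combinations of simple roots, constrained by the $\fl_{I_V}$-integrability) to exhibit the explicit non-closed behavior. Once $(3) \Rightarrow (4)$ is in hand, the cycle closes and, combined with the structural dichotomy already established, the theorem follows.
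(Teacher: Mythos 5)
Your skeleton (a cycle of implications plus the two structural dichotomies) is reasonable, and you correctly flag $(3)\Rightarrow(4)$ as a crux, but the mechanisms you propose at the crucial steps either fail or are missing. For $(3)\Rightarrow(4)$, your contrapositive rests on the claim that when $\lambda$ lies on the boundary of the $I_V$ Tits cone, the orbit $W_{I_V}\lambda$ has an accumulation point. This cannot happen: $w\lambda - \lambda \in \Z\pi_{I_V}$ for all $w \in W_{I_V}$, so the orbit sits in a translate of the root lattice and is discrete; indeed the orbit may be the single point $\lambda$ (e.g.\ when $(\halpha_j,\lambda)=0$ for all $j\in I_V$), which is exactly the hardest case. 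What fails at the boundary is finiteness of the \emph{stabilizer}, and the non-closedness has to be extracted one level down: using indecomposability and non-triviality one finds $i \in I\setminus I_V$ adjacent to an infinite-type component $J'$ of the stabilizer's support, so that $U(\fl_{J'})V_{\lambda-\alpha_i}$ is a non-trivial integrable $\fl_{J'}$-module; an imaginary positive root $\delta$ supported on $J'$ then gives weights $\lambda-\alpha_i-\Z^{\geqslant 0}\delta$, whence the ray $\lambda-\R^{\geqslant 0}\delta$ lies in the closure of $\conv V$ but not in $\conv V$. Some such device (imaginary roots of the infinite stabilizer component, or an equivalent) is indispensable and absent from your sketch. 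Similarly, in $(4)\Rightarrow(2)$ the hard half is the inclusion $\bigcap_{w\in W_{I_V}} T_{w\lambda}V \subset \conv V$: one must show both that the intersection meets the dominant chamber only in $\conv V$ (which needs a rationality/dilation reduction to integral points and the non-degeneracy criterion of Proposition \ref{nonde}) and, much more seriously, that the intersection does not leak outside the $I_V$ Tits cone; the latter uses Looijenga's theorem that $\conv W_{I_V}(K)$ is closed for compact $K$ in the interior of the Tits cone, applied to cutoffs of the Ray Decomposition. ``The translates cover enough to pin down $\conv V$'' does not engage with this. Finally, $(2)\Rightarrow(1)$ is not formal: the intersection runs over the infinite group $W_{I_V}$, and local polyhedrality is precisely a local-finiteness statement that has to be proved (the paper does this with a principal-gradation functional $\hrho$, cutting with half-spaces $\hrho \geqslant k$, and showing the slab is the hull of finitely many vertices and truncated $1$-faces).

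The structural claims also have gaps. Your argument that a polytope forces $V$ one-dimensional asserts that $I_V=I$ ``forces $\g$ to be of finite type,'' which is false (integrable highest weight modules exist over every Kac--Moody algebra); what is actually needed is that a non-trivial integrable module over indecomposable $\g$ of infinite type has infinitely many weights, which the paper proves via the ideal structure of $\overline{\g}$ together with Proposition \ref{nonde}. Likewise, for the polyhedrality dichotomy your criterion ``the integrable part is a finite polytope'' breaks in the degenerate case $(\halpha_j,\lambda)=0$ for all $j\in I_V$ with $I_V$ of infinite type: there the top integrable slice is a single point, yet $\conv V$ is still not a polyhedron, which one sees only by passing to $\lambda-\alpha_i$ for a suitable $i\notin I_V$ and showing its $W_{I_V}$-orbit is infinite, producing infinitely many $1$-faces.
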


In the somewhat unrepresentative case of integrable modules in finite
type, Theorem \ref{clo} is straightforward and well known, see
e.g.~\cite[\S 2.3]{Kam}. For general $\lie{g}$ and $V$, one of the
implications can be deduced from work of Looijenga on root systems
\cite{Looijenga}, and a theorem concerning local polyhedrality in a
similar geometric context appears in his later work \cite{Looijenga2}.
Otherwise, to our knowledge Theorem \ref{clo} is largely new, including
for all non-integrable modules in finite type.

Returning to Brion's question, given Theorem \ref{ffffaces} it suffices
to consider faces of the form $\conv U(\fl_J) V_\lambda$. We show:

\begin{theo}\label{brcc}
Suppose $\lambda$ has trivial stabilizer in $W_{I_V}$, and $J \subset I$.
Writing $F$ for the face $\conv U(\fl_J) V_\lambda$, we have:
\[
\Loc_F \conv V = \conv M(\lambda, I_V \cap J).
\]
\end{theo}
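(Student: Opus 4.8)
The plan is to compute $\Loc_F \conv V$ directly from the definition as the intersection of tangent cones $T_f \conv V$ over $f \in F$, using the representation-theoretic description of faces provided by Theorem \ref{ffffaces}. First I would reduce to understanding tangent cones at weights of $V$. Since $F = \conv U(\fl_J) V_\lambda$ is itself the convex hull of the weights of the $\fl_J$-module $U(\fl_J) V_\lambda$, and $\Loc_F$ is the intersection of tangent cones over all of $F$, it suffices (by a standard argument, e.g.\ that tangent cones only shrink as one moves into the relative interior, together with the fact that $F$ is the convex hull of $\wt_J V$) to intersect the tangent cones $T_\mu \conv V$ as $\mu$ ranges over $\wt_J V$, and in fact over the extreme points of $F$. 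Here I would use that $\lambda$ has trivial stabilizer in $W_{I_V}$, so $\lambda$ itself is a vertex of $F$ and of $\conv V$; the tangent cone $T_\lambda \conv V$ is then the key object.

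Next I would identify $T_\lambda \conv V$ representation-theoretically. The weights of $V$ all lie in $\lambda - \Z_{\geqslant 0} \Pi$, and near $\lambda$ the only directions available are $-\alpha_i$ for $i \in I$; more precisely, by considering $\mathfrak{sl}_2$-strings, $\lambda - \alpha_i$ is a weight of $V$ for every $i$ (one-dimensionality of $V$ being excluded), so the edges of $\conv V$ at $\lambda$ point in the directions $-\alpha_i$, $i \in I$. Thus $T_\lambda \conv V = \lambda - \R_{\geqslant 0}\Pi$, which is exactly $T_\lambda \conv M(\lambda)$ for the Verma module, and more to the point, $\conv M(\lambda, I_V \cap J)$ has the same tangent cone at $\lambda$ precisely when one localizes along the face $\conv U(\fl_{I_V \cap J}) V_\lambda$. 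For a general vertex $w\mu$ of $F$ with $w \in W_{I_V \cap J}$, the tangent cone is the $w$-translate of the analogous cone, reflecting the parabolic triangular decomposition twisted by $w$; intersecting over all such $w$ and all the relevant strata is what carves out $\conv M(\lambda, I_V \cap J)$. I would make this precise by showing both inclusions: $\conv M(\lambda, I_V\cap J) \subseteq \Loc_F \conv V$ follows because $M(\lambda, I_V \cap J)$ surjects onto $V$ in the relevant range near $F$ and has the maximal possible weights there, while the reverse inclusion follows by exhibiting, for each wall of $\conv M(\lambda, I_V \cap J)$, a weight $f \in F$ whose tangent cone $T_f \conv V$ already lies on the correct side of that wall — using Theorem \ref{clo}(2) applied to the Levi $\fl_{I_V \cap J}$ to guarantee that the relevant tangent cones are closed and intersect correctly.

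The main obstacle I anticipate is the reverse inclusion $\Loc_F \conv V \subseteq \conv M(\lambda, I_V \cap J)$, i.e.\ showing the intersection of tangent cones is not too large. The difficulty is exactly the one flagged in the introduction: for $\g$ of infinite type the functionals cutting out the walls of $\conv M(\lambda, I_V \cap J)$ may lie outside the Tits cone, so one cannot simply test membership against a dominant linear functional. I would circumvent this by working infinitesimally at a well-chosen collection of vertices of $F$ rather than with global functionals — for each $i \in I_V \setminus J$ (the directions in which $M(\lambda, I_V \cap J)$ is ``thin''), the tangent cone at $\lambda$ already forbids the direction $+\alpha_i$, and for each $w \in W_{I_V \cap J}$ the tangent cone at $w\lambda$ forbids the corresponding $w$-twisted positive directions; the claim is that these finitely-generated (on each relevant face) tangent cones intersect to exactly the cone over $\conv M(\lambda, I_V \cap J)$ at each point, which one checks stratum by stratum along $F$. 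The hypothesis that $\lambda$ has trivial $W_{I_V}$-stabilizer is what ensures the vertices $w\lambda$, $w \in W_{I_V \cap J}$, are genuinely distinct and in ``general position'' so that this intersection is as small as claimed; without it the face $F$ would be more degenerate and $\Loc_F \conv V$ correspondingly larger, matching a parabolic Verma for a smaller Levi.
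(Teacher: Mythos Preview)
Your approach is essentially the paper's: reduce $\Loc_F$ to the intersection of tangent cones at the vertices $W_{I_V \cap J}(\lambda)$ of $F$, identify each $T_{w\lambda}\conv V$ with $T_{w\lambda}\conv M(\lambda, I_V\cap J)$ using the regularity hypothesis (this is Lemma \ref{tcone}, since trivial stabilizer forces $I_V\cap\lambda^\perp=\emptyset$), and then invoke Theorem \ref{clo}(2) for $M(\lambda, I_V\cap J)$ to conclude that this intersection equals $\conv M(\lambda, I_V\cap J)$.

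Two corrections. First, your parenthetical ``tangent cones only shrink as one moves into the relative interior'' is backwards: tangent cones \emph{grow} toward the interior (at an interior point the tangent cone is the whole affine span). The paper's version of this reduction is the observation that if $[a,b]$ extends slightly past $b$ in $C$ then $T_aC\subset T_bC$; hence the intersection $\bigcap_{f\in F}T_fC$ is already achieved at the extreme points of $F$, which by the Ray Decomposition are exactly $W_{I_V\cap J}(\lambda)$. Your conclusion is right but the stated reason is inverted. Second, you are making the reverse inclusion harder than it is. Once you know $T_{w\lambda}\conv V = T_{w\lambda}\conv M(\lambda, I_V\cap J)$ for every $w\in W_{I_V\cap J}$, Theorem \ref{clo}(2) applied to $M(\lambda, I_V\cap J)$ (whose integrability is $I_V\cap J$, and in which $\lambda$ again has trivial, hence finite, stabilizer) gives \emph{both} inclusions simultaneously: $\conv M(\lambda, I_V\cap J) = \bigcap_{w\in W_{I_V\cap J}} T_{w\lambda} M(\lambda, I_V\cap J)$. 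There is no need for a separate wall-by-wall or stratum-by-stratum argument, and your worry about functionals outside the Tits cone is already absorbed into the proof of Theorem \ref{clo}.
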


The regularity assumption appearing in Theorem \ref{brcc} specializes to
that of Brion for integrable modules in finite type. That such a
condition is necessary to recover from $\conv V$ the convex hull of a
module with less integrability is already clear from considering edges,
i.e.~the case of $\lie{sl}_2$.

In Brion's motivating example, this convexity-theoretic localization is a
combinatorial shadow of localizing the corresponding line bundle on the
flag variety off of certain Schubert divisors.
As we develop in \cite{DK}, for general $V$ with dot-regular highest
weight in finite type, the above formula is the combinatorial shadow of
localizing the corresponding (twisted) $D$-module on the flag variety.

\subsection*{Organization of the paper}

After discussing preliminaries and establishing notation in Section
\ref{Snotation}, we show in Sections \ref{S6}, \ref{S82}, \ref{Srep},
\ref{Suniversal}, \ref{S8}, \ref{Sbrion} in order the main results
stated above, Theorems \ref{ffffaces}--\ref{brcc}.
In Section \ref{Sweak} we prove an analogue of Theorem \ref{Trep} for a
more flexible notion of Weyl polyhedra, and in Section \ref{Squantum} we
show the analogue of Theorem \ref{Tinclusions} for $U_q(\g)$-modules.

\section*{Acknowledgments}

We thank Michel Brion, Daniel Bump, and Victor Reiner for valuable
discussions. The work of G.D.~is partially supported by the Department of
Defense (DoD) through the NDSEG fellowship. A.K.~is partially supported
by an Infosys Young Investigator Award.

\section{Notation and preliminaries}\label{Snotation}

We now set notation and recall preliminaries and results from the
companion work \cite{DK}; in particular, the material in this section and
Section \ref{Sqnotation} overlaps with \textit{loc.~cit.} We advise the
reader to skim Subsections \ref{wwey} and \ref{Sslice-ray}, and refer
back to the rest only as needed.

Write $\Z$ for the integers, and $\Q, \R, \C$ for the rational, real, and
complex numbers respectively. For a subset $S$ of a real vector space
$E$, write $\Z^{\geqslant 0} S$ for the set of finite linear combinations
of $S$ with coefficients in $\Z^{\geqslant 0}$, and similarly $\Z S,
\Q^{\geqslant 0} S, \R S$, etc.

\subsection{Notation for Kac--Moody algebras, standard parabolic and Levi
subalgebras}\label{rat}

The basic references are \cite{Kac} and \cite{Kumar}.
In this paper we work throughout over $\C$. Let $I$ be a finite set. Let
$A = (a_{ij})_{i,j \in I}$ be a generalized Cartan matrix, i.e., an
integral matrix satisfying $a_{ii} = 2, a_{ij} \leqslant 0$, and $a_{ij}
= 0$ if and only if $a_{ji} = 0$, for all $i \neq j \in I$. Fix a
realization, i.e., a triple $(\h, \pi, \hpi)$, where $\h$ is a complex
vector space of dimension $|I| +$ corank$(A)$, and $\pi = \{ \alpha_i
\}_{i \in I} \subset \h^*,  \hpi = \{ \halpha_i \}_{i \in I } \subset \h$
are linearly independent collections of vectors satisfying $(\halpha_i,
\alpha_j) = a_{ij}, \forall i,j \in I$. We call $\pi$ and $\hpi$ the
simple roots and simple coroots, respectively.

Let $\g := \g(A)$ be the associated Kac--Moody algebra generated by $\{
e_i, f_i : i \in I \}$ and $\h$, modulo the relations:
\begin{align*}
[e_i, f_j] = &\ \delta_{ij} \halpha_i, \quad
[h, e_i] = (h, \alpha) e_i, \quad
[h, f_i] = -(h, \alpha) f_i, \quad
[\h,\h] = 0, \quad \forall h \in \h, \ i,j \in I,\\
& ({\rm ad}\ e_i)^{1 - a_{ij}}(e_j) = 0, \quad
({\rm ad}\ f_i)^{1 - a_{ij}}(f_j) = 0, \quad \forall i,j \in I,
\ i \neq j.
\end{align*}

Denote by $\overline{\g}(A)$ the quotient of $\g(A)$ by the largest ideal
intersecting $\h$ trivially; these coincide when $A$ is symmetrizable.
When $A$ is clear from context, we will abbreviate these to $\g,
\overline{\g}$.

In the following we establish notation for $\g$; the same apply for
$\overline{\g}$ \textit{mutatis mutandis}.
Let $\Delta^+, \Delta^-$ denote the sets of positive and negative roots,
respectively.
We write $\alpha > 0$ for $\alpha \in \Delta^+$, and
similarly $\alpha < 0$ for $\alpha \in \Delta^-$. For a sum of roots
$\beta = \sum_{i \in I} k_i \alpha_i$ with all $k_i \geqslant 0$, write
$\supp \beta := \{ i \in I: k_i \neq 0 \}$. Write
\[
\n^- := \bigoplus_{ \alpha < 0} \g_\alpha, \qquad
\n^+ := \bigoplus_{\alpha > 0 } \g_{\alpha}.
\]

Let $\leqslant$ denote the standard partial order on $\h^*$, i.e.~for
$\mu, \lambda \in \h^*$, $\mu \leqslant \lambda$ if and only if $\lambda
- \mu  \in \Z^{\geqslant 0} \pi$. 

For any $J \subset I$, let $\fl_J$ denote the associated Levi subalgebra
generated by $\{ e_i, f_i : i \in J \}$ and $\h$; denote $\fl_i :=
\fl_{\{ i \}}$ for $i \in I$. For $\lambda \in \h^*$ write
$L_{\fl_J}(\lambda)$ for the simple $\fl_J$-module of highest weight
$\lambda$. Writing $A_J$ for the principal submatrix $(a_{i,j})_{i,j \in
J}$, we may (non-canonically) realize $\g(A_J) =: \g_J$ as a subalgebra
of $\g(A)$. Now write $\pi_J, \Delta^+_J, \Delta^-_J$ for the simple,
positive, and negative roots of $\g(A_J)$ in $\h^*$, respectively (note
these are independent of the choice of realization).
Finally, we define the associated Lie subalgebras $\lie{u}^+_J,
\lie{u}^-_J, \n^+_J, \n^-_J$ by:
\[
\lie{u}^\pm_J := \bigoplus_{\alpha \in \Delta^\pm \setminus \Delta^\pm_J}
\g_\alpha, \qquad \n^\pm_J := \bigoplus_{\alpha \in \Delta^\pm_J}
\g_\alpha,
\]

\noindent and $\lie{p}_J := \fl_J \oplus \lie{u}^+_J$ to be the
associated parabolic subalgebra.

\subsection{Weyl group, parabolic subgroups, Tits cone}\label{wwey}

Write $W$ for the Weyl group of $\g$, generated by the simple reflections
$\{ s_i, i \in I \}$, and let $\ell: W \rightarrow \Z^{\geqslant 0}$ be
the associated length function. For $J \subset I$, let $W_J$ denote the
parabolic subgroup of $W$ generated by $\{ s_j, j \in J \}$.

Write $P^+$ for the dominant integral weights, i.e.~$\{\mu \in \h^*:
(\halpha_i, \mu) \in \Z^{\geqslant 0}, \forall i \in I \}$. The following
choice is non-standard. Define the real subspace $\h^*_\R := \{ \mu \in
\h^*: (\halpha_i, \mu) \in \R, \forall i \in I \}$. Now define the {\em
dominant chamber} as $D := \{ \mu \in \h^*: (\halpha_i, \mu) \in
\R^{\geqslant 0}, \forall i \in I \} \subset \h^*_\R$, and the {\em Tits
cone} as $C := \bigcup_{w \in W} wD$. 

\begin{re}\label{weird}
In \cite{Kumar} and \cite{Kac}, the authors define $\h^*_\R$ to be a real
form of $\h^*$. This is smaller than our definition whenever the
generalized Cartan matrix $A$ is non-invertible, and has the consequence
that the dominant integral weights are not all in the dominant chamber,
unlike for us. This is a superficial difference, but our convention helps
avoid constantly introducing arguments like \cite[Lemma 8.3.2]{Kumar}.
\end{re}

We will also need parabolic analogues of the above. For $J \subset I$,
define $\h^*_\R(J) := \{ \mu \in \h^*: (\halpha_j, \mu) \in \R, \forall j
\in J \}$, the $J$ {\em dominant chamber} as $D_J := \{\mu \in \h^*:
(\halpha_j, \mu) \in \R^{\geqslant 0}, \forall j \in J \}$, and the $J$
{\em Tits cone} as $C_J := \bigcup_{w \in W_J} wD_J$. Finally, we write
$P^+_J$ for the $J$ dominant integral weights, i.e.~$\{ \mu \in \h^*:
(\halpha_j, \mu) \in \Z^{\geqslant 0}, \forall j \in J\}.$ The following
standard properties will be used without further reference in the paper;
the reader can easily check that the standard arguments, cf.
\cite[Proposition 1.4.2]{Kumar}, apply in this setting {\em mutatis
mutandis}.

\begin{pro}\label{tits}
For $\g(A)$ with realization $(\h, \pi, \hpi)$, let $\g(A^t)$ be the dual
algebra with realization $(\h^*, \hpi, \pi)$. Write $\check{\Delta}^+_J$
for the positive roots of the standard Levi subalgebra $\fl_J^t \subset
\g(A^t)$. 
\begin{enumerate}
\item For $\mu \in D_J$, the isotropy group $\{w \in W_J: w\mu = \mu \}$
is generated by the simple reflections it contains.

\item The $J$ dominant chamber is a fundamental domain for the action of
$W_J$ on the $J$ Tits cone, i.e., every $W_J$ orbit in $C_J$ meets $D_J$
in precisely one point. 

\item $C_J = \{ \mu \in \h^*_\R(J): (\halpha, \mu) < 0 \text{ for at most
finitely many } \halpha \in \check{\Delta}^+_J \}$; in particular, $C_J$
is a convex cone.

\item Consider $C_J$ as a subset of  $\h^*_\R(J)$ in the analytic
topology, and fix $\mu \in C_J$. Then $\mu$ is an interior point of $C_J$
if and only if the isotropy group $\{w \in W_J: w\mu = \mu \}$ is finite. 
\end{enumerate}
\end{pro}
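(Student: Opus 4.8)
The plan is to deduce (1)--(4) from the classical case $J=I$ --- namely \cite[Prop.~1.4.2]{Kumar} (equivalently \cite[\S3.12]{Kac}) --- by reducing to a minimal realization of the principal submatrix $A_J$. The group $W_J$ acts on the real vector space $E:=\h^*_\R(J)$ by the restricted simple reflections $s_j(\mu)=\mu-(\halpha_j,\mu)\alpha_j$, $j\in J$, and fixes pointwise the subspace $K_J:=\bigcap_{j\in J}\ker\halpha_j$. Since the $\halpha_j$, $j\in J$, are linearly independent, $\dim E-\dim K_J=|J|$, and the subspace $V_J:=\sum_{j\in J}\R\alpha_j$ satisfies $V_J\cap K_J=\{\sum_j c_j\alpha_j:\ A_Jc=0\}$, of dimension $\operatorname{corank}(A_J)$. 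First I would choose any subspace $Z\subseteq K_J$ with $\dim Z=\dim K_J-\operatorname{corank}(A_J)$ and $Z\cap V_J=0$; such a $Z$ exists because $V_J\cap K_J$ is a subspace of $K_J$ of complementary dimension. Writing $\pi\colon E\twoheadrightarrow\overline E:=E/Z$, I claim $(\overline E,\{\pi\alpha_j\}_{j\in J},\{\halpha_j\}_{j\in J})$ is a minimal realization of $A_J$: the functionals $\halpha_j$ descend to $\overline E$ since $Z\subseteq K_J$ and remain linearly independent; the $\pi\alpha_j$ are linearly independent because $Z\cap V_J=0$; $(\halpha_k,\pi\alpha_j)=a_{kj}$; and $\dim\overline E=|J|+\operatorname{corank}(A_J)$. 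As $W_J$ fixes $Z$ pointwise, $W_J$ acts on $\overline E$ and $\pi$ is $W_J$-equivariant.

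Next I would check that $D_J$, $C_J$, the coroot walls, the interior points, and the isotropy groups all descend along $\pi$. Since $D_J$ and every translate $wD_J$ ($w\in W_J$) are invariant under translation by $K_J\supseteq Z$, one gets $D_J=\pi^{-1}(\overline D_J)$ and $C_J=\pi^{-1}(\overline C_J)$, where $\overline D_J,\overline C_J$ denote the dominant chamber and Tits cone of the minimal realization; and since $\pi$ is a surjective open linear map, $\mu\in C_J$ is an interior point of $C_J$ (in $E$) if and only if $\pi(\mu)$ is an interior point of $\overline C_J$ (in $\overline E$). Each $\halpha\in\check\Delta^+_J$ is by definition a positive root of $\g(A_J^t)$, hence a non-negative integer combination of the $\halpha_j$, so it vanishes on $K_J\supseteq Z$, descends to $\overline E$, satisfies $(\halpha,\mu)=(\halpha,\pi\mu)$, and $\check\Delta^+_J$ is exactly the set of positive coroots of $\overline E$. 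Finally, for $w\in W_J$ the difference $w\mu-\mu$ lies in $V_J$, so $w\mu=\mu\iff\pi(w\mu)=\pi(\mu)$, giving $\St_{W_J}(\mu)=\St_{W_J}(\pi\mu)$. Combining these identifications, (1)--(4) for $(\h,\pi,\hpi)$ and $J$ follow word for word from \cite[Prop.~1.4.2]{Kumar} applied to the minimal realization $\overline E$ of $A_J$; for (4) one uses in addition that, by (1), $\St_{W_J}(\pi\mu)$ is the standard parabolic $W_{J_0}$ with $J_0=\{j\in J:(\halpha_j,\mu)=0\}$, which is finite exactly when $\Delta^+_{J_0}$ is finite, and that this dichotomy controls the local finiteness of the coroot hyperplane arrangement near $\pi\mu$.

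The only step here that is not purely formal is the reduction of the first paragraph: one must produce $Z$ and verify that the realization axioms, along with $D_J$, $C_J$, the walls, the interior, and the stabilizers, really do descend. I expect no serious obstacle --- this is elementary linear algebra, and everything downstream is a citation of the classical Tits-cone theory. (Alternatively one could re-run the proof of \cite[Prop.~1.4.2]{Kumar} directly for $W_J$ acting on $\h^*_\R(J)$, in which case the one delicate point is again assertion (4), whose key input is the equivalence ``$W_{J_0}$ finite $\iff\Delta^+_{J_0}$ finite'' governing the geometry of $C_J$ along $D_J$.)
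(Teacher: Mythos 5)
Your proposal is correct, but it takes a genuinely different route from the paper: the paper offers no written proof of Proposition \ref{tits}, only the assertion that the standard arguments of \cite[Proposition 1.4.2]{Kumar} apply \emph{mutatis mutandis} to $W_J$ acting on the enlarged space $\h^*_\R(J)$ --- which is exactly your parenthetical fallback. Your main argument instead reduces the parabolic statement formally to the classical one: you quotient $\h^*_\R(J)$ by a subspace $Z$ of the common kernel of the $\halpha_j$, $j \in J$, chosen complementary to $\R\pi_J \cap \bigcap_{j \in J}\ker\halpha_j$, check that the quotient $\overline{E}$ is a minimal real realization of $A_J$ (more precisely of $A_J^t$, matching the paper's dual-algebra phrasing), and verify that $D_J$, $C_J$, the coroot walls, interiors, and isotropy groups all correspond under the quotient map, so that Kumar's statement pulls back verbatim. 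This buys a proof by citation rather than a re-verification of the classical arguments in a nonstandard ambient space, at the cost of the linear-algebra bookkeeping in your first paragraph; the paper's route avoids the bookkeeping but leaves the reader to re-run the proofs. Two points you should make explicit to close the argument: (i) the map $W_J \to GL(\overline{E})$ is injective --- this follows from your identity $\St_{W_J}(\mu) = \St_{W_J}(\pi\mu)$ applied to all $\mu$, since $\h^*_\R(J)$ spans $\h^*$ over $\C$ and $W$ acts faithfully on $\h^*$ --- so that stabilizers and the ``generated by simple reflections'' statement taken in $W_J$ really do agree with those taken in the Weyl group of the quotient realization; and (ii) you implicitly use uniqueness of minimal realizations over $\R$ (equivalently, that Kumar's proposition holds for any realization of minimal dimension), together with the standard identification of $\check{\Delta}^+_J$ with the positive roots of $\g(A_J^t)$ and of $W_J$ with the Weyl group of $A_J$. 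All of these are standard, so there is no genuine gap.
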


We also fix $\rho \in \h^*$ satisfying $(\halpha_i, \rho) = 1, \forall i
\in I$, and define the {\em dot action} of $W$ via $w \cdot \mu := w(\mu
+ \rho) - \rho$. Recall this does not depend on the choice of $\rho$.
Indeed, for any $w \in W$, a standard induction on $\ell(w)$ shows:
\begin{equation}
w(\rho) - \rho = \sum_{\alpha > 0 : w \alpha < 0} w(\alpha).
\end{equation}

We define the {\em dot dominant chamber} to be $\dot{D} := \{ \mu \in
\h^*: (\halpha_i, \mu + \rho) \in \R^{\geqslant 0}, \forall i \in I \}$,
and the {\em dot Tits cone} to be $\bigcup_{w \in W}  w \cdot \dot{D}$.
It is clear how to similarly define their parabolic analogues: the {\em
$J$ dot dominant chamber} and the {\em $J$ dot Tits cone}. Now
Proposition \ref{tits} holds for the dot action, {\em mutatis mutandis}.

\subsection{Representations, integrability, and parabolic Verma
modules}\label{weyl}

Given an $\h$-module $M$ and $\mu \in \h^*$, write $M_\mu$ for the
corresponding simple eigenspace of $M$, i.e.~$M_\mu := \{ m \in M : hm =
(h,\mu) m\ \forall h \in \h \}$, and write $\wt M := \{ \mu \in \h^* :
M_\mu \neq 0 \}$.

Now recall $V$ is a highest weight $\g$-module with highest weight
$\lambda \in \h^*$. For $J \subset I$, write $\wt_J V := \wt U(\fl_J)
V_{\lambda}$, i.e.~the weights of the $\fl_J$-module generated by the
highest weight line. We say $V$ is {\em $J$ integrable} if $f_j$ acts
locally nilpotently on $V$, $\forall j \in J$. Let $I_V$ denote the
maximal $J$ for which $V$ is $J$ integrable, i.e.,
\begin{equation}
I_V = \{ i \in I : (\halpha_i, \lambda) \in \Z^{\geqslant 0}, f_i^{
(\halpha_i, \lambda) + 1} V_\lambda = 0 \}.
\end{equation}
We will call $W_{I_V}$ the {\em integrable Weyl group}. 

We next remind the basic properties of parabolic Verma modules over
Kac--Moody algebras. These are also known in the literature as generalized
Verma modules, e.g.~in the original papers by Lepowsky (see \cite{lepo1}
and the references therein).

Fix $\lambda \in \h^*$ and a subset $J$ of $I_{L(\lambda)} = \{ i \in
I: (\halpha_i, \lambda) \in \Z^{\geqslant 0} \}$. The parabolic Verma
module $M(\lambda, J)$ co-represents the following functor from $\g\Mod$
to Set:
\begin{equation}
M \rightsquigarrow \{ m \in M_\lambda : \n^+ m = 0,\ f_j \text{ acts
nilpotently on } m, \forall j \in J \}.
\end{equation}

\noindent When $J$ is empty, we simply write $M(\lambda)$ for the Verma
module. Explicitly:
\begin{equation}
M(\lambda, J) \simeq M(\lambda)/( f_j^{(\halpha_j, \lambda) + 1}
M(\lambda)_\lambda, \forall j \in J).
\end{equation}

Finally, it will be useful to us to recall the following result on the
weights of integrable highest weight modules, i.e.~those $V$ with $I_V =
I$.

\begin{pro}[{\cite[\S 11.2 and Proposition 11.3(a)]{Kac}}]\label{nonde}
For $\lambda \in P^+, \ \mu \in \h^*$, say $\mu$ is {\em non-degenerate
with respect to $\lambda$} if $\mu \leqslant \lambda$ and $\lambda$ is
not perpendicular to any connected component of $\supp (\lambda - \mu)$.
Let $V$ be an integrable module of highest weight $\lambda$.
\begin{enumerate}
\item If $\mu \in P^+$, then $\mu \in \wt V$ if and only if $\mu$ is
non-degenerate with respect to $\lambda$. 

\item If the sub-diagram on $\{ i \in I: (\halpha_i, \lambda) = 0 \}$ is
a disjoint union of diagrams of finite type, then $\mu \in P^+$ is
non-degenerate with respect to $\lambda$ if and only if $\mu \leqslant
\lambda$.

\item $\wt V = (\lambda - \Z^{\geqslant 0} \pi) \cap \conv(W \lambda)$.
\end{enumerate}
\end{pro}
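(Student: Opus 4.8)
The statement is \cite[\S 11.2 and Proposition 11.3]{Kac}; the plan is to recall a proof. I would first reduce all three parts to the case $V = L(\lambda)$, by checking that any integrable highest weight module of highest weight $\lambda$ satisfies $\wt V = \wt L(\lambda)$. The inclusion $\wt L(\lambda) \subseteq \wt V$ holds since $L(\lambda)$ is a quotient of $V$, while the reverse inclusion will follow once part (1) is established for $L(\lambda)$: the set $\wt V$ is stable under $W$ by the $\lie{sl}_2$-theory of the $f_i$ (which act locally nilpotently, as $I_V = I$), so it suffices to see that each dominant $\mu \in \wt V$ is non-degenerate with respect to $\lambda$, which is exactly the argument of the next paragraph applied verbatim to $V$.

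Next I would prove the forward implication of (1), and with the same computation part (2). Let $\mu \in \wt L(\lambda) \cap P^+$; then $\mu \leqslant \lambda$, and writing $\beta := \lambda - \mu$ and $\beta_S$ for the part of $\beta$ supported on a connected component $S$ of $\supp \beta$, I claim $\beta_S = 0$ whenever $(\halpha_i, \lambda) = 0$ for all $i \in S$. Indeed, nodes in different connected components of $\supp\beta$ are non-adjacent, so $f_i$ for $i \in S$ commutes with $f_j$ for $j \in \supp\beta \setminus S$; hence in any weight vector of weight $\mu$, expanded over monomials $f_{i_1}\cdots f_{i_k}$ with $\sum_t \alpha_{i_t} = \beta$ applied to a highest weight vector, one may move the factors indexed by $S$ to the right, which exhibits $\lambda - \beta_S$ as a weight of $L(\lambda)$ with $L(\lambda)_{\lambda - \beta_S} \subseteq U(\n^-_S) L(\lambda)_\lambda$. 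But if $(\halpha_i, \lambda) = 0$ for $i \in S$ then each $f_i$, $i \in S$, annihilates the highest weight line by integrability, so the latter is a trivial $\g_S$-submodule and $U(\n^-_S) L(\lambda)_\lambda = L(\lambda)_\lambda$; hence $\beta_S = 0$, contradicting $S \subseteq \supp\beta$. For part (2): if the sub-diagram on $\{i : (\halpha_i, \lambda) = 0\}$ is of finite type and $\mu \in P^+$ with $\mu \leqslant \lambda$, a perpendicular component $S$ of $\supp(\lambda - \mu)$ would give a finite-type $A_S$ together with $\beta_S = \sum_{j \in S} c_j \alpha_j$, all $c_j > 0$, satisfying $\sum_{j \in S} a_{ij} c_j = (\halpha_i, \beta_S) = -(\halpha_i, \mu) \leqslant 0$ for all $i \in S$; since the inverse of a finite-type Cartan matrix has non-negative entries, this forces all $c_j \leqslant 0$, a contradiction. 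Thus for $\mu \in P^+$, non-degeneracy with respect to $\lambda$ is equivalent to $\mu \leqslant \lambda$.

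The remaining point, and the hardest, is the converse of (1): every non-degenerate dominant $\mu \leqslant \lambda$ lies in $\wt L(\lambda)$. I would argue by induction on the rank of $\g$. If $\supp(\lambda - \mu) = \bigsqcup_k S_k$ has more than one component, the $\g_{S_k}$ commute pairwise, so $U(\n^-_{\sqcup_k S_k}) L(\lambda)_\lambda$ is a highest weight module for $\bigoplus_k \g_{S_k}$, and it is enough to realize the $S_k$-component of $\lambda - \mu$ inside $U(\g_{S_k}) L(\lambda)_\lambda$ for each $k$, which holds by the inductive hypothesis; if $\supp(\lambda - \mu)$ is connected but a proper subset $S \subsetneq I$, restrict to $\g_S$ and again invoke the inductive hypothesis. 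One is thereby reduced to the case $I$ connected, $\lambda \neq 0$, and $\lambda - \mu$ of full support, where one must show $L(\lambda)_\mu \neq 0$ directly. This is the crux and where I expect the main difficulty: the obvious induction on the height of $\lambda - \mu$, descending one weight at a time along $\lie{sl}_2$-strings — legitimate at a weight $\nu$ as soon as some simple root occurring in $\nu - \mu$ pairs positively with $\nu$ — need not keep one inside the dominant chamber, so one instead uses the inductive structure of the Freudenthal-type recursion for weight multiplicities, equivalently the Weyl--Kac character formula, as in \cite[\S 11.2]{Kac}. Granting (1), part (3) follows by combining the $W$-stability of $\wt L(\lambda)$, the inclusion $\wt L(\lambda) \subseteq \conv(W\lambda)$ (standard for integrable modules), and the identification of $P^+ \cap \conv(W\lambda)$ with $\{\mu \in P^+ : \mu \leqslant \lambda \text{ and non-degenerate with respect to } \lambda\}$, whose non-trivial inclusion is of the same nature as the converse of (1).
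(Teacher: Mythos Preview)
The paper does not prove this proposition; it is stated in the preliminaries section as a result quoted from \cite[\S 11.2 and Proposition 11.3(a)]{Kac}, so there is no paper proof to compare against. Your outline is a faithful sketch of the argument in Kac: the reduction to $L(\lambda)$, the forward direction of (1) via the observation that $f_i$ kills the highest weight line when $(\halpha_i,\lambda)=0$, the finite-type Cartan matrix argument for (2), and the acknowledgment that the converse of (1) is the substantive step requiring the character-theoretic machinery of \cite[\S 11.2]{Kac} are all correct and correctly located.

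Two small comments. First, your treatment of (3) is thinner than the rest: both inclusions require genuine work, and in particular the identification of the dominant points of $\conv(W\lambda)$ with the non-degenerate dominant $\mu \leqslant \lambda$ is itself a result of the same depth as the converse of (1); you say as much, but it deserves the same caveat you attach to the converse of (1). Second, in your induction for the converse of (1), the reduction to full support is fine, but note that once $\supp(\lambda-\mu)=I$ is connected and $\lambda \neq 0$, the argument in Kac does not proceed by rank induction at all but rather by the explicit character formula; your phrasing ``induction on the rank of $\g$'' is really only buying the reduction step, not the base case.
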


\subsection{Convexity}\label{crat}

Let $E$ be a finite-dimensional real vector space (we will take $E =
\h^*$).  For a subset $X \subset E$, write $\conv X$ for its convex hull,
and $\aff X$ for its affine hull. For an $\h$-module $M$, for brevity we
write $\conv M$, $\aff M$, for $\conv \wt M$, $\aff \wt M$, respectively.
A collection of vectors $v_0, \dots, v_n$ is said to be {\em affine
independent} if their affine hull has dimension $n$. A {\em closed half
space} is a locus of the form $\zeta^{-1} ([t, \infty))$, for $\zeta \in
E^*, t \in \R$. For us, a {\em polyhedron} is a finite intersection of
closed half spaces, and a {\em polytope} is a compact polyhedron. The
following well known alternative characterization of polyhedra will be
useful to us:

\begin{pro}[Weyl--Minkowski]\label{Tdecomp}
A subset $X \subset E$ is a polyhedron if and only if it can be written
as the Minkowski sum $X = \conv(X') + \R^{\geqslant 0} X''$, for two
finite subsets $X',X''$ of $E$.
If $X$ is a polyhedron that does not contain an affine line, then $X',
X''$ as above of minimal cardinality are unique up to rescaling vectors
in $X''$ by $\R^{>0}$.
\end{pro}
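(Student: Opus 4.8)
The plan is to reduce everything to two classical facts and then do bookkeeping. The first fact is \emph{Fourier--Motzkin elimination}: a linear image of a polyhedron is again a polyhedron. I would prove this by writing the polyhedron via finitely many affine inequalities and eliminating one coordinate at a time, each step replacing the inequalities that involve the chosen variable by the (finitely many) nonnegative combinations of oppositely-signed pairs; the resulting finite system cuts out exactly the projection. The second fact is the \emph{Minkowski--Weyl theorem for cones}: a cone is a finite intersection of closed half spaces through the origin if and only if it is the $\R^{\geqslant 0}$-span of a finite set. The ``finitely generated $\Rightarrow$ polyhedral'' half is immediate from Fourier--Motzkin (a finitely generated cone is a linear image of an orthant); for the converse I would show that a polyhedral cone $C = \{x : \langle a_i, x\rangle \leqslant 0\}$ has polar $C^\circ = \R^{\geqslant 0}\{a_i\}$ — the nontrivial inclusion follows by separating a point outside the closed cone $\R^{\geqslant 0}\{a_i\}$ (closed, being polyhedral by the first fact) — and then conclude $C = C^{\circ\circ}$ is finitely generated by the bipolar theorem.

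For the stated equivalence I would use homogenization. Given a polyhedron $X = \{x : \langle a_i, x\rangle \leqslant b_i\}$, which we may assume nonempty, form the polyhedral cone $\widehat X := \{(x,t) \in E \times \R : \langle a_i, x\rangle \leqslant b_i t,\ t \geqslant 0\}$. By Minkowski--Weyl for cones, $\widehat X$ is the $\R^{\geqslant 0}$-span of finitely many vectors $(z_j, t_j)$ with $t_j \geqslant 0$; after rescaling, $t_j \in \{0,1\}$. Taking $X'$ to be the $z_j$ with $t_j = 1$ and $X''$ the $z_j$ with $t_j = 0$, the equivalence $x \in X \Leftrightarrow (x,1) \in \widehat X$ unwinds exactly to $X = \conv X' + \R^{\geqslant 0} X''$. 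The reverse implication is even easier and again follows from Fourier--Motzkin: $\conv X'$ is a polytope (affine image of a simplex), $\R^{\geqslant 0} X''$ is a polyhedron, and the Minkowski sum of two polyhedra is the image of their product under addition, hence a polyhedron.

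For uniqueness, assume $X$ contains no affine line and fix a decomposition $X = \conv X' + \R^{\geqslant 0} X''$ of minimal total cardinality. The key point is that both pieces are recovered intrinsically. First, the recession cone $\operatorname{rec} X := \{v : X + v \subseteq X\}$ equals $\R^{\geqslant 0} X''$: the inclusion $\supseteq$ is clear, and for $\subseteq$ one writes $x_0 + nv = c_n + r_n$ with $c_n \in \conv X'$ bounded and $r_n \in \R^{\geqslant 0} X''$ and passes to the limit $v = \lim_n r_n/n$, using that $\R^{\geqslant 0} X''$ is closed. Since $X$ is line-free, $\operatorname{rec} X$ is pointed, so it has a unique minimal generating set up to positive scaling — one vector per extreme ray — which must be a rescaling of $X''$. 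Second, every vertex $x$ of $X$ must lie in $X'$: writing $x = p + r$ with $p \in \conv X'$ and $r \in \operatorname{rec} X$, the identity $x = \tfrac12 p + \tfrac12(p + 2r)$ forces $r = 0$ by extremality, so $x = p$ is a vertex of $\conv X'$ and hence an element of $X'$. Conversely, $X$ has a vertex (being line-free and nonempty) and $X = \conv(\operatorname{vert} X) + \operatorname{rec} X$: maximize over $X$ a functional strictly negative on $\operatorname{rec} X \setminus \{0\}$, observe the maximizing face is a polytope and hence has a vertex (a vertex of $X$), and induct on dimension. Thus non-vertices in $X'$ are redundant, so a minimal $X'$ is exactly $\operatorname{vert} X$.

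I expect the main obstacle to be packaging Fourier--Motzkin elimination and the cone duality cleanly enough that the homogenization step is transparent; the uniqueness part, while fiddly, is routine once $\operatorname{rec} X$ is identified with $\R^{\geqslant 0} X''$. Note the line-free hypothesis is genuinely needed, since otherwise uniqueness fails. Alternatively, one could simply cite a standard reference such as Ziegler's \emph{Lectures on Polytopes} or Rockafellar's \emph{Convex Analysis}.
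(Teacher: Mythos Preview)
The paper does not prove this proposition: it is stated as a ``well known alternative characterization of polyhedra'' in the preliminaries section and used as a black box. So there is no argument in the paper to compare against.

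Your proposal is a correct and standard route to Weyl--Minkowski. The homogenization trick reducing to the cone case, with the cone case handled by Fourier--Motzkin plus bipolar, is exactly how this is done in e.g.\ Ziegler. The uniqueness argument is also correct: identifying $\R^{\geqslant 0} X''$ with the recession cone via the limiting argument, then using pointedness to pin down $X''$ up to scaling, and finally showing a minimal $X'$ must coincide with $\operatorname{vert} X$. One small point: your sketch of $X = \conv(\operatorname{vert} X) + \operatorname{rec} X$ via ``maximize a functional and induct on dimension'' is terse; the cleanest way to run that induction is on $\dim X$, sending an interior point along a line to the boundary (which meets a proper face, handled by induction) in at least one direction since $X$ is line-free, and absorbing the possibly unbounded direction into $\operatorname{rec} X$. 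But this is routine and your outline is fine. Given that the paper itself simply cites the result, your suggestion at the end to just reference Ziegler or Rockafellar is entirely in keeping with how the paper treats it.
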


Let $C \subset E$ be convex. We endow $\aff C$ with the analytic
topology, and denote by $\relint C$ the interior of $C$, viewed as a
subset of $\aff C$; this is always non-empty. For brevity we will often
refer to $c \in \relint C$ as an {\em interior point} of $C$. For a point
$c \in C$, we denote the {\em tangent cone at $c$} by $T_c C$; recall
this is the union of the rays $c + \R^{\geqslant 0}(c' - c), \forall c'
\in C$. Given a highest weight module $V$ and $\mu \in \wt V$, we will
write $T_\mu V$ for $T_\mu (\conv \wt V)$ for convenience.

A convex subset $F \subset C$ is called a {\em face} if whenever a convex
combination $\sum_i t_i c_i$ of points $c_i, 1 \leqslant i \leqslant n$
of $C$ lies in $F$, all the points $c_i$ lie in $F$. Two convex sets
$C,C' \subset E$ are said to be {\em combinatorially isomorphic} if there
is a dimension-preserving isomorphism of their face posets, ordered by
inclusion.

If $\zeta$ is a real linear functional, and $\zeta|_C$ has a maximum
value $m$, the locus $\{c \in C: (\zeta, c) = m \}$ is an {\em exposed
face}. All exposed faces are faces, but the reverse implication does not
hold in general. The following simple lemmas will be used later. 

\begin{lem}\label{f1}
If $F$ is a face of $C$, then $F = C \cap \aff F$.
\end{lem}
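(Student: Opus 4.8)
The inclusion $F \subseteq C \cap \aff F$ is immediate, since $F \subseteq C$ and $F \subseteq \aff F$. For the reverse inclusion, let $x \in C \cap \aff F$; we must show $x \in F$. Fix an interior point $c \in \relint F$, which exists as noted above. If $x = c$ we are done, so assume $x \neq c$. Since $c, x \in \aff F$, the point $c' := (1+\epsilon) c - \epsilon x = c + \epsilon(c - x)$ lies in $\aff F$ for every $\epsilon \in \R$. As $c$ is an interior point of $F$ relative to $\aff F$, for $\epsilon > 0$ sufficiently small we have $c' \in F$.

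Now observe
\[
c = \frac{1}{1+\epsilon}\, c' + \frac{\epsilon}{1+\epsilon}\, x,
\]
which exhibits $c$ as a convex combination of $c'$ and $x$, both of which lie in $C$ (indeed $c' \in F \subseteq C$ and $x \in C$ by hypothesis). Since $c \in F$ and $F$ is a face of $C$, the defining property of a face forces both $c'$ and $x$ to lie in $F$. In particular $x \in F$, as desired.

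The only point requiring care is the choice of $c'$: one must invoke that $\relint F$ is taken with respect to $\aff F$ (so that moving from $c$ in the direction $c - x$, which stays in $\aff F$, remains in $F$ for small displacement), together with the nonemptiness of $\relint F$ recorded in Subsection \ref{crat}. Everything else is the one-line convex combination identity above. $\qed$
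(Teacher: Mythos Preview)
Your argument is correct and is the standard one: pick an interior point $c \in \relint F$, push slightly past $c$ along the line through $x$ and $c$ to land at $c' \in F$, and then use the face property on the resulting convex combination $c = \tfrac{1}{1+\epsilon} c' + \tfrac{\epsilon}{1+\epsilon} x$ with both coefficients strictly positive.

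Note, however, that the paper does not actually supply a proof of this lemma; it is listed among several ``simple lemmas'' in Subsection~\ref{crat} and left to the reader. So there is nothing to compare against beyond confirming that your proof is sound, which it is.
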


\begin{lem}\label{f2}
If $F_1, F_2$ are faces of $C$, and $F_1$ contains a point of $\relint
F_2$, then $F_1$ contains $F_2$. In particular, if $\relint F_1 \cap
\relint F_2 \neq \emptyset$, then $F_1 = F_2$.
\end{lem}

\begin{lem}\label{f3}
If $F$ is a face of $C$ and $\lambda \in F$, then $T_\lambda F$ is a face
of $T_\lambda C$.
\end{lem}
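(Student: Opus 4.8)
The plan is to verify the two defining properties of a face directly at the level of tangent cones, using Lemma \ref{f1} to translate "$x \in F$'' into the linear condition "$x \in \aff F$''. Recall $T_\lambda C$ consists of all vectors $v = t(c - \lambda)$ with $c \in C$, $t \in \R^{\geqslant 0}$, together with their limits in the relevant sense; more precisely, for our purposes it suffices to work with the set of rays $\lambda + \R^{\geqslant 0}(c - \lambda)$, $c \in C$, and note $T_\lambda F$ is the analogous set built from $c \in F$. Since $\lambda \in F$ and $F = C \cap \aff F$, and $\aff F$ is an affine subspace through $\lambda$, the cone $T_\lambda F$ is precisely $T_\lambda C \cap \aff F$ (translated to be based at $\lambda$): indeed a ray $\lambda + \R^{\geqslant 0}(c-\lambda)$ lies in $\aff F$ iff $c \in \aff F$ iff $c \in F$ (the last step by Lemma \ref{f1}), so $T_\lambda F = T_\lambda C \cap \aff F$. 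Thus it is enough to show that the intersection of the convex set $T_\lambda C$ with the affine subspace $\aff F$ is a face of $T_\lambda C$.

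First I would record the elementary general fact: if $D$ is a convex subset of $E$, $G$ a face of $D$, and $H \supseteq \aff G$ an affine subspace with $D \cap H = G$, then $G$ is exactly $D \cap H$ and any such intersection is a face. More to the point, I would instead argue as follows. Take a convex combination $\sum_i t_i v_i$ of points $v_i \in T_\lambda C$ lying in $T_\lambda F$; I must show each $v_i \in T_\lambda F$. Writing $v_i = \lambda + s_i(c_i - \lambda)$ with $c_i \in C$, $s_i \geqslant 0$ (handling $s_i = 0$ trivially), the point $\sum_i t_i v_i$ lies in $\aff F$ by the previous paragraph. Because $\aff F$ is an affine subspace and each $v_i$ is an affine combination... here one must be slightly careful: membership of a convex combination in an affine subspace does not force the summands into that subspace. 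So the cleaner route is to use the face property of $F$ in $C$ directly: push the relation $\sum_i t_i v_i \in T_\lambda F$ back to a relation among points of $C$, using that $C$ is "cofinal'' in the directions of its tangent cone near $\lambda$.

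Concretely, the key step: given $v = \lambda + s(c - \lambda) \in T_\lambda C$ with $c \in C$, for small enough $\epsilon > 0$ the point $\lambda + \epsilon s(c-\lambda) = (1-\epsilon s)\lambda + \epsilon s\, c$ is a genuine convex combination of $\lambda \in C$ and $c \in C$, hence lies in $C$; and this point lies in $F$ iff $v$ does, since both are equivalent (via Lemma \ref{f1}) to $c \in \aff F$, which given $c \in C$ is equivalent to $c \in F$. So, scaling all the $v_i$ simultaneously by a common small $\epsilon$, the hypothesis "$\sum_i t_i v_i \in T_\lambda F$'' becomes "$\sum_i t_i\bigl((1-\epsilon s_i)\lambda + \epsilon s_i c_i\bigr) \in F$'', which is a convex combination (after absorbing the $\lambda$-terms, still a convex combination of $\lambda$ and the $c_i$, all in $C$) of points of $C$ lying in $F$; since $F$ is a face of $C$, each $c_i \in F$, whence each $v_i \in T_\lambda F$. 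The other face axiom — that $T_\lambda F$ is convex and is a subset of $T_\lambda C$ — is immediate from convexity of $F \subseteq C$.

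The one place requiring care, and the main (mild) obstacle, is the bookkeeping around limit points / closedness of tangent cones: depending on the precise definition of $T_\lambda C$ one is using, a point of $T_\lambda C$ may only be a limit of rays $\lambda + \R^{\geqslant 0}(c-\lambda)$ rather than lying on such a ray. In that case I would first establish the lemma for the "open'' tangent cone spanned by actual rays, then pass to the closure, observing that $\aff F$ is closed so $T_\lambda C \cap \aff F$ is closed and contains the closure of $T_\lambda F$'s open version, while conversely any limit point of points of $T_\lambda C$ lying in the face defined by $\aff F$ still satisfies the face inclusion by a standard argument that faces defined as $D \cap H$ for $H$ affine are closed under the relevant limits. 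Since the paper only needs $T_\lambda F$ to be \emph{a} face of $T_\lambda C$, and all of this is soft point-set convexity, I expect no real difficulty beyond choosing conventions consistently.
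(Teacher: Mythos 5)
Your argument is correct. The paper actually states Lemma \ref{f3} with no proof (it is listed among the ``simple lemmas'' of Section \ref{crat}), so there is no argument of the paper to compare against; your route --- first identifying $T_\lambda F = T_\lambda C \cap \aff F$ via Lemma \ref{f1}, then transferring the face condition back to $F \subset C$ by scaling a convex combination of points of $T_\lambda C$ by a small $\epsilon$ so that it becomes a genuine convex combination of $\lambda$ and the $c_i$ inside $C$ --- is the natural one and is complete. Two cosmetic points: since the paper defines the tangent cone simply as the union of the rays $\lambda + \R^{\geqslant 0}(c'-\lambda)$ (no closure is taken), your final paragraph about limit points is moot; and, as usual, the face condition is read with strictly positive coefficients, so the terms with $t_i = 0$ (like those with $s_i = 0$) require no further attention beyond the remark you already make.
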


By a {\em rational structure} on $E$, we mean the choice of a rational
subspace $P_\Q \subset E$ such that $P_\Q \otimes_\Q \R
\xrightarrow{\sim} E$. Let us fix a rational structure $P$ on $E = \h^*$,
such that $\Q \pi \subset P_\Q$. With the choice of rational structure,
one can make sense of rational vectors, rational hyperplanes and
half-spaces, and by taking finite intersections of rational half-spaces,
rational polyhedra. We have the following:

\begin{pro}[Basic properties of rational polyhedra]\hfill
\begin{enumerate}
\item The affine hull of a rational polyhedron is rational.

\item A cone is rational if and only if it is generated by rational
vectors.

\item A polytope is rational if and only if its vertices are rational
points.

\item A polyhedron is rational if and only if it is the Minkowski sum of
a rational polytope and a rational cone.

\item Every face of a rational polyhedron is a rational polyhedron.

\item Every point in a rational polyhedron is a convex combination of
rational points.
\end{enumerate}
\end{pro}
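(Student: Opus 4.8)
The plan is to reduce each assertion to the single structural fact that a rational cone (a finite intersection of rational half-spaces through the origin) is exactly a cone generated by finitely many rational vectors; this is the rational refinement of the Weyl--Minkowski theorem of Proposition \ref{Tdecomp}. First I would establish (2) directly: given a rational polyhedral cone $\sigma = \bigcap_k \zeta_k^{-1}([0,\infty))$ with each $\zeta_k \in E^*$ rational, run Fourier--Motzkin elimination (or the double-description algorithm) over $\Q$; since all the input inequalities have rational coefficients, every step produces new inequalities and new extreme-ray generators with rational coefficients, so $\sigma = \R^{\geqslant 0}\{v_1,\dots,v_m\}$ with $v_j \in P_\Q$. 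Conversely, if $\sigma = \R^{\geqslant 0}\{v_1,\dots,v_m\}$ with $v_j$ rational, then dualizing (again over $\Q$, using that the dual cone of a rationally generated cone is cut out by rational inequalities and is itself rationally generated) shows $\sigma$ is a rational polyhedron. I would package the homogenization trick here: a polyhedron $X \subset E$ corresponds to the cone $\widehat{X} = \overline{\R^{\geqslant 0}(X \times \{1\})} \subset E \oplus \R$, and $X$ is rational with respect to $P_\Q$ iff $\widehat{X}$ is rational with respect to $P_\Q \oplus \Q$; this lets me transfer everything about cones to general polyhedra.

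With (2) in hand, (4) follows: $\widehat{X}$ rational gives rational generators $(v_j, t_j)$ of $\widehat{X}$; separating those with $t_j > 0$ (rescale so $t_j = 1$, giving rational points whose convex hull is a rational polytope) from those with $t_j = 0$ (a rational cone), one recovers $X = \conv\{w_i\} + \sigma$ with $w_i$ rational and $\sigma$ a rational cone. Statement (3) is the special case of (4) where the cone is trivial, combined with the uniqueness clause of Proposition \ref{Tdecomp} to identify the $w_i$ with the actual vertices; statement (6) is immediate from (4) since any point of $X$ is a convex combination of some $w_i$ plus a nonnegative combination of generators of $\sigma$, and one absorbs a small rational perturbation — more cleanly, (6) follows from (3) applied to the rational polytope which is the convex hull of the vertices of any bounded rational face, together with decomposing via (4). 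For (1), the affine hull of $X$ is a translate of the linear span of $\{w_i - w_1\} \cup \{\text{generators of }\sigma\}$, all rational vectors, hence a rational affine subspace.

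Statement (5) is the one requiring a genuine argument rather than bookkeeping. Let $F$ be a face of the rational polyhedron $X$. Since $X$ is a finite intersection $\bigcap_k H_k$ of rational half-spaces $H_k = \zeta_k^{-1}([t_k,\infty))$ with $\zeta_k, t_k$ rational, a face is obtained by selecting a subset $S$ of indices and imposing equality $\zeta_k(\cdot) = t_k$ for $k \in S$ while keeping the remaining inequalities — that is, $F = X \cap \bigcap_{k \in S} \zeta_k^{-1}(t_k)$ for an appropriate $S$. Indeed, $\relint F$ lies in $\relint\bigl(\zeta_k^{-1}([t_k,\infty))\bigr)$ or on its boundary hyperplane for each $k$; taking $S$ to be the indices where the face lies on the bounding hyperplane, one checks $F$ equals the displayed intersection using Lemma \ref{f1}. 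Each defining equality $\zeta_k(\cdot) = t_k$ is the intersection of the two rational half-spaces $\zeta_k^{-1}([t_k,\infty))$ and $(-\zeta_k)^{-1}([-t_k,\infty))$, so $F$ is again a finite intersection of rational half-spaces, i.e.\ a rational polyhedron. I expect this identification of faces with "inequality-to-equality" subsets — and in particular verifying that every face, not just the exposed ones, arises this way for a polyhedron — to be the main obstacle; the standard resolution is that for polyhedra (unlike general convex sets) all faces are exposed, which one proves by induction on dimension using that a face of $X$ is a face of some facet.
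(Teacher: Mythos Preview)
Your treatment of (1)--(5) is considerably more detailed than the paper's, which simply cites \cite[Proposition~1.69]{gub} for these five assertions and says nothing further. Your route via Fourier--Motzkin over $\Q$, homogenization, and the identification of each face with an ``equality subset'' of the defining half-spaces is the standard textbook development and is correct; the anticipated obstacle in (5) (that every face of a polyhedron, not only the exposed ones, arises this way) is indeed handled by the induction you describe. So for (1)--(5) the two approaches differ only in that yours is self-contained while the paper outsources.

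The genuine gap is in (6). Your first claim, that (6) is ``immediate from (4)'', is not: writing $x = \sum_i t_i w_i + \sum_j s_j r_j$ with $w_i, r_j$ rational exhibits $x$ as a convex-plus-conic combination of rational vectors, not as a \emph{convex} combination of rational points of $X$. Your two attempted fixes (``absorb a small rational perturbation'' and ``apply (3) to the convex hull of the vertices of any bounded rational face'') are both too vague to be proofs; in particular a polyhedron may have no bounded positive-dimensional faces at all. The idea is salvageable --- e.g.\ for rational $M$ large enough that $t_{i_0} \geqslant M^{-1}\sum_j s_j$ for some $i_0$ with $t_{i_0}>0$, one checks
\[
x = \Bigl(t_{i_0} - \tfrac{1}{M}\textstyle\sum_j s_j\Bigr) w_{i_0} + \sum_{i \ne i_0} t_i w_i + \sum_j \tfrac{s_j}{M}\,(w_{i_0} + M r_j)
\]
is a genuine convex combination of rational points of $X$ --- but you have not written this. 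The paper's argument for (6) is different and cleaner: by (5) and the supporting hyperplane theorem one reduces to the case where $x$ lies in $\relint X$; by (1) the affine hull of $X$ is rational, so one may identify it with $\R^n$ carrying the rational structure $\Q^n$; then any open neighborhood $U$ of $x$ in $\R^n$ contains rational points $q_0,\dots,q_n$ with $x \in \conv\{q_i\}$ and $\aff\{q_i\} = \R^n$, and taking $U$ small enough forces $q_i \in X$.
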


\begin{proof}
(1)--(5) may be found in \cite[Proposition 1.69]{gub}. For the last
point, by (5) and the supporting hyperplane theorem it suffices to
consider an interior point; by (1) it suffices to note that if $U$ is an
open set in $\R^n$ containing a point $q$, we can find $q_i \in U \cap
\Q^n$ such that $q$ is a convex combination of the $q_i$; one may in fact
choose them so that $\aff q_i = \R^n$.
\end{proof}

\subsection{Results from previous work on highest weight
modules}\label{Sslice-ray}

We now recall results from recent work \cite{DK} on the structure of $\wt
V$ for an arbitrary highest weight $\g$-module $V$. The first result
explains how every module $V$ is equal up to ``first order'' to its
parabolic Verma cover $M(\lambda, I_V)$. In particular, we obtain the
convex hull of $\wt V$:

\begin{theo}\label{main1}
The following data are equivalent:
\begin{enumerate}
\item $I_V$, the integrability of $V$.

\item $\conv V$, the convex hull of the weights of $V$.

\item The stabilizer of $\conv V$ in $W$.
\end{enumerate}
In particular, the convex hull in (2) is always that of the parabolic
Verma module $M(\lambda, I_V)$, and the stabilizer in (3) is always the
parabolic subgroup $W_{I_V}$. 
\end{theo}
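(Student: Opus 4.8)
The plan is to prove the chain of implications $(2) \Rightarrow (3) \Rightarrow (1) \Rightarrow (2)$, together with the two "in particular" assertions, with the real content being the implication $(1) \Rightarrow (2)$, i.e.\ that $\conv V = \conv M(\lambda, I_V)$. Since $V$ is a quotient of $M(\lambda, I_V)$ (as $V$ is $I_V$ integrable and has highest weight $\lambda$), we have $\wt V \subset \wt M(\lambda, I_V)$ and hence $\conv V \subset \conv M(\lambda, I_V)$; the work is the reverse inclusion. For this I would first reduce to understanding $\wt M(\lambda, I_V)$ itself. One pleasant approach: the weights of $M(\lambda, I_V)$ are $\bigcup_{\mu} (\mu - \Z^{\geqslant 0}(\pi \setminus \pi_{I_V}))$ where $\mu$ ranges over $\wt L_{\fl_{I_V}}(\lambda)$, essentially because $M(\lambda, I_V) \cong U(\lie{u}^-_{I_V}) \otimes L_{\fl_{I_V}}(\lambda)$ as $\h$-modules (using that the parabolic Verma module is free over $U(\lie{u}^-_{I_V})$ and its $\fl_{I_V}$-socle-generated piece is the simple module in the relevant situation, or more robustly, that it surjects onto something with these weights and is dominated by the Verma module). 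Then $\conv M(\lambda, I_V) = \conv\bigl(\wt L_{\fl_{I_V}}(\lambda)\bigr) + \R^{\geqslant 0}(-(\pi \setminus \pi_{I_V}))$, a Minkowski sum of the (convex hull of the) $\fl_{I_V}$-Weyl-polytope-type object and a simplicial cone in the non-$I_V$ directions.

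Now for the reverse inclusion $\conv M(\lambda, I_V) \subset \conv V$, it suffices to show $\wt_{I_V} V = \wt U(\fl_{I_V}) V_\lambda$ already contains, up to convex hull, all of $\wt L_{\fl_{I_V}}(\lambda)$, and that the non-$I_V$ negative root directions $-\alpha_i$, $i \notin I_V$, are recessive directions of $\conv V$. The first point holds because $U(\fl_{I_V}) V_\lambda$ is an $\fl_{I_V}$-highest weight module of highest weight $\lambda$ which is $I_V$-integrable, hence (by Proposition \ref{nonde}(3) applied to $\fl_{I_V}$, or the classical theory of the $\fl_{I_V}$-Weyl polytope) its convex hull already equals $\conv L_{\fl_{I_V}}(\lambda)$. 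The second point — that for each $i \notin I_V$ and each weight $\nu \in \wt V$, the whole ray $\nu - \R^{\geqslant 0}\alpha_i$ lies in $\conv V$ — is where I expect to lean on the slice/ray results from \cite{DK} recalled in Subsection \ref{Sslice-ray}: the point is that $f_i$ does \emph{not} act nilpotently on $V_\lambda$ (since $i \notin I_V$, either $(\halpha_i,\lambda) \notin \Z^{\geqslant 0}$, or $(\halpha_i,\lambda) \in \Z^{\geqslant 0}$ but $f_i^{(\halpha_i,\lambda)+1}V_\lambda \neq 0$), so the $\fl_i$-submodule $U(\fl_i) V_\lambda$ is infinite-dimensional in the $-\alpha_i$ direction, giving $\lambda - \Z^{\geqslant 0}\alpha_i \subset \wt V$; translating this along weights already produced shows $-\alpha_i$ is recessive. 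Combining: $\conv V \supset \conv(\wt_{I_V} V) + \R^{\geqslant 0}(-(\pi \setminus \pi_{I_V})) = \conv M(\lambda, I_V)$.

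For the remaining implications: $(2) \Rightarrow (3)$ is immediate, since the stabilizer of $\conv V$ in $W$ is determined by $\conv V$. For $(3) \Rightarrow (1)$, I would argue that $W_{I_V}$ stabilizes $\conv V$ (which follows from $(1)\Rightarrow(2)$ and the $W_{I_V}$-invariance of $\conv M(\lambda, I_V)$, since $M(\lambda, I_V)$ is $I_V$-integrable so its weights — being the weights of an integrable $\fl_{I_V}$-module crossed with the $\lie{u}^-_{I_V}$ directions — are $W_{I_V}$-stable in convex hull), and conversely that $\Stab_W(\conv V)$ cannot be larger: if $s_i$ stabilizes $\conv V$ for some $i \notin I_V$, then since $\lambda$ is the unique $\leqslant$-maximal weight and a vertex of $\conv V$, $s_i \lambda = \lambda - (\halpha_i,\lambda)\alpha_i$ must also be a vertex, forcing $(\halpha_i, \lambda) \in \Z^{\geqslant 0}$ and — using that $\wt V \cap (\lambda - \Z^{\geqslant 0}\alpha_i)$ is then finite by $W_{\{i\}}$-invariance of the convex hull together with $\lambda$ being a vertex — one deduces $f_i$ acts nilpotently on $V_\lambda$, i.e.\ $i \in I_V$, a contradiction; hence $\Stab_W(\conv V) = W_{I_V}$, and this recovers $I_V$ as the set of $i$ with $s_i \in \Stab_W(\conv V)$. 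This simultaneously yields the "in particular" on the stabilizer. The main obstacle I anticipate is the recessivity/slice claim in the $(1)\Rightarrow(2)$ step — making precise, uniformly over all highest weight modules (not just Verma or simple), that non-integrable directions contribute full rays to $\conv V$ and that these assemble correctly with the $\fl_{I_V}$-polytope — but the slice and ray decomposition results imported from \cite{DK} are designed exactly for this, so the argument should go through cleanly.
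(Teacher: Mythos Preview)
There is a genuine gap in your $(1)\Rightarrow(2)$ argument: the Minkowski-sum description of $\conv M(\lambda, I_V)$ is incorrect. You claim
\[
\wt M(\lambda, I_V) \;=\; \wt L_{\fl_{I_V}}(\lambda) - \Z^{\geqslant 0}\pi_{I\setminus I_V},
\qquad
\conv M(\lambda, I_V) \;=\; \conv L_{\fl_{I_V}}(\lambda) - \R^{\geqslant 0}\pi_{I\setminus I_V},
\]
but the Integrable Slice Decomposition (Proposition~\ref{slice}) says the slice over $\mu\in\Z^{\geqslant 0}\pi_{I\setminus I_V}$ is $\wt L_{\fl_{I_V}}(\lambda-\mu)$, and because each $-\alpha_i$ with $i\notin I_V$ is $I_V$-dominant, these slices strictly \emph{grow} as $\mu$ increases rather than being translates of the top slice. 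Concretely, take $\g=\lie{sl}_3$, $I_V=\{1\}$, $(\halpha_1,\lambda)=1$: in coordinates $\lambda - x\alpha_1 - y\alpha_2$ your Minkowski sum is the strip $\{0\leqslant x\leqslant 1,\ y\geqslant 0\}$, whereas the Ray Decomposition (Proposition~\ref{ray}) gives the larger region $\{0\leqslant x\leqslant 1+y,\ y\geqslant 0\}$, since $s_1(\lambda - t\alpha_2)=\lambda-(1+t)\alpha_1 - t\alpha_2$. So even if you establish $\conv V \supset \conv L_{\fl_{I_V}}(\lambda) - \R^{\geqslant 0}\pi_{I\setminus I_V}$, this does not yield $\conv V \supset \conv M(\lambda, I_V)$. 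Relatedly, your justification that $-\alpha_i$ is recessive for $\conv V$ (``translating this along weights already produced'') is not an argument; knowing $\lambda-\Z^{\geqslant 0}\alpha_i\subset\wt V$ does not by itself force $\nu-\R^{\geqslant 0}\alpha_i\subset\conv V$ for arbitrary $\nu$.

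The ingredient you are missing is $W_{I_V}$-invariance of $\wt V$, which holds because $V$ is $I_V$-integrable. The paper's route (via \cite{DK}) is exactly this: from the Slice Decomposition one derives the Ray Decomposition
\[
\conv M(\lambda, I_V) \;=\; \conv \bigcup_{w\in W_{I_V},\ i\notin I_V} w(\lambda-\Z^{\geqslant 0}\alpha_i),
\]
and then observes that each generator $w(\lambda-\Z^{\geqslant 0}\alpha_i)$ already lies in $\wt V$: the ray $\lambda-\Z^{\geqslant 0}\alpha_i$ is in $\wt V$ by your (correct) non-nilpotency argument, and applying $w\in W_{I_V}$ keeps it in $\wt V$ by integrability. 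This immediately gives $\conv V\supset\conv M(\lambda,I_V)$. In short, replace your ``top slice plus untwisted simple-root cone'' picture with ``$W_{I_V}$-orbit of the rays at $\lambda$'', and the argument goes through; compare also the Corollary after Proposition~\ref{Pclo}, where even in finite type the recession cone is $\R^{\geqslant 0}W_{I_V}(\pi_{I\setminus I_V})$, not $\R^{\geqslant 0}\pi_{I\setminus I_V}$.
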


The argument for the equality of $\conv M(\lambda, I_V)$
and $\conv V$ involves two ingredients that are repeatedly used in this
paper. The first is to study the weights of $M(\lambda, I_V)$ via
restriction to the Levi subalgebra $\fl_{I_V}$ corresponding to $I_V$.
The representation decomposes into a direct sum corresponding to cosets
of $\h^*$ modulo translation by $\Delta_{I_V}$, the roots of $\fl_{I_V}$,
and it transpires each `slice' has easily understood weights:

\begin{pro}[Integrable Slice Decomposition, \cite{DK}]\label{slice}
For $J \subset I_{L(\lambda)}$, we have:
\begin{equation}\label{intint}
\wt M(\lambda, J) = \bigsqcup_{\mu \in \Z^{\geqslant 0} (\pi \setminus
\pi_J)} \wt L_{\fl_J}(\lambda - \mu),
\end{equation}

\noindent where $L_{\fl_J}(\nu)$ denotes the simple $\fl_J$-module of
highest weight $\nu$. In particular, $\wt M(\lambda, J)$ lies in the $J$
Tits cone (cf.~Section \ref{wwey}). Moreover,
\begin{equation}\label{Enoholes}
\wt M(\lambda,J) = (\lambda + \Z \pi) \cap \conv M(\lambda,J).
\end{equation}
\end{pro}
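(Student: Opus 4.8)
The plan is to restrict $M(\lambda,J)$ to the Levi subalgebra $\fl_J$ and to observe that, sliced by cosets modulo the $\fl_J$-root lattice, its weights are precisely the weights of integrable $\fl_J$-modules. Concretely, I would first invoke the standard realization $M(\lambda,J)\cong U(\lie{u}^-_J)\otimes_\C L_{\fl_J}(\lambda)$ as $\fl_J$-modules, with $\fl_J$ acting diagonally (adjointly on $U(\lie{u}^-_J)$); this follows from the co-representability of $M(\lambda,J)$ together with the PBW decomposition $U(\g)=U(\lie{u}^-_J)\otimes U(\lie{p}_J)$. Since $\operatorname{ad}f_j$ is locally nilpotent on $\g$ for every $j\in I$ and $L_{\fl_J}(\lambda)$ is $\fl_J$-integrable (as $(\halpha_j,\lambda)\in\Z^{\geqslant0}$ for $j\in J$), the module $M(\lambda,J)$ is $\fl_J$-integrable. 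Grouping weight spaces by their coset modulo $\Z\pi_J$ produces a direct sum $M(\lambda,J)=\bigoplus_{\mu\in\Z^{\geqslant0}(\pi\setminus\pi_J)}M_\mu$ of nonzero $\fl_J$-submodules, where $M_\mu$ collects the weight spaces lying in $\lambda-\mu+\Z\pi_J$; this is nonzero since, writing $\mu=\sum_{i\notin J}c_i\alpha_i$, the vector $\bigl(\prod_{i\notin J}f_i^{c_i}\bigr)\otimes v_\lambda$ has weight $\lambda-\mu$. Because every weight of $M(\lambda,J)$ is $\leqslant\lambda$, inspection of the coset shows $\lambda-\mu$ is the unique maximal weight of $M_\mu$, and it is $J$-dominant integral because $a_{ji}\leqslant0$ for $j\in J$, $i\notin J$.

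The next step is to prove $\wt M_\mu=\wt L_{\fl_J}(\lambda-\mu)$. The inclusion $\supseteq$ is immediate: a nonzero vector of weight $\lambda-\mu$ is killed by $\n^+_J$ (no strictly larger weights occur in $M_\mu$), hence generates a highest weight $\fl_J$-module of highest weight $\lambda-\mu$ which surjects onto $L_{\fl_J}(\lambda-\mu)$. For $\subseteq$, using that $M_\mu$ is $\fl_J$-integrable with all weights $\leqslant\lambda-\mu$ and decomposing $U(\lie{u}^-_J)$ into $\fl_J$-isotypic constituents, one obtains $\wt M_\mu\subseteq\conv(W_J(\lambda-\mu))$; Proposition~\ref{nonde}(3), applied over $\fl_J$ to the integrable module $L_{\fl_J}(\lambda-\mu)$, then gives $\wt M_\mu\subseteq(\lambda-\mu-\Z^{\geqslant0}\pi_J)\cap\conv(W_J(\lambda-\mu))=\wt L_{\fl_J}(\lambda-\mu)$. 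Summing over $\mu$ yields \eqref{intint}, with disjointness built into the coset decomposition. Finally $\wt M(\lambda,J)\subseteq C_J$, since $\lambda-\mu\in D_J\subseteq C_J$ and $C_J$ is $W_J$-stable (by definition) and convex (Proposition~\ref{tits}(3)), so $\wt L_{\fl_J}(\lambda-\mu)\subseteq\conv(W_J(\lambda-\mu))\subseteq C_J$ for all $\mu$.

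For \eqref{Enoholes}, the inclusion $\subseteq$ is clear. For $\supseteq$ the key geometric input is the identity $\conv M(\lambda,J)=\bigcup_{\sigma\in\R^{\geqslant0}(\pi\setminus\pi_J)}\conv(W_J(\lambda-\sigma))$: the containment $\supseteq$ holds because $\lambda-\sigma\in\lambda-\R^{\geqslant0}(\pi\setminus\pi_J)\subseteq\conv M(\lambda,J)$ and $\conv M(\lambda,J)$ is $W_J$-stable and convex, while $\subseteq$ follows from \eqref{intint}, Proposition~\ref{nonde}(3), and the convexity of the union on the right. Granting this identity, let $\eta\in(\lambda+\Z\pi)\cap\conv M(\lambda,J)$. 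As $\conv M(\lambda,J)\subseteq\lambda-\R^{\geqslant0}\pi$ and $\pi$ is linearly independent, $\eta=\lambda-\sum_ik_i\alpha_i$ with all $k_i\in\Z^{\geqslant0}$. Moreover $\eta$ lies in $\conv(W_J(\lambda-\sigma))$ for some real $\sigma\geqslant0$, and projecting modulo $\R\pi_J$ and using the linear independence of $\{\alpha_i:i\notin J\}$ forces $\sigma=\mu:=\sum_{i\notin J}k_i\alpha_i$, which is integral and $J$-dominant. Since $\eta\in(\lambda-\mu)-\Z^{\geqslant0}\pi_J$, Proposition~\ref{nonde}(3) over $\fl_J$ gives $\eta\in\wt L_{\fl_J}(\lambda-\mu)\subseteq\wt M(\lambda,J)$, which finishes \eqref{Enoholes}.

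The crux, on which both $\wt M_\mu\subseteq\conv(W_J(\lambda-\mu))$ and the convexity of the union of slices rest, is the subadditivity of Weyl-orbit hulls over $\fl_J$: $\conv(W_J\xi)+\conv(W_J\eta)\subseteq\conv(W_J(\xi+\eta))$ for $J$-dominant $\xi,\eta$. In finite type this is classical, for instance via the description $\conv(W_J\zeta)=\{z:\zeta-wz\in\R^{\geqslant0}\pi_J\ \forall w\in W_J\}$ for dominant $\zeta$, from which $w(a+b)=wa+wb\leqslant\xi+\eta$ for every $w\in W_J$. In infinite type $\conv(W_J\zeta)$ need not be closed and this description can fail at singular $\zeta$, so the honest work is to run the argument with the closed cone $\{z:\zeta-wz\in\R^{\geqslant0}\pi_J\ \forall w\in W_J\}$ and check that it contributes no lattice points beyond $\conv(W_J\zeta)$ — which suffices, since the final assertion concerns the lattice point $\eta$ — or, equivalently, to isolate the slices for which $\lambda-\mu$ lies in the interior of $C_J$ and treat the boundary slices by a limiting argument. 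This convexity-and-closure bookkeeping over the Levi, rather than any additional representation-theoretic ingredient, is the main obstacle.
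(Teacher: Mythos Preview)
This paper does not contain a proof of Proposition~\ref{slice}: the result is quoted from the companion work \cite{DK} in Section~\ref{Sslice-ray}, which is explicitly devoted to recalling results proved elsewhere. There is therefore no proof in the present paper against which to compare your proposal.

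That said, your outline is broadly in the right spirit. The PBW realization $M(\lambda,J)\simeq U(\lie{u}^-_J)\otimes_{\C} L_{\fl_J}(\lambda)$ as an $\fl_J$-module, the coset-by-coset decomposition into integrable $\fl_J$-submodules $M_\mu$, and the identification of $\wt M_\mu$ with $\wt L_{\fl_J}(\lambda-\mu)$ via Proposition~\ref{nonde}(3) are all natural moves and are essentially what one finds in \cite{DK}. Your honest flagging of the subadditivity issue $\conv(W_J\xi)+\conv(W_J\eta)\subseteq\conv(W_J(\xi+\eta))$ in infinite type is well-placed: this is indeed where care is required, and your suggested workaround via the half-space description $\{z:\zeta-wz\in\R^{\geqslant 0}\pi_J,\ \forall w\in W_J\}$ at the level of lattice points is a reasonable way to close the gap for \eqref{Enoholes}. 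One small wrinkle: for the inclusion $\wt M_\mu\subseteq\conv(W_J(\lambda-\mu))$ you appeal to ``decomposing $U(\lie{u}^-_J)$ into $\fl_J$-isotypic constituents'', but in infinite type $U(\lie{u}^-_J)$ need not be semisimple over $\fl_J$; it is cleaner to argue directly that any weight $\nu$ of the integrable $\fl_J$-module $M_\mu$ has $W_J\nu\subset\wt M_\mu\subset\lambda-\mu-\Z^{\geqslant 0}\pi_J$, whence $\nu\in\conv(W_J(\lambda-\mu))$ by the half-space characterization.
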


\noindent We include two illustrations of Equation \eqref{intint} in
Figure \ref{Fig1}.

\begin{figure}[ht]
\begin{tikzpicture}[line cap=round,line join=round,>=triangle 45,x=1.0cm,y=1.0cm]
\coordinate (A1) at (3,6); 
\coordinate (A2) at (5,6);
\coordinate (A3) at (4,5.25);
\draw (A1) -- (A3);
\draw [fill=blue,blue] (A1) -- (A2) -- (A3) -- cycle;
\draw (5,6.6) node[anchor=north west] {$\lambda$};
\draw (3,6)-- (5,6);  
\draw (5,6)-- (4,5.25);
\draw (4,5.25)-- (3,6);
\coordinate (BH1) at (1.5,3); 
\coordinate (BH2) at (3,4);
\coordinate (BH3) at (5,4);
\coordinate (BH4) at (6.5,3);
\coordinate (BH5) at (5.5,2.25);
\coordinate (BH6) at (2.5,2.25);
\draw (BH1) -- (BH6);
\draw [fill=blue!30,blue!30] (BH1) -- (BH2) -- (BH3) -- (BH4) -- (BH5) -- (BH6) -- cycle;
\draw [dash pattern=on 6pt off 6pt] (1.5,3)-- (3,4); 
\draw [dash pattern=on 6pt off 6pt] (3,4)-- (5,4);
\draw [dash pattern=on 6pt off 6pt] (5,4)-- (6.5,3);
\draw (6.5,3)-- (5.5,2.25);
\draw (5.5,2.25)-- (2.5,2.25);
\draw (2.5,2.25)-- (1.5,3);
\coordinate (TH1) at (2.25,4.5); 
\coordinate (TH2) at (3,5);
\coordinate (TH3) at (5,5);
\coordinate (TH4) at (5.75,4.5);
\coordinate (TH5) at (4.75,3.75);
\coordinate (TH6) at (3.25,3.75);
\draw (TH1) -- (TH6);
\draw [fill=blue!55,blue!55] (TH1) -- (TH2) -- (TH3) -- (TH4) -- (TH5) -- (TH6) -- cycle;
\draw [dash pattern=on 6pt off 6pt] (2.25,4.5)-- (3,5); 
\draw [dash pattern=on 6pt off 6pt] (3,5)-- (5,5);
\draw [dash pattern=on 6pt off 6pt] (5,5)-- (5.75,4.5);
\draw (5.75,4.5)-- (4.75,3.75);
\draw (4.75,3.75)-- (3.25,3.75);
\draw (3.25,3.75)-- (2.25,4.5);
\draw (3.8,6)-- (3.95,6.15); 
\draw (3.8,6)-- (3.95,5.85);
\draw (2.8,6.7) node[anchor=north west] {$-\alpha_1 - \alpha_2$};
\draw (4.5,5.625)-- (4.52,5.85); 
\draw (4.5,5.625)-- (4.67,5.54);
\draw (4.1,5.55) node[anchor=north west] {$-\alpha_1$};
\draw [dash pattern=on 6pt off 6pt] (3,6)-- (3,1.1); 
\draw [dash pattern=on 6pt off 6pt] (5,6)-- (5,1.1);
\draw (3,6)-- (0.75,1.5); 
\draw (4,5.25)-- (1.75,0.75);
\draw (5,6)-- (7.25,1.5); 
\draw (4,5.25)-- (6.25,0.75);
\draw (6.3,3.4)-- (6.1,3.45); 
\draw (6.3,3.4)-- (6.4,3.6);
\draw (6.1,4.1) node[anchor=north west] {$-\alpha_3$};
\draw (0.2,0.5) node[anchor=north west] {$\g = \kbordermatrix{
& \alpha_1 & \alpha_2 & \alpha_3 \\
\halpha_1 & 2 & -1 & 0\\
\halpha_2 & -1 & 2 & -1\\
\halpha_3 & 0 & -1 & 2}$, \quad $J = \{ 1, 2 \}$};
\coordinate (BS1) at (11.9,5.35); 
\coordinate (BS2) at (10.5,5);
\coordinate (BS3) at (8.5,3);
\coordinate (BS4) at (10,2);
\coordinate (BS5) at (15,1);
\coordinate (BS6) at (16,1.25);
\draw (BS1) -- (BS6);
\draw [fill=blue!30,blue!30] (BS1) -- (BS2) -- (BS3) -- (BS4) -- (BS5) -- (BS6) -- cycle;
\draw [dash pattern=on 6pt off 6pt] (11.9,5.35)-- (10.5,5); 
\draw [dash pattern=on 6pt off 6pt] (10.5,5)-- (8.5,3);
\draw (8.5,3)-- (10,2);
\draw (10,2)-- (15,1);
\draw (15,1)-- (16,1.25);
\coordinate (TS1) at (12.25,6); 
\coordinate (TS2) at (10.25,5.5);
\coordinate (TS3) at (9.25,4.5);
\coordinate (TS4) at (10,4);
\coordinate (TS5) at (12.5,3.5);
\coordinate (TS6) at (14.5,4);
\draw (TS1) -- (TS6);
\draw [fill=blue,blue!55] (TS1) -- (TS2) -- (TS3) -- (TS4) -- (TS5) -- (TS6) -- cycle;
\draw [dash pattern=on 6pt off 6pt] (12.25,6)-- (10.25,5.5); 
\draw [dash pattern=on 6pt off 6pt] (10.25,5.5)-- (9.25,4.5);
\draw (9.25,4.5)-- (10,4);
\draw (10,4)-- (12.5,3.5);
\draw (12.5,3.5)-- (14.5,4);
\draw (10,6)-- (8,2); 
\draw (10,6)-- (10,1);
\draw (10,6)-- (15.5,0.5);
\draw [dash pattern=on 6pt off 6pt] (10,6)-- (12.5,1);
\draw (9.4,6.6) node[anchor=north west] {$\lambda$};
\draw (9.25,3.75)-- (9.2,3.5); 
\draw (9.25,3.75)-- (9.05,3.75);
\draw (9,3.6) node[anchor=north west] {$-\alpha_0$};
\draw (9.25,2.5)-- (9.05,2.4); 
\draw (9.25,2.5)-- (9.2,2.75);
\draw (8.5,2.4) node[anchor=north west] {$-\alpha_1$};
\draw (9.625,5.25)-- (9.8,5.35); 
\draw (9.625,5.25)-- (9.525,5.45);
\draw (8.5,5.5) node[anchor=north west] {$-\alpha_2$};
\draw (8.5,0.5) node[anchor=north west] {$\g = \kbordermatrix{
& \alpha_0 & \alpha_1 & \alpha_2 \\
\halpha_0 & 2 & -2 & -1\\
\halpha_1 & -2 & 2 & 0\\
\halpha_2 & -1 & 0 & 2}$, \quad $J = \{ 0, 1 \}$};
\end{tikzpicture}
\caption{Integrable Slice Decomposition, with finite and infinite
integrability. Note we only draw four of the infinitely many rays
originating at $\lambda$ in the right-hand illustration.}
\label{Fig1}
\end{figure}
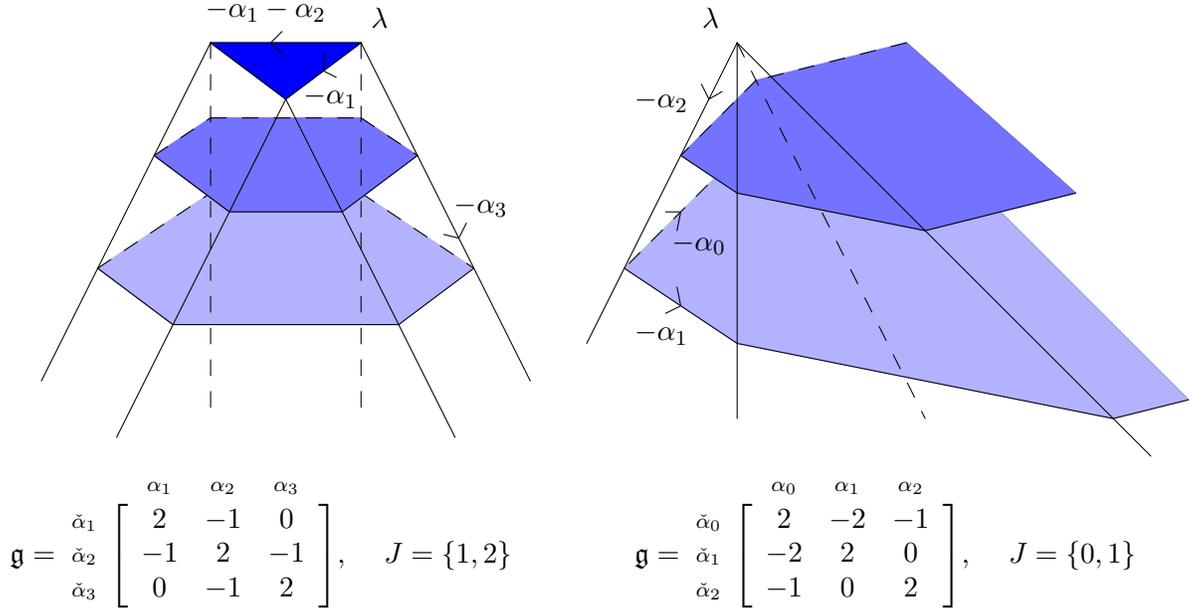

Next, using Proposition \ref{slice} we obtain a formula for $\conv
M(\lambda, I_V)$ via `generators', i.e.~the convex hull of an explicit
set of points and rays, which also lie in $\conv V$.

\begin{pro}[Ray Decomposition, \cite{DK}]\label{ray} 
\begin{equation}\label{raydc}
\conv V = \conv \bigcup_{w \in W_{I_V},\ i \in I \setminus I_V} w
(\lambda - \Z^{\geqslant 0} \alpha_i).
\end{equation}

\noindent When $I_V = I$, by the right-hand side we mean $\conv
\bigcup_{w \in W} w \lambda$. Moreover, each ray $w(\lambda -
\R^{\geqslant 0} \alpha_i)$ is a face of $\conv V$.
\end{pro}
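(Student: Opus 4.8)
The plan is to prove the displayed identity by two containments and then to realize each ray as an exposed face. Throughout, let $\calp$ denote the right-hand side (read as $\conv W\lambda$ when $I_V = I$); it is convex and, being the convex hull of a $W_{I_V}$-stable set, is $W_{I_V}$-stable. I would work with $\wt V$ directly, using two elementary facts: $V$ restricted to $\fl_{I_V}$ is integrable, so $\wt V$ — and hence $\conv V = \conv \wt V$ — is $W_{I_V}$-stable; and $V$ is a quotient of $M(\lambda, I_V)$, so $\wt V \subseteq \wt M(\lambda, I_V)$. The case $I_V = I$ is immediate from Proposition \ref{nonde}(3), which gives $W\lambda \subseteq \wt V \subseteq \conv W\lambda$ and hence $\conv V = \conv W\lambda = \calp$, with the final clause vacuous; so I assume $I_V \neq I$ below.

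For the inclusion $\calp \subseteq \conv V$, I would first verify that $\lambda - n\alpha_i \in \wt V$ for every $i \in I \setminus I_V$ and $n \geqslant 0$. Since $\n^+ v_\lambda = 0$, the $\fl_i$-submodule generated by $v_\lambda$ is, as an $\mathfrak{sl}_2$-module, a highest weight module of highest weight $(\halpha_i, \lambda)$; the condition $i \notin I_V$ says exactly that this module is not the finite-dimensional simple one, hence it is the Verma module and $f_i^n v_\lambda \neq 0$ for all $n$. By $W_{I_V}$-stability of $\wt V$, every $w(\lambda - n\alpha_i)$ with $w \in W_{I_V}$ then lies in $\wt V \subseteq \conv V$, and convexity of $\conv V$ gives $\calp \subseteq \conv V$.

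For the reverse inclusion $\conv V \subseteq \calp$, I would combine $\wt V \subseteq \wt M(\lambda, I_V)$ with the Integrable Slice Decomposition (Proposition \ref{slice}) for $J = I_V$, which expresses $\wt M(\lambda, I_V)$ as the disjoint union of $\wt L_{\fl_{I_V}}(\lambda - \mu)$ over $\mu \in \Z^{\geqslant 0}(\pi \setminus \pi_{I_V})$. Each $\lambda - \mu$ is $I_V$-dominant integral because $(\halpha_j, \alpha_i) \leqslant 0$ for $j \in I_V$ and $i \notin I_V$, so Proposition \ref{nonde}(3) applied to $\fl_{I_V}$ gives $\wt L_{\fl_{I_V}}(\lambda - \mu) \subseteq \conv W_{I_V}(\lambda - \mu)$. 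The remaining point is $\conv W_{I_V}(\lambda - \mu) \subseteq \calp$; as $\calp$ is convex and $W_{I_V}$-stable this reduces to showing $\lambda - \mu \in \calp$, which when $\mu = \sum_{i \in S} k_i \alpha_i$ (with $\emptyset \neq S \subseteq I \setminus I_V$ and all $k_i > 0$) follows from the identity $\lambda - \mu = \sum_{i \in S} a_i\bigl(\lambda - (k_i/a_i)\alpha_i\bigr)$ for any $a_i > 0$ with $\sum_{i \in S} a_i = 1$, each term lying on the ray $\lambda - \R^{\geqslant 0}\alpha_i$; the case $\mu = 0$ is trivial. Hence $\wt V \subseteq \calp$, so $\conv V \subseteq \calp$, and equality holds.

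Finally, for the claim that each ray $w(\lambda - \R^{\geqslant 0}\alpha_i)$ is a face: since $W_{I_V}$ stabilizes $\conv V$ and permutes its faces, it suffices to treat $w = 1$, i.e. to show $R := \lambda - \R^{\geqslant 0}\alpha_i$ (a subset of $\conv V$ by the second paragraph) is a face for each $i \in I \setminus I_V$. As $\pi$ is linearly independent in $E = \h^*$, I would choose a real linear functional $\xi$ on $E$ with $\xi(\alpha_i) = 0$ and $\xi(\alpha_j) = 1$ for all $j \in I \setminus \{i\}$. Any weight $\nu = \lambda - \sum_j c_j \alpha_j \in \wt V$ has all $c_j \in \Z^{\geqslant 0}$, whence $\xi(\nu) = \xi(\lambda) - \sum_{j \neq i} c_j \leqslant \xi(\lambda)$, with equality precisely when $\nu \in R$. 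Thus $\xi|_{\conv V}$ attains its maximum $\xi(\lambda)$ at $\lambda$, and since every point of $\conv V = \conv \wt V$ is a finite convex combination of weights — on each of which $\xi$ is at most $\xi(\lambda)$ — the exposed face it cuts out is the convex hull of the weights lying in $R$, namely $R$ itself; so $R$ is a face. The step I expect to require the most care is not conceptual but is the representation-theoretic bookkeeping: pinning down $f_i^n v_\lambda \neq 0$ for all $n$ when $i \notin I_V$ from the $\mathfrak{sl}_2$-submodule generated by $v_\lambda$, and treating the functional $\xi$ cleanly in the case $\lambda \notin \h^*_\R$, where $\conv V$ must be viewed inside the real affine subspace $\lambda + \R\pi$. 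Everything else is formal given Propositions \ref{slice} and \ref{nonde}.
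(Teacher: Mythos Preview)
The paper does not give its own proof of this proposition; it is quoted from the companion work \cite{DK}. Your argument is correct and self-contained within the present paper's framework: the inclusion $\calp \subseteq \conv V$ via $\lie{sl}_2$-theory, the reverse inclusion via the Integrable Slice Decomposition and Proposition \ref{nonde}(3) applied to $\fl_{I_V}$, and the realization of each ray as an exposed face cut out by the functional $\xi = \sum_{j \neq i} \check{\omega}_j$ (exactly the device of Lemma \ref{arefaces}) are all sound. The concern you flag about $\lambda \notin \h^*_\R$ is harmless once one notes $\conv V \subseteq \lambda + \R\pi$ and that only the values of $\xi$ on $\R\pi$ enter the argument; your $\lie{sl}_2$ bookkeeping is also fine, since a highest weight $\lie{sl}_2$-module is either the Verma or its finite-dimensional simple quotient.
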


We include two illustrations of Proposition \ref{ray} in Figure
\ref{Fig4}. To our knowledge Proposition \ref{ray} was not known in
either finite or infinite type prior to \cite{DK}.

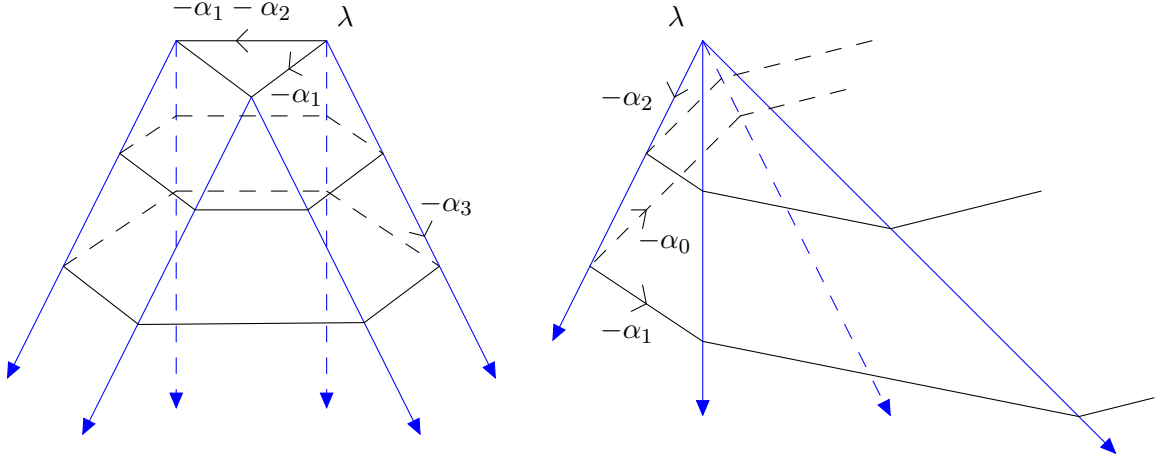
\begin{figure}[ht]
\begin{tikzpicture}[line cap=round,line join=round,>=triangle 45,x=1.0cm,y=1.0cm]
\draw (3,6)-- (5,6);  
\draw (5,6)-- (4,5.25);
\draw (4,5.25)-- (3,6);
\draw (5,6.6) node[anchor=north west] {$\lambda$};
\draw (3.8,6)-- (3.95,6.15); 
\draw (3.8,6)-- (3.95,5.85);
\draw (2.8,6.7) node[anchor=north west] {$-\alpha_1 - \alpha_2$};
\draw (4.5,5.625)-- (4.52,5.85); 
\draw (4.5,5.625)-- (4.67,5.54);
\draw (4.1,5.55) node[anchor=north west] {$-\alpha_1$};
\draw [->,blue] [dash pattern=on 6pt off 6pt] (3,6)-- (3,1.1); 
\draw [->,blue] [dash pattern=on 6pt off 6pt] (5,6)-- (5,1.1);
\draw [->,blue] (3,6)-- (0.75,1.5); 
\draw [->,blue] (4,5.25)-- (1.75,0.75);
\draw [->,blue] (5,6)-- (7.25,1.5); 
\draw [->,blue] (4,5.25)-- (6.25,0.75);
\draw (6.3,3.4)-- (6.1,3.45); 
\draw (6.3,3.4)-- (6.4,3.6);
\draw (6.1,4.1) node[anchor=north west] {$-\alpha_3$};
\draw [dash pattern=on 6pt off 6pt] (2.25,4.5)-- (3,5); 
\draw [dash pattern=on 6pt off 6pt] (3,5)-- (5,5);
\draw [dash pattern=on 6pt off 6pt] (5,5)-- (5.75,4.5);
\draw (5.75,4.5)-- (4.75,3.75);
\draw (4.75,3.75)-- (3.25,3.75);
\draw (3.25,3.75)-- (2.25,4.5);
\draw [dash pattern=on 6pt off 6pt] (1.5,3)-- (3,4); 
\draw [dash pattern=on 6pt off 6pt] (3,4)-- (5,4);
\draw [dash pattern=on 6pt off 6pt] (5,4)-- (6.5,3);
\draw (6.5,3)-- (5.5,2.25);
\draw (5.5,2.25)-- (2.5,2.225);
\draw (2.5,2.225)-- (1.5,3);
\draw [->,blue] (10,6)-- (8,2); 
\draw [->,blue] (10,6)-- (10,1);
\draw [->,blue] (10,6)-- (15.5,0.5);
\draw [->,blue] [dash pattern=on 6pt off 6pt] (10,6)-- (12.5,1);
\draw (9.4,6.6) node[anchor=north west] {$\lambda$};
\draw (9.25,3.75)-- (9.2,3.5); 
\draw (9.25,3.75)-- (9.05,3.75);
\draw (9,3.6) node[anchor=north west] {$-\alpha_0$};
\draw (9.25,2.5)-- (9.05,2.4); 
\draw (9.25,2.5)-- (9.2,2.75);
\draw (8.5,2.4) node[anchor=north west] {$-\alpha_1$};
\draw (9.625,5.25)-- (9.8,5.35); 
\draw (9.625,5.25)-- (9.525,5.45);
\draw (8.5,5.5) node[anchor=north west] {$-\alpha_2$};
\draw [dash pattern=on 6pt off 6pt] (12.25,6)-- (10.25,5.5); 
\draw [dash pattern=on 6pt off 6pt] (10.25,5.5)-- (9.25,4.5);
\draw (9.25,4.5)-- (10,4);
\draw (10,4)-- (12.5,3.5);
\draw (12.5,3.5)-- (14.5,4);
\draw [dash pattern=on 6pt off 6pt] (11.9,5.35)-- (10.5,5); 
\draw [dash pattern=on 6pt off 6pt] (10.5,5)-- (8.5,3);
\draw (8.5,3)-- (10,2);
\draw (10,2)-- (15,1);
\draw (15,1)-- (16,1.25);
\end{tikzpicture}
\caption{Ray Decomposition, with finite and infinite integrability; here
$\g$ and $J$ are as in Figure \ref{Fig1}.}
\label{Fig4}
\end{figure}

\section{Classification of faces of highest weight modules}\label{S6}

Let $V$ be a $\g$-module of highest weight $\lambda$. For a standard Levi
subalgebra $\fl$, we introduce the notation $\wt_{\fl} := \conv \wt
U(\fl) V_{\lambda}$, and its translates by the integrable Weyl group $w
\wt_{\fl}$, $\forall w\in W_{I_V}$. We first check these are indeed faces
of $\conv V$.

\begin{lem}\label{arefaces}
$w \wt_{\fl}$ is an (exposed) face of $\conv V$, for any standard Levi
$\fl$ and $w \in W_{I_V}$. 
\end{lem}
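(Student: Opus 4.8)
The plan is to exhibit, for each standard Levi $\fl = \fl_J$ and each $w \in W_{I_V}$, an explicit linear functional on $\h^*_\R$ whose maximum on $\conv V$ is attained exactly along $w\, \wt_\fl$. Since faces are preserved under the symmetries realizing the $W_{I_V}$-action on $\conv V$ (Theorem \ref{main1} tells us $W_{I_V}$ stabilizes $\conv V$, and these act affinely on $\h^*_\R$ permuting weights), it suffices to treat the case $w = 1$ and then translate: if $\zeta$ exposes $\wt_\fl$, then $w^{-*}\zeta$ (the functional $\mu \mapsto (\zeta, w^{-1}\mu)$) exposes $w\,\wt_\fl$, because $w$ is a bijection of $\conv V$ and carries $\wt_\fl$ to $w\,\wt_\fl$.

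For the case $w=1$: I would pick $\zeta \in \h_\R$ (acting on $\h^*$ via the pairing, after choosing $\rho_J^\vee$-type element) with $(\zeta, \alpha_i) = 0$ for $i \in J$ and $(\zeta, \alpha_i) < 0$ for $i \in I \setminus J$ — concretely, using that $\pi$ is linearly independent, one can choose $\zeta$ in the span of the $\alpha_i^\vee$ (or any functional on $\h^*$) with these values on simple roots. Then on the highest weight line $\zeta$ takes value $(\zeta, \lambda)$, and on $\wt U(\fl_J)V_\lambda \subset \lambda - \Z^{\geqslant 0}\pi_J$ it is constant equal to $(\zeta,\lambda)$ since $\zeta$ annihilates $\pi_J$. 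The key point is to show $(\zeta, \mu) < (\zeta, \lambda)$ strictly for every $\mu \in \wt V \setminus \wt_\fl$. Here I invoke the Ray Decomposition (Proposition \ref{ray}): $\conv V$ is the convex hull of the rays $w'(\lambda - \R^{\geqslant 0}\alpha_i)$ for $w' \in W_{I_V}$, $i \in I \setminus I_V$, together with (when relevant) $W_{I_V}\lambda$. On each such generating ray/point, $\zeta$ is bounded above by $(\zeta,\lambda)$ — this requires checking $(\zeta, w'\lambda) \leqslant (\zeta,\lambda)$ for $w' \in W_{I_V}$ (true because $\zeta$ is $W_J$-... no — rather because $\lambda$ is $I_V$-dominant and $\zeta$ pairs non-positively with $I_V$-positive roots appropriately; one must be slightly careful and instead argue directly that $\lambda - w'\lambda \in \Z^{\geqslant 0}\pi_{I_V}$ and $\zeta$ is $\leqslant 0$ on those roots) — and because along $\lambda - \R^{\geqslant 0}\alpha_i$ with $i \notin I_V \supset$... wait, $i \notin I_V$ but $\zeta$ has $(\zeta,\alpha_i)<0$ only guaranteed for $i \notin J$; since $I \setminus I_V \subset I \setminus J$ is false in general. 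So I would instead split: choose $\zeta$ with $(\zeta,\alpha_i) = 0$ for $i \in J$ and $(\zeta, \alpha_i) \leqslant 0$ for all $i \notin J$, with strict inequality for $i \notin J$; this makes $\zeta$ non-increasing along every ray $\lambda - \R^{\geqslant 0}\alpha_i$, $i \in I\setminus I_V$, strictly so, and constant on $\wt_\fl$.

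The remaining and more delicate step is the converse: that the maximizing locus is \emph{exactly} $w\,\wt_\fl$ and not something larger. For this I would show that any weight $\mu \in \wt V$ with $(\zeta,\mu) = (\zeta,\lambda)$ must lie in $\lambda - \Z^{\geqslant 0}\pi_J$, hence $\mu = \lambda - \beta$ with $\supp\beta \subset J$; combined with $\mu \in \wt V$ and the basic structure of highest weight modules, one gets $\mu \in \wt U(\fl_J)V_\lambda = \wt_\fl$ — here the relevant input is that $\wt U(\fl_J) V_\lambda$ already contains every weight of $V$ of the form $\lambda - \beta$ with $\supp \beta \subseteq J$, since such a weight vector, being killed by raising operators outside its support's reach, is generated over $U(\fl_J)$ from $V_\lambda$ (one can phrase this via the $\mathfrak{l}_J$-submodule generated by $V_\lambda$ and weight considerations). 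Thus the max-locus of $\zeta$ is precisely $\conv \wt_\fl = \wt_\fl$, so it is an exposed face. The main obstacle I anticipate is exactly this last containment — controlling which weights of $V$ of the form $\lambda - \beta$, $\supp\beta\subseteq J$, actually lie in $\wt_\fl$ — and I would handle it by reducing to the $\fl_J$-module $U(\fl_J)V_\lambda$ and noting any weight $\lambda - \beta$ of $V$ with $\supp \beta \subseteq J$ is a weight of a highest weight $\fl_J$-module generated by $V_\lambda$, since all of $V_{\lambda-\beta}$ is reached from $V_\lambda$ by lowering operators $f_j$, $j \in J$ (using $\supp\beta \subseteq J$ and the $\n^+$-highest weight property at $\lambda$).
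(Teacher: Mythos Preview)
Your approach is essentially the paper's: reduce to $w=1$ by $W_{I_V}$-invariance of $\conv V$, then exhibit a functional vanishing on $\pi_J$ and nonzero on $\pi_{I\setminus J}$. The paper simply takes $\zeta = \sum_{i\in I\setminus J}\check\omega_i$ with $(\check\omega_i,\alpha_j)=\delta_{ij}$ and declares the result clear.

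Two points to fix. First, your sign is backwards: with $(\zeta,\alpha_i)<0$ for $i\notin J$, a weight $\mu=\lambda-\beta$ with $\supp\beta\not\subset J$ has $(\zeta,\mu)=(\zeta,\lambda)-(\zeta,\beta)>(\zeta,\lambda)$, so $\zeta$ is \emph{minimized}, not maximized, on $\wt_\fl$. Either flip the sign or speak of the minimum. Second, the entire Ray Decomposition detour is unnecessary and, as you yourself noticed mid-argument, tangled (the inclusions $I\setminus I_V\subset I\setminus J$ you need do not hold in general). The direct argument suffices: every $\mu\in\wt V$ lies in $\lambda-\Z^{\geqslant 0}\pi$, so with the corrected sign $(\zeta,\mu)\leqslant(\zeta,\lambda)$, with equality exactly when $\supp(\lambda-\mu)\subset J$. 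Your final paragraph identifying this locus with $\wt_\fl$ via the PBW argument is correct and is precisely what the paper records later as Equation~\eqref{Ewtspc}.
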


\begin{proof}
It suffices to consider $w = 1$, i.e., $\wt_{\fl}$. Write $\fl = \fl_{J}$
for some $J \subset I$, and choose `fundamental coweights'
$\check{\omega_i}$ in the {\em real} dual  of $\h^*$ satisfying
$(\check{\omega_i}, \alpha_j) = \delta_{ij}, \forall i, j \in I$. Then it
is clear that $\sum_{i \in I \setminus J} \check{\omega_i}$ is maximized
on $\wt \fl$, as desired.
\end{proof}

The non-trivial assertion is that the converse to Lemma \ref{arefaces}
also holds, which is the main result of this section:

\begin{theo}\label{cf}
Every face of $\conv V$ is of the form $w \wt_{\fl}$, for some standard
Levi $\fl$ and $w \in W_{I_V}$. 
\end{theo}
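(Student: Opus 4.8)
The plan is to argue by induction on the rank $|I|$ of $\g$, reducing a general face to one of the stated form by a combination of Weyl group translation and an infinitesimal analysis at a suitable vertex. The base case $|I| \leqslant 1$ is immediate: for $\g = \mathfrak{sl}_2$ (or abelian), $\conv V$ is either a point, a bounded segment, or a ray, and in each case the faces are visibly among the $w\,\wt_{\fl}$. For the inductive step, let $F$ be a face of $\conv V$. Since $\lambda$ is the unique $\leqslant$-maximal weight and $\wt V$ lies in $\lambda - \Z^{\geqslant 0}\pi$, the face structure is ``pointed'' at the top; I would first use the $W_{I_V}$-action (and the fundamental domain property of the $I_V$ dominant chamber, Proposition \ref{tits}) to move $F$ so that it contains a point of $\overline{D_{I_V}}$, and then handle separately the case $\lambda \in F$ versus $\lambda \notin F$.

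\textbf{Case $\lambda \in F$.} Here I would pass to tangent cones: by Lemma \ref{f3}, $T_\lambda F$ is a face of $T_\lambda(\conv V)$, and by the Ray Decomposition (Proposition \ref{ray}), $T_\lambda(\conv V)$ is the cone generated by the rays $\lambda - \R^{\geqslant 0}\alpha_i$, $i \in I\setminus I_V$, together with the reflected rays $s_j(\lambda - \R^{\geqslant 0}\alpha_i)$ for $j \in I_V$ — equivalently, it is cut out near $\lambda$ by the coordinates dual to $\alpha_i$. A face of this cone is determined by which generating rays it contains, so $T_\lambda F$ is the cone spanned by $\{-\alpha_i : i \in I\setminus I_V, i \in S\} \cup \{-\alpha_j : j \in I_V\cap \ldots\}$ for an appropriate subset; reading off the ``active'' simple roots produces a subset $J \subset I$ with $T_\lambda F = T_\lambda(\conv U(\fl_J)V_\lambda)$. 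One then upgrades this infinitesimal equality to a global one: by Lemma \ref{f1}, $F = \conv V \cap \aff F$, and since $\wt V \subset \lambda - \Z^{\geqslant 0}\pi$ and $\wt_J V$ has the no-holes property along the relevant slices (Proposition \ref{slice}, Equation \eqref{Enoholes}), the weights of $\conv V$ lying in the affine hull of $F$ are exactly $\wt_J V$. This is where the induction enters: to identify the weights of $\conv V$ in a proper coordinate subspace one restricts to a Levi $\fl_{I\setminus\{i\}}$ of strictly smaller rank and invokes Theorem \ref{cf} there, using that $V$ restricted to such a Levi is a (possibly infinite) sum of highest weight modules whose integrabilities are understood.

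\textbf{Case $\lambda \notin F$.} Pick an interior point $\mu$ of $F$; since $\mu \in \overline{D_{I_V}}$ and $\mu \neq \lambda$, some simple coroot $\halpha_j$, $j \in I_V$, satisfies $(\halpha_j,\mu) = 0$, or else $\mu < \lambda$ strictly in a direction $-\alpha_i$ with $i \notin I_V$. In the former situation $s_j$ fixes $\mu$ hence fixes $F$ (Lemma \ref{f2}), so $F$ is already a face of $\conv U(\fl_{I\setminus\{j\}})V$ or, after reflecting, meets a wall and we recurse on the Levi; in the latter we slide toward $\lambda$ along $+\alpha_i$, replacing $F$ by the smallest face containing both $F$ and this translate, which does contain $\lambda$, reducing to the first case. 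The main obstacle, and the technical heart of the argument, is the no-holes/affine-hull step: one must show that a face cannot ``cut through'' the weight lattice transversally — i.e., that $\conv V \cap \aff F$ contains no weights beyond $\wt_J V$ — without recourse to a supporting functional (which may lie outside the Tits cone). I expect to control this precisely via the Integrable Slice Decomposition, which expresses $\wt M(\lambda, I_V)$ as a disjoint union of weights of finite-dimensional $\fl_{I_V}$-modules along cosets, reducing the transversality question on each slice to the classical (finite-type, integrable) case where it is known. Combining the two cases with the initial $W_{I_V}$-reduction yields that every face is of the form $w\,\wt_{\fl}$, completing the induction.
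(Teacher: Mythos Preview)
Your overall architecture (induction on $|I|$, Weyl translation, tangent cones) matches the paper's, but there are two genuine gaps.

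\textbf{First, the case split is both unnecessary and unsound.} By the Ray Decomposition (Proposition~\ref{ray}), every face of $\conv V$ contains some vertex $w\lambda$, so after replacing $F$ by $w^{-1}F$ one always has $\lambda \in F$; there is no ``$\lambda \notin F$'' case. Your proposed handling of that case is also broken: an interior point $\mu \in \overline{D_{I_V}}$ with $\mu \neq \lambda$ need not satisfy your dichotomy (it could have $(\halpha_j,\mu)>0$ for all $j\in I_V$ while $\lambda-\mu$ has support meeting $I_V$), and ``sliding toward $\lambda$'' does not produce a face containing $F$ in any controlled way.

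\textbf{Second, and more seriously, your tangent-cone step assumes the theorem.} You write that ``a face of this cone is determined by which generating rays it contains,'' and then read off $J$. But $T_\lambda V = \conv M(\lambda, I_V \cap \lambda^\perp)$ (Lemma~\ref{tcone}), and this cone need \emph{not} be polyhedral: when $I_V \cap \lambda^\perp$ is of infinite type, it has infinitely many extremal rays and faces are not \emph{a priori} determined by subsets of generators. Classifying the faces of $\conv M(\lambda, J)$ is precisely the content of Theorem~\ref{cf} for that module, and is where all the work lies. The paper does this as follows: after reducing to $V=M(\lambda,J)$ with $J\subsetneq I$, one shows $F$ has a \emph{rational} interior point (via the rational structure on the Ray Decomposition), dilates to make it integral, moves it into the $J$ dominant chamber, writes it as $\lambda - \sum_{J'} n_j\alpha_j - \sum_{I'} n_i\alpha_i$ with strictly positive coefficients, and then --- this is where the induction on $|I|$ is actually invoked --- applies Lemma~\ref{bdry} to the $\fl_{J'}$-submodule of an integrable slice to show this point is interior to $\wt_{\fl_{J'\sqcup I'}}$, whence $F=\wt_{\fl_{J'\sqcup I'}}$ by Lemma~\ref{f2}. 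Your sketch has no analogue of this argument; the appeal to ``no-holes'' and Equation~\eqref{Enoholes} does not substitute for it, since that only tells you which lattice points lie in $\conv V$, not which affine subspaces are affine hulls of faces.
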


To prove this, we first examine the orbits of $\wt \fl$ under $W_{I_V}$,
in the spirit of \cite{Vin}. 

\begin{pro}[Action of $W_{I_V}$ on faces]\label{orbit}
Let $\fl, \fl'$ be standard Levi subalgebras, and $w \in W_{I_V}$.
\begin{enumerate}
\item $\wt_{\fl}$ has an interior point in the $I_V$ dominant chamber.

\item If $w \wt_{\fl}$ has an interior point in the $I_V$ dominant
chamber, then $w \wt_{\fl} = \wt_{\fl}$. 

\item If the $W_{I_V}$ orbits of $\wt_{\fl}, \wt_{\fl'}$ are equal, then
$\wt_{\fl} = \wt_{\fl'}$. 
\end{enumerate}
\end{pro}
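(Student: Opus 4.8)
The plan is to treat the three parts in order, using the $J$ dominant chamber $D_{I_V}$ and the fact (from Theorem \ref{main1} and Proposition \ref{slice}) that $\wt V$ — equivalently $\wt M(\lambda, I_V)$ — lies in the $I_V$ Tits cone $C_{I_V}$, with $D_{I_V}$ a fundamental domain for the $W_{I_V}$-action. For part (1), I want to exhibit an interior point of $\conv \wt U(\fl_J) V_\lambda$ lying in $D_{I_V}$. The natural candidate is a suitable average: since $\wt U(\fl_J) V_\lambda$ contains $\lambda$ and, by the integrable slice decomposition applied to the Levi $\fl_{J \cap I_V}$, it contains for each $j \in J \cap I_V$ a whole $s_j$-string through $\lambda$, one can form a convex combination weighted symmetrically under $W_{J \cap I_V}$ to land in $D_{J \cap I_V}$. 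To get into $D_{I_V}$ rather than just $D_{J\cap I_V}$, note that for $i \in I_V \setminus J$ the generators $f_i$ do not act on $V_\lambda$ inside $U(\fl_J)V_\lambda$, so the weights of $U(\fl_J)V_\lambda$ stay $\leqslant \lambda$ with no $\alpha_i$-component being ``undone''; hence $(\halpha_i, \mu) \leqslant (\halpha_i,\lambda)$ is automatically $\geqslant 0$-ish — more carefully, one checks $(\halpha_i,\cdot)$ is bounded below on $\wt_{\fl_J}$ by a nonnegative quantity, or simply that an interior point can be chosen with $(\halpha_i,\cdot)\geqslant 0$ for $i \in I_V\setminus J$ since decreasing along $\alpha_i$ is impossible. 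This needs the companion-paper description of $\wt U(\fl_J)V_\lambda$, but is essentially bookkeeping with the slice decomposition.

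For part (2), suppose $w\wt_{\fl}$ has an interior point $p$ in $D_{I_V}$. By part (1), $\wt_\fl$ has an interior point $q$ in $D_{I_V}$, so $w^{-1}p$ is an interior point of $\wt_\fl \subset C_{I_V}$, hence $w^{-1}p \in C_{I_V}$. Both $p$ and $w^{-1}p$ lie in the same $W_{I_V}$-orbit and $p \in D_{I_V}$; but $p$ and $w^{-1}p$ need not both lie in $D_{I_V}$, so I instead argue: $\aff \wt_\fl$ is a $W_{I_V\cap?}$-stable affine subspace and $\relint \wt_\fl$ meets $D_{I_V}$; by the fundamental-domain property, $w$ fixes $q$ (or fixes the interior point $w^{-1}p$, which is again in $D_{I_V}$ up to applying the isotropy description in Proposition \ref{tits}(1)), hence $w$ lies in the stabilizer of $\wt_\fl$, giving $w\wt_\fl = \wt_\fl$. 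The clean way: $q$ and $p$ are both interior points of the face $\wt_\fl$ resp.\ $w\wt_\fl$, and $w^{-1}p \in \relint \wt_\fl$ as well; two interior points of $\wt_\fl$ in $C_{I_V}$, one of which ($q$, or a chosen one) is in $D_{I_V}$, force — via the fundamental domain property applied pointwise along a path in $\relint\wt_\fl$ — that the whole relative interior meets $D_{I_V}$ in an open subset, and then $wq$ and $q$ being $W_{I_V}$-equivalent points of $D_{I_V}$ are equal, so $w$ fixes a genuine interior point, whence $w\wt_\fl=\wt_\fl$ by Lemma \ref{f2}.

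For part (3), if $W_{I_V}\wt_\fl = W_{I_V}\wt_{\fl'}$, pick by part (1) interior points $q \in \relint\wt_\fl \cap D_{I_V}$ and $q' \in \relint\wt_{\fl'}\cap D_{I_V}$; then $q' = wq$ for some $w \in W_{I_V}$, so $q, q'$ are two points of $D_{I_V}$ in the same $W_{I_V}$-orbit, hence $q = q'$ by the fundamental domain property (Proposition \ref{tits}(2)). Thus $\wt_\fl$ and $\wt_{\fl'}$ share an interior point, so $\wt_\fl = \wt_{\fl'}$ by Lemma \ref{f2}. The main obstacle is part (2): making rigorous that ``$w$ carries one interior point in $D_{I_V}$ to another interior point in $D_{I_V}$'' actually forces $w$ into the stabilizer of $\wt_\fl$ — the subtlety being that $\wt_\fl$ is lower-dimensional, so a naive application of the fundamental domain property needs to be upgraded to the statement that $\relint\wt_\fl$ meets $D_{I_V}$ in a set with nonempty interior \emph{within} $\aff\wt_\fl$, together with control of the isotropy groups via Proposition \ref{tits}(1). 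Everything else is a direct consequence of the fundamental-domain property and Lemma \ref{f2}.
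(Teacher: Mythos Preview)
Your identification of the main obstacle is correct: part (2) is where the substance lies, and your proposed arguments for it do not close the gap. The missing idea is already present in a correct proof of (1), but your approach to (1) obscures it. The paper proves (1) not by constructing a specific interior point via averaging, but by showing that \emph{every} interior point $\nu$ of $\wt_{\fl_J}$ can be moved into $D_{I_V}$ by an element of $W_{J'}$, where $J' := I_V \cap J$. Indeed, $W_{J'}$ is the integrable Weyl group of the $\fl_J$-module $U(\fl_J)V_\lambda$, so after acting by $W_{J'}$ one may assume $(\halpha_j, \nu) \geqslant 0$ for $j \in J'$; and for $i \in I_V \setminus J$, since $\nu \in \lambda - \R^{\geqslant 0}\pi_J$ and $a_{ij} \leqslant 0$ for $i \neq j$, one gets $(\halpha_i, \nu) \geqslant (\halpha_i, \lambda) \geqslant 0$ automatically. (Your inequality was written in the wrong direction.) Crucially, $W_{J'}$ stabilizes $\wt_\fl$.

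This stronger form of (1) is exactly what makes (2) go through. If $\nu'$ is an interior point of $w\wt_\fl$ lying in $D_{I_V}$, then $w^{-1}\nu'$ is an interior point of $\wt_\fl$, so there exists $w' \in W_{J'}$ with $w'w^{-1}\nu' \in D_{I_V}$. Since $w'\wt_\fl = \wt_\fl$, one may replace $w$ by $w(w')^{-1}$ and assume $w^{-1}\nu' \in D_{I_V}$. Now $\nu'$ and $w^{-1}\nu'$ are $W_{I_V}$-conjugate points of $D_{I_V}$, hence equal by Proposition \ref{tits}(2), so $\nu' \in \relint \wt_\fl \cap \relint w\wt_\fl$ and Lemma \ref{f2} finishes. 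Your ``path'' and ``open subset'' arguments are attempts to manufacture a second $D_{I_V}$-point in the orbit without ever producing one; the subgroup $W_{J'}$ is the missing tool.

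For (3), your direct argument is not correct: from $\wt_{\fl'} = w\wt_\fl$ you cannot conclude $q' = wq$ for your \emph{specific} chosen interior points $q, q'$. But as you also note, (3) follows immediately from (1) and (2): $\wt_{\fl'} = w\wt_\fl$ has an interior point in $D_{I_V}$ by (1), so $w\wt_\fl = \wt_\fl$ by (2).
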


\begin{proof}\hfill
\begin{enumerate}
\item Let $\nu$ be an interior point of $\wt_{\fl}$, and write $\fl =
\fl_{J}$ for some $J \subset I$.  Setting $J' = I_V \cap J$, it is easy
to see $W_{J'}$ is the integrable Weyl group of $U(\fl_J) V_\lambda$. In
particular, by Proposition \ref{slice}, it lies in the $I_V$ Tits cone,
whence acting by $W_{J'}$ we may assume $\nu$ lies in the $J'$ dominant
chamber, i.e., $(\halpha_j, \nu) \geqslant 0, \forall j \in J'$. Since
$\nu$ lies in $\lambda - \R^{\geqslant 0} \pi_{J}$, it follows for all $i
\in I_V \setminus J$ that $(\halpha_i, \nu) \geqslant 0$. Thus, $\nu$
lies in the $I_V$ dominant chamber. 

\item The proof of (1) showed that in fact any interior point of
$\wt_{\fl}$ may be brought via $W_{J'}$ into the $I_V$ dominant chamber.
Let $\nu'$ be an interior point of $w \wt_{\fl}$ which is $I_V$ dominant.
Then $w^{-1} \nu'$ is an interior point of $\wt_{\fl}$; pre-composing $w$
with an element of $W_{J'}$ we may assume $w^{-1} \nu'$ is $I_V$
dominant. But since the $I_V$ dominant chamber is a fundamental domain
for the $I_V$ Tits cone (cf.~Proposition \ref{tits}), it follows $w^{-1}
\nu' = \nu'$. As the faces $\wt_{\fl}, w \wt_{\fl}$ have intersecting
interiors, by Lemma \ref{f2} they coincide. 

\item This is immediate from the first two assertions. \qedhere
\end{enumerate}
\end{proof}

\begin{proof}[Proof of Theorem \ref{cf}]
We proceed by induction on $\lvert I \rvert$, i.e., the number of simple
roots. So we may assume the result for all highest weight modules over
all proper Levi subalgebras. 

If $F$ is a face of $\conv V$, by the Ray Decomposition \ref{ray} $F$
contains $w \lambda$, for some $w \in W_{I_V}$. Replacing $F$ by
$w^{-1}F$, we may assume $F$ contains $\lambda$. Passing to tangent
cones, it follows by Lemma \ref{f3} that $T_{\lambda} F$ is a face of
$T_{\lambda} V$. We observe that the latter has representation theoretic
meaning: 

\begin{lem}\label{tcone}
Set $J := I_V \cap \lambda^\perp$, where $\lambda^\perp := \{ i \in I_V:
(\halpha_i, \lambda) = 0 \}$. Then $T_{\lambda} V = \conv M(\lambda, J)$.
\end{lem}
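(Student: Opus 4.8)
The plan is to compute both sides explicitly and match them. The tangent cone $T_\lambda V$ is by definition $\bigcup_{\mu \in \wt V} (\lambda + \R^{\geqslant 0}(\mu - \lambda))$, and since all weights of $V$ lie in $\lambda - \Z^{\geqslant 0}\pi$, this is a convex cone with apex $\lambda$, opening ``downward''. So I want to show this cone equals $\conv M(\lambda, J)$ for $J = I_V \cap \lambda^\perp$. First I would observe that $\conv M(\lambda, J)$ is itself a cone with apex $\lambda$: indeed, by the Integrable Slice Decomposition (Proposition \ref{slice}), $\wt M(\lambda, J) = \bigsqcup_{\mu \in \Z^{\geqslant 0}(\pi \setminus \pi_J)} \wt L_{\fl_J}(\lambda - \mu)$, and because $(\halpha_j, \lambda) = 0$ for all $j \in J$, each slice $\wt L_{\fl_J}(\lambda - \mu)$ is the weight set of an integrable $\fl_J$-module whose highest weight is perpendicular to all of $J$ — hence (by Proposition \ref{nonde}(2) applied to $\fl_J$, whose relevant subdiagram is all of $J$) it equals $(\lambda - \mu - \Z^{\geqslant 0}\pi_J) \cap \conv(W_J(\lambda - \mu))$, and I will argue the convex hull of the union is the cone $\lambda - \R^{\geqslant 0}(\pi \setminus \pi_J) - \R^{\geqslant 0}\Delta^+_J$-translated appropriately; more cleanly, by the Ray Decomposition (Proposition \ref{ray}) applied to $M(\lambda, J)$, $\conv M(\lambda, J) = \conv \bigcup_{w \in W_J,\, i \in I \setminus J} w(\lambda - \Z^{\geqslant 0}\alpha_i)$, and since each $s_j$ with $j \in J$ fixes $\lambda$, this is manifestly a cone with apex $\lambda$.

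**Next I would prove the two inclusions.** For $T_\lambda V \subseteq \conv M(\lambda, J)$: since $\conv V = \conv M(\lambda, I_V)$ by Theorem \ref{main1} and $M(\lambda, I_V)$ surjects onto... no — rather, $\conv V$ is generated as in Proposition \ref{ray} by the rays $w(\lambda - \R^{\geqslant 0}\alpha_i)$ for $w \in W_{I_V}$, $i \in I \setminus I_V$, so $T_\lambda V$ is generated by the tangent directions at $\lambda$ of these rays, i.e. by $\{\lambda + \R^{\geqslant 0}(w\lambda - \lambda) : w \in W_{I_V}\}$ together with $\{\lambda - \R^{\geqslant 0}\alpha_i : i \in I \setminus I_V\}$. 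The directions $-\alpha_i$, $i \in I \setminus I_V$, lie in $\conv M(\lambda, J)$ by the Ray Decomposition formula above (take $w = e$). For the directions $w\lambda - \lambda$ with $w \in W_{I_V}$: write $w\lambda - \lambda = -\sum_{i} c_i \alpha_i$ with $c_i \geqslant 0$ supported on $I_V$; I need this to lie in $\R^{\geqslant 0}$-span of directions of $\conv M(\lambda, J)$. Here is where the condition $J = I_V \cap \lambda^\perp$ enters: reflections $s_j$ for $j \in I_V \setminus J$ move $\lambda$ by a positive multiple of $-\alpha_j$, and more generally $W_{I_V}\lambda - \lambda$ should be contained in the cone $-\R^{\geqslant 0}(\pi_{I_V} \setminus \pi_J) - (\text{stuff already present})$; I will show $\conv(W_{I_V}\lambda) \subseteq T_\lambda V \cap (\text{the relevant cone})$ and that this is absorbed into $\conv M(\lambda, J)$, using that $W_J$ stabilizes $\lambda$ so only the ``new'' reflections $s_j$, $j \in I_V\setminus J$, contribute. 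For the reverse inclusion $\conv M(\lambda, J) \subseteq T_\lambda V$: using the Ray Decomposition for $M(\lambda, J)$, I must show each ray $w(\lambda - \R^{\geqslant 0}\alpha_i)$, $w \in W_J$, $i \in I \setminus J$, lies in $T_\lambda V$. Since $w \in W_J \subseteq W_{I_V}$ fixes $\lambda$, this ray is $\lambda - \R^{\geqslant 0} w\alpha_i$, and I need $\lambda - \R^{\geqslant 0}w\alpha_i \subseteq \conv V$; but $w\alpha_i$ is a positive root (as $i \notin J$ and $w \in W_J$), and more to the point by Proposition \ref{nonde} or directly by slice decomposition the weights $\lambda - nw\alpha_i$ — wait, I should instead just show the tangent direction is realized: it suffices that $\lambda - \epsilon w\alpha_i \in \conv V$ for small $\epsilon > 0$, and since $w \in W_J$ fixes $\lambda$ and $\conv V$ is $W_{I_V}$-stable, $w(\lambda - \epsilon\alpha_i) = \lambda - \epsilon w\alpha_i$, reducing to $\lambda - \epsilon\alpha_i \in \conv V$, which holds since $\lambda - \alpha_i$ or a positive multiple is a weight of $V$ (true iff $(\halpha_i,\lambda) \neq 0$ or... ) — in any case $\lambda - \Z^{\geqslant 0}\alpha_i$ always contains weights of $V$ since $V$ has highest weight $\lambda$ and, when $\lambda - \alpha_i \notin \wt V$, one uses the Ray Decomposition directly.

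**The main obstacle** I expect is the forward inclusion $T_\lambda V \subseteq \conv M(\lambda, J)$, specifically handling the rays $w(\lambda - \R^{\geqslant 0}\alpha_i)$ for $w \in W_{I_V} \setminus W_J$ and showing their tangent directions at $\lambda$ lie in $\conv M(\lambda, J)$ — equivalently, that $\conv(W_{I_V}\lambda)$'s tangent cone at $\lambda$ is absorbed. The right framework is: $T_\lambda \conv(W_{I_V}\lambda) = T_\lambda \conv(W_{I_V \cap \lambda^\perp}$-trivial$\ldots)$, i.e. the tangent cone of the ``$W_{I_V}$-orbit polytope'' of $\lambda$ at its vertex $\lambda$ depends only on which simple reflections fix $\lambda$, namely those in $I_V \cap \lambda^\perp = J$; this is a standard fact about Coxeter orbit polytopes (the tangent cone at $\lambda$ is the cone over the faces through $\lambda$, cut out by the walls $\halpha_j^\perp$ for $j$ with $s_j\lambda = \lambda$), and combined with the Integrable Slice Decomposition giving $\wt M(\lambda,J) \cap (\lambda - \R^{\geqslant 0}\pi_J)$-slices this should close the argument. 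Concretely I would phrase it: $T_\lambda \conv V$ is generated by $-\R^{\geqslant 0}\alpha_i$ for $i \in I\setminus I_V$ and $T_\lambda\conv(W_{I_V}\lambda)$; the former are in $\conv M(\lambda,J)$; for the latter, $T_\lambda \conv(W_{I_V}\lambda) = \conv(W_J\text{-action on }\{\lambda - \R^{\geqslant 0}\alpha_j : j \in I_V \setminus J\})$-type cone $\subseteq \conv M(\lambda, J)$ by Ray Decomposition — and then check equality of the resulting explicit cone presentations on both sides, which is routine once set up.
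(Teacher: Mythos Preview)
Your reverse inclusion $\conv M(\lambda, J) \subseteq T_\lambda V$ is correct and essentially matches the paper: use the Ray Decomposition for $M(\lambda,J)$, note each $w \in W_J$ fixes $\lambda$ and stabilizes $\conv V$, and check that $\lambda - \epsilon\alpha_i \in \conv V$ for $i \in I \setminus J$ (which holds since either $i \notin I_V$, or $i \in I_V$ with $(\halpha_i,\lambda) > 0$ so $f_i V_\lambda \neq 0$).

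The forward inclusion is where you work far too hard and leave a genuine gap. You attempt to compute $T_\lambda V$ explicitly from the Ray Decomposition of $V$, asserting it is ``generated by $-\R^{\geqslant 0}\alpha_i$ for $i \in I \setminus I_V$ and $T_\lambda \conv(W_{I_V}\lambda)$''. This is not justified: the rays $w(\lambda - \R^{\geqslant 0}\alpha_i)$ for $w\lambda \neq \lambda$ emanate from $w\lambda$, not from $\lambda$, so a generic point of $\conv V$ contributes a tangent direction of the form $\sum_k t_k(w_k\lambda - \lambda) - \sum_k t_k r_k\, w_k\alpha_{i_k}$, and the terms $-w_k\alpha_{i_k}$ need not lie in the cone you describe. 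You then appeal to a ``standard fact about Coxeter orbit polytopes'' to control $T_\lambda \conv(W_{I_V}\lambda)$, but this is precisely what requires proof, and in infinite type (where $W_{I_V}\lambda$ may be infinite and $\conv(W_{I_V}\lambda)$ non-polyhedral) it is not standard.

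The paper bypasses all of this with a one-line observation you overlooked: since $J \subset I_V$ there are surjections of highest weight modules $M(\lambda, J) \twoheadrightarrow M(\lambda, I_V) \twoheadrightarrow V$, whence $\conv V \subseteq \conv M(\lambda, J)$ and so $T_\lambda V \subseteq T_\lambda M(\lambda, J)$. Combined with your own observation that $\conv M(\lambda, J)$ is already a cone with apex $\lambda$ (so equals its own tangent cone there), this finishes the forward inclusion immediately. The moral: rather than analyzing the complicated geometry of $\conv V$ near $\lambda$, compare it to the simpler module $M(\lambda,J)$ via the obvious map.
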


\begin{proof}
Composing the surjections $M(\lambda, J) \rightarrow M(\lambda, I_V)
\rightarrow V$, we have $T_{\lambda} V \subset T_{\lambda} M(\lambda,
J).$ On the other hand, by the Ray Decomposition \ref{ray},
\[
\conv M(\lambda, J) = 
\conv \bigcup_{w \in W_J,\ i \in I \setminus J} w
(\lambda - \Z^{\geqslant 0} \alpha_i) =
T_{\lambda} M(\lambda, J),
\]
which evidently lies in $T_{\lambda} V$.
\end{proof}

It may be clarifying for the reader in the remainder of the argument to
keep in mind the right-hand picture in Figure \ref{Fig1}.

We now reduce to proving Theorem \ref{cf} for $M(\lambda, J)$, with $J =
I_V \cap \lambda^\perp$ as above. Indeed, assuming this holds, by Lemma
\ref{f3} we may write $T_{\lambda} F = w \wt_{\fl} M(\lambda, J)$, for $w
\in W_{J}$. Again replacing $F$ with $w^{-1} F$, we may assume
$T_{\lambda} F = \wt_{\fl} M(\lambda, J)$. 
By a similar argument to Lemma \ref{tcone}, $T_\lambda \wt_{\fl} V =
\wt_{\fl} T_\lambda V = T_\lambda F$. It follows that $\aff F = \aff
\wt_{\fl}$, whence $F =\wt_\fl$ by Lemma \ref{f1}.

It remains to prove Theorem \ref{cf} for $V = M(\lambda, J)$. We may
assume $J$ is a proper subset of $I$, otherwise $V$ is a $1$-dimensional
module and the theorem is tautological. Let $F$ be a face of $\conv V$.
We first claim that $F$ contains a relative interior point $v_0$ such
that $v_0 \in \lambda - \Q^{\geqslant 0} \pi$. Indeed, let $V' := \conv V
- \lambda, F' := F - \lambda$. Then by the Ray Decomposition \ref{ray},
$V'$ is the convex hull of a collection of rational rays $r_i =
\R^{\geqslant 0} p_i,\ p_i \in P_\Q$, using the rational structure $P_\Q
\subset \h^*$ fixed in Section \ref{crat}. If $F'$ is not the cone point
$0$, for which the theorem is tautological, it is the convex hull of
those rays $r_i$ whose relative interiors it contains. In particular,
$F'$ is again a convex hull of rational rays $r'_i$. It follows that
$\aff F' = P' \otimes_{\Q} \R$, where $P' \subset P_\Q$ is the $\Q$-span
of the $r'_i$. Since $\relint F'$ is open in $\aff F'$, it is now clear
it contains a rational point, as desired. 

We next reduce to the case of an interior point in $\lambda -
\Z^{\geqslant 0} \pi$. To see why, note that for any $n \in \Z^{\geqslant
0}$, dilating $\h^*$ by a factor of $n$ gives an isomorphism $d_n: \conv
M(\lambda, J) \rightarrow \conv M(n \lambda, J)$, e.g.~by the Ray
Decomposition \ref{ray}. Moreover, $d_n$ sends $w \wt_{\fl} M(\lambda,
J)$ to $w \wt_{\fl} M(n \lambda, J)$, for all standard Levi subalgebras
$\fl$ and $w \in W_J$.

With these reductions, let $v_0$ be an interior point of $F$, lying in
$\lambda - \Z^{\geqslant 0} \pi$. Since $v_0$ lies in the $J$ Tits cone
by Proposition \ref{slice}, replacing $F$ by $w F$ for some $w \in
W_{J}$, we may assume $v_0$ is in the $J$ dominant chamber. Write
\[
v_0 = \lambda - \sum_{j \in J'} n_{j} \alpha_{j} - \sum_{i \in I'}
n_{i}\alpha_i, \quad \text{where}\ J' \subset J, I' \subset I \setminus J,
\ \text{and}\ n_j, n_i \in \Z^{ > 0}, \forall j \in J', i \in I'.
\]
Write $\fl$ for the Levi subalgebra associated to $J' \sqcup I'$; we
claim that $v_0$ is also a interior point of $\wt_{\fl}$, which will
complete the proof by Lemma \ref{f2}. We address this claim in two steps,
by analyzing first the integrable directions $J'$, and then incorporating
$I'$.

Consider the integrable slice $\conv L_{\fl_J}(\lambda')$, where
$\lambda' := \lambda - \sum_{i \in I'} n_i \alpha_i$. We first claim
$v_0$ is an interior point of $\wt_{\fl_{J'}} L_{\fl_J}(\lambda')$.
Recalling that $J$ is a proper subset of $I$, the claim follows from the
following lemma, applied to the $\fl_{J'}$-module generated by
$L_{\fl_J}(\lambda')_{\lambda'}$.

\begin{lem}\label{bdry}
Assume Theorem \ref{cf} is true for $\g$. Suppose a point $v_0$ of $\conv
V$
(i) lies in the $I_V$ dominant chamber, and
(ii) is of the form $\lambda - \sum_{i \in I} c_i \alpha_i, c_i >0,
\forall i \in I$.
Then $v_0$ is an interior point of $\conv V$.
\end{lem}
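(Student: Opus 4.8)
The plan is to identify, for the point $v_0$, the smallest face of $\conv V$ containing it, and to show that this face is all of $\conv V$. First I would recall the standard fact of convex geometry that for any convex set $C$ and any $c \in C$ there is a unique face $F$ with $c \in \relint F$: take $F$ to be the intersection of all faces containing $c$, which is again a face and is nonempty, and note that if $c$ did not lie in $\relint F$ it would, by the supporting hyperplane theorem applied inside $\aff F$, lie on a proper subface of $F$, contradicting minimality of $F$. This uses only convexity of $C$, not closedness, which is the relevant generality here since $\conv V$ is typically not closed in infinite type. Let $F_0$ be the face with $v_0 \in \relint F_0$; it suffices to prove $F_0 = \conv V$, as then $v_0$ is by definition an interior point of $\conv V$.

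Next I would apply the results already in hand. Since Theorem \ref{cf} is assumed for $\g$, we may write $F_0 = w \wt_{\fl_J}$ for some $w \in W_{I_V}$ and $J \subset I$. By hypothesis (i), $v_0$ is an interior point of $F_0$ lying in the $I_V$ dominant chamber, so Proposition \ref{orbit}(2) forces $w \wt_{\fl_J} = \wt_{\fl_J}$. In particular $v_0 \in \wt_{\fl_J} \subset \lambda - \R^{\geqslant 0} \pi_J$.

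The last step uses hypothesis (ii). Writing $\lambda - v_0 = \sum_{i \in I} c_i \alpha_i$ with every $c_i > 0$, membership in $\R^{\geqslant 0} \pi_J$ places this vector in the $\R$-span of $\{ \alpha_j : j \in J \}$; since the simple roots are linearly independent and all $c_i$ are strictly positive, this is possible only if $J = I$. Hence $F_0 = \wt_{\fl_I} = \conv \wt U(\g) V_\lambda = \conv V$, using that $V = U(\g) V_\lambda$, and $v_0$ is an interior point as claimed.

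I expect the only point requiring care to be the convex-geometry preliminary about the smallest face through $v_0$ in the possibly non-closed setting; once that is available, the proof is a direct application of Theorem \ref{cf}, Proposition \ref{orbit}(2), and linear independence of the simple roots, with no genuine obstacle remaining.
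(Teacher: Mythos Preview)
Your proof is correct and follows essentially the same route as the paper: find a face containing $v_0$ in its relative interior (the paper phrases this as ``iteratively applying the supporting hyperplane theorem''), invoke Theorem~\ref{cf} to write it as $w\wt_{\fl_J}$, use hypothesis~(i) together with Proposition~\ref{orbit}(2) to reduce to $w=1$, and then use hypothesis~(ii) and the linear independence of simple roots to force $J=I$. Your more explicit treatment of the convex-geometry preliminary (minimal face through a point, valid without closedness) is a welcome elaboration of what the paper leaves implicit.
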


\begin{proof}
Iteratively applying the supporting hyperplane theorem, we may assume
$v_0$ is an interior point of a face $F$. By Theorem \ref{cf}, $F = w
\wt_{\fl}$ for some standard Levi subalgebra $\fl \subsetneq \g$ and $w
\in W_{I_V}$. By assumption (i) and Proposition \ref{orbit}, we may take
$w = 1$, and now (ii) yields $\fl = \g$, completing the proof.
\end{proof}

Next, it is easy to see that $\dim \aff \wt_{\fl_{J'}}
L_{\fl_J}(\lambda') = \lvert J' \rvert$. Indeed, the inequality
$\leqslant$ always holds. For the reverse inequality, recall that $v_0 =
\lambda' - \sum_{j \in J'} n_j \alpha_j, n_j > 0, \forall j \in J'$.
Consider a lowering operator $D$ of weight $- \sum_{j \in J'} n_j
\alpha_j$ in $U(\n^-_{J'})$ which does not annihilate the highest weight
line of $V = M(\lambda, J)$, and whose existence is guaranteed by
Equations \eqref{intint} and \eqref{Enoholes}. Using the surjection from
the tensor algebra generated by $f_j, j \in J'$ onto $U(\n^-_{J'})$,
there exists a simple tensor in the $f_j$ of weight $-\sum_{j \in J'} n_j
\alpha_j$ that does not annihilate the highest weight line. Viewing the
simple tensor as a composition of simple lowering operators, it follows
that the weight spaces it successively lands in are nonzero. Since $n_j >
0, \forall j \in J'$, this proves the reverse inequality.  

Now choose affine independent $p_0, \ldots, p_{\lvert J' \rvert}$ in
$\wt_{\fl_{J'}} L_{\fl_J}(\lambda')$ whose convex hull contains $v_0$ as
an interior point. Write each as a convex combination of $W_{J'}
(\lambda')$, i.e.,
\[
p_k = \sum_j t_{kj} w'_j(\lambda'), \quad \text{with} \quad w'_j \in
W_{J'}, \ t_{kj} \in \R^{\geqslant 0}, \ \sum_j t_{kj} = 1, \ 0 \leqslant
k \leqslant \lvert J' \rvert.
\]
As affine independence can be checked by the non-vanishing of a
determinant, we may pick a compact neighborhood $B$ around $\sum_{i \in
I'} n_i \alpha_i$ in $\R^{> 0} \pi_{I'}$ such that for each $b \in B$,
the points $p_k(b) := \sum_j t_{kj} w'_j(\lambda - b)$ are again affine
independent. Writing $\Sigma(b) := \conv \{ p_k(b) : 0 \leqslant k
\leqslant |J'| \}$, and $\Sigma(B) := \cup_{b \in B} \Sigma(b)$, is it
easy to see we have a homeomorphism $\Sigma(B) \simeq \sigma \times B$,
where $\sigma$ denotes the standard $\lvert J' \rvert$-simplex. In
particular, $v_0$ is an interior point of $\Sigma(B)$. Noting that
$\Sigma(B) \subset \wt_{\fl}$, and
$\dim \aff \wt_{\fl} \leqslant \dim \aff \Sigma(B) = \lvert J' \rvert +
\lvert I' \rvert$,
it follows that $\dim \aff \Sigma(B) = \dim \aff \wt_{\fl}$, and hence
$v_0$ is an interior point of $\wt_{\fl}$, as desired.
\end{proof}

\section{Inclusions between faces}\label{S82}

Having shown in Section \ref{S6} that every face is of the form $w
\wt_{\fl}$, to complete their classification it remains to address their
inclusion relations, and in particular their redundancies. We first
ignore the complications introduced by the Weyl group, i.e., classify
when $\wt_{\fl} \subset \wt_{\fl'}$, for Levi subalgebras $\fl, \fl'$. 

To do so, we introduce some terminology. For $J \subset I$, call a node
$j \in J$ {\em active} if either (i) $j \notin I_V$, or (ii) $j \in I_V$
and $(\halpha_j, \lambda) \in \R^{> 0}$, and otherwise call it {\em
inactive}. Notice that the active nodes are precisely those $j \in J$ for
which $\lambda - \alpha_j \in \wt V$. Decompose the Dynkin diagram
corresponding to $J$ as a disjoint union of connected components
$C_\beta$, and call a component $C_\beta$ {\em active} if it contains an
active node. Finally, let $J_{\min}$ denote the nodes of all the active
components. The above names are justified by the following theorem.

\begin{theo}\label{incl}
Let $\fl, \fl'$ be standard Levi subalgebras, and $J, J'$ the
corresponding subsets of $I$, respectively. Then $\wt_{\fl} \subset
\wt_{\fl'}$ if and only if $J_{\min} \subset J'$. 
\end{theo}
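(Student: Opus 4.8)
The plan is to prove both directions directly from the explicit description of $\wt_{\fl}$ as $\wt U(\fl_J) V_\lambda$ and the Integrable Slice Decomposition (Proposition \ref{slice}), rather than passing through convexity. First I would dispose of the easy direction: suppose $\wt_\fl \subset \wt_{\fl'}$. The active nodes $j \in J$ are precisely those with $\lambda - \alpha_j \in \wt V$, hence $\lambda-\alpha_j \in \wt_\fl \subset \wt_{\fl'}$, which forces $j \in J'$ (since $\wt_{\fl'} \subset \lambda - \Z^{\geqslant 0}\pi_{J'}$). So every active node of $J$ lies in $J'$. Then I would argue that once an active node $j$ of a component $C_\beta$ lies in $J'$, the \emph{whole} component $C_\beta$ must lie in $J'$: walking along edges of $C_\beta$ away from $j$, if $j' $ is adjacent to a node already known to be in $J'$, then the weight $\lambda - \alpha_j - \cdots - \alpha_{j'}$ (a sum along a path inside $C_\beta$) is a weight of $U(\fl_J)V_\lambda$ — using that a suitable monomial in the $f$'s along the path does not kill the highest weight line, exactly as in the reverse-inequality argument in the proof of Theorem \ref{cf} via Equations \eqref{intint}, \eqref{Enoholes} — and this weight again lies in $\wt_{\fl'}$, forcing $j' \in J'$. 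Inductively $C_\beta \subset J'$, so $J_{\min} \subset J'$.

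For the converse, assume $J_{\min} \subset J'$; I must show $\wt U(\fl_J)V_\lambda \subset \wt U(\fl_{J'})V_\lambda$. Decompose $J = J_{\min} \sqcup J_0$, where $J_0$ consists of the inactive components, i.e. components $C_\beta$ all of whose nodes $j$ satisfy $j \in I_V$ and $(\halpha_j,\lambda)=0$. The key structural claim is that the inactive components contribute nothing: $\wt U(\fl_J)V_\lambda = \wt U(\fl_{J_{\min}})V_\lambda$. Granting this, since $J_{\min} \subset J'$ we immediately get $\wt_\fl = \wt_{\fl_{J_{\min}}} \subset \wt_{\fl_{J'}} = \wt_{\fl'}$, as desired. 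To prove the claim, I would first reduce to $V = M(\lambda, I_V)$ by Theorem \ref{main1} (all that matters for $\conv$, hence for $\wt$ up to the lattice points via \eqref{Enoholes}, is $I_V$), and then restrict to the Levi $\fl_{I_V}$ and use the Integrable Slice Decomposition: on each slice $\wt L_{\fl_{I_V}}(\lambda - \mu)$, the action of $U(\fl_{J_0})$ fixes $\lambda-\mu$ because $J_0 \subset I_V$ and $(\halpha_j, \lambda-\mu)$ will typically be negative only in directions outside $J_0$ — more precisely, a node $j$ inactive for $V$ is also inactive (acts by zero on the relevant highest weight line) for the slice, because $(\halpha_j,\lambda) = 0$ and the Levi $\fl_{I_V}$-structure forces $f_j$ to annihilate the $\fl_{I_V}$-highest weight vectors of each slice. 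Hence $U(\fl_{J_0})$ acts through its Cartan on the generators of the $\fl_J$-module, and adding $J_0$ to $J_{\min}$ enlarges neither the set of weights reachable.

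The main obstacle I anticipate is the last point: carefully justifying that the inactive nodes act trivially on the appropriate highest weight lines across \emph{all} the integrable slices, not just on $V_\lambda$ itself. The subtlety is that a node $j$ with $(\halpha_j,\lambda)=0$ could in principle become ``active'' deeper inside the module, on some slice $\wt L_{\fl_{I_V}}(\lambda-\mu)$ with $(\halpha_j, \lambda - \mu) > 0$; I need to rule this out, or rather show it does not enlarge $\wt U(\fl_J)V_\lambda$ beyond $\wt U(\fl_{J_{\min}})V_\lambda$. The resolution is that the $\fl_J$-module generated by $V_\lambda$ is a quotient of $M_{\fl_J}(\lambda)$, and its weights therefore lie in $\lambda - \Z^{\geqslant 0}\pi_J$; combining the slice decomposition for $\fl_J$ itself (splitting $\pi_J$ into $\pi_{J_{\min}}$ and $\pi_{J_0}$) with the fact that $(\halpha_j,\lambda) = 0$ and $j$'s component in $J$ is entirely inactive, one sees $f_j$ kills the $\fl_{J_{\min}}$-highest weight vectors of every $\fl_{J_{\min}}$-slice, so the $\pi_{J_0}$-grading is concentrated in degree zero. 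I would isolate this as a short lemma (``inactive components are invisible'') proved by the same path-of-$f$'s monomial argument used elsewhere in the section, and then the theorem follows formally.
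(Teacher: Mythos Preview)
Your overall strategy is sound and both directions would go through, but the converse is substantially over-engineered compared to the paper, and your detour creates the very ``obstacle'' you then have to work around.

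For the forward direction your path-walking argument is essentially identical to the paper's: it shows $\lambda + \R\pi_{J_{\min}} \subset \aff \wt_\fl$ by producing the chain of weights $\lambda - \alpha_{i_1}, \lambda - \alpha_{i_1} - \alpha_{i_2}, \ldots$ via basic $\lie{sl}_2$ theory.

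For the converse, the paper avoids the Integrable Slice Decomposition and the reduction to $M(\lambda, I_V)$ entirely. Two observations replace all of that. First, by PBW and $V = U(\n^-)V_\lambda$ one has $U(\fl_J)V_\lambda = \bigoplus_{\mu \in \lambda + \Z\pi_J} V_\mu$, hence $\wt_\fl = \conv V \cap (\lambda + \R\pi_J)$; this reduces the theorem to the single claim $\aff \wt_\fl = \lambda + \R\pi_{J_{\min}}$. Second, for the inclusion $\subset$ one uses that $J \setminus J_{\min}$ is \emph{disconnected} from $J_{\min}$ in the Dynkin diagram, so $\n^-_J = \n^-_{J_{\min}} \oplus \n^-_{J \setminus J_{\min}}$ as Lie algebras, whence $U(\fl_J)V_\lambda = U(\n^-_{J_{\min}}) \otimes_{\C} U(\n^-_{J \setminus J_{\min}}) V_\lambda$, and the augmentation ideal of the second tensor factor kills $V_\lambda$ because every $j \in J \setminus J_{\min}$ is inactive.

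Your ``obstacle'' --- that an inactive node $j$ might become active on a deeper $\fl_{I_V}$-slice $L_{\fl_{I_V}}(\lambda - \mu)$ --- is real in that framing (indeed $(\halpha_j, \lambda - \mu) \geqslant 0$ can be strict), but it is an artifact of slicing along $\fl_{I_V}$ rather than working inside $\fl_J$. The disconnectedness of $J_0$ from $J_{\min}$ means $f_j$ commutes with all of $\n^-_{J_{\min}}$, so $f_j\, U(\n^-_{J_{\min}})V_\lambda = U(\n^-_{J_{\min}})\, f_j V_\lambda = 0$ in one line; no slicing or reduction to the parabolic Verma is needed. Your final paragraph gropes toward exactly this, but the clean statement is the Lie-algebra direct sum above. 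Also note your appeal to Theorem~\ref{main1} and Equation~\eqref{Enoholes} to ``reduce to $M(\lambda, I_V)$'' is slightly off: \eqref{Enoholes} is stated only for parabolic Vermas, not for arbitrary $V$, so it does not by itself let you pass from $\conv V$ back to $\wt V$; the reduction is nonetheless valid once you have the PBW formula $\wt_\fl = \conv V \cap (\lambda + \R\pi_J)$, since that depends only on $\conv V$.
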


\begin{proof}
First, by the PBW theorem and $V = U(\n^-) V_{\lambda}$:
\begin{equation}\label{Ewtspc}
U(\fl) V_{\lambda} = \bigoplus_{\mu \in \lambda + \Z
\pi_J} V_{\mu},
\end{equation}

\noindent so $\wt_{\fl} = \conv V \cap (\lambda + \R \pi_J)$.
Accordingly, it suffices to check that
\begin{equation}\label{Eaffine}
\aff \wt_{\fl} = \lambda + \R \pi_{J_{\min}}.
\end{equation}
To see $\supset$, for any node $i \in J_{\min}$, we may choose a sequence
$i_1, \ldots, i_n = i$ of distinct nodes in $J_{\min}$ such that
(i) $i_1$ is active, and
(ii) $i_{j}$ is inactive and shares an edge with $i_{j-1}$, $\forall j
\geqslant 2$.
Then by basic $\lie{sl}_2$ representation theory it is clear the weight
spaces corresponding to the following sequence of weights are nonzero:
\begin{equation}
\lambda - \alpha_{i_1}, \quad \lambda - (\alpha_{i_1} + \alpha_{i_2}),
\quad \ldots, \quad
\lambda - \sum_{1 \leqslant j \leqslant n} \alpha_{i_j}.
\end{equation}
It is now clear that $\aff \wt_{\fl}$ contains $\lambda + \R \alpha_i$,
proving $\supset$ as desired.

For the inclusion $\subset$, by assumption the simple coroots
corresponding to $J \setminus J_{\min}$ act by 0 on $\lambda$, and are
disconnected from $J_{\min}$ in the Dynkin diagram. In particular, as Lie
algebras $\n^-_{J} = \n^{-}_{J_{\min}} \oplus \n^{-}_{J\setminus
J_{\min}}$, whence by the PBW theorem,
\[
U(\fl) V_{\lambda} = U(\n^-_{J_{\min}} ) \otimes_{\C} U(\n^-_{J \setminus
J_{\min}}) V_{\lambda},
\]
and the augmentation ideal of $U(\n^-_{J \setminus J_{\min}})$
annihilates $V_\lambda$, as desired.
\end{proof}

\begin{re}\label{Rremark}
Equation \eqref{Ewtspc} immediately implies a result in \cite{Kh2},
namely, that $\wt_{\fl} = \wt_{\fl'}$ if and only if $U(\fl) V_\lambda =
U(\fl') V_\lambda$.
\end{re}

As we have seen, $J \setminus J_{\min}$ doesn't play a role in
$\wt_{\fl}$.\footnote{In fact, an easy adaptation of the proof of
Theorem \ref{incl} shows the derived subalgebra of $\g_{J \setminus
J_{\min}}$ acts trivially on $U(\fl) V_{\lambda}$.} Motivated by this,
let:
\begin{equation}\label{defjd}
J_{d} := \{ j \in I_V: (\halpha_j, \lambda) = 0 \text{ and $j$ is not
connected to $J_{\min}$}\},
\end{equation}

\noindent and set $J_{\max} := J_{\min} \sqcup J_d$. Here we think of the
subscript $d$ as standing for `dormant'. An easy adaptation of the
argument of Theorem \ref{incl}, in particular Equation \eqref{Eaffine},
shows the following:

\begin{theo}\label{eq}
Let $\fl, \fl'$ be standard Levi subalgebras, and $J, J'$ the
corresponding subsets of $I$. Then $\wt_{\fl} = \wt_{\fl'}$ if and only
if $J_{\min} \subset J' \subset J_{\max}$.
\end{theo}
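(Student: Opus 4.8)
The plan is to bootstrap off the affine-hull computation \eqref{Eaffine} established in the proof of Theorem \ref{incl}, thereby reducing the equality $\wt_\fl = \wt_{\fl'}$ to a purely combinatorial statement about $J_{\min}$ and $J_{\max}$.

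First I would note that \eqref{Eaffine}, which was proved for an arbitrary standard Levi, gives $\aff\wt_\fl = \lambda + \R\pi_{J_{\min}}$ and likewise $\aff\wt_{\fl'} = \lambda + \R\pi_{J'_{\min}}$. Since $\wt_\fl$ is a face of $\conv V$ (Lemma \ref{arefaces}), Lemma \ref{f1} gives $\wt_\fl = \conv V \cap \aff\wt_\fl = \conv V \cap (\lambda + \R\pi_{J_{\min}})$, and similarly for $\fl'$. As the simple roots are linearly independent, $\lambda + \R\pi_{J_{\min}} = \lambda + \R\pi_{J'_{\min}}$ if and only if $J_{\min} = J'_{\min}$. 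Combining these: if $\wt_\fl = \wt_{\fl'}$ then the affine hulls agree, forcing $J_{\min} = J'_{\min}$; conversely $J_{\min} = J'_{\min}$ makes the displayed descriptions of the two faces coincide. (One could instead invoke Theorem \ref{incl} twice, but the resulting condition ``$J_{\min}\subseteq J'$ and $J'_{\min}\subseteq J$'' is less transparently symmetric.) So it remains to prove the combinatorial equivalence: $J_{\min} = J'_{\min}$ if and only if $J_{\min}\subseteq J'\subseteq J_{\max}$.

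For this the key observation is that a node being \emph{active} is a function of $\lambda$ and $I_V$ alone, independently of the ambient Levi, whereas ``active component'' and $J_{\min}$ do depend on the Levi, and ``connected to $J_{\min}$'' means adjacency in the Dynkin diagram. If $J_{\min}\subseteq J'$, then each active component of $J$ lies inside $J'$, is connected, and contains an active node, so it lies in a single component of $J'$, which is then active; hence $J_{\min}\subseteq J'_{\min}$. If moreover $J'\subseteq J_{\max} = J_{\min}\sqcup J_d$, then for $j\in J'_{\min}$ I join $j$ inside $J'$ by a path to an active node; that node must lie in $J_{\min}$ since $J_d$ consists of inactive nodes, and walking back along the path each successive node stays in $J_{\min}$, because a node of $J_d$ is by definition not adjacent to $J_{\min}$ --- so $j\in J_{\min}$, and thus $J'_{\min} = J_{\min}$. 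Conversely, assume $J_{\min} = J'_{\min}$; then $J_{\min}\subseteq J'$ is immediate, and for $j\in J'\setminus J_{\min} = J'\setminus J'_{\min}$ the node $j$ lies in an inactive component of $J'$, so $j$ itself is inactive, i.e.\ $j\in I_V$ with $(\halpha_j,\lambda) = 0$, and $j$ cannot be adjacent to any node of $J_{\min} = J'_{\min}\subseteq J'$ lest its component of $J'$ become active; hence $j\in J_d$, so $J'\subseteq J_{\max}$. The only real care needed is this last piece of bookkeeping --- keeping straight which notions depend on the Levi and which do not --- and I expect no genuine difficulty beyond organizing it cleanly.
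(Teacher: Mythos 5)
Your proposal is correct and follows the paper's own route: the paper proves Theorem \ref{eq} precisely as ``an easy adaptation of the argument of Theorem \ref{incl}, in particular Equation \eqref{Eaffine},'' i.e.\ reducing equality of the faces to $J_{\min}=J'_{\min}$ via the affine hulls and the identity $\wt_{\fl}=\conv V\cap\aff\wt_{\fl}$, exactly as you do. Your explicit verification of the combinatorial equivalence $J_{\min}=J'_{\min}\Leftrightarrow J_{\min}\subset J'\subset J_{\max}$ just fills in the details the paper leaves to the reader, and it is accurate.
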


\begin{re}
We now discuss precursors to Theorem \ref{eq}. The sets $J_{\min}, J
\subset I$ appear in Satake \cite[\S 2.3]{Satake}, Borel--Tits \cite[\S
12.16]{Borel-Tits}, Vinberg \cite[\S 3]{Vin}, and Casselman \cite[\S
3]{Casselman} for integrable modules in finite type. To our knowledge
$J_{\max}$ first appears in work of Cellini--Marietti \cite{cm} and
subsequent work by Khare \cite{Kh2}, again in finite type.

Finally, we comment on the relationship of the language in Theorems
\ref{incl} and \ref{eq} to the two previous approaches: (i) that of
Vinberg \cite{Vin} and Khare \cite{Kh2}, and (ii) that of Satake
\cite{Satake}, Borel--Tits \cite{Borel-Tits}, Casselman \cite{Casselman},
Cellini--Marietti \cite{cm}, and Li--Cao--Li \cite{lcl}. Ours is
essentially an average of the two different approaches: in our
terminology, unlike in \cite{Kh2}, we do not distinguish between
integrable and non-integrable `active' nodes, which simplifies the
subsets $J_1$--$J_6$ of Definition 3.1 of \textit{loc.~cit.} to our
$J_{\min}$ and $J_{\max}$. Similarly, if we extend the Dynkin diagram of
$\g$ by adding a node $i_\star$ that is connected to exactly the active
nodes, then we may rephrase our results along the lines of
\cite{Borel-Tits}, \cite{Casselman}, \cite{cm}, \cite{lcl}, \cite{Satake}
as follows: standard parabolic faces of $\conv V$ are in bijection with
connected subsets of the extended Dynkin diagram that contain $i_\star$.
\end{re}

\begin{cor}\label{Cbranch1}
Let $L(\lambda)$ be a simple module, and $\fl_J$ a standard Levi
subalgebra. The restriction of $L(\lambda)$ to $\fl_J$ is simple if
and only if $I_{\min} \subset J$.
\end{cor}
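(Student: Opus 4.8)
The plan is to deduce Corollary~\ref{Cbranch1} from Theorem~\ref{eq} by recognizing both sides as statements about when the $\fl_J$-submodule generated by the highest weight line of $L(\lambda)$ is everything. First I would observe that $L(\lambda)|_{\fl_J}$ being simple is equivalent to $U(\fl_J) L(\lambda)_\lambda = L(\lambda)$: one direction is trivial, and for the other, the $\fl_J$-submodule $U(\fl_J) L(\lambda)_\lambda$ is a highest weight $\fl_J$-module of highest weight $\lambda$, hence is simple precisely when it has no proper submodule, but it is already simple as a $\fl_J$-module because any $\fl_J$-submodule of the simple $\g$-module $L(\lambda)$ generated in the top weight space must be $L_{\fl_J}(\lambda)$ --- the point is just whether it is all of $L(\lambda)$.

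Next I would translate the condition $U(\fl_J) L(\lambda)_\lambda = L(\lambda) = U(\fl_I) L(\lambda)_\lambda$ using Remark~\ref{Rremark}, which states $U(\fl) V_\lambda = U(\fl') V_\lambda$ if and only if $\wt_\fl = \wt_{\fl'}$ (equivalently $\conv \wt_\fl = \conv\wt_{\fl'}$, i.e.~$\wt_\fl V = \wt_{\fl'} V$ in the notation of the statement section). Applying this with $\fl' = \g = \fl_I$, simplicity of the restriction is equivalent to $\wt_J L(\lambda) = \wt_I L(\lambda)$, which by Theorem~\ref{eq} holds if and only if $I_{\min} \subset J \subset I_{\max}$. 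The final step is to note $I_{\max} = I$: for $V = L(\lambda)$ we have $I_V = I_{L(\lambda)} = \{i : (\halpha_i,\lambda)\in\Z^{\geqslant 0}\}$, and a node $j \in I_V$ with $(\halpha_j,\lambda) = 0$ not connected to $I_{\min}$ is by definition in $I_d$; but running the diagram over $J = I$, every node outside $I_{\min}$ is of this form since the inactive nodes are exactly those $j \in I_V$ with $(\halpha_j,\lambda)=0$ and, after removing the active components $I_{\min}$, what remains consists of inactive nodes not adjacent to $I_{\min}$. Hence $I_{\max} = I$ and the upper inclusion $J \subset I_{\max}$ is automatic, leaving exactly the condition $I_{\min}\subset J$.

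The main obstacle is the first step --- establishing cleanly that simplicity of $L(\lambda)|_{\fl_J}$ is equivalent to $U(\fl_J) L(\lambda)_\lambda = L(\lambda)$, rather than merely to $U(\fl_J)L(\lambda)_\lambda$ being a simple $\fl_J$-module (which is automatic). One must argue that $L(\lambda)|_{\fl_J}$ is simple $\iff$ it is generated by its $\lambda$-weight line: the forward direction holds because a nonzero simple highest weight module over $\fl_J$ sitting inside is cyclic on any highest weight vector, and $L(\lambda)_\lambda$ is one-dimensional and primitive; conversely if $U(\fl_J)L(\lambda)_\lambda = L(\lambda)$ then $L(\lambda)$ is a highest weight $\fl_J$-module, and it is simple as such because any $\g$-submodule-free argument shows a proper $\fl_J$-submodule $N$ would have $N_\lambda = 0$, forcing the quotient $L(\lambda)/N$ to still be generated in weight $\lambda$ --- here one invokes that $L(\lambda)$ as a $\g$-module is simple only indirectly, so the cleanest route is: $U(\fl_J)L(\lambda)_\lambda$ is the simple $\fl_J$-module $L_{\fl_J}(\lambda)$ (being a highest weight module with all the integrability $L(\lambda)$ carries), so $L(\lambda)|_{\fl_J}$ is simple exactly when it equals this submodule. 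Once that reduction is in hand, everything else is a direct citation of Remark~\ref{Rremark} and Theorem~\ref{eq} together with the elementary identification $I_{\max} = I$.
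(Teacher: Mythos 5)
Your overall route is the intended one: reduce simplicity of the restriction to the equality $U(\fl_J)L(\lambda)_\lambda = L(\lambda)$, convert that via Remark \ref{Rremark} (or directly via Equation \eqref{Ewtspc}) into $\wt_J L(\lambda) = \wt_I L(\lambda)$, and then apply Theorem \ref{eq} with the observation that $I_{\max} = I$ (which is correct: since $I_{\min}$ is a union of connected components of the full diagram, every node outside it is inactive, hence lies in $I_{L(\lambda)}$ with $(\halpha_j,\lambda)=0$, and is not adjacent to $I_{\min}$, so it lies in $I_d$). That part of your argument is fine and is exactly why the paper states this as a corollary of Theorem \ref{eq}.

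The gap is in the step you yourself identify as the crux, and the justification you settle on does not work. You claim $U(\fl_J)L(\lambda)_\lambda$ is the simple module $L_{\fl_J}(\lambda)$ ``being a highest weight module with all the integrability $L(\lambda)$ carries''; but integrability does not detect simplicity --- a parabolic Verma module $M(\lambda,I_V)$ has the same integrability as $L(\lambda)$ and is generally not simple, so this parenthetical proves nothing. Likewise the earlier sketch (``a proper $\fl_J$-submodule $N$ would have $N_\lambda=0$, forcing the quotient to be generated in weight $\lambda$'') is not a contradiction by itself. The correct short argument genuinely uses $\g$-simplicity of $L(\lambda)$: by Equation \eqref{Ewtspc}, $N := U(\fl_J)L(\lambda)_\lambda = \bigoplus_{\mu \in \lambda + \Z\pi_J} L(\lambda)_\mu$; if $N' \subset N$ were a nonzero proper $\fl_J$-submodule, then $N'_\lambda = 0$, and choosing $0 \neq v \in N'_\mu$ with $\lambda - \mu \in \Z^{\geqslant 0}\pi_J$ of minimal height one gets $e_j v = 0$ for $j \in J$ by maximality, while for $i \in I \setminus J$ the weight $\mu + \alpha_i$ does not lie in $\lambda - \Z^{\geqslant 0}\pi$ (its $\alpha_i$-coefficient is $+1$), so $e_i v = 0$ as well; thus $U(\g)v = U(\n^-)v$ is a nonzero proper $\g$-submodule of $L(\lambda)$, a contradiction. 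With this in place, $L(\lambda)|_{\fl_J}$ is simple if and only if it coincides with its simple submodule $U(\fl_J)L(\lambda)_\lambda$, and the rest of your derivation goes through.
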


\begin{cor}\label{Cbranch2}
Let $V$ be a highest weight module, and $\fl_J$ a standard Levi
subalgebra. The restriction of $V$ to $\fl_J$ is a highest weight module
if and only if $I_{\min} \subset J$.
\end{cor}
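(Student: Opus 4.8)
The plan is to translate the module-theoretic assertion into the combinatorial criterion of Theorem \ref{incl}, using Remark \ref{Rremark} as the bridge between modules and convex hulls. Throughout, write $I_{\min}$ for the subset $J_{\min}$ of Theorem \ref{incl} attached to the full node set $J = I$; concretely, $I_{\min}$ is the union of those connected components of the Dynkin diagram of $\g$ containing a node $i$ with $\lambda - \alpha_i \in \wt V$.

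First I would record the elementary equivalence: $V$ is a highest weight module over $\fl_J$ if and only if $V = U(\fl_J) V_\lambda$. The implication ``$\Leftarrow$'' is immediate, since $v_\lambda$ spans $V_\lambda$ and is annihilated by $\n^+_J \subseteq \n^+$, hence is a highest weight vector for $\fl_J$. For ``$\Rightarrow$'', suppose $V = U(\fl_J) v$ for some weight vector $v$ of weight $\mu$ with $\n^+_J v = 0$; then $\wt V \subseteq \mu - \Z^{\geqslant 0}\pi_J$. Since $\lambda \in \wt V$ this forces $\mu - \lambda \in \Z^{\geqslant 0}\pi_J$, while $\mu \in \wt V \subseteq \lambda - \Z^{\geqslant 0}\pi$ forces $\lambda - \mu \in \Z^{\geqslant 0}\pi$. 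As $\pi$ is linearly independent, $\mu = \lambda$, and since $\dim V_\lambda = 1$ we get $v \in \C v_\lambda$, so $V = U(\fl_J) V_\lambda$.

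It remains to show $U(\fl_J) V_\lambda = V$ if and only if $I_{\min} \subseteq J$. Since $V = U(\g) V_\lambda = U(\fl_I) V_\lambda$, this equality says $U(\fl_J) V_\lambda = U(\fl_I) V_\lambda$, which by Remark \ref{Rremark} is equivalent to the equality of convex hulls $\wt_{\fl_J} = \wt_{\g} = \conv V$. Now $\wt_{\fl_J} \subseteq \wt_{\g}$ always holds, since $U(\fl_J)V_\lambda$ is an $\fl_J$-submodule of $V$ and so $\wt_J V \subseteq \wt V$; hence the displayed equality is equivalent to the reverse inclusion $\wt_{\g} \subseteq \wt_{\fl_J}$. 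Applying Theorem \ref{incl} to the two standard Levi subalgebras $\g = \fl_I$ and $\fl_J$ — so that the set ``$J_{\min}$'' it produces is precisely our $I_{\min}$ — this inclusion holds if and only if $I_{\min} \subseteq J$. Chaining the equivalences proves Corollary \ref{Cbranch2}.

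I do not expect a genuine obstacle; the only step requiring a little care is the ``$\Rightarrow$'' direction of the second paragraph, where one must argue that an abstract $\fl_J$-generating highest weight vector is forced to be a nonzero multiple of $v_\lambda$ — this is exactly the place where uniqueness of the maximal weight $\lambda$ and one-dimensionality of $V_\lambda$ are used. Finally, Corollary \ref{Cbranch1} follows along the same lines: for $V = L(\lambda)$ simple, once $L(\lambda) = U(\fl_J)v_\lambda$ one has $\wt L(\lambda) \subseteq \lambda - \Z^{\geqslant 0}\pi_J$, from which a weight computation shows that both $\lie{u}^+_J$ and $\lie{u}^-_J$ kill any proper $\fl_J$-submodule; since such a submodule would then be a proper $\g$-submodule, it must vanish, so the restriction is simple, and the converse is trivial.
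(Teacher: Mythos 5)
Your argument is correct and is essentially the derivation the paper intends: the corollary is stated as an immediate consequence of Theorem \ref{incl} (equivalently Theorem \ref{eq}) together with Remark \ref{Rremark}, and you supply exactly that chain, plus the routine verification that restriction to $\fl_J$ being highest weight is equivalent to $V = U(\fl_J)V_\lambda$ (where the weight comparison forcing $\mu = \lambda$ and $\dim V_\lambda = 1$ are used correctly). No gaps.
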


We are ready to incorporate the action of the integrable Weyl group. Of
course, to understand when $w' \wt_{\fl'} \subset w \wt_{\fl}$, we may
assume $w' = 1$.

\begin{pro}\label{Pstabb}
Set $J^{\max}_V := I_V \cap J_{\max}$ and $J^{\min}_V := I_V \cap
J_{\min}$. Given $w \in W_{I_V}$, we have $\wt_{\fl'} \subset w
\wt_\fl$ if and only if
(i) $\wt_{\fl'} \subset \wt_\fl$ and
(ii) the left coset $w W_{J^{\min}_V}$ intersects the integrable
stabilizer $\St_{W_{I_V}}(\wt_{\fl'})$ non-emptily.
\end{pro}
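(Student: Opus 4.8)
The plan is to bootstrap both implications from the classification of faces (Theorem~\ref{cf}) and the description of the $W_{I_V}$-action on them (Proposition~\ref{orbit}), applied not to $\g$ but to the standard Levi $\fl_{J_{\min}}$. First I would normalize the data. By Theorem~\ref{eq} we have $\wt_\fl = \wt_{\fl_{J_{\min}}}$, and replacing $J$ by $J_{\min}$ alters neither $J^{\min}_V = I_V\cap J_{\min}$ (every component of $J_{\min}$ is active, so the ``$J_{\min}$'' attached to $J_{\min}$ is $J_{\min}$ itself) nor any of the three assertions in the Proposition; so we may assume $J = J_{\min}$. If moreover $J_{\min} = I$, then $\wt_\fl = \conv V = w\wt_\fl \supseteq \wt_{\fl'}$ for every $w$, while $J^{\min}_V = I_V$ and $e \in W_{J^{\min}_V}\cap\St_{W_{I_V}}(\wt_{\fl'})$, so all three assertions hold and the equivalence is vacuous; hence we may assume $J = J_{\min}\subsetneq I$.

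The one structural input is then that $U(\fl_J)V_\lambda$ is a highest weight module over the Kac--Moody algebra $\g_J$ whose integrability is exactly $I_V\cap J = J^{\min}_V$; so by Theorem~\ref{main1} its stabilizer inside $W_J$ is $W_{J^{\min}_V}$, and in particular $W_{J^{\min}_V}$ stabilizes $\wt_\fl$. Granting this, the backward implication is immediate: given $\sigma = w\tau \in wW_{J^{\min}_V}\cap\St_{W_{I_V}}(\wt_{\fl'})$ with $\tau\in W_{J^{\min}_V}$, one has $w\wt_\fl = \sigma\tau^{-1}\wt_\fl = \sigma\wt_\fl$, and then $\wt_{\fl'} = \sigma\wt_{\fl'}\subseteq\sigma\wt_\fl = w\wt_\fl$ using (i).

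For the forward implication, suppose $\wt_{\fl'}\subseteq w\wt_\fl$. Since $w^{-1}\in W_{I_V}$ preserves $\conv V$ and permutes its faces, $w^{-1}\wt_{\fl'}$ is a face of $\conv V$; being contained in the face $\wt_\fl$, it is a face of $\wt_\fl$ as well. Applying Theorem~\ref{cf} to the $\g_J$-module $U(\fl_J)V_\lambda$ --- whose integrable Weyl group is $W_{J^{\min}_V}$ --- I would write $w^{-1}\wt_{\fl'} = v\,\wt_{\fl_{J''}}$ with $v\in W_{J^{\min}_V}$ and $J''\subseteq J$. Then $\wt_{\fl'} = wv\,\wt_{\fl_{J''}}$, so $\wt_{\fl'}$ and $\wt_{\fl_{J''}}$ are standard-Levi faces of $\conv V$ in the same $W_{I_V}$-orbit; Proposition~\ref{orbit}(3) then forces $\wt_{\fl'} = \wt_{\fl_{J''}}$. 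As $J''\subseteq J$, this gives $\wt_{\fl'} = \conv U(\fl_{J''})V_\lambda\subseteq\conv U(\fl_J)V_\lambda = \wt_\fl$, which is (i); and rewriting $w^{-1}\wt_{\fl'} = v\,\wt_{\fl_{J''}} = v\,\wt_{\fl'}$ shows $wv\in\St_{W_{I_V}}(\wt_{\fl'})\cap wW_{J^{\min}_V}$, which is (ii).

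The step I expect to need the most care is the normalization to $J = J_{\min}$: for a general $J$ the faces of $\conv U(\fl_J)V_\lambda$ are indexed by the possibly larger group $W_{I_V\cap J}$ rather than $W_{J^{\min}_V}$, so the element $v$ produced by Theorem~\ref{cf} might not lie in $W_{J^{\min}_V}$ --- yet it is precisely the membership $wv\in wW_{J^{\min}_V}$ that condition (ii) records. The remaining ingredients are routine: that a face of a convex set contained in a face is a face of that face; that $W_{I_V}$ acts on $\conv V$ by symmetries permuting faces (Theorem~\ref{main1}); and that the locus $\wt_{\fl_{J''}}$ is the same whether formed over $\g_J$ or over $\g$. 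A more computational route --- passing to tangent cones at $\lambda$, reducing to the case where $\lambda$ is maximally singular for its integrability (so that $\conv V$ is a cone and the containment becomes a condition on linear spans of simple roots) --- also works, but I would avoid it as heavier.
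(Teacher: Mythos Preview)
Your proof is correct and follows essentially the same route as the paper: reduce to the classification of faces of $\wt_\fl$ via Theorem~\ref{cf} applied to the Levi module $U(\fl_{J_{\min}})V_\lambda$ (with integrable Weyl group $W_{J^{\min}_V}$), then invoke Proposition~\ref{orbit}(3) to identify $\wt_{\fl'}$ with the resulting standard parabolic face. The paper compresses this into a few lines and does not spell out the normalization to $J=J_{\min}$ or the backward implication; your explicit treatment of both, and your remark on why the normalization is needed to land in $W_{J^{\min}_V}$ rather than $W_{I_V\cap J}$, are sound and clarify the argument.
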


\begin{proof}
One implication is tautological. For the reverse implication, suppose
$\wt_{\fl'} \subset w \wt_\fl$. Applying Theorem \ref{cf} to
$\g_{J^{\min}_V}$, we have $\wt_{\fl'} = w w' \wt_{\fl''}$, for some $w'
\in W_{J^{\min}_V}$, $\fl'' \subset \fl$. Then by Proposition
\ref{orbit}, $\wt_{\fl'} = \wt_{\fl''} = ww' \wt_{\fl''}$.
\end{proof}

Our last main result in this section computes the stabilizers of the
parabolic faces of $\conv V$, under the action of the integrable Weyl
group.

\begin{theo}\label{wstabb}
Retaining the notation of Proposition \ref{Pstabb}, we have:
\begin{equation}
\St_{W_{I_V}}(\wt_\fl) = W_{J^{\max}_V} \simeq W_{J^{\min}_V} \times
W_{J^{\max}_V \setminus J^{\min}_V}.
\end{equation}
\end{theo}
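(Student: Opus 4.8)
The plan is to compute $\St_{W_{I_V}}(\wt_\fl)$ directly. First I would reduce via Theorem~\ref{eq}: since $J_{\min}\subset J_{\min}\subset J_{\max}$, we have $\wt_\fl=\wt_{\fl_{J_{\min}}}$, so we may assume $\fl=\fl_{J_{\min}}$; note the $\fl_{J_{\min}}$-module $U(\fl_{J_{\min}})V_\lambda$ has integrability $J^{\min}_V$, and by Equation~\eqref{Eaffine} that $\aff\wt_\fl=\lambda+\R\pi_{J_{\min}}$. The claimed factorization $W_{J^{\max}_V}\simeq W_{J^{\min}_V}\times W_{J^{\max}_V\setminus J^{\min}_V}$ is then immediate, since $J^{\max}_V=J^{\min}_V\sqcup J_d$ and no node of $J_d$ is connected to $J_{\min}\supseteq J^{\min}_V$; so it remains to identify $\St_{W_{I_V}}(\wt_\fl)$ with $W_{J^{\max}_V}$.

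For the inclusion $\supseteq$, I would use that $\wt_\fl=\conv V\cap(\lambda+\R\pi_{J_{\min}})$ by Lemma~\ref{f1}, together with the fact that $W_{I_V}$ stabilizes $\conv V$ (Theorem~\ref{main1}). If $j\in J^{\min}_V$, then $s_j\lambda-\lambda=-(\halpha_j,\lambda)\alpha_j\in\R\pi_{J_{\min}}$ and $s_j$ preserves $\R\pi_{J_{\min}}$, so $s_j$ preserves the affine subspace $\lambda+\R\pi_{J_{\min}}$; if $j\in J_d$, then $s_j$ fixes $\lambda$ and fixes $\R\pi_{J_{\min}}$ pointwise, since $j$ is disconnected from $J_{\min}$. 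Either way $s_j$ preserves $\conv V\cap(\lambda+\R\pi_{J_{\min}})=\wt_\fl$, and these $s_j$ generate $W_{J^{\max}_V}$.

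The substance is the inclusion $\subseteq$. The key is to choose a good relative interior point $\nu_0$ of $\wt_\fl$: I would take one lying in the $I_V$ dominant chamber $D_{I_V}$ and with $\supp(\lambda-\nu_0)=J_{\min}$. Such a $\nu_0$ exists: by the argument of Proposition~\ref{orbit}(1) one can bring any interior point of $\wt_{\fl_{J_{\min}}}$ into $D_{I_V}$ by acting through $W_{J^{\min}_V}$, while the locus in $\relint\wt_\fl$ where some $\alpha_j$-coordinate ($j\in J_{\min}$) of $\lambda-\nu$ vanishes is a finite union of hyperplane sections of the full-dimensional set $\relint\wt_\fl\subset\lambda+\R\pi_{J_{\min}}$, hence avoidable, and stays avoidable under the finitely many elements of $W_{J^{\min}_V}$. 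Now let $w\in\St_{W_{I_V}}(\wt_\fl)$. Then $w\nu_0\in\relint\wt_\fl$, which lies in the $I_V$ Tits cone $C_{I_V}$ (Proposition~\ref{slice}), so again by the argument of Proposition~\ref{orbit}(1) there is $u\in W_{J^{\min}_V}$ with $uw\nu_0\in D_{I_V}$; since $D_{I_V}$ is a fundamental domain for the $W_{I_V}$-action on $C_{I_V}$ (Proposition~\ref{tits}) and $uw\nu_0,\nu_0$ are $W_{I_V}$-conjugate, we conclude $uw\nu_0=\nu_0$. Hence $uw\in\St_{W_{I_V}}(\nu_0)=W_{K_1}$, where $K_1:=\{i\in I_V:(\halpha_i,\nu_0)=0\}$, using Proposition~\ref{tits}(1).

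It then suffices to show $K_1\subseteq J^{\max}_V$, for then $w\in u^{-1}W_{K_1}\subseteq W_{J^{\min}_V}W_{K_1}\subseteq W_{J^{\max}_V}$. Write $\nu_0=\lambda-\sum_{j\in J_{\min}}c_j\alpha_j$; since $\nu_0\in\conv V$ and $\supp(\lambda-\nu_0)=J_{\min}$, all $c_j>0$. For $i\in K_1$ we have $0=(\halpha_i,\nu_0)=(\halpha_i,\lambda)-\sum_{j\in J_{\min}}c_j a_{ij}$. If $i\in J_{\min}$ then $i\in J^{\min}_V$; otherwise $a_{ij}\leqslant 0$ for every $j\in J_{\min}$ while $(\halpha_i,\lambda)\geqslant 0$ (as $i\in I_V$), so the identity forces $(\halpha_i,\lambda)=0$ and, since each $c_j>0$, all $a_{ij}=0$, i.e.\ $i\in I_V$ is orthogonal to $\lambda$ and disconnected from $J_{\min}$; thus $i\in J_d$ by \eqref{defjd}. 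I expect the main obstacle to lie precisely here: the whole argument hinges on producing an interior point that is \emph{simultaneously} $I_V$-dominant and of full support $J_{\min}$, which is exactly what collapses the sign identity above; weakening either property would readmit spurious longest-element-type symmetries into the stabilizer.
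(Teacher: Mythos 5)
Your route to the hard inclusion $\subset$ is genuinely different from the paper's, and apart from one step it works. The paper covers the face $F=\wt_\fl$ by the chambers $wD_{I_V}$, $w\in W_{J^{\min}_V}$, reduces by a dimension/face argument to an element fixing $F\cap D_{I_V}$ pointwise, and then intersects the stabilizers of a family of points $v_i\in F\cap D_{I_V}$, one for each $i\in J_{\min}$, supported on a path $K(i)$ from an active node to $i$. You instead use a single point $\nu_0\in\wt_\fl\cap D_{I_V}$ of full support $J_{\min}$, the fundamental-domain property of $D_{I_V}$, and Proposition \ref{tits}(1); your closing sign computation (forcing $(\halpha_i,\lambda)=0$ and $a_{ij}=0$ for $i\in K_1\setminus J_{\min}$) neatly repackages the paper's identification of the residual stabilizer with $W_{J_d}$. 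Your $\supseteq$ inclusion and the product decomposition of $W_{J^{\max}_V}$ are fine (the paper treats these as immediate). Note also that your argument never actually uses that $\nu_0$ is a relative interior point: membership in $\wt_\fl\cap D_{I_V}$ with full support suffices, since $w\nu_0\in\wt_\fl$ already because $w$ stabilizes the set.

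You correctly flagged the existence of $\nu_0$ as the crux, and that is exactly where the written argument fails: you avoid the bad hyperplanes ``under the finitely many elements of $W_{J^{\min}_V}$'', but $W_{J^{\min}_V}$ is infinite in general (e.g.\ $V$ integrable over affine $\g$ with $\lambda$ regular and $J=I$ gives $J^{\min}_V=I$), i.e.\ precisely in the infinite-type situations the theorem targets. This is repairable in several ways. (i) The sets of $\nu\in\aff\wt_\fl$ for which $u\nu$ drops support, as $u$ ranges over $W_{J^{\min}_V}$ and the support condition over $j\in J_{\min}$, form a countable union of proper hyperplane sections of $\aff\wt_\fl$ (each is proper because $u$ restricts to a linear automorphism of $\R\pi_{J_{\min}}$ and $j\in J_{\min}$), so a Baire category argument still yields a suitable $\nu$, and then $\nu_0:=u\nu$ for the $u$ that moves $\nu$ into $D_{I_V}$. (ii) Better, no genericity is needed: take any $\nu\in\relint\wt_\fl$ and move it into $D_{I_V}$ by the argument of Proposition \ref{orbit}(1); the image is still in $\relint\wt_\fl$ since $W_{J^{\min}_V}$ stabilizes $\wt_\fl$, and it automatically has full support -- writing $K'$ for its support, Lemma \ref{bdry} applied to $\g_{K'}$ shows it is interior to $\wt_{K'}$, so $\wt_{K'}=\wt_{\fl_{J_{\min}}}$ by Lemma \ref{f2}, and Theorem \ref{eq} together with $(J_{\min})_{\min}=J_{\min}$ forces $K'=J_{\min}$; this is exactly the device the paper uses for its points $v_i$. (iii) Alternatively, take a convex combination of the paper's points $v_i$, $i\in J_{\min}$: it lies in $\wt_\fl\cap D_{I_V}$ and all its coordinates along $\pi_{J_{\min}}$ are positive. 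With any of these replacements your proof is complete.
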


\begin{proof}
The inclusion $\supset$ is immediate. For the inclusion $\subset$, write
$F$ for the face $\wt_{\fl}$, and $D$ for the $I_V$ dominant chamber
$D_{I_V}$. Then by an argument similar to Proposition \ref{orbit}, we
have that:
\[
F = \bigcup_{w \in W_{J^{\min}_V}} (F \cap wD).
\]

Suppose that $\tilde{w} \in W_{I_V}$ stabilizes $F$. Then we have:
\[
F = \bigcup_{w \in W_{J^{\min}_V}} (F \cap \tilde{w}wD).
\]

As $F \cap D \cap \tilde{w} w D$ is a face of $F \cap D$ for all $w \in
W_{J^{\min}_V}$, it follows by dimension considerations that $F \cap D =
F \cap D \cap \tilde{w}{w}D$ for some $w \in W_{J^{\min}_V}$. 
So, replacing $\tilde{w}$ by $\tilde{w}w$, we may assume $\tilde{w}$
fixes $F \cap D$ identically.
It suffices to show this implies $\tilde{w} \in W_{J_d},$ as defined in
Equation \eqref{defjd}. To see this, for $i \in J_{\min}$, choose as in
the proof of Theorem \ref{incl} a sequence $i_1, \ldots, i_n = i$ of
distinct $i_j \in J_{\min}$ such that
(i) $i_1$ is active, and
(ii) $i_j$ is inactive and shares an edge with $i_{j-1}$, $\forall j
\geqslant 2$.
Write $K(i) = \{i_k \}_{1 \leqslant k \leqslant n }$;
it follows from Proposition \ref{orbit}(1) that $\wt_{K(i)}$ has an
interior point $v_i$ in $D$.
Write $v_i := \lambda - \sum_{k=1}^n c_k \alpha_{i_k}$, and let $K' := \{
i_k : c_k > 0 \}$. Then applying Lemma \ref{bdry} for $\g_{K'}$, it
follows that $v_i$ is an interior point of $\wt_{K'}$, whence $\wt_{K'} =
\wt_{K(i)}$ by Lemma \ref{f2}. But $K(i) = K(i)_{\min}$, so $K' = K(i)$
and we have $c_k$ is nonzero, $1 \leqslant k \leqslant n$. Since
$\tilde{w}$ fixes $v_i$, it follows by Proposition \ref{tits} that
$\tilde{w} \in W_{J(i)}$, where we write:
\[
J(i) := I_V \cap v_i^{\perp} = \{ j \in I_V: (\halpha_j, \lambda - \sum_k
c_k \alpha_{i_k} ) = 0 \}.
\]

Since $i \in J_{\min}$ was arbitrary, by the observation $W_{K'} \cap
W_{K''} = W_{K' \cap K''}, \ \forall K', K''$ $\subset I$ we have:
\[
\tilde{w} \in W_K, \ \text{where}\ 
K := \{ k \in I_V : (\halpha_k, \lambda) = 0, (\halpha_k, \alpha_i) = 0,
\forall i \in J_{\min} \}.
\]
But this is exactly $J_d$, as desired.
\end{proof}

We now deduce the coefficients of the $f$-polynomial of the
``polyhedral'' set $\conv V$, although as we presently discuss, $\conv V$
is rarely a polyhedron in infinite type. The following result is a
consequence of Theorems \ref{cf} and \ref{wstabb}, Proposition
\ref{orbit}, and Equation \eqref{Eaffine}.

\begin{pro}\label{ffpoly}
Write $f_V(q)$ for the $f$-polynomial of $\conv V$, i.e., $f_V(q) =
\sum_{i \in \Z^{\geqslant 0}} f_i q^i$, where $f_i \in \Z^{\geqslant 0}
\cup \{ \infty \}$ is the number of faces of dimension $i$. Then we have:
\begin{equation}
f_V(q) = \sum_{\wt_{\fl_J}} [ W_{I_V} : W_{I_V \cap J_{\max}} ]
q^{\lvert J_{\min} \rvert },
\end{equation}

\noindent where $\wt_{\fl_J}$ ranges over the (distinct) standard
parabolic faces of $V$ and we allow $f_V(q)$ to have possibly infinite
coefficients.
\end{pro}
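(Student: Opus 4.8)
The plan is to count faces of $\conv V$ by organizing them according to the combinatorial data already classified in the preceding results. By Theorem \ref{cf}, every face of $\conv V$ has the form $w\,\wt_{\fl_J}$ for some standard Levi subalgebra $\fl_J$ and some $w \in W_{I_V}$; and by Proposition \ref{orbit}, every such face lies in the $W_{I_V}$-orbit of a \emph{unique} standard parabolic face $\wt_{\fl_J}$, where by Theorem \ref{eq} the distinct standard parabolic faces are indexed by the data of $J_{\min}$ (equivalently, by those $J$ with $J_{\min} \subset J \subset J_{\max}$, one representative per interval). So the set of faces is partitioned into $W_{I_V}$-orbits, one orbit for each distinct standard parabolic face $\wt_{\fl_J}$, and it remains to (a) count the faces in each orbit, and (b) record the common dimension of the faces in a given orbit.

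For (a), the number of faces in the orbit of $\wt_{\fl_J}$ is the index of the stabilizer $\St_{W_{I_V}}(\wt_{\fl_J})$ in $W_{I_V}$, by the orbit-stabilizer correspondence (here one uses that $W_{I_V}$, although possibly infinite, acts with the orbit of $\wt_{\fl_J}$ in bijection with the coset space $W_{I_V}/\St_{W_{I_V}}(\wt_{\fl_J})$, so the count is $[W_{I_V} : \St_{W_{I_V}}(\wt_{\fl_J})]$, with the convention that this is $\infty$ when the index is infinite). By Theorem \ref{wstabb}, $\St_{W_{I_V}}(\wt_{\fl_J}) = W_{J^{\max}_V} = W_{I_V \cap J_{\max}}$, giving the factor $[W_{I_V} : W_{I_V \cap J_{\max}}]$. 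For (b), Equation \eqref{Eaffine} gives $\aff \wt_{\fl_J} = \lambda + \R\pi_{J_{\min}}$, so $\dim \wt_{\fl_J} = \lvert J_{\min}\rvert$; translation by $w \in W_{I_V}$ is affine-linear and hence dimension-preserving, so every face in the orbit has dimension $\lvert J_{\min}\rvert$. Summing $q^{\lvert J_{\min}\rvert}$ weighted by the orbit size over the distinct standard parabolic faces yields the claimed formula for $f_V(q)$.

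I do not anticipate a serious obstacle here; the work is entirely in assembling the cited results correctly. The one point requiring a little care is the bookkeeping of ``distinct'' standard parabolic faces: one must ensure that ranging over $\wt_{\fl_J}$ means ranging over one $J$ per fiber of $J \mapsto \wt_{\fl_J}$ (equivalently, per valid pair $(J_{\min}, J_{\max})$ arising from some $J \subset I$), so that no face is double-counted, which is exactly what Theorem \ref{eq} and Proposition \ref{orbit}(3) guarantee. A second minor point is the treatment of infinite coefficients: when $I_V$ is of infinite type the index $[W_{I_V} : W_{I_V \cap J_{\max}}]$ may be infinite, and correspondingly $f_i = \infty$ for the relevant $i$; the formula is to be read in $\Z^{\geqslant 0} \cup \{\infty\}$ as stated, and no convergence issue arises since each power of $q$ receives contributions from only finitely many values of $\lvert J_{\min}\rvert$ (namely at most $\lvert I\rvert + 1$ of them). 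This completes the proof.
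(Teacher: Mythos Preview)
Your proposal is correct and follows precisely the approach indicated in the paper, which states the proposition as a direct consequence of Theorems \ref{cf} and \ref{wstabb}, Proposition \ref{orbit}, and Equation \eqref{Eaffine}; you have assembled these ingredients in the expected way. The only quibble is the phrasing in your final sentence about ``finitely many values of $\lvert J_{\min}\rvert$'' --- what you need (and what holds) is that there are only finitely many distinct standard parabolic faces, since $I$ is finite, so each coefficient is a finite sum in $\Z^{\geqslant 0}\cup\{\infty\}$.
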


Proposition \ref{ffpoly} was previously proved by Cellini and Marietti
\cite{cm} for the adjoint representation when $\g$ is simple, and by the
second named author \cite{Kh2} for $\g$ finite-dimensional and most $V$.
Thus Proposition \ref{ffpoly} addresses the remaining cases in finite
type, as well as all cases for $\g$ of infinite type.

\begin{example}
In the special case of $\lambda \in P^+$ regular, the formula yields:
\begin{equation}\label{Efpoly}
f_V(q) = \sum_{J \subset I} [ W : W_J ] q^{\lvert J \rvert }.
\end{equation}
\end{example}

\begin{example}\label{Eint-reg}
More generally, if $\lambda$ is $I_V$ regular, then writing $V_{top}$ for
the $I_V$ integrable slice $\conv U(\fl_{I_V}) V_\lambda$, we have:
\begin{equation}
f_V(q) = f_{V_{top}}(q) \cdot (1+q)^{|I| - |I_V|},
\end{equation}

\noindent where $f_{V_{top}}(q)$ is as in \eqref{Efpoly}.
\end{example}

Example \ref{Eint-reg} is a combinatorial shadow of the fact that for
$\lambda$ $I_V$ regular, the standard parabolic faces of $\conv V$ are
determined by their intersection with the top slice and their extremal
rays at $\lambda$ along non-integrable simple roots.

Figure \ref{Fig3} shows an example in type $A_3$, namely, a
permutohedron. In this case the $f$-polynomial equals $q^3 + 14 q^2 + 36
q + 24$, and on facets we get $14 = 4+4+6$, corresponding to four
hexagons each from $\{ 1, 2 \}$ and from $\{ 2, 3 \}$, and six squares
from $\{ 1, 3 \}$.

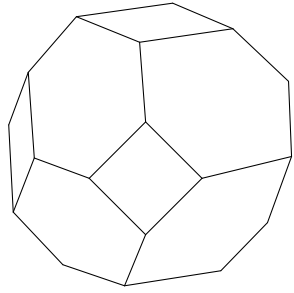
\begin{figure}[ht]
\begin{tikzpicture}[line cap=round,line join=round,>=triangle 45,x=1.0cm,y=1.0cm]
\draw (1.99,2.49)-- (2.74,3.24);
\draw (2.74,3.24)-- (3.49,2.49);
\draw (3.49,2.49)-- (2.74,1.74);
\draw (2.74,1.74)-- (1.99,2.49);
\draw (3.49,2.49)-- (4.68,2.78);
\draw (2.74,3.24)-- (2.66,4.3);
\draw (2.66,4.3)-- (3.9,4.48);
\draw (3.9,4.48)-- (4.64,3.78);
\draw (4.64,3.78)-- (4.68,2.78);
\draw (4.68,2.78)-- (4.36,1.9);
\draw (4.36,1.9)-- (3.74,1.26);
\draw (3.74,1.26)-- (2.46,1.06);
\draw (2.46,1.06)-- (2.74,1.74);
\draw (2.46,1.06)-- (1.64,1.34);
\draw (1.64,1.34)-- (0.98,2.04);
\draw (0.98,2.04)-- (1.26,2.76);
\draw (1.26,2.76)-- (1.99,2.49);
\draw (1.26,2.76)-- (1.18,3.9);
\draw (1.18,3.9)-- (1.82,4.64);
\draw (1.82,4.64)-- (2.66,4.3);
\draw (1.82,4.64)-- (3.1,4.82);
\draw (3.1,4.82)-- (3.9,4.48);
\draw (1.18,3.9)-- (0.92,3.2);
\draw (0.92,3.2)-- (0.98,2.04);
\end{tikzpicture}
\caption{$A_3$-permutohedron (i.e., $\lambda \in P^+$ regular)}
\label{Fig3}
\end{figure}

\section{Families of Weyl polyhedra and representability}\label{Srep}

Let $\g$ be a Kac--Moody algebra, and $\fp$ a parabolic subalgebra with
Levi quotient $\fl$. Consider $\ind_{\fp}^\g L$, for $L$ an integrable
highest weight $\fl$ module. This induced module still has a weight
decomposition with respect to a fixed Cartan of $\fl$, and one can study
the convex hulls of the weights of these modules as $L$ varies. 

Less invariantly, if $\fl$ corresponds to a subset $J \subset I$ of the
simple roots, the resulting convex hulls are precisely those of the
parabolic Verma modules with integrability $J$, or equivalently of all
highest weight modules with integrability $J$. 

We now introduce a family of convex shapes interpolating between the
above convex hulls. In particular, for the remainder of this section we
fix a subset of the simple roots $J \subset I$.

\begin{defn}
Say a subset $P \subset \h^*$ is a {\em Weyl polyhedron} if it can be
written as a convex combination:
\[
P = \sum_i t_i \conv V_i, \quad t_i > 0,\ \sum_i t_i = 1,
\]
where $V_i$ are highest weight modules with integrability $J$. 
\end{defn}

We will see shortly that Weyl polyhedra are always polyhedra if and only
if the corresponding Levi is finite dimensional, so this is strictly
speaking abuse of notation.

Having introduced `real analogues' of convex hulls of parabolically
induced modules, we can now let convex hulls vary in families. By a
convex set $S$, we simply mean a subset of some real vector space which
is convex is the usual sense.

\begin{defn}
Let $S$ be a convex set. Define an {\em $S$ family of Weyl polyhedra} to
be a subset $\calp \subset S \times \h^*$ satisfying:
\begin{enumerate}
\item Under the natural projection $\pi: \calp \rightarrow S$, the fibers
$F_s := \pi^{-1} s$ are Weyl polyhedra for all $s \in S$;
\item For $s_0, s_1 \in S,\ 0 \leqslant t \leqslant 1$, we have:
\[
t F_{s_0} + (1-t) F_{s_1} = F_{ts_0 + (1-t)s_1}.
\]
\end{enumerate}
\end{defn}

We remind that a morphism of convex sets $S \rightarrow S'$ is a map of
sets which commutes with taking convex combinations. With this, we have
that families of Weyl polyhedra pull back.

\begin{lem}
If $\phi: S' \rightarrow S$ is a morphism of convex sets, and $\calp
\rightarrow S$ is an $S$ family of Weyl polyhedra, then the base change
$\phi^* \calp := \calp \times_S S'$ is an $S'$ family of Weyl polyhedra.
\end{lem}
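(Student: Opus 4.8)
The plan is to verify the two defining conditions of an $S'$ family of Weyl polyhedra for $\phi^* \calp = \calp \times_S S'$ directly, by transporting the corresponding properties of $\calp$ along $\phi$. Throughout, write $\pi': \phi^*\calp \to S'$ for the natural projection and $F'_{s'}$ for its fiber over $s' \in S'$. The key observation, which I would establish first, is the fiberwise identity $F'_{s'} = F_{\phi(s')}$: indeed, a point $(s', v) \in S' \times \h^*$ lies in $\phi^*\calp$ precisely when $(\phi(s'), v) \in \calp$, i.e. when $v \in F_{\phi(s')}$. This is an immediate unwinding of the definition of the fiber product as a subset of $S' \times \h^*$ sitting over $\calp \subset S \times \h^*$ via $\phi \times \mathrm{id}_{\h^*}$.

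Granting this, condition (1) is immediate: for each $s' \in S'$ the fiber $F'_{s'} = F_{\phi(s')}$ is a Weyl polyhedron because $\calp \to S$ is an $S$ family of Weyl polyhedra. For condition (2), fix $s'_0, s'_1 \in S'$ and $0 \leqslant t \leqslant 1$. Since $\phi$ is a morphism of convex sets, it commutes with convex combinations, so $\phi(t s'_0 + (1-t) s'_1) = t\,\phi(s'_0) + (1-t)\,\phi(s'_1)$ in $S$. Applying the fiberwise identity and then condition (2) for the family $\calp \to S$ at the points $\phi(s'_0), \phi(s'_1) \in S$, we obtain
\[
t F'_{s'_0} + (1-t) F'_{s'_1} = t F_{\phi(s'_0)} + (1-t) F_{\phi(s'_1)} = F_{t\phi(s'_0) + (1-t)\phi(s'_1)} = F_{\phi(ts'_0 + (1-t)s'_1)} = F'_{ts'_0 + (1-t)s'_1},
\]
which is exactly condition (2) for $\phi^*\calp$.

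There is really no hard part here — the statement is a bookkeeping verification. The only point demanding a sliver of care is confirming that the subset $\phi^*\calp = \calp \times_S S'$ of $S' \times \h^*$ is the one whose fibers are computed as above; once one fixes the convention that the fiber product is realized inside $S' \times \h^*$ via the map $(s', v) \mapsto (\phi(s'), v)$, everything follows. (If one instead prefers to define $\calp \times_S S'$ abstractly, one checks it is canonically identified with $\{(s',v) : (\phi(s'),v) \in \calp\}$, and proceeds as above.) One could also remark, though it is not needed, that $\phi^*\calp$ is a convex subset of $S' \times \h^*$ whenever $\calp$ is, since $\phi \times \mathrm{id}$ is a morphism of convex sets and preimages of convex sets under such morphisms are convex.
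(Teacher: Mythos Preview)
Your proof is correct; the paper states this lemma without proof, treating it as an immediate verification from the definitions. Your argument is exactly the routine check one would supply, so there is nothing to compare.
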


It follows that the assignment $\mathsf{F}: S \rightsquigarrow
\{\text{$S$ families of Weyl polyhedra}\}, \ \phi \rightsquigarrow
\phi^*$, defines a contravariant functor from convex sets to sets. We now
construct what could be loosely thought of as a convexity-theoretic
Hilbert scheme for Weyl polyhedra. 

\begin{theo}\label{rep}
The functor $\mathsf{F}$ is representable. 
\end{theo}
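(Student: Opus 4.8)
The plan is to construct an explicit representing object and check it has the universal property. First I would identify the natural candidate for the classifying space. Fix the subset $J \subset I$. By Theorem \ref{main1}, the convex hull $\conv V$ of a highest weight module with integrability $J$ depends only on the highest weight $\lambda$, and in fact (by the Integrable Slice and Ray Decompositions, Propositions \ref{slice} and \ref{ray}) only on $\lambda$ through its pairings $(\halpha_j, \lambda)$ for $j \in J$ together with the coset of $\lambda$ modulo $\R\pi$ — more precisely, $\conv M(\lambda, J)$ is determined by the $J$-dominant weight $\lambda$ up to translation. So the natural base is $B_{univ} := D_J / (\text{translations fixing the relevant data})$, or concretely a parabolic analogue of the dominant chamber: the cone $\{\lambda \in \h^*_\R(J) : (\halpha_j,\lambda)\in\R^{\geqslant 0}\ \forall j \in J\}$, suitably quotiented so that the fiber over $\lambda$ is $\conv M(\lambda, J)$. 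The universal family $\calp_{univ} \subset B_{univ} \times \h^*$ is then the set $\{(\lambda, \mu) : \mu \in \conv M(\lambda, J)\}$.

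The key steps I would carry out, in order: (1) Verify that $\calp_{univ} \to B_{univ}$ is a $B_{univ}$-family of Weyl polyhedra. The fiber condition is immediate from the definition; the additivity condition $tF_{s_0} + (1-t)F_{s_1} = F_{ts_0 + (1-t)s_1}$ is the crucial point, and it follows from the Ray Decomposition \ref{ray}: $\conv M(\lambda, J) = \conv\bigcup_{w\in W_J,\, i\in I\setminus J} w(\lambda - \Z^{\geqslant 0}\alpha_i)$, and since $w$ is linear and the generating rays depend affinely on $\lambda$, taking convex combinations of highest weights corresponds exactly to taking convex (Minkowski) combinations of the resulting polyhedra. (2) Given an arbitrary convex set $S$ and an $S$-family $\calp \to S$, construct the classifying morphism $\phi_\calp : S \to B_{univ}$. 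For each $s \in S$, the fiber $F_s$ is a Weyl polyhedron, hence a convex combination $\sum_i t_i \conv V_i$ of convex hulls of modules with integrability $J$; by additivity-in-families this is itself $\conv M(\mu(s), J)$ for a well-defined $J$-dominant weight $\mu(s)$ (one recovers $\mu(s)$ as the unique "apex"-type vertex, or from the combinatorics of Proposition \ref{ray}), and $\phi_\calp(s) := \mu(s)$. (3) Check $\phi_\calp$ is a morphism of convex sets, i.e. $\mu(ts_0 + (1-t)s_1) = t\mu(s_0) + (1-t)\mu(s_1)$ — this is again forced by the additivity axiom together with step (1). (4) Check $\phi_\calp^*\calp_{univ} = \calp$, which holds fiberwise by construction, and check uniqueness of $\phi_\calp$, which follows since the $\mu(s)$ are intrinsically determined by the fibers.

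The main obstacle I expect is step (2): showing that a Weyl polyhedron, defined as a \emph{convex combination} $\sum_i t_i \conv V_i$ of several modules' convex hulls, is \emph{itself} equal to $\conv M(\mu, J)$ for a single $J$-dominant weight $\mu$, and that this $\mu$ is uniquely and functorially recoverable. The forward direction — that $\sum_i t_i \conv V_i = \conv M(\sum_i t_i \lambda_i, J)$ where $\lambda_i$ is the highest weight of $V_i$ — should follow cleanly from the Ray Decomposition as in step (1), once one notes all the $V_i$ have the same integrability $J$ so the indexing set $W_J \times (I\setminus J)$ of generating rays is common; the subtlety is extracting $\mu$ canonically from the \emph{subset} $\calp \subset S\times\h^*$ without reference to the $V_i$, i.e.\ defining $\mu(s)$ purely convexity-theoretically (e.g.\ as the unique point $v$ of $F_s$ such that $F_s \subset v - \Q^{\geqslant 0}\pi$ and $v$ maximizes $\sum_{i\in I\setminus J}\check\omega_i$, cf.\ the functional used in Lemma \ref{arefaces}) and then proving this assignment is affine in $s$. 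Handling the degenerate cases — where $\lambda$ is singular for $W_J$, so the fiber has smaller-dimensional "top slice" — requires care, but is governed by the same parabolic Tits cone geometry of Proposition \ref{tits} already invoked throughout Section \ref{S6}.
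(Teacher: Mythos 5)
Your proposal is correct and takes essentially the same route as the paper: the representing object is the $J$ dominant chamber $D_J$ with the tautological family $\calp_J = \{(\lambda,\nu): \lambda \in D_J,\ \nu \in P(\lambda,J)\}$, the key inputs being the Minkowski-additivity $\sum_i t_i P(\lambda_i,J) = P(\sum_i t_i \lambda_i, J)$ and the recovery of $\lambda$ from its Weyl polyhedron as the unique $J$ dominant vertex, which makes the classifying map $s \mapsto \lambda(s)$ well defined and convex. Two small caveats: no quotient of $D_J$ is needed or wanted (distinct $\lambda \in D_J$ give distinct Weyl polyhedra precisely because $\lambda$ is that unique vertex, and the hull does depend on more than the pairings $(\halpha_j,\lambda)$ and the coset mod $\R\pi$), and in the additivity lemma the nontrivial inclusion is $\sum_i t_i P(\lambda_i,J) \subset P(\sum_i t_i \lambda_i, J)$, where one must combine points lying on rays indexed by different $w \in W_J$, so it does not follow purely from the affine dependence of the rays on $\lambda$, though it is the "direct calculation" the paper also leaves to the reader.
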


From the theorem, we obtain a unique up to unique isomorphism base space
$B$ and universal family $\calp_{univ} \rightarrow B$. In the subsequent
section, we will show that the total space of $\calp_{univ}$ is a
remarkable convex cone, whose combinatorial isomorphism type stores the
classification of faces obtained in the previous sections.

\begin{proof}
We need to show there exists a convex set $B$ and bijections for every
convex set $S$:
\begin{equation}\label{Econvex}
\{\text{$S$ families of Weyl polyhedra} \} \simeq
\operatorname{Hom}(S, B)
\end{equation}
which intertwine pullback of families and precomposition along morphisms
$S \rightarrow S'$. 

To do so, we introduce in the following lemma a Ray Decomposition for
Weyl polyhedra, akin to Proposition \ref{ray}.

\begin{lem}
For $\lambda \in D_J$, i.e.~$(\halpha_j, \lambda) \in \R^{\geqslant 0},
\forall j \in J$, set:
\begin{equation}
P(\lambda, J) := \conv \bigcup_{w \in W_J,\ i \in I \setminus J} w
(\lambda - \R^{\geqslant 0} \alpha_i).
\end{equation}
If $J = I$, by the RHS we mean $\conv \bigcup_{w \in W} w\lambda$. Then
for any finite collection $\lambda_i \in D_J$, $t_i \geqslant 0, \sum_i
t_i = 1$, we have:
\begin{equation}\label{combo}
\sum_i t_i P(\lambda_i, J) = P(\sum_i t_i \lambda_i, J).
\end{equation}
\end{lem}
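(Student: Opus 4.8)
The plan is to prove the identity $\sum_i t_i P(\lambda_i, J) = P(\sum_i t_i \lambda_i, J)$ by a direct Minkowski-sum computation, exploiting the fact that the $W_J$-action and the generating rays behave linearly in $\lambda$. First I would reduce to two summands: since a convex combination of more than two points can be built up by iterated binary convex combinations, it suffices to prove $t P(\lambda_0, J) + (1-t) P(\lambda_1, J) = P(t\lambda_0 + (1-t)\lambda_1, J)$ for $\lambda_0, \lambda_1 \in D_J$ and $0 \le t \le 1$; note $t\lambda_0 + (1-t)\lambda_1 \in D_J$ since $D_J$ is convex, so the right-hand side makes sense. Write $\lambda_t := t\lambda_0 + (1-t)\lambda_1$.

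The inclusion $\subset$ is the easy direction. A point of $t P(\lambda_0,J) + (1-t) P(\lambda_1,J)$ is $t p_0 + (1-t) p_1$ where each $p_k$ is a convex combination of points on rays $w(\lambda_k - \R^{\geqslant 0}\alpha_i)$, $w \in W_J$, $i \in I \setminus J$ (or points $w\lambda_k$, $w \in W$, when $J = I$). Because $w$ acts linearly, $t\, w(\lambda_0 - s_0 \alpha_i) + (1-t)\, w(\lambda_1 - s_1 \alpha_i) = w(\lambda_t - (t s_0 + (1-t)s_1)\alpha_i)$ lies on the ray $w(\lambda_t - \R^{\geqslant 0}\alpha_i)$; matching up the $w$'s and $i$'s across the two convex combinations — which one can always do after refining both convex combinations to use a common finite index set of $(w,i)$ pairs — shows $t p_0 + (1-t) p_1$ is a convex combination of points of the generating rays for $\lambda_t$, hence lies in $P(\lambda_t, J)$. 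The $J = I$ case is identical with rays replaced by the vertices $w\lambda$.

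For the reverse inclusion $\supset$, observe that $P(\lambda_t, J)$ is by definition the convex hull of the union of its generating rays, and the Minkowski sum $t P(\lambda_0,J) + (1-t)P(\lambda_1,J)$ is convex (a Minkowski sum of convex sets), so it suffices to check that each generating ray $w(\lambda_t - \R^{\geqslant 0}\alpha_i)$ of $P(\lambda_t, J)$ is contained in $t P(\lambda_0, J) + (1-t) P(\lambda_1, J)$. But a point $w(\lambda_t - s\alpha_i)$ on such a ray can be written as $t\, w(\lambda_0 - s\alpha_i) + (1-t)\, w(\lambda_1 - s\alpha_i)$, and $w(\lambda_k - s\alpha_i) \in P(\lambda_k, J)$ by definition for $k = 0, 1$; hence the whole ray lies in the Minkowski sum, and taking convex hulls gives $P(\lambda_t, J) \subset t P(\lambda_0,J) + (1-t) P(\lambda_1,J)$. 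Again the $J = I$ case is the same with $w\lambda_t = t\, w\lambda_0 + (1-t)\, w\lambda_1$.

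The only genuine subtlety — the step I would be most careful about — is the bookkeeping in the $\subset$ direction: a priori the convex combinations expressing $p_0$ and $p_1$ range over different, finite subsets of $W_J \times (I \setminus J)$ with different weights, so one must first pad both to a common finite index set (assigning weight zero to the missing terms) before the pairing $t\, w(\lambda_0 - s_0\alpha_i) + (1-t)\, w(\lambda_1 - s_1\alpha_i)$ can be carried out coordinatewise. This is routine once set up correctly, and the linearity of the $W_J$-action does all the real work. (Alternatively, one could avoid the bookkeeping entirely by noting that for convex sets $A = \conv X$ and $B = \conv Y$ one has $tA + (1-t)B = \conv\big(\bigcup_{x \in X, y \in Y} \{t x + (1-t) y\}\big)$, and then the argument for both inclusions collapses to the single linear identity $t\, w(\lambda_0 - s_0\alpha_i) + (1-t)\, w(\lambda_1 - s_1\alpha_i) \in w(\lambda_t - \R^{\geqslant 0}\alpha_i)$ together with its converse.)
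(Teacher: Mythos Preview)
Your reduction to two summands and the $\supset$ direction are fine. The gap is in the $\subset$ direction. In the identity $tA + (1-t)B = \conv\{tx + (1-t)y : x \in X,\ y \in Y\}$ the pair $(x,y)$ ranges over all of $X \times Y$, so a typical generator of $tP(\lambda_0,J) + (1-t)P(\lambda_1,J)$ is
\[
t\,w(\lambda_0 - s_0\alpha_i) + (1-t)\,w'(\lambda_1 - s_1\alpha_{i'}),\qquad w,w'\in W_J,\ i,i'\in I\setminus J,
\]
with $(w,i)$ and $(w',i')$ a priori unrelated. Your displayed identity and the sentence ``matching up the $w$'s and $i$'s'' only treat the diagonal case $(w,i)=(w',i')$. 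Padding to a common finite index set of $(w,i)$ pairs does not force the convex coefficients to agree: if $p_0=\sum_{(w,i)} c_{(w,i)}\,w(\lambda_0-s_{(w,i)}\alpha_i)$ and $p_1=\sum_{(w,i)} d_{(w,i)}\,w(\lambda_1-s'_{(w,i)}\alpha_i)$, then the $(w,i)$ term of $tp_0+(1-t)p_1$ is $t c_{(w,i)}w\lambda_0+(1-t)d_{(w,i)}w\lambda_1-(\cdots)w\alpha_i$, which is a scalar multiple of a point on $w(\lambda_t-\R^{\geqslant 0}\alpha_i)$ only when $c_{(w,i)}=d_{(w,i)}$. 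So the cross terms are genuinely unhandled.

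What is missing is an argument that the mixed generator above lies in $P(\lambda_t,J)$. One route: first show by induction on $\ell(w)$ that $t\,w\lambda_0+(1-t)\lambda_1\in\conv W_J\lambda_t$ for $\lambda_0,\lambda_1\in D_J$ (in the step $w=s_jw''$, the point sits on the segment $[\mu,s_j\mu]$ for $\mu:=t\,w''\lambda_0+(1-t)\lambda_1$, using $(\halpha_j,w''\lambda_0)\geqslant 0$ and $(\halpha_j,\lambda_1)\geqslant 0$); second show, again by induction on length, that $u\lambda_t - r\alpha_i\in P(\lambda_t,J)$ for all $u\in W_J$, $r\geqslant 0$, $i\notin J$. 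Splitting the mixed generator as a midpoint to separate the two ray contributions and applying these two facts finishes $\subset$. The paper only says ``direct calculation'' and gives no details, but the calculation is not the one-line linearity check your write-up suggests.
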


\noindent The lemma is proved by direct calculation. 

Notice that a highest weight module $V$ with integrability $J$ and
highest weight $\lambda$ has $\conv V = P(\lambda, J)$. It follows from
Equation \eqref{combo} that Weyl polyhedra are precisely the shapes
$P(\lambda, J)$ for $\lambda \in D_J$. As these have a unique vertex in
the $J$ dominant chamber, namely $\lambda$, they in particular are
distinct, and  thus we have a naive parametrization of Weyl polyhedra by
their highest weights. 

Introduce the tautological family $\calp_J \rightarrow D_J$, where we
define:
\begin{equation}
\calp_J := \{ (\lambda, \nu): \lambda \in D_J, \nu \in P(\lambda, J)\}.
\end{equation}

A direct calculation shows that $\calp_J$ is a $D_J$ family of Weyl
polyhedra.

We now show that $D_J$ is the moduli space that represents $\mathsf{F}$,
with universal family $\calp_{univ} = \calp_J$. Indeed, given a morphism
of convex sets $\phi : S \rightarrow D_J$, we associate to it the
pullback of the tautological family $\phi^* \calp_J$. In the reverse
direction, given an $S$ family of Weyl polyhedra, if the fiber over $s
\in S$ is $P(\lambda(s), J)$, then the map $s \mapsto \lambda(s)$ is a
morphism of convex sets by Equation \eqref{combo}. One can directly
verify these assignments are mutually inverse, and intertwine
precomposition and pullback of families under a convex morphism $S
\rightarrow S'$.
\end{proof}

\begin{re}
Notice that both sides of Equation \eqref{Econvex} are naturally convex
sets (given by taking convex combinations fiberwise and pointwise,
respectively), and in fact the isomorphism is one of convex sets.
\end{re}

\begin{example}
For a simplex $S$ with vertices $v_0, \ldots, v_n$, an $S$ family of Weyl
polyhedra is the same datum as the fibers at the vertices, i.e.~Weyl
polyhedra $P_0, \ldots, P_n$. Thus in this case, both sides of Equation
\eqref{Econvex} indeed reduce to $D_J^{n+1}$.
\end{example}

\begin{example}
We could just as well have parametrized Weyl polyhedra by their unique
vertex in another fixed chamber of the $J$ Tits cone. Writing the chamber
as $wD_J, w \in W_J$, the isomorphism of convex sets $wD_J \simeq D_J$
corresponding to Theorem \ref{rep} is simply application of $w^{-1}$. 
\end{example}

We conclude this section with an illustration of a universal family of
Weyl polyhedra; see Figure \ref{Fig8}.

\begin{figure}[ht]
\begin{tikzpicture}[line cap=round,line join=round,>=triangle 45,x=1.0cm,y=1.0cm]
\draw (0,-3.67)-- (4.25,-3.67); 
\draw (0,-3.67)-- (1.5,-1.075);
\fill (0,-3.67) circle (1pt);
\fill (3.42,-3.67) circle (1pt);
\fill (1.71,-3.67) circle (1pt);
\fill (1.21,-1.57) circle (1pt);
\fill (0.605,-2.62) circle (1pt);
\fill (2.3,-2.675) circle (1pt);
\fill (1.15,-3.16) circle (1pt);
\draw [dash pattern=on 4pt off 4pt, color=black!50] (0,-3.67)-- (0,0); 
\draw [dash pattern=on 4pt off 4pt, color=black!50] (3.42,-3.67)-- (3.42,0);
\draw [dash pattern=on 4pt off 4pt, color=black!50] (1.71,-3.67)-- (1.71,-0.21);
\draw [dash pattern=on 4pt off 4pt, color=black!50] (1.21,-1.57)-- (1.21,2.1);
\draw [dash pattern=on 4pt off 4pt, color=black!50] (0.605,-2.62)-- (0.605,0.8);
\draw [dash pattern=on 4pt off 4pt, color=black!50] (2.3,-2.675)-- (2.3,0.4);
\draw [dash pattern=on 4pt off 4pt, color=black!50] (1.15,-3.16)-- (1.15,0.15);
\coordinate (T11) at (4,0); 
\coordinate (T12) at (3.13,0.5);
\coordinate (T13) at (3.13,-0.5);
\draw (T11) -- (T13);
\draw [fill=blue!40,blue!40] (T11) -- (T12) -- (T13) -- (T11) -- cycle;
%
\coordinate (T21) at (2,0); 
\coordinate (T22) at (1.565,0.25);
\coordinate (T23) at (1.565,-0.25);
\draw (T21) -- (T23);
\draw [fill=blue,blue] (T21) -- (T22) -- (T23) -- (T21) -- cycle;
%
\coordinate (T31) at (1.5,2.6); 
\coordinate (T32) at (1.5,1.6);
\coordinate (T33) at (0.63,2.1);
\draw (T31) -- (T33);
\draw [fill=blue!40,blue!40] (T31) -- (T32) -- (T33) -- (T31) -- cycle;
%
\coordinate (T41) at (0.75,1.3); 
\coordinate (T42) at (0.75,0.8);
\coordinate (T43) at (0.315,1.05);
\draw (T41) -- (T43);
\draw [fill=blue,blue] (T41) -- (T42) -- (T43) -- (T41) -- cycle;
%
\coordinate (H11) at (2.8,1.3); 
\coordinate (H12) at (2.8,0.75);
\coordinate (H13) at (2.3,0.45);
\coordinate (H14) at (1.8,0.75);
\coordinate (H15) at (1.8,1.3);
\coordinate (H16) at (2.3,1.6);
\draw (H11) -- (H16);
\draw [fill=blue!40,blue!40] (H11) -- (H12) -- (H13) -- (H14) -- (H15) -- (H16) -- (H11) -- cycle;
%
\coordinate (H21) at (1.4,0.65); 
\coordinate (H22) at (1.4,0.375);
\coordinate (H23) at (1.15,0.225);
\coordinate (H24) at (0.9,0.375);
\coordinate (H25) at (0.9,0.65);
\coordinate (H26) at (1.15,0.8);
\draw (H21) -- (H26);
\draw [fill=blue,blue] (H21) -- (H22) -- (H23) -- (H24) -- (H25) -- (H26) -- (H21) -- cycle;
%
\fill (0,0) circle (1.5pt);
\end{tikzpicture}
\caption{Schematic of the universal family of Weyl polyhedra for $\lie{g}
= \lie{sl}_3$ and $J = I$. The fibers $P(\lambda,J)$ are illustrated at
some points $\lambda$ along the relative interiors of each face in the
dominant chamber $D = D_I$.}
\label{Fig8}
\end{figure}
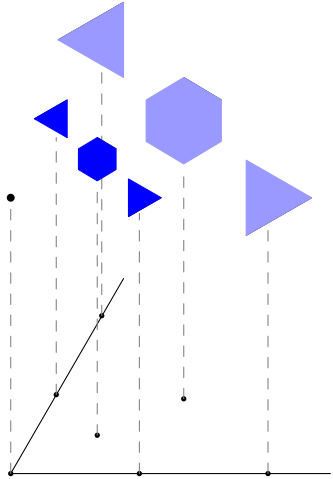

\section{Faces of the universal Weyl polyhedron}\label{Suniversal}

In the previous section, we began studying families of Weyl polyhedra
$\calp \rightarrow S$. In this section, we study faces of such families,
i.e.~of the total spaces $\calp$, which indeed are convex. Notice that
just as families pull back, so do faces:

\begin{lem}
Let $\phi: S' \rightarrow S$ be a morphism of convex sets, $\calp
\rightarrow S$ a family of Weyl polyhedra, and $F \rightarrow \calp$ a
face of the total space. Then the pullback $\phi^* F$ is a face of
$\phi^* \calp$.
\end{lem}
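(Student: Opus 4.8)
The plan is to realize both $\phi^*\calp$ and $\phi^*F$ as preimages under a single morphism of convex sets, and then to observe that the formation of faces is compatible with such preimages. Concretely, unravelling the fibre product, $\phi^*\calp = \{(s',\nu) \in S' \times \h^* : (\phi(s'),\nu) \in \calp\}$ and $\phi^*F = \{(s',\nu) \in S' \times \h^* : (\phi(s'),\nu) \in F\}$. First I would introduce $\Phi := \phi \times \mathrm{id}_{\h^*} : S' \times \h^* \to S \times \h^*$. Since $\phi$ is a morphism of convex sets and the identity on $\h^*$ trivially is, $\Phi$ commutes with the formation of convex combinations; moreover $\phi^*\calp = \Phi^{-1}(\calp)$ and $\phi^*F = \Phi^{-1}(F) \subseteq \phi^*\calp$, the last inclusion because $F \subseteq \calp$. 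I would also note in passing that $\calp$ and $\phi^*\calp$ are genuinely convex --- for $\calp$ this is axiom (2) in the definition of a family (or the computation preceding it), and for $\phi^*\calp$ it is then inherited, or read off directly from the description above --- so that the notion of ``face'' makes sense on both sides.

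It then suffices to prove the following elementary fact: if $g : C' \to C$ is a morphism of convex sets and $F \subseteq C$ is a face, then $g^{-1}(F)$ is a face of $C'$. Convexity of $g^{-1}(F)$ is immediate: $g$ sends any convex combination of two points of $g^{-1}(F)$ to the corresponding convex combination of their images in $F$, which lies in $F$ by convexity of $F$. For the defining property of a face, suppose $c'_1, \dots, c'_n \in C'$ and $t_i > 0$ with $\sum_i t_i = 1$ satisfy $\sum_i t_i c'_i \in g^{-1}(F)$. Applying $g$ and using that it preserves convex combinations gives $\sum_i t_i\, g(c'_i) = g\bigl(\sum_i t_i c'_i\bigr) \in F$; this exhibits a point of $F$ as a convex combination of the points $g(c'_i) \in C$, so since $F$ is a face of $C$ we conclude $g(c'_i) \in F$, i.e.\ $c'_i \in g^{-1}(F)$, for every $i$. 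Taking $g$ to be the restriction of $\Phi$ to a map $\phi^*\calp \to \calp$ and $F$ the given face then yields the lemma.

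I do not anticipate a genuine obstacle: the argument is a formal diagram chase, and its entire content is the compatibility of $\Phi$ with convex combinations, which is precisely the hypothesis that $\phi$ is a morphism of convex sets. The only points requiring a line of care are checking that the relevant total spaces are convex, so that ``face'' is meaningful, and that $\phi^*F$ is contained in $\phi^*\calp$ rather than merely in $S' \times \h^*$; both are immediate from the explicit descriptions above.
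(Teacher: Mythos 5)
Your proof is correct: the paper states this lemma without proof, treating it as routine, and your argument --- that $\phi^*\calp$ and $\phi^*F$ are the preimages of $\calp$ and $F$ under the convex-combination-preserving map $\phi \times \mathrm{id}_{\h^*}$, together with the elementary fact that the preimage of a face under a morphism of convex sets is a face --- is exactly the intended verification. No gaps; the side checks (convexity of the total spaces via axiom (2), and $\phi^*F \subseteq \phi^*\calp$) are handled correctly.
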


We motivate what follows with a very natural face of {\em any} family. As
before, we work with Weyl polyhedra corresponding to a fixed $J \subset
I$.

\begin{example}
For $j \in J$, consider the corresponding root hyperplane $H_j$ in
$\h^*$. If $\calp \rightarrow S$ is a family of Weyl polyhedra, then the
union of the fibers whose highest weight lies in $H_j$ is a face of
$\calp$. 
\end{example}

Notice in the above example that this construction commutes with pullback
of families. In this way, one can study `characteristic faces', i.e.~the
assignment to every family of Weyl polyhedra  $\calp$ of a face $F
\subset \calp$ which commutes with pullback. As the intersection of each
face with a given fiber is a face of the fiber, one may think of
characteristic faces as controlling how the faces of $\conv V$ deform as
one deforms the highest weight.

As families of Weyl polyhedra admit a moduli space, i.e.~Theorem
\ref{rep}, characteristic faces are the same data as faces of
$\calp_{univ}$, which we call the {\em universal Weyl polyhedron}. In
this section, we will determine $\calp_{univ}$ up to combinatorial
isomorphism, i.e.~enumerate all faces as well as their dimensions and
inclusions. 

The main ingredients will be (i) an extension of the classification of
faces of $\conv V$ from Sections \ref{S6}, \ref{S82} to all Weyl
polyhedra, and (ii) a stratification of the base $D_J$ along which
$\calp_{univ}$ is `constant'. The latter will in particular strengthen
the consequence of Section \ref{S82} that the classification of faces of
$\conv V$ only depends on the singularity of the highest weight, which
followed from the definitions of $J_{\min}, J_{\max}$.

\subsection{Faces of Weyl polyhedra}\label{Srealfaces}

In this subsection, we show the classification of faces of $P(\lambda,
J)$. It is exactly parallel to that of $\conv M(\lambda, J)$, and we
elaborate whenever the proofs differ. 

For $J' \subset I$, with associated Levi subalgebra $\fl$, consider the
functional $\sum_{i \in I \setminus J'} \check{\omega_i}$ as in Lemma
\ref{arefaces}. As $P(\lambda, J) \subset \lambda - \R^{\geqslant 0}
\pi$, $\sum_{i \in I \setminus J'} \check{\omega_i}$ attains a maximum on
$P(\lambda, J)$, and write $\wt_\fl$ for the corresponding exposed face.
As $P(\lambda, J)$ is $W_J$ invariant, similarly $w \wt_\fl$ is a face of
$P(\lambda, J)$, for $w \in W_J$. 

\begin{theo}\label{cf2}
Every face of $P(\lambda, J)$ is of the form $w \wt_\fl$, for $w \in W_J$
and a standard Levi algebra $\fl$. 
\end{theo}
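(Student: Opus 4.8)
The plan is to mimic, essentially verbatim, the proof of Theorem \ref{cf} for $\conv M(\lambda, J)$ given in Section \ref{S6}, since $P(\lambda, J)$ is by construction the object to which that argument ``morally'' applied. More precisely, recall that the proof of Theorem \ref{cf} first reduced to the case $V = M(\lambda, J)$ by passing to the tangent cone at $\lambda$ (Lemma \ref{tcone}: $T_\lambda V = \conv M(\lambda, J)$), and then proved the statement directly for $M(\lambda, J)$ using only the Ray Decomposition \eqref{raydc} and the slice formulas \eqref{intint}, \eqref{Enoholes}. Here $P(\lambda, J)$ \emph{is} already the analogue of $\conv M(\lambda, J)$: it has a Ray Decomposition by its very definition, and the only missing ingredient is a substitute for the ``no holes'' property \eqref{Enoholes}, which enters exactly once (in the computation $\dim \aff \wt_{\fl_{J'}} L_{\fl_J}(\lambda') = |J'|$ via a nonvanishing lowering operator). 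So the first thing I would do is record that $P(\lambda, J)$ is $W_J$-invariant, lies in $\lambda - \R^{\geqslant 0}\pi$, and that by the lemma preceding Theorem \ref{cf2} (Equation \eqref{combo}) it is preserved under the dilation $d_n$; these let us run the same reductions.

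Next, I would carry out the induction on $|I|$, as in the proof of Theorem \ref{cf}. Given a face $F$ of $P(\lambda, J)$, by the Ray Decomposition $F$ contains $w\lambda$ for some $w \in W_J$, so after translating by $W_J$ we may assume $\lambda \in F$. We may assume $J \subsetneq I$ (else $P(\lambda, J) = \conv W\lambda$ and one invokes the integrable case, which is classical / covered by Theorem \ref{cf}). As before, show $F$ contains a rational relative interior point $v_0 \in \lambda - \Q^{\geqslant 0}\pi$ using that $P(\lambda, J)$ is a convex hull of rational rays emanating from $\lambda$; then dilate via $d_n$ to arrange $v_0 \in \lambda - \Z^{\geqslant 0}\pi$; then act by $W_J$ (legitimate since $v_0$ lies in the $J$ Tits cone, which holds because each defining ray of $P(\lambda,J)$ does and the $J$ Tits cone is convex) to put $v_0$ in the $J$ dominant chamber. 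Write $v_0 = \lambda - \sum_{j \in J'} n_j \alpha_j - \sum_{i \in I'} n_i \alpha_i$ with $J' \subset J$, $I' \subset I \setminus J$, all coefficients positive, and let $\fl$ be the Levi of $J' \sqcup I'$. The goal is to show $v_0$ is a relative interior point of $\wt_\fl$, whence $F = \wt_\fl$ by Lemma \ref{f2}.

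The two-step local analysis is then copied over. First, one needs the analogue of Lemma \ref{bdry} for Weyl polyhedra: a point $v_0 \in P(\lambda, J)$ that is $J$-dominant and of the form $\lambda - \sum_{i \in I} c_i \alpha_i$ with all $c_i > 0$ is a relative interior point. The proof is identical: iteratively apply the supporting hyperplane theorem to land inside a face, invoke Theorem \ref{cf2} (for $\g$, inductively available since $J \subsetneq I$ makes the relevant $\fl_{J'}$-module a module over a proper Levi) plus the Weyl-orbit analysis of Proposition \ref{orbit}, whose proof used only Proposition \ref{slice} and Lemma \ref{f2} and hence applies to $w\wt_\fl \subset P(\lambda,J)$ with no change. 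Applying this to the $\fl_{J'}$-module generated by $L_{\fl_J}(\lambda')_{\lambda'}$, where $\lambda' := \lambda - \sum_{i \in I'} n_i \alpha_i$, shows $v_0$ is interior in $\wt_{\fl_{J'}} L_{\fl_J}(\lambda')$, and that this integrable slice has affine dimension exactly $|J'|$; here the one needed replacement for \eqref{Enoholes} is the observation that $\wt L_{\fl_J}(\lambda')$ is the full integrable slice through $\lambda'$ by Proposition \ref{slice}(\ref{intint}), so a lowering operator in $U(\n^-_{J'})$ of weight $-\sum_{j \in J'} n_j\alpha_j$ not killing the highest line exists, giving the reverse dimension inequality. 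Second, deform: pick affine independent $p_0, \dots, p_{|J'|} \in \wt_{\fl_{J'}} L_{\fl_J}(\lambda')$ with $v_0$ interior in their hull, write them via $W_{J'}(\lambda')$, perturb $\lambda'$ over a compact neighborhood $B$ of $\sum_{i \in I'} n_i\alpha_i$ in $\R^{>0}\pi_{I'}$ keeping affine independence, and assemble $\Sigma(B) \cong \sigma \times B \subset \wt_\fl$ with $v_0$ in its interior; since $\dim \aff \wt_\fl \leqslant |J'| + |I'| = \dim \aff \Sigma(B)$, equality holds and $v_0 \in \relint \wt_\fl$. The main obstacle — and the only place real care is needed — is verifying that every ingredient imported from Section \ref{S6} (Proposition \ref{slice}, the Ray Decomposition, Proposition \ref{orbit}, Lemma \ref{bdry}, the dilation $d_n$) genuinely has a valid analogue for $P(\lambda, J)$; each does, but the ``no holes'' lattice property has no meaning for a general real Weyl polyhedron, so it must be systematically replaced by the integrable slice description wherever it was invoked.
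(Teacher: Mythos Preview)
Your overall strategy matches the paper's: follow the proof of Theorem \ref{cf} essentially verbatim, isolating the steps that need modification. However, there is a genuine gap at exactly the step you flag as delicate --- the equality $\dim \aff \wt_{\fl_{J'}} = |J'|$. You propose to recover this by invoking the $\fl_J$-module $L_{\fl_J}(\lambda')$ and producing a lowering operator in $U(\n^-_{J'})$ via Proposition \ref{slice}. But $\lambda \in D_J$ is an arbitrary \emph{real} $J$-dominant weight, not $J$-integral; your dilation by $n$ arranges $\lambda - v_0 \in \Z^{\geqslant 0}\pi$ but does nothing for the values $(\halpha_j, n\lambda)$, which remain arbitrary non-negative reals. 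Hence $\lambda'$ need not be $J$-integral, $L_{\fl_J}(\lambda')$ need not be integrable, and more fundamentally there is no module underlying $P(\lambda, J)$ at all --- so neither Proposition \ref{slice} nor any lowering-operator argument is available. Your own final sentence correctly diagnoses that lattice/module arguments have no meaning for real Weyl polyhedra, yet the replacement you offer is still module-theoretic.

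The paper's proof agrees that this is the unique nontrivial modification and resolves it by forward-referencing Theorem \ref{ef2}, which computes $\aff \wt_{\fl'} = \lambda + \R\pi_{J'_{\min}}$ purely convex-geometrically: the inclusion $\supset$ replaces the weight sequence $\lambda - \alpha_{i_1}, \lambda - \alpha_{i_1} - \alpha_{i_2}, \ldots$ by $\lambda - \epsilon_1\alpha_{i_1}, \lambda - \epsilon_1\alpha_{i_1} - \epsilon_2\alpha_{i_2}, \ldots$ for small $\epsilon_k > 0$ (no module needed), and $\subset$ comes from a Ray Decomposition \eqref{rayface} for the face itself. The paper also supplies a convex analogue of Lemma \ref{tcone}, namely $T_\lambda P(\lambda, J) = P(\lambda, J \cap \lambda^\perp)$, proved by direct induction on $\ell(w)$; this reduction is what renders the case $J = I$ trivial (since then $J \cap \lambda^\perp = I$ forces $\lambda$ to be $W$-fixed), so your appeal to Theorem \ref{cf} for $\conv W\lambda$ --- which does not apply to non-integral $\lambda$ --- becomes unnecessary.
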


\begin{proof}
This is proved similarly to Theorem \ref{cf}. The analogue of Lemma
\ref{tcone} may be proved as follows. 

\begin{lem}
$T_\lambda P(\lambda, J) = P(\lambda, J \cap \lambda^\perp)$,
where $\lambda^\perp := \{ i \in I : (\halpha_i, \lambda) = 0 \}$.
\end{lem}

\begin{proof}
The inclusion $\supseteq$ is as in Lemma \ref{tcone}. For the inclusion
$\subseteq$, we prove by induction on $\ell(w)$ that $w(\lambda -
t\alpha_i)\in P(\lambda, J \cap \lambda^\perp)$, for $t \geqslant 0, i
\notin J, w \in W_J$. The base case of $w = 1$ is clear. For the
induction step write $w = s_j \tilde{w}$, with $\ell(\tilde{w}) <
\ell(w), j \in J$. If $j \in \lambda^\perp$, we use the inductive
hypothesis for $\tilde{w}$ and the $s_j$ invariance of $P(\lambda, J \cap
\lambda^\perp)$. If $j \notin \lambda^\perp$, since $\lambda$ is
$J$-dominant, we have:
\[
w(\lambda - t\alpha_i) = \tilde{w}(\lambda - t\alpha_i)  - t' \alpha_j,
\quad \quad t' \in \R^{> 0}.
\]
The proof concludes by noting that $P(\lambda, J \cap \lambda^\perp)$ is
a cone with vertex $\lambda$ and contains $\lambda - \R^{\geqslant 0}
\alpha_j$.
\end{proof}

Following the notation of the proof of Theorem \ref{cf}, the only
argument which requires nontrivial modification is the equality $\dim
\aff \wt_{\fl_{J'}} L_{\fl_{J}}(\lambda') = \lvert J' \rvert$. This
however is a consequence of the following Theorem \ref{ef2}.
\end{proof}

We next determine the inclusions between standard parabolic faces. For
$J' \subset I$, the notions of {\em active} and {\em inactive} nodes as
well as $J'_{\min}, J'_{\max}$ are as in Section \ref{S82}, provided one
replaces $I_V$ with $J$ in the definitions. With this, we have:

\begin{theo}\label{ef2}
Let $\fl', \fl''$ be standard Levi subalgebras, and $J', J''$ the
corresponding subsets of $I$, respectively. Then $\wt_{\fl'} \subset
\wt_{\fl''}$ if and only if $J'_{\min} \subset J''$. 
\end{theo}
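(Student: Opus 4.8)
The plan is to mirror the proof of Theorem \ref{incl}, tracking where the absence of an actual module forces a purely convex-geometric substitute. As in that proof, the key is to identify the affine hull of $\wt_{\fl'}$, i.e.\ to prove the analogue of Equation \eqref{Eaffine}, namely $\aff \wt_{\fl'} = \lambda + \R\pi_{J'_{\min}}$. Granting this, the rest is formal: since $P(\lambda, J) \subset \lambda - \R^{\geqslant 0}\pi$ and $\wt_{\fl'}$ is the maximizer of $\sum_{i \in I \setminus J'} \check{\omega_i}$, one checks directly that $\wt_{\fl'} = P(\lambda, J) \cap (\lambda + \R\pi_{J'})$ using the Ray Decomposition defining $P(\lambda,J)$ and the $W_{J'\cap J}$-invariance of the relevant slice; then $\wt_{\fl'} \subset \wt_{\fl''}$ holds if and only if $\aff\wt_{\fl'} \subset \aff\wt_{\fl''}$ (by Lemma \ref{f1}, both faces being the intersection of $P(\lambda,J)$ with their affine hulls), which by the affine-hull formula is exactly $\lambda + \R\pi_{J'_{\min}} \subset \lambda + \R\pi_{J''_{\min}}$, and since $J''_{\min} \subset J''$ this is equivalent to $J'_{\min} \subset J''$.

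For the affine-hull formula, the inclusion $\aff\wt_{\fl'} \supseteq \lambda + \R\pi_{J'_{\min}}$ is proved by the same chain-of-nodes argument as in Theorem \ref{incl}: for $i \in J'_{\min}$ pick a path $i_1, \dots, i_n = i$ in $J'_{\min}$ with $i_1$ active and each subsequent node inactive and adjacent to its predecessor; here "active" means $(\halpha_{i_k}, \lambda) > 0$ (note $J \supset J'$, so all of $J'$ is integrable and activity is governed by $\lambda$ alone). Using the Ray Decomposition for $P(\lambda, J)$, the points $s_{i_1}\lambda = \lambda - (\halpha_{i_1},\lambda)\alpha_{i_1}$ and iteratively the reflected vertices $s_{i_k}\cdots s_{i_1}\lambda$ lie in $P(\lambda,J)$, and a short computation (exactly the $\lie{sl}_2$ computation of Theorem \ref{incl}, now done on vertices rather than weight spaces) shows their affine span contains $\lambda + \R\alpha_i$. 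The reverse inclusion $\aff\wt_{\fl'} \subseteq \lambda + \R\pi_{J'_{\min}}$ follows because the simple roots in $J' \setminus J'_{\min}$ satisfy $(\halpha_j, \lambda) = 0$ and are disconnected from $J'_{\min}$: then $P(\lambda, J') = P(\lambda, J'_{\min})$ as convex sets (the extra reflections $s_j$ for $j \in J'\setminus J'_{\min}$ fix $\lambda$ and act trivially on the relevant rays, again visible from the Ray Decomposition), so $\wt_{\fl'}$, which is the $\sum_{i\in I\setminus J'}\check\omega_i$-face, lives in $\lambda + \R\pi_{J'_{\min}}$.

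The main obstacle I anticipate is the step asserting $\dim\aff\wt_{\fl_{J'}}L_{\fl_J}(\lambda') = |J'|$ inside the proof of Theorem \ref{cf2} — but the excerpt explicitly defers that to the present theorem, so the genuine content here is just the $\supseteq$ direction of the affine-hull formula, where one must produce enough affinely independent points in $P(\lambda,J)$ purely from the cone description without recourse to nonvanishing of weight spaces. The translation is routine once one notes that the Ray Decomposition supplies all the reflected rays $w(\lambda - \R^{\geqslant 0}\alpha_i)$ explicitly, so the "basic $\lie{sl}_2$ representation theory" of Theorem \ref{incl} becomes an equally elementary computation with Weyl reflections and rays; the only care needed is to handle the inactive-node path, where one must use that traversing an inactive node $j$ adjacent to an already-reached node $i_{k-1}$ with $(\halpha_{i_{k-1}}, v) \neq 0$ for the current vertex $v$ produces a genuinely new affine direction $\alpha_j$, which holds since $a_{j, i_{k-1}} < 0$.
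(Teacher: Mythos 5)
There are two genuine gaps, both in the proof of the affine-hull formula $\aff \wt_{\fl'} = \lambda + \R\pi_{J'_{\min}}$ (your outer reduction via Lemma \ref{f1} and the equivalence $J'_{\min} \subset J'' \Leftrightarrow J'_{\min} \subset J''_{\min}$ is fine, and matches the paper's strategy). First, your standing assumption that $J' \subset J$ is unwarranted: in Theorem \ref{ef2} the Levi $\fl'$ is an arbitrary standard Levi, and in this section a node $j \in J'$ is \emph{active} if either $j \notin J$, or $j \in J$ and $(\halpha_j,\lambda) > 0$ (the Section \ref{S82} definition with $I_V$ replaced by $J$). Your mechanism for $\aff\wt_{\fl'} \supseteq \lambda + \R\pi_{J'_{\min}}$ uses the reflected vertices $s_{i_k}\cdots s_{i_1}\lambda$, which requires $s_{i_1} \in W_J$; when the active node $i_1$ lies in $I \setminus J$ this breaks down: $(\halpha_{i_1},\lambda)$ need not even be real, and $s_{i_1}\lambda$ need not lie in $P(\lambda,J)$. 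The paper's fix is to take small steps $\lambda - \epsilon_1\alpha_{i_1}$, $\lambda - \epsilon_1\alpha_{i_1} - \epsilon_2\alpha_{i_2}, \ldots$, which lie in $P(\lambda,J)$ because either $i_1 \notin J$ (use the generating ray $\lambda - \R^{\geqslant 0}\alpha_{i_1}$) or $i_1 \in J$ is active (use the segment $[\lambda, s_{i_1}\lambda]$), and then at each inactive node one uses $a_{i_k, i_{k-1}} < 0$; your closing paragraph gestures at exactly this, but the argument you actually wrote is confined to the case $J' \subset J$.

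Second, the inclusion $\aff\wt_{\fl'} \subseteq \lambda + \R\pi_{J'_{\min}}$ is not proved. You invoke ``$P(\lambda,J') = P(\lambda,J'_{\min})$'', but these are Weyl polyhedra with integrabilities $J'$ and $J'_{\min}$, not the face $\wt_{\fl'}$ of $P(\lambda,J)$; even granting your (correct) identity $\wt_{\fl'} = P(\lambda,J) \cap (\lambda + \R\pi_{J'})$, you still must rule out that a point of this slice has a nonzero component along an inactive component of $J'$. In Theorem \ref{incl} this was done by PBW (the augmentation ideal of $U(\n^-_{J\setminus J_{\min}})$ kills $V_\lambda$); with no module available, the paper instead proves a ray decomposition of the face itself, $\wt_{\fl'} = \conv\bigcup_{w \in W_{J\cap J'},\, i \in J'\setminus J} w(\lambda - \R^{\geqslant 0}\alpha_i)$, by showing any generating point $w(\lambda - t\alpha_i)$ lying in the face must have $i \in J'\setminus J$ (linear independence of simple roots) and can be replaced by $\tilde w(\lambda - t\alpha_i)$ with $\tilde w \in W_{J\cap J'}$ (fundamental-domain property of $D_J$); it then factors $W_{J\cap J'} \simeq W_{J\cap J'_{\min}} \times W_{J'\setminus J'_{\min}}$, the second factor acting trivially on those rays. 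Some substitute for this step (or for the description of $P(\lambda,J)$ via $\leqslant_J$ in Theorem \ref{rnd}) is needed; ``visible from the Ray Decomposition'' as written does not supply it, since the Ray Decomposition you cite is for the wrong object.
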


\begin{proof}
It  suffices to check that $\aff \wt_{\fl'} = \lambda + \R
\pi_{J'_{\min}}$. The inclusion $\supset$ is proved as in Theorem
\ref{incl}, replacing $\lambda - \alpha_{i_1}$ with $\lambda -
\epsilon_{1} \alpha_{i_1}$ for some $\epsilon_{1} > 0$, $\lambda -
\alpha_{i_1} - \alpha_{i_2}$ with $\lambda - \epsilon_{1}\alpha_{i_1} -
\epsilon_2 \alpha_{i_2}$, for some $\epsilon_2 > 0$, and so on. For the
inclusion $\subset$, we will show a `Ray Decomposition' for each face:
\begin{equation}\label{rayface}
\wt_{\fl'} = \conv \bigcup_{w \in W_{J \cap J'}, \ i \in J' \setminus J}
w (\lambda - \R^{\geqslant 0} \alpha_i).
\end{equation}

\noindent Having shown this, we will be done, as we may factor $W_{J \cap
J'} \simeq W_{J \cap J'_{\min} } \times W_{J' \setminus J'_{\min}}$, and
the latter parabolic factor acts trivially on $\lambda - \R^{\geqslant 0}
\alpha_i, \forall i \in J' \setminus J$. To see Equation \eqref{rayface},
the inclusion $\supset$ is straightforward. For the inclusion $\subset$,
it suffices to see that if $w(\lambda - t \alpha_i)$ lies in
$\wt_{\fl'}$, for some $w \in W_J$, $t \geqslant 0$, and $i \in I
\setminus J$, then
(i) $i \in J' \setminus J$, and
(ii) $w(\lambda - t \alpha_i) = \tilde{w}(\lambda - t \alpha_i)$, for
some $\tilde{w}$ in $W_{J \cap J'}$.
But (i) follows from the linear independence of simple roots, and
(ii) follows from noting that $w(\lambda - t{\alpha_i})$ lies in the $J
\cap J'$ Tits cone, and $\lambda - t\alpha_i$ lies in the $J$ dominant
chamber, which is a fundamental domain for the action of $W_J$, cf.
Proposition \ref{tits}(2). 
\end{proof}

The remaining results of Section \ref{S82}, namely Theorem \ref{eq},
Proposition \ref{Pstabb}, Theorem \ref{wstabb}, and Proposition
\ref{ffpoly} hold for $P(\lambda, J)$ {\em mutatis mutandis}, with
similar proofs. In particular, the face poset of $P(\lambda,J)$ depends
on $\lambda \in D_J$ only through $J \cap \lambda^\perp$, proving Theorem
\ref{Tuniversal}(2).

\subsection{Strong combinatorial isomorphism and strata of the Weyl
chamber}\label{Sstrong}

Before continuing, let us sharpen our observation above that the
combinatorial isomorphism class of $P(\lambda, J)$ depends only on the
`integrable stabilizer' of $\lambda$. To do so, we first remind the face
structure of $D_J$:

\begin{lem}\label{Lstrata}
Faces of $D_J$ are in bijection with subsets of $J$, under the
assignment:
\[
K \subset J \rightsquigarrow {}^\perp K := \{\lambda \in D_J:
(\halpha_k, \lambda) = 0, \forall k \in K \}.
\]
Moreover, this bijection is an isomorphism of posets, where we order
faces by inclusion and subsets of $J$ by reverse inclusion.
\end{lem}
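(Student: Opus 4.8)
The plan is to exhibit the bijection by hand, bypassing the (conceptual) observation that $D_J$ is linearly isomorphic to a product of a closed orthant with a linear subspace. Throughout, $D_J\subset\h^*$ is regarded as a convex subset of the underlying real vector space, and we use that $\{\halpha_j:j\in J\}$ is linearly independent, so that for any prescribed values in $\R^{\geqslant 0}$ on these coroots there is a weight in $D_J$ realizing them. First I would check that each ${}^\perp K$ is genuinely a face: for $k\in K\subset J$ the functional $(\halpha_k,-)$ is nonnegative on $D_J$ and vanishes somewhere on it, so $D_J\cap\ker(\halpha_k,-)$ is an exposed face, and summing these functionals over $k\in K$ exhibits ${}^\perp K=\bigcap_{k\in K}\bigl(D_J\cap\ker(\halpha_k,-)\bigr)$ as an exposed face as well. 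That $K\mapsto{}^\perp K$ is order-reversing is immediate.

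For the remaining poset assertions, the key point is that ${}^\perp K\subseteq{}^\perp K'$ implies $K'\subseteq K$. To see this, pick a weight $\mu$ with $(\halpha_j,\mu)=0$ for $j\in K$ and $(\halpha_j,\mu)=1$ for $j\in J\setminus K$; then $\mu\in{}^\perp K\subseteq{}^\perp K'$, so $(\halpha_{k'},\mu)=0$, hence $k'\in K$, for every $k'\in K'$. Applying this in both directions gives injectivity of $K\mapsto{}^\perp K$ and shows the map reflects inclusions; together with order-reversal, this makes $K\mapsto{}^\perp K$ a poset isomorphism onto its image.

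It then remains to prove surjectivity: every (nonempty) face $F$ of $D_J$ equals ${}^\perp K$ for $K:=\{j\in J:(\halpha_j,\lambda)=0\text{ for all }\lambda\in F\}$. The inclusion $F\subseteq{}^\perp K$ is immediate. For the reverse, fix a relative interior point $\mu_0$ of $F$. For each $j\in J\setminus K$ there is, by definition of $K$, some $\lambda_j\in F$ with $(\halpha_j,\lambda_j)>0$ (positivity since $F\subset D_J$); writing $\mu_0=t\lambda_j+(1-t)\lambda_j'$ with $\lambda_j'\in F$, $t>0$ — available because $\mu_0\in\relint F$ — yields $(\halpha_j,\mu_0)\geqslant t(\halpha_j,\lambda_j)>0$. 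Thus $(\halpha_k,\mu_0)=0$ for $k\in K$ while $(\halpha_j,\mu_0)>0$ for $j\in J\setminus K$. Now take any $\lambda\in{}^\perp K$ and set $\mu_t:=(1+t)\mu_0-t\lambda$. Then $(\halpha_k,\mu_t)=0$ for $k\in K$, and $(\halpha_j,\mu_t)=(1+t)(\halpha_j,\mu_0)-t(\halpha_j,\lambda)\geqslant 0$ for $j\in J\setminus K$ once $t>0$ is small, using $(\halpha_j,\mu_0)>0$ and $(\halpha_j,\lambda)\geqslant 0$. Hence $\mu_t\in D_J$ for small $t>0$, and $\mu_0=\tfrac{1}{1+t}\mu_t+\tfrac{t}{1+t}\lambda$ exhibits $\mu_0\in F$ as a convex combination of $\mu_t,\lambda\in D_J$, so the face property forces $\lambda\in F$. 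Therefore ${}^\perp K\subseteq F$.

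I expect the surjectivity step to be the main obstacle, and within it the inclusion ${}^\perp K\subseteq F$: one must build, from an arbitrary point $\lambda$ of ${}^\perp K$, a convex decomposition of an interior point of $F$ that uses $\lambda$ and stays inside $D_J$, which is precisely what the ``inflation'' $\mu_t=(1+t)\mu_0-t\lambda$ accomplishes. The only mild subtleties elsewhere are the conventions — treating $D_J\subset\h^*$ as a real convex set and using linear independence of the $\halpha_j$, $j\in J$, so that all the prescribed weights exist.
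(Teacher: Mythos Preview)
Your proof is correct. The paper does not prove this lemma: it is stated as a ``reminder'' of the face structure of $D_J$ and treated as standard, since $D_J$ is (as you note parenthetically) linearly isomorphic to the product of a closed orthant $\R^{\geqslant 0}_J$ with a linear subspace, whose face poset is well known. Your direct argument is a clean, self-contained substitute; the surjectivity step via the inflation $\mu_t=(1+t)\mu_0-t\lambda$ is exactly the right idea, and your use of linear independence of $\{\halpha_j:j\in J\}$ to produce the separating weight $\mu$ is the only ingredient needed for injectivity.
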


Let us call the interior of a face of $D_J$ a {\em stratum} of $D_J$.
With this, our earlier observation is that the combinatorial isomorphism
class of $P(\lambda, J)$ is constant on strata of $D_J$. 

To explain the sharpening, we recall a notion from convexity,
cf.~\cite{bar}. Two convex sets $C, C'$ in a real vector space $E$ are
said to be {\em strongly combinatorially isomorphic} if for all
functionals $\phi \in E^*$, we have
(i): $\phi$ attains a maximum value on $C$ if and only if $\phi$ attains
a maximum value on $C'$, and
(ii) if the maxima exist, writing $F, F'$ for the corresponding exposed
faces of $C,C'$, we have $\dim F = \dim F'$.
Informally, strong combinatorial isomorphism is combinatorial isomorphism
plus `compatible' embeddings in a vector space.

\begin{pro}\label{sc2}
Fix $\lambda, \lambda' \in D_J$. Then:
\begin{enumerate}
\item If $P(\lambda, J)$ and $P(\lambda', J)$ are strongly
combinatorially isomorphic, then $\lambda$ and $\lambda'$ lie in the same
stratum of $D_J$. 

\item If $\lambda, \lambda'$ have finite stabilizer in $W_J$, then the
converse to (1) holds, i.e.~if they lie in the same stratum $D_J$, then
$P(\lambda, J)$ and $P(\lambda', J)$ are strongly combinatorially
isomorphic. 
\end{enumerate}
\end{pro}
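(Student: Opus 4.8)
\emph{Part (1).} The plan is to recover the zero set $\{j\in J : (\halpha_j,\lambda)=0\}$ — which by Lemma~\ref{Lstrata} determines the stratum of $\lambda$ — from the strong combinatorial type. For each $j\in J$ I would test against $\phi_j:=\sum_{i\in I\setminus\{j\}}\check{\omega_i}$, with $\check{\omega_i}$ the fundamental coweights of Lemma~\ref{arefaces}. Since $P(\mu,J)\subseteq\mu-\R^{\geqslant 0}\pi$ for every $\mu\in D_J$, writing a point of $P(\mu,J)$ as $\mu-\sum_l c_l\alpha_l$ with $c_l\geqslant 0$ gives $(\phi_j,\mu-\sum_l c_l\alpha_l)=(\phi_j,\mu)-\sum_{l\neq j}c_l$, so $\phi_j$ attains its maximum over $P(\mu,J)$ exactly on $P(\mu,J)\cap(\mu-\R^{\geqslant 0}\alpha_j)$. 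This intersection is the segment $[\mu,s_j\mu]$: it contains both endpoints (which lie in $W_J\mu$), and a point $\mu-t\alpha_j$ with $t>(\halpha_j,\mu)$ cannot lie in $P(\mu,J)$, since applying $s_j$ (which preserves $P(\mu,J)$) would move it outside $\mu-\R^{\geqslant 0}\pi$. Thus this exposed face has dimension $1$ if $(\halpha_j,\mu)>0$ and dimension $0$ if $(\halpha_j,\mu)=0$. Hence a strong combinatorial isomorphism between $P(\lambda,J)$ and $P(\lambda',J)$ forces $(\halpha_j,\lambda)=0\iff(\halpha_j,\lambda')=0$ for every $j\in J$, i.e.\ $\lambda,\lambda'$ lie in the same stratum.

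\emph{Part (2): strategy.} Once $\lambda$ has finite stabilizer I would show that the strong combinatorial type of $P(\lambda,J)$ is governed by the translated tangent cones at its vertices, and that these see $\lambda$ only through $K:=J\cap\lambda^\perp$. The abstract tool is: \emph{if $C,C'$ are closed convex sets in a real vector space $E$ containing no affine line, and $\beta$ is a bijection from the extreme points of $C$ to those of $C'$ with $T_vC-v=T_{\beta(v)}C'-\beta(v)$ for all extreme points $v$, then $C$ and $C'$ are strongly combinatorially isomorphic.} To see this: any $\phi$ attaining a maximum on $C$ does so at an extreme point $v$ — its exposed face is a nonempty closed line-free convex set, hence has an extreme point, which is extreme in $C$ — and then $\phi$ lies in the normal cone $N_C(v)$, which equals $N_{T_vC}(v)$ and so depends only on $T_vC-v=T_{\beta(v)}C'-\beta(v)$; therefore $\phi$ attains a maximum on $C'$ as well. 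Moreover the dimension of the exposed face of $\phi$ in $C$ equals that of its tangent cone at $v$, which is the face of $T_vC-v$ cut out by $\phi$, again depending only on $T_vC-v$. By symmetry $C$ and $C'$ are strongly combinatorially isomorphic.

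\emph{Part (2): execution.} It remains to produce the bijection for $C=P(\lambda,J)$, $C'=P(\lambda',J)$, where $K:=J\cap\lambda^\perp=J\cap(\lambda')^\perp$ and $W_K=\St_{W_J}(\lambda)=\St_{W_J}(\lambda')$ (Proposition~\ref{tits}(1)) is finite by hypothesis. First, $P(\mu,J)\subseteq\mu-\R^{\geqslant 0}\pi$ is line-free, and its extreme points are exactly $W_J\mu$: by the Ray Decomposition they lie among the $w\mu$, and each $w\mu$ is exposed by $\sum_{i\in I}\check{\omega_i}$ precomposed with $w^{-1}$. Second, since $W_K$ is finite, $\mu$ lies in the interior of the $J$ Tits cone (Proposition~\ref{tits}(4)), so $P(\mu,J)$ is closed — the Weyl polyhedron analogue of $(3)\Leftrightarrow(4)$ in Theorem~\ref{clo}. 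Third, by the tangent cone computation in the proof of Theorem~\ref{cf2}, $T_\mu P(\mu,J)=P(\mu,J\cap\mu^\perp)=P(\mu,K)$; as $K\subseteq\mu^\perp$ the subgroup $W_K$ fixes $\mu$, so $P(\mu,K)=\mu-\mathcal{D}(K)$ with $\mathcal{D}(K):=\R^{\geqslant 0}\{w\alpha_i:w\in W_K,\ i\in I\setminus K\}$ a $W_K$-stable cone depending only on $K$; by $W_J$-invariance of $P(\mu,J)$ this gives $T_{w\mu}P(\mu,J)=w\mu-w\mathcal{D}(K)$. Finally, since $\lambda,\lambda'$ share the stabilizer $W_K$, the map $w\lambda\mapsto w\lambda'$ is a well-defined bijection $W_J\lambda\to W_J\lambda'$, and $w\mathcal{D}(K)$ depends only on the coset $wW_K$; hence $T_{w\lambda}P(\lambda,J)-w\lambda=-w\mathcal{D}(K)=T_{w\lambda'}P(\lambda',J)-w\lambda'$, and the fact above applies.

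\emph{Main obstacle.} The one substantive input is the closedness of $P(\mu,J)$ when $\St_{W_J}(\mu)$ is finite — the Weyl polyhedron analogue of Theorem~\ref{clo} — since this is precisely what fails for infinite stabilizer, and is what prevents the converse in part~(1) from holding in general. I would obtain it either by transcribing the proof of Theorem~\ref{clo}, or from the identity $P(\mu,J)=\conv M(\mu,J)$ for integral $\mu$ together with the dilations $d_n$ used in the proof of Theorem~\ref{cf}. The remaining ingredients — the tangent cone computation (a copy of Lemma~\ref{tcone}) and the elementary convexity fact — are routine.
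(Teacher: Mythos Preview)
Your argument for part (1) is essentially the paper's: both test with the functionals $\sum_{i\neq j}\check\omega_i$ for $j\in J$, and you simply spell out why the resulting exposed face is $[\mu,s_j\mu]$.

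For part (2) you take a genuinely different route. The paper argues directly with functionals: given $\phi$ attaining a maximum, it uses Theorem~\ref{cf2} to place $\lambda$ in $F_\phi$, then uses finiteness of $W_{J\cap\lambda^\perp}$ to arrange $(\phi,\alpha_j)\geqslant 0$ for $j\in J\cap\lambda^\perp$; since $\lambda-\epsilon\alpha_i\in P(\lambda,J)$ for $i\notin J\cap\lambda^\perp$, this forces $(\phi,\alpha_i)\geqslant 0$ for all $i$, so $F_\phi$ is the standard parabolic face for $\{i:(\phi,\alpha_i)=0\}$ in both $P(\lambda,J)$ and $P(\lambda',J)$, and Theorem~\ref{ef2} gives equal dimensions. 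No closedness is invoked.

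Your approach via matching translated tangent cones at vertices is correct in outline and conceptually clean, but as written it leans on closedness of $P(\lambda,J)$ to guarantee the exposed face $F_\phi$ has an extreme point. Two comments. First, your dilation route to closedness only handles rational $\lambda$; for irrational $\lambda\in D_J$ you would genuinely need to transcribe Theorem~\ref{clo}, and the rationality/integrality steps there require real-variable replacements. Second, you can sidestep closedness entirely: since Theorem~\ref{cf2} already classifies all faces of $P(\lambda,J)$ as $w\wt_\fl$, any nonempty exposed face automatically contains a vertex $w\lambda$, and your tangent-cone matching then goes through verbatim. With that modification your proof is complete; it is longer than the paper's but has the virtue of isolating exactly which data (the cones $w\mathcal D(K)$, depending only on $K=J\cap\lambda^\perp$ and the coset $wW_K$) control the strong combinatorial type.
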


\begin{proof}
To see (1), if $P(\lambda, J)$ and $P(\lambda', J)$ are strongly
combinatorially isomorphic, using the functionals $\sum_{i \neq j}
\check{\omega}_i, j \in J$ shows that $\lambda, \lambda'$ lie in the
interior of the same face of $D_J$. To see (2), let $\phi$ be a
functional which is bounded on $P(\lambda, J)$. By applying some $w \in
W_J$ to $\phi$, we may assume $F_\phi$ contains $\lambda$. By the
assumption of finite stabilizer in $W_J$, we may further assume $( \phi,
\alpha_j ) \geqslant 0$ for all $j \in J \cap \lambda^\perp$. Since
$\lambda - \epsilon \alpha_i \in P(\lambda, J)$ for some $\epsilon > 0$,
$i \notin J \cap \lambda^\perp$, it follows for $P(\lambda, J),
P(\lambda',J)$ that $\phi$ cuts out the standard parabolic face
corresponding to $\{ i \in I: ( \phi, \alpha_i ) \} = 0$.
As by assumption $\lambda', \lambda$ lie in the same faces of the $J$
dominant chamber, the equality of dimensions follows from Theorem
\ref{ef2}. 
\end{proof}

\begin{re}
It would be interesting to know if Proposition \ref{sc2}(2) holds without
the assumption of finite integrable stabilizer. Note also that the proof
contains a short argument for the classification of faces for $P(\lambda,
J)$ when $\lambda$ has finite integrable stabilizer. 
\end{re}

\subsection{Faces of the universal Weyl polyhedron}

We now determine the face poset of $\calp_{univ} = \calp_J$. More
generally, in this subsection we do so for $\calp \to S$ an arbitrary
family of Weyl polyhedra, where $S$ admits a convex embedding into a
finite dimensional real vector space.

Write $\mathcal{F}$ for the face poset of $S$. By our assumption on $S$,
it is stratified by the relative interiors of its faces:
\[
S = \bigsqcup_{F \in \mathcal{F}} S_F.
\]

We first note that with respect to this stratification, $\calp$ is
`constant' on strata:

\begin{lem}\label{Lstrata2}
Let $F \in \mathcal{F}$. Then for all $s,s' \in S_F$, $\lambda(s)^\perp =
\lambda(s')^\perp$, where as before $\lambda(s)$ denotes the highest
weight of the fiber over $s$.
\end{lem}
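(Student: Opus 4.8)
The plan is to reduce this to the already-established fact (Proposition~\ref{sc2}(1), or rather its underlying argument) that the standard parabolic faces of $P(\lambda,J)$, cut out by the functionals $\sum_{i\neq j}\check{\omega}_i$ for $j\in J$, detect precisely the face of $D_J$ containing $\lambda$ in its relative interior. Concretely, I would first recall that a family of Weyl polyhedra $\calp\to S$ is, by the proof of Theorem~\ref{rep}, determined by a morphism of convex sets $S\to D_J$, $s\mapsto \lambda(s)$, with the fiber over $s$ equal to $P(\lambda(s),J)$. So the content of the lemma is purely about this classifying map: I must show that $s\mapsto\lambda(s)^\perp$ is constant on the relative interior $S_F$ of each face $F$ of $S$.

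The key step is that the map $\lambda(-)\colon S\to D_J$ carries $S_F$ into a single stratum of $D_J$. Since $\lambda(-)$ is affine (a morphism of convex sets), it respects convex combinations: for $s,s'\in S_F$ and $0<t<1$ we have $\lambda(ts+(1-t)s')=t\lambda(s)+(1-t)\lambda(s')$. Now if $s,s'\in S_F$ then the whole open segment between them lies in $S_F$, hence in $\relint S$ relative to $\aff F$; applying $\lambda(-)$, the point $\lambda(s)$ cannot lie on a proper face of $D_J$ strictly smaller than the one containing $\lambda(s')$ in its interior, and vice versa, because an affine map sending a relatively open convex set into $D_J$ sends it into the relative interior of the smallest face of $D_J$ containing its image. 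More precisely: let $G$ be the smallest face of $D_J$ containing $\lambda(S_F)$; since $D_J$ is a rational polyhedral cone and $\lambda(S_F)$ is a relatively open convex set, each $\lambda(s)$, being expressible with $s$ in the relative interior of a segment in $S_F$, lies in $\relint G$ — for otherwise $\lambda(s)$ would lie on a proper face $G'\subsetneq G$, and then by the face property every point of a segment through $\lambda(s)$ inside $\lambda(S_F)$ would lie in $G'$, contradicting minimality of $G$. Hence $\lambda(S_F)\subset\relint G$, i.e. $\lambda(S_F)$ lies in a single stratum of $D_J$. By Lemma~\ref{Lstrata}, a stratum of $D_J$ is exactly a locus where $\lambda^\perp$ (intersected with $J$) is constant, so $\lambda(s)^\perp$ is constant on $S_F$.

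The main obstacle, such as it is, is bookkeeping rather than mathematics: one must be careful that "$\lambda^\perp$" here means $\{i\in I:(\halpha_i,\lambda)=0\}$, whereas the stratification of $D_J$ only pins down $J\cap\lambda^\perp$; but $\lambda\in D_J$ only constrains the coordinates indexed by $J$, and for $i\notin J$ the condition $(\halpha_i,\lambda)=0$ is automatically shared along the whole family since — unwinding the construction — the highest weights $\lambda(s)$ all move within a fixed coset structure only affecting the $J$-directions is \emph{not} automatic, so in fact one should simply note that the relevant invariant governing $P(\lambda,J)$ and its faces (via $J_{\min},J_{\max}$ computed with $I_V$ replaced by $J$, cf.~Section~\ref{Srealfaces}) is precisely $J\cap\lambda(s)^\perp$, and it is this that the argument shows to be constant; I would phrase the conclusion in those terms to match how the lemma is used subsequently. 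With that caveat, the proof is a two-line consequence of affineness of $\lambda(-)$ and the face structure of $D_J$ from Lemma~\ref{Lstrata}.
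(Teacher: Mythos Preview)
Your approach is essentially identical to the paper's, which compresses the argument to a single line: under the convex morphism $F \to D_J$, if an interior point of $F$ lands in the interior of a face of $D_J$, then all of $\relint F$ does. Your observation that the argument only pins down $J \cap \lambda(s)^\perp$ rather than the full $\lambda(s)^\perp$ is well-taken---the paper's proof has the same feature, and indeed only $J \cap \lambda^\perp$ is needed for the stated consequence (combinatorial isomorphism of fibers) and for the subsequent uses of the lemma.
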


In particular, the fibers over $S_F$ are combinatorially isomorphic.

\begin{proof}
Under the corresponding convex morphism $F \to D_J$, if an interior point
of $F$ lands in the interior of a face of $D_J$, then the interior of $F$
lands in the interior of that face.
\end{proof}

We now produce `tautological' faces of $\calp$ by taking families of
standard parabolic faces along the closure of a stratum. 

\begin{pro}
For $F \in \mathcal{F}$, and a standard Levi $\fl$ of $\g$, the locus:
\[
\wt_{\fl, F} := \{ (s, x): s \in F, \ x \in \wt_\fl P(\lambda(s), J) \}
\]
is a face of $\calp$. 
\end{pro}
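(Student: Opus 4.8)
The plan is to verify the face condition directly from the definition: if a convex combination $\sum_k t_k (s_k, x_k)$ of points of $\calp$ lands in $\wt_{\fl, F}$, we must show each $(s_k, x_k) \in \wt_{\fl, F}$. First I would split this into two independent conditions, one on the base and one fiberwise. For the base, note that the image of $\wt_{\fl,F}$ under the projection $\pi \colon \calp \to S$ is exactly $F$, which is a face of $S$ by hypothesis; hence if $\sum_k t_k s_k \in F$, then all $s_k \in F$. This reduces the problem to showing that, having fixed all $s_k \in F$, the fiberwise condition $x_k \in \wt_\fl P(\lambda(s_k), J)$ holds for each $k$.

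For the fiberwise part, the key tool is the additivity axiom (2) in the definition of an $S$-family together with the Ray Decomposition \eqref{combo} for Weyl polyhedra. Writing $\lambda_k := \lambda(s_k)$ and $s := \sum_k t_k s_k$, we have $\lambda(s) = \sum_k t_k \lambda_k$ by \eqref{combo}, and the fiber decomposes as $\sum_k t_k P(\lambda_k, J) = P(\lambda(s), J)$. Now recall from Lemma~\ref{arefaces} (and its analogue for $P(\lambda, J)$ in Section~\ref{Srealfaces}) that $\wt_\fl P(\lambda(s), J)$ is the \emph{exposed} face cut out by the functional $\phi := \sum_{i \in I \setminus J'} \check{\omega}_i$, where $\fl = \fl_{J'}$. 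The crucial observation is that the \emph{same} functional $\phi$ cuts out $\wt_\fl P(\lambda_k, J)$ in each summand: since every $P(\lambda_k, J)$ lies in $\lambda_k - \R^{\geqslant 0}\pi$ and is $W_J$-invariant, $\phi$ attains its maximum on each of them, and one checks the maximum value is $(\phi, \lambda_k)$ in all cases (attained precisely along $\wt_\fl P(\lambda_k, J)$). Thus $\phi$ is bounded above on the Minkowski sum with maximum $\sum_k t_k (\phi, \lambda_k) = (\phi, \lambda(s))$, and the face of the sum cut out by $\phi$ is the Minkowski sum of the faces cut out by $\phi$ in each summand — a standard fact about exposed faces of Minkowski sums. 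Consequently, if $\sum_k t_k x_k$ lies in $\wt_\fl P(\lambda(s), J) = \{y \in P(\lambda(s),J) : (\phi, y) = (\phi, \lambda(s))\}$, then each term $x_k$ must achieve $(\phi, x_k) = (\phi, \lambda_k)$, i.e.\ $x_k \in \wt_\fl P(\lambda_k, J)$, as required.

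The main obstacle I anticipate is the bookkeeping around the functional $\phi$: one must confirm that the maximum of $\phi$ on $P(\lambda_k, J)$ is indeed $(\phi, \lambda_k)$ uniformly in $k$ (so that the maxima add correctly along the Minkowski sum), and that the exposed face of a Minkowski sum of finitely many convex sets with respect to a commonly-bounded functional is the Minkowski sum of the exposed faces. The first point follows because $\check{\omega}_i$ pairs to zero with $\alpha_j$ for $j \in J'$ and to a nonnegative number with the other simple roots, so $\phi$ decreases as one moves down from $\lambda_k$ along roots outside $\pi_{J'}$, while staying constant along $\pi_{J'}$; the $W_J$-invariance handles the translates $w\lambda_k$. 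The second point is elementary and can be cited or dispatched in a line. Everything else is a direct unwinding of definitions, so the proposition follows.
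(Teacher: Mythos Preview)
Your argument is correct and rests on the same two ingredients as the paper's proof: that $F$ is a face of $S$, and that the functional $\phi = \sum_{i \notin J'} \check{\omega}_i$ (where $\fl = \fl_{J'}$) cuts out $\wt_\fl$ fiberwise. The difference is only in packaging. The paper observes that the pullback $\pi^* F = \{(s,x) \in \calp : s \in F\}$ is a face of $\calp$, and then applies the single affine functional $(s,x) \mapsto (\phi, \lambda(s) - x)$ to $\pi^* F$; since $\lambda(-)$ is a morphism of convex sets this functional is affine on the total space, it is nonnegative on $\calp$, and its vanishing locus inside $\pi^* F$ is exactly $\wt_{\fl,F}$. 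Your version unwinds the same computation by hand via the Minkowski-sum identity $P(\lambda(s),J) = \sum_k t_k P(\lambda_k,J)$ and the additivity of exposed faces under Minkowski sums. Both are fine; the paper's formulation is shorter because it avoids the Minkowski-sum bookkeeping by working with one functional on $S \times \h^*$ rather than one per fiber, but your approach makes the mechanism more transparent.
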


\begin{proof}
The pullback of the face $F$ along the projection $\pi: \calp
\rightarrow S$ yields a face $\pi^* F$ of $\calp$. As in Lemma
\ref{arefaces}, consider the exposed face of $\pi^* F$ corresponding to
the vanishing locus of $(s,x) \mapsto \sum_{i \not\in J}
(\check{\omega}_i, \lambda(s) - x)$, to obtain the desired face
$\wt_{\fl, F}$ of $\calp$.
\end{proof}

Notice that $W_J$ acts on $\calp$ via the usual action on the second
factor. Thus we obtain more faces $w \wt_{\fl, F}$ of $\calp$ -- in fact,
all of them:

\begin{theo}\label{Tface}
Every face of $\calp$ is of the form $w \wt_{\fl, F}$, for some $w \in
W_J$, standard Levi $\fl$, and $F \in \mathcal{F}$. 
\end{theo}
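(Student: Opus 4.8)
The plan is to test the face $G \subset \calp$ fibrewise against the classification of faces of a single Weyl polyhedron (Theorem \ref{cf2}), and then reconstruct $G$ from a single generic fibre using the defining additivity of a family of Weyl polyhedra together with Lemma \ref{f2}.

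First I would show that $F := \pi(G)$ is a face of $S$. Given $s = t s_1 + (1-t) s_2$ with $s_i \in S$, $t \in (0,1)$, and $(s,x) \in G$, the family axiom $F_s = t F_{s_1} + (1-t) F_{s_2}$ lets me write $x = t x_1 + (1-t) x_2$ with $x_i \in F_{s_i}$; then $(s,x) = t(s_1,x_1) + (1-t)(s_2,x_2)$ is a convex combination of points of $\calp$ lying in $G$, so $(s_i,x_i) \in G$, hence $s_i \in F$. Running the same computation with $s_1 = s_2 = s_0$ shows that, for every $s_0 \in S$, the fibre $G_{s_0} := \{x : (s_0,x)\in G\}$ is a face of $F_{s_0} = P(\lambda(s_0),J)$. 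Now I would pick $(s_0,x_0) \in \relint G$; since $\pi|_G \colon G \to F$ is a surjective morphism of convex sets it carries relative interior onto relative interior, so $s_0 \in \relint F$. By Theorem \ref{cf2}, $G_{s_0} = w\,\wt_\fl P(\lambda(s_0),J)$ for some $w \in W_J$ and standard Levi $\fl = \fl_{J'}$; since $W_J$ acts on $\calp$ carrying faces to faces and $\wt_{\fl,F}$ to $w\wt_{\fl,F}$, after replacing $G$ by $w^{-1}G$ (and $x_0$ by $w^{-1}x_0$) I may assume $G_{s_0} = \wt_\fl P(\lambda(s_0),J) = (\wt_{\fl,F})_{s_0}$.

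It then remains to identify $G$ with $\wt_{\fl,F}$. Here I would first record that $\wt_{\fl,F} \to F$ is itself a family of Weyl polyhedra: its fibre $\wt_\fl P(\lambda(s),J)$ is the exposed face of $P(\lambda(s),J)$ along $\sum_{i \notin J'}\check{\omega}_i$, and since $\lambda(\cdot)$ is a morphism of convex sets (as observed in the proof of Theorem \ref{rep}), $P(\cdot,J)$ satisfies the family axiom by \eqref{combo}, while passing to the exposed face of a fixed functional commutes with Minkowski sums and with scaling. Now, since $x_0 \in G_{s_0} = (\wt_{\fl,F})_{s_0}$ we have $(s_0,x_0)\in \wt_{\fl,F}$, and as $(s_0,x_0) \in \relint G$, Lemma \ref{f2} gives $\wt_{\fl,F} \supseteq G$; in particular $G$ is a face of the convex set $\wt_{\fl,F}$, with $G_{s_0} = (\wt_{\fl,F})_{s_0}$. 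Finally, for any $(s_1,x_1) \in \wt_{\fl,F}$, using $s_0 \in \relint F$ choose $s_2 \in F$ and $t \in (0,1)$ with $s_0 = t s_1 + (1-t)s_2$; by the family axiom for $\wt_{\fl,F}$, any $x_2 \in (\wt_{\fl,F})_{s_2}$ satisfies $t x_1 + (1-t)x_2 \in (\wt_{\fl,F})_{s_0} = G_{s_0}$, so $t(s_1,x_1)+(1-t)(s_2,x_2)$ is a convex combination of points of $\wt_{\fl,F}$ lying in the face $G$, whence $(s_1,x_1) \in G$. Thus $G \supseteq \wt_{\fl,F}$, so $G = \wt_{\fl,F}$, which (undoing the twist) is of the asserted form $w\wt_{\fl,F}$.

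The step I expect to require the most care is the verification that $\wt_{\fl,F}$ is itself a family of Weyl polyhedra — concretely, that taking the exposed face along $\sum_{i\notin J'}\check{\omega}_i$ is compatible with the convex combinations in \eqref{combo}, for which one must check the relevant maxima are attained (this is where $P(\lambda,J) \subset \lambda - \R^{\geqslant 0}\pi$ enters) — together with the initial bookkeeping that $\pi(G)$ is a face of $S$; the remaining reconstruction is a soft interplay of Lemma \ref{f2} with the family axiom and Theorem \ref{cf2}.
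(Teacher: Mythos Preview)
Your argument is correct and takes a genuinely different route from the paper's. The paper picks an interior point $(s,x)$ of the face, identifies the stratum $S_F$ containing $s$, and then builds an explicit ``product neighborhood'' of $(s,x)$ inside $\wt_{\fl,F}$ by taking a simplex in the fibre and deforming its vertices via the Ray Decomposition \eqref{rayface} over a neighborhood of $s$ in $S_F$; a dimension count then shows $(s,x) \in \relint \wt_{\fl,F}$, and a single application of Lemma \ref{f2} finishes. You instead work more structurally: you first prove $\pi(G)$ is a face of $S$ and each fibre $G_{s}$ is a face of $F_s$, identify the fibre at one interior point via Theorem \ref{cf2}, get $G \subset \wt_{\fl,F}$ from Lemma \ref{f2}, and then push the fibrewise identification to all of $F$ using the additivity of the family. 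Your approach trades the explicit simplex deformation for the verification that exposed faces along a fixed functional commute with Minkowski sums (your ``family axiom'' for $\wt_{\fl,F}$), which is clean and elementary. One small terminological point: the fibres $\wt_\fl P(\lambda(s),J)$ are Weyl polyhedra only after passing to the Levi $\fl_{J'}$ with integrability $J \cap J'$, not for $\g$ with integrability $J$ (cf.\ the Remark following the theorem in the paper); but you only use the additivity, which you verify correctly, so this does not affect the argument.
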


\begin{proof}
For any face of $\calp$, choose an interior point $(s, x),\ s \in S,\ x
\in P(\lambda(s),J)$. By acting by $W_J$, we may assume that $x$ is $J$
dominant. By Theorem \ref{cf2} and  the analogue of Proposition
\ref{orbit}, $x$ is an interior point of $\wt_{\fl} P(\lambda(s),  J)$
for a standard Levi $\fl$. For some $F \in \mathcal{F}$, $s \in S_F$.
As in the proof of Theorem \ref{cf}, we produce a ``product''
neighborhood of $(s,x)$ in $\wt_{\fl,F}$ in the following manner. Take a
relatively open simplex in $\wt_\fl P(\lambda(s),J)$ containing $x$.
Writing its vertices as convex combinations of the terms in the Ray
Decomposition \eqref{rayface}, in an open neighborhood of $s$ in $S_F$,
the corresponding convex combinations are affine independent. By
dimension counting, $(s, x)$ is an interior point of $\wt_{\fl, F}$, and
we are done by Lemma \ref{f2}.
\end{proof}

\begin{re}
Theorem \ref{Tface} shows that every face of an $S$ family of Weyl
polyhedra is again a family of Weyl polyhedra, provided one passes to (i)
a face of the base, and to (ii) a Levi subalgebra of $\g$.
\end{re}

We next deduce the structure of the face poset of $\calp$ from our prior
analysis of the face posets of the fibers. 

\begin{pro}\label{ezpz}
Let $J', J'' \subset I$, with associated Levi subalgebras $\fl', \fl''$,
and let $F', F'' \in \mathcal{F}$. Below, we calculate $J'_{\min}$ and
$J'_{\max}$ for $s$ lying in the stratum of $S$ corresponding to $F'$,
cf.~Lemma \ref{Lstrata2}. Then:
\begin{enumerate}
\item  $\wt_{\fl', F'} \subset \wt_{\fl'', F''}$ if and only if (i) $F'
\subset F''$ and (ii) $J'_{\min} \subset J''$.

\item $\wt_{\fl', F'} = \wt_{\fl'', F''}$ if and only if (i) $F' = F''$
and (ii) $J'_{\min} \subset J'' \subset J'_{\max}$. 

\item For $w \in W_J$, we have $w \wt_{\fl', F'} \subset \wt_{\fl'',
F''}$ if and only if (i) $F' \subset F''$, and (ii) for an interior point
$s$ of $F'$, we have $w \wt_{\fl'} P(\lambda(s), J) \subset \wt_{\fl''}
P(\lambda(s), J)$, cf.~Proposition \ref{Pstabb}.

\item The stabilizer of $\wt_{\fl', F'}$ in $W_J$ is  $W_{J'_{\max} \cap
J}$.
\end{enumerate}
\end{pro}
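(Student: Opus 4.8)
The plan is to bootstrap each assertion from the corresponding fiberwise statement, using the projection $\pi : \calp \to S$ and the stratification constancy of Lemma \ref{Lstrata2}. First I would observe that $\wt_{\fl,F}$ fibers over the face $F \subset S$: under $\pi$ the image of $\wt_{\fl,F}$ is exactly $F$, and the fiber over $s \in F$ is $\wt_\fl P(\lambda(s),J)$, a face of $P(\lambda(s),J)$. So a containment $\wt_{\fl',F'} \subset \wt_{\fl'',F''}$ forces $F' = \pi(\wt_{\fl',F'}) \subset \pi(\wt_{\fl'',F''}) = F''$ at the level of bases, giving the ``only if'' direction of (i) in parts (1)--(3). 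Given $F' \subset F''$, containment of total spaces is then equivalent to fiberwise containment over every $s \in F'$; and since by Lemma \ref{Lstrata2} the relevant combinatorial data $\lambda(s)^\perp$ (hence $J'_{\min}, J'_{\max}$, and which $W_J$-translates of standard parabolic faces are nested) is constant as $s$ ranges over the interior of $F'$, and the faces vary continuously, it suffices to check the single fiber over an interior point $s$ of $F'$. This reduces (1) to Theorem \ref{ef2}, (2) to the analogue of Theorem \ref{eq} for $P(\lambda,J)$, and (3) to the analogue of Proposition \ref{Pstabb}, all of which hold for Weyl polyhedra by the discussion at the end of Subsection \ref{Srealfaces}.

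For part (4), the stabilizer of $\wt_{\fl',F'}$ in $W_J$: since $W_J$ acts only on the second ($\h^*$) factor, it fixes $\pi^* F'$ setwise, so $w \in W_J$ stabilizes $\wt_{\fl',F'}$ if and only if it stabilizes the fiber over every (equivalently, some interior) $s \in F'$, i.e. $w\,\wt_{\fl'} P(\lambda(s),J) = \wt_{\fl'} P(\lambda(s),J)$. By the analogue of Theorem \ref{wstabb} for $P(\lambda(s),J)$ with $\lambda(s)^\perp \cap J$ in the role of $I_V \cap \lambda^\perp$, this stabilizer is $W_{J'_{\max} \cap J}$ — here one uses Lemma \ref{Lstrata2} to know $J'_{\max}$ is well-defined independent of the choice of interior point. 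A small point to dispatch carefully: the convexity-theoretic claim that a face of $\calp$ contained in $\pi^* F''$ maps onto a subset of $F''$, and conversely that a face which is ``constant'' over $F'$ and fiberwise a face is itself a face — the latter is exactly the content already used in proving Theorem \ref{Tface}, so I would simply invoke that.

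The main obstacle I anticipate is justifying the reduction ``containment of total spaces $\Leftrightarrow$ fiberwise containment over a single interior point of $F'$'' rigorously, rather than just over all of $F'$. The subtlety is that $\wt_{\fl',F'}$ includes the boundary fibers over $\partial F' \subset F''$ as well, and one must argue these impose no extra condition: this follows because $\wt_{\fl',F'}$ is the closure of its part over the interior stratum $S_{F'}$ (the Ray Decomposition \eqref{rayface} exhibits each fiber as a convex hull of rays whose generators depend continuously, indeed affine-linearly, on $\lambda(s)$), and containment of closures is detected on a dense subset. Once this continuity/density point is nailed down — for which the affine-linear dependence in \eqref{combo} and \eqref{rayface} is the key tool — the rest is a routine translation of the fiberwise results of Subsections \ref{Srealfaces}--\ref{Sstrong} into the total space.
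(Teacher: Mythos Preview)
Your proposal is correct and matches the paper's approach: the paper does not give an explicit proof of this proposition, but the sentence preceding it (``We next deduce the structure of the face poset of $\calp$ from our prior analysis of the face posets of the fibers'') makes clear that exactly the reduction you describe---project to the base to get (i), then compare fibers over an interior point of $F'$ and invoke the results of Subsection~\ref{Srealfaces}---is intended. You have supplied the details the paper leaves implicit, including the one genuine point needing care (passing from containment over interior points to containment over all of $F'$), and your closure/continuity argument via the affine-linear dependence in \eqref{combo} and \eqref{rayface} handles it; alternatively one can note directly that degenerating $\lambda(s)$ to the boundary of $F'$ only enlarges $\lambda(s)^\perp$, hence only shrinks $(J')_{\min}$ computed at $s$, so fiberwise containment at an interior point forces it everywhere on $F'$.
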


In particular, we obtain the following formula for the $f$-polynomial of
$\calp$:

\begin{pro}\label{PFpoly}
For $F \in \mathcal{F}$,
write $f_F(q)$ for the $f$ polynomial of $P(\lambda(s), J), s \in S_F$.
Then:
\[
f_{\calp}(q) = \sum_{F \in \mathcal{F}} q^{\dim F} f_F(q).
\]
\end{pro}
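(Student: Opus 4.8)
The plan is to count the faces of $\calp$ by grouping them according to the face of the base $S$ from which they are induced, and then, within each group, to identify the faces of $\calp$ with the faces of a single fiber, up to a uniform dimension shift. Summing the resulting generating functions yields the stated identity.

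First I would fix the bookkeeping. By Theorem \ref{Tface} every face of $\calp$ has the form $w\wt_{\fl,F}$ for some $w\in W_J$, standard Levi $\fl$, and $F\in\mathcal{F}$. I would check that $F$ is determined by the face: if $w\wt_{\fl',F'}=w'\wt_{\fl'',F''}$, then applying $(w')^{-1}$ and invoking Proposition \ref{ezpz}(3) in both directions forces $F'\subset F''$ and $F''\subset F'$, hence $F'=F''$. This gives a partition of the face poset of $\calp$ into the subsets $\{w\wt_{\fl,F}\}_{w,\fl}$ indexed by $F\in\mathcal{F}$. I would also record that $f_F(q)$ is well defined: by Lemma \ref{Lstrata2} and the remark following Theorem \ref{ef2}, the face poset, hence the $f$-polynomial, of $P(\lambda(s),J)$ depends only on $J\cap\lambda(s)^\perp$, which is constant for $s\in S_F$.

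Next, fix $F\in\mathcal{F}$ and $s\in S_F$. I claim the assignment $w\wt_{\fl,F}\mapsto w\wt_{\fl}P(\lambda(s),J)$ is a bijection from the faces of $\calp$ lying in the $F$-group onto the face poset of the fiber $P(\lambda(s),J)$. Surjectivity is Theorem \ref{cf2} together with its $W_J$-equivariant refinement; well-definedness and injectivity come from Proposition \ref{ezpz}(2),(4) matched against the fiber analogues of Theorem \ref{eq} and Theorem \ref{wstabb}, which say that $(w,\fl)$ and $(w',\fl')$ produce the same face of $\calp$ over $F$ precisely when they produce the same face of the fiber. I would then establish the dimension formula
\[
\dim\bigl(w\wt_{\fl,F}\bigr)=\dim F+\dim\bigl(w\wt_{\fl}P(\lambda(s),J)\bigr):
\]
the inequality $\leqslant$ follows by slicing over $F$ (each slice $\{s\}\times\aff\wt_\fl P(\lambda(s),J)$ has dimension $\dim\wt_\fl P(\lambda(s),J)$, constant on $S_F$, and $s$ varies in a set of dimension $\dim F$), and the inequality $\geqslant$ is exactly the product-neighbourhood estimate from the proof of Theorem \ref{Tface}, where $\wt_{\fl,F}$ is shown to contain a set homeomorphic to an open subset of $S_F$ times a relatively open simplex of maximal dimension in $\wt_\fl P(\lambda(s),J)$.

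Combining these, the number of $i$-dimensional faces of $\calp$ equals $\sum_{F\in\mathcal{F}}\#\{\text{faces of }P(\lambda(s),J)\text{ of dimension }i-\dim F\}$, interpreted as $0$ when $i<\dim F$ and allowing the value $\infty$ when the fiber has infinitely many faces of that dimension. Packaging this as a power series in $q$ gives $f_{\calp}(q)=\sum_{F\in\mathcal{F}}q^{\dim F}f_F(q)$. All arithmetic takes place in $\Z^{\geqslant 0}\cup\{\infty\}$, which poses no problem since the identity is a coefficientwise comparison. The only step needing genuine care is the dimension formula and the compatibility of the bijection above with $\dim$; both are light consequences of the proof of Theorem \ref{Tface} and of Proposition \ref{ezpz}, so I do not expect a serious obstacle.
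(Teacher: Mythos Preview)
Your proposal is correct and follows essentially the same approach as the paper, which states Proposition \ref{PFpoly} as a direct consequence of Theorem \ref{Tface} and Proposition \ref{ezpz} without further elaboration. You have simply made explicit the two ingredients the paper leaves implicit: that the face $F$ of the base is uniquely determined (your use of Proposition \ref{ezpz}(3) in both directions), and the dimension formula $\dim(w\wt_{\fl,F})=\dim F+\dim(w\wt_{\fl}P(\lambda(s),J))$, whose $\geqslant$ direction is indeed embedded in the product-neighborhood argument of Theorem \ref{Tface}.
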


The above results may be specialized to the case of the universal Weyl
polyhedron $\calp_{univ} = \calp_J$ using the description of the face
poset of the base $D_J$ reminded in Lemma \ref{Lstrata}.
Before doing so, we first determine when the universal Weyl polyhedron is
indeed polyhedral.

\begin{pro}\label{Ppoly}
$\calp_J$ is a polyhedron if and only if $J$ is of finite type.
\end{pro}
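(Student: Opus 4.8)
The plan is to characterize polyhedrality of $\calp_J$ via the Weyl--Minkowski criterion (Proposition \ref{Tdecomp}): a convex set is a polyhedron exactly when it is the Minkowski sum of a finitely generated cone and the convex hull of finitely many points. I will treat the two directions separately.

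For the ``if'' direction, suppose $J$ is of finite type. Then $W_J$ is finite, and I claim $\calp_J$ is a finitely generated cone, hence a polyhedron. Recall $\calp_J = \{(\lambda,\nu) : \lambda \in D_J,\ \nu \in P(\lambda,J)\}$, and by the Ray Decomposition for Weyl polyhedra, $P(\lambda, J) = \conv \bigcup_{w \in W_J,\ i \in I \setminus J} w(\lambda - \R^{\geqslant 0}\alpha_i)$. The key point is that the assignment $\lambda \mapsto w\lambda$ and $\lambda \mapsto w(\lambda - t\alpha_i)$ are linear in $(\lambda, t)$, so $\calp_J$ is the image under a linear map of $D_J \times \R^{\geqslant 0}\pi_{I\setminus J}$, combined with the convex-hull operation over the \emph{finite} set $W_J \times (I \setminus J)$. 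More precisely, $\calp_J$ is the convex hull of the finitely many rays $\{(\lambda, w\lambda) : \lambda \text{ ranges over the finitely many extremal rays of } D_J,\ w \in W_J\}$ together with the rays $\{(0, -w\alpha_i) : w \in W_J,\ i \in I\setminus J\}$ (using that $D_J$ itself is a finitely generated cone since $A_J$ is invertible of finite type, so $D_J = \R^{\geqslant 0}\{\check\omega_j^\vee\text{-duals}\} + (\text{lineality})$). Since all index sets are finite, Proposition \ref{Tdecomp} gives that $\calp_J$ is a polyhedron. I should double-check that $D_J$ is genuinely finitely generated as a cone (it is, in finite type, being a simplicial cone modulo the fixed subspace $\bigcap_j \ker \halpha_j$), and that taking the union of fibers over a polyhedral base with fiberwise-polyhedral, linearly-varying fibers stays polyhedral; this is where the finiteness of $W_J$ is essential.

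For the ``only if'' direction, suppose $J$ is of infinite type; I must show $\calp_J$ is not a polyhedron. The cleanest route is to exhibit a fiber which is not a polyhedron, since a fiber $\pi^{-1}(\lambda)$ is the intersection of $\calp_J$ with the affine (in fact linear, after translating) slice $\{\lambda\} \times \h^*$, and the intersection of a polyhedron with an affine subspace is again a polyhedron. Now pick $\lambda$ in the relative interior of $D_J$ — so $\lambda$ has trivial (or at least finite) stabilizer in $W_J$ and in particular $J \subseteq I_V$ for the corresponding module. The fiber is $P(\lambda, J) = \conv U(\fl_J)V_\lambda$ up to the identification with a highest weight module of integrability $J$. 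Since $J$ is of infinite type, $W_J$ is infinite, and by Theorem \ref{clo} (applied to $\g_J$, or directly) $\conv M(\lambda, J)$ — equivalently $P(\lambda,J)$ — is not a polyhedron: it has infinitely many vertices, namely the infinitely many distinct $W_J$-translates $w\lambda$, which are all vertices (extreme points) of $P(\lambda, J)$ by the Ray Decomposition. A polyhedron has only finitely many faces, so $P(\lambda,J)$ is not a polyhedron, hence neither is $\calp_J$.

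\textbf{The main obstacle} I anticipate is the ``if'' direction: one must carefully verify that the total space $\calp_J$, not merely each fiber, is finitely generated. The subtlety is that the base $D_J$ is a cone with apex $0$ whose fiber there is just $\{0\}$ (or $P(0,J) = \{0\}$), while fibers grow linearly, so $\calp_J$ really is a cone with apex at the origin $(0,0)$; identifying a finite generating set amounts to writing down the extreme rays, which come from (extreme rays of $D_J$) $\times$ ($W_J$-orbit of highest weight) together with the negative-root rays $(0, -w\alpha_i)$. Making the bookkeeping precise — especially handling the lineality space of $D_J$ when $A_J$ is not of full rank in $\h^*$, and confirming no spurious extreme rays appear — is the place where care is needed. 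Everything else (Proposition \ref{Tdecomp}, the vertex count from the Ray Decomposition, the slicing argument) is routine. Alternatively, for the ``only if'' direction one could invoke Theorem \ref{clo} directly once one has reduced to a fiber, which sidesteps re-deriving the infinitude of vertices.
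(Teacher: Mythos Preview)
Your proposal is correct and, for the ``if'' direction, essentially identical to the paper's: the Proposition immediately following Proposition \ref{Ppoly} records an explicit Minkowski decomposition of $\calp_J$ into the rays $\R^{\geqslant 0}(0,-w\alpha_i)$, $\R^{\geqslant 0}(\omega_j, w\omega_j)$ and the lines $\R(\nu_k,\nu_k)$, which is precisely the finite generating set you describe (your anticipated lineality-space issue is exactly the $\nu_k$ term). For the ``only if'' direction you slice at a fiber $\{\lambda\}\times P(\lambda,J)$ and observe that $P(\lambda,J)$ has infinitely many vertices $w\lambda$, whereas the paper instead observes directly that $\calp_J$ has infinitely many faces, invoking the face classification of the total space (Theorem \ref{Tface}) over the open stratum of $D_J$. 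Your route is marginally more self-contained, needing only that affine slices of polyhedra are polyhedra rather than the face enumeration of $\calp_J$; the paper's is a one-line consequence of what it has already developed. One small point: your appeal to Theorem \ref{clo} is not quite apt, since that result concerns $\conv V$ for modules over indecomposable infinite-type $\g$ rather than $P(\lambda,J)$ for real $\lambda$, but your direct vertex-count argument stands on its own and makes that citation unnecessary.
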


\begin{re}
In fact, one can show that if $J$ is of finite type, then for any family
$\calp \to S$, $\calp$ is a polyhedron if and only if $S$ is.
\end{re}

\begin{proof}[Proof of Proposition \ref{Ppoly}]
If $J$ is not of finite type, consider the tautological faces over the
open stratum of $D_J$. If $J$ is of finite type, the polyhedrality of
$\calp_J$ follows from the following observation.
\end{proof}

\begin{pro}
For $J \subset I$ arbitrary, $\calp_J$ is the Minkowski sum of the
following rays and lines:
\[
\R^{\geqslant 0} (0, -w \alpha_i), \quad
\R^{\geqslant 0} (\omega_j, w \omega_j), \quad
\R (\nu_k, \nu_k),
\]

\noindent where $i \in I \setminus J,\ w \in W_J,\ j \in J$, the
$\omega_j$ satisfy: $(\halpha_{j'}, \omega_j) = \delta_{j,j'}, \ \forall
j,j' \in J$, and $\nu_k$ form a basis of $\bigcap_{j \in J}
\halpha_j^\perp$.
\end{pro}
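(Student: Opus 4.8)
The plan is to prove the two inclusions between $\calp_J$ and the claimed set, which I will denote by $Q$; here the "Minkowski sum" of the (possibly infinitely many, when $W_J$ is infinite) rays and lines is understood as the set of all finite sums of elements taken one from each of finitely many of the summands, so that $Q$ is a convex cone. First I would record that $\calp_J$ is itself a convex cone: from the definition of $P(\lambda,J)$ one has $P(c\lambda,J)=cP(\lambda,J)$ for $c\geqslant 0$, and applying \eqref{combo} with $\lambda_1=2\lambda$, $\lambda_2=2\lambda'$, $t_1=t_2=\tfrac12$ gives $P(\lambda,J)+P(\lambda',J)=P(\lambda+\lambda',J)$; since $D_J$ is a cone, $\calp_J$ is closed under addition and nonnegative scaling (this is also part of Theorem \ref{Tuniversal}).

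For the inclusion $Q\subseteq\calp_J$ it then suffices to check that each generating ray or line lies in $\calp_J$, which is a direct verification. For $i\in I\setminus J$, $w\in W_J$, $t\geqslant 0$, we have $0\in D_J$ and $-tw\alpha_i=w(0-t\alpha_i)\in P(0,J)$. For $j\in J$, $w\in W_J$, $t\geqslant 0$: since $(\halpha_{j'},\omega_j)=\delta_{jj'}\geqslant 0$ we have $t\omega_j\in D_J$, and $tw\omega_j\in tP(\omega_j,J)=P(t\omega_j,J)$ because $w\omega_j=w(\omega_j-0\cdot\alpha_i)$ lies on a generating ray of $P(\omega_j,J)$ (or $w\omega_j\in W\omega_j\subset P(\omega_j,I)$ when $J=I$). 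For $\nu_k\in\bigcap_{j\in J}\halpha_j^\perp$ and any $t\in\R$: $(\halpha_j,t\nu_k)=0$ for all $j\in J$, so $t\nu_k\in D_J$, and $t\nu_k\in P(t\nu_k,J)$ since every $\mu\in D_J$ lies in $P(\mu,J)$.

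For the reverse inclusion $\calp_J\subseteq Q$ I would fix $(\lambda,\nu)\in\calp_J$, set $a_j:=(\halpha_j,\lambda)\geqslant 0$ for $j\in J$ and $\lambda_0:=\lambda-\sum_{j\in J}a_j\omega_j$; pairing against $\halpha_{j'}$ shows $\lambda_0\in\bigcap_{j\in J}\halpha_j^\perp$, so $\lambda_0=\sum_k c_k\nu_k$ with $c_k\in\R$. The key point is that $W_J$ fixes $\bigcap_{j\in J}\halpha_j^\perp$ pointwise — indeed $s_j$ acts there by $\mu\mapsto\mu-(\halpha_j,\mu)\alpha_j=\mu$ — so $w\lambda=\sum_{j\in J}a_j\,w\omega_j+\lambda_0$ for every $w\in W_J$. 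Expanding $\nu$ via the definition of $P(\lambda,J)$ as a convex hull of rays, $\nu=\sum_m s_m\,w_m(\lambda-t_m\alpha_{i_m})$ for finitely many terms with $w_m\in W_J$, $i_m\in I\setminus J$, $t_m\geqslant 0$, $s_m\geqslant 0$, $\sum_m s_m=1$ (when $J=I$ the $\alpha_{i_m}$-terms are absent), and substituting the formula for $w_m\lambda$ and using $\sum_m s_m=1$ the computation collapses to
\[
(\lambda,\nu)=\sum_{j\in J}a_j\Bigl(\omega_j,\ \textstyle\sum_m s_m\,w_m\omega_j\Bigr)+\sum_k c_k(\nu_k,\nu_k)+\sum_m s_m t_m\,(0,\ -w_m\alpha_{i_m}).
\]
The last two sums lie in $Q$ by inspection (the $c_k$ are real, the $s_m t_m$ nonnegative); in the first sum each inner term equals $\sum_m s_m(\omega_j,w_m\omega_j)$, a nonnegative combination of generating rays of $Q$, hence lies in the convex cone $Q$, as then does the whole first sum. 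Thus $(\lambda,\nu)\in Q$.

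I expect the only genuinely substantive step to be the bookkeeping behind that displayed identity, and in particular the observation that $W_J$ acts trivially on $\bigcap_{j\in J}\halpha_j^\perp$: this is exactly what makes the lineality directions $(\nu_k,\nu_k)$ truly diagonal and lets the $W_J$-dependence of $\nu$ be absorbed into the $(\omega_j,w\omega_j)$ and $(0,-w\alpha_i)$ terms. Everything else is routine, aside from keeping track of the degenerate case $J=I$, where the rays $(0,-w\alpha_i)$ drop out and $P(\lambda,I)=\conv(W\lambda)$.
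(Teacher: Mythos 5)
Your proof is correct, and in fact the paper offers no argument at all for this proposition -- it is stated as an ``observation'' immediately after the proof of Proposition \ref{Ppoly} -- so there is nothing to compare it against except what the statement implicitly presupposes. Your route is the natural one and is complete: Step 0 ($\calp_J$ is a convex cone, via $P(c\lambda,J)=cP(\lambda,J)$ and Equation \eqref{combo}, which also follows from Theorem \ref{Tuniversal}) reduces one inclusion to checking generators, and your verifications there are fine; for the reverse inclusion, the decomposition $\lambda=\sum_{j\in J}a_j\omega_j+\lambda_0$ with $a_j=(\halpha_j,\lambda)\geqslant 0$ and $\lambda_0\in\bigcap_{j\in J}\halpha_j^\perp$, combined with the (correct and genuinely key) observation that $W_J$ fixes $\bigcap_{j\in J}\halpha_j^\perp$ pointwise, makes the expansion of $\nu$ as a finite convex combination $\sum_m s_m\,w_m(\lambda-t_m\alpha_{i_m})$ collapse exactly to the displayed identity, whose three groups of terms visibly lie in the cone generated by the asserted rays and lines; the degenerate case $J=I$ is handled. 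Two small points worth making explicit: your reading of the infinite Minkowski sum as finite sums (equivalently, the convex cone generated by the rays and lines) is the right one and agrees with the usual notion when $W_J$ is finite, since every summand contains $0$; and your step ``$\lambda_0=\sum_k c_k\nu_k$ with $c_k\in\R$'' tacitly takes $\{\nu_k\}$ to be a basis of $\bigcap_{j\in J}\halpha_j^\perp$ as a \emph{real} vector space, which, given the paper's convention for $\h^*_\R$ and $D_J$ (Remark \ref{weird}), is the reading under which the proposition is true, so this is a clarification rather than a gap.
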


Returning to the specialization of the results about a general family
$\calp \to S$, the $f$-polynomial of the universal Weyl polyhedron is
given by:

\begin{pro}
Recalling our convention for $D_J$, cf.~Remark \ref{weird}, we have:
\begin{equation}\label{Efweyl}
f_{\calp_J}(q) = q^{2 \dim_\C \h^* - \rk \g} \sum_{K \subset J} q^{-K}
f_{P(\lambda_K, J)}(q),
\end{equation}

\noindent where $\lambda_K$ is any point in the interior of the face
${}^\perp K$ of $D_J$. 
\end{pro}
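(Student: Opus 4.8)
The plan is to obtain \eqref{Efweyl} as a direct specialization of the general $f$-polynomial formula for families, Proposition \ref{PFpoly}, applied to the tautological family $\calp_{univ} = \calp_J \to D_J$, combined with the explicit description of the face poset of the base $D_J$ recorded in Lemma \ref{Lstrata}.

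First I would apply Proposition \ref{PFpoly} with $S = D_J$ and $\calp = \calp_J$, giving
\[
f_{\calp_J}(q) = \sum_{F} q^{\dim F}\, f_F(q),
\]
the sum running over all faces $F$ of $D_J$, where $f_F(q)$ is the $f$-polynomial of the fiber $P(\lambda(s), J)$ over any $s \in \relint F$. This is well posed because, as established in Section \ref{Srealfaces} (see Lemma \ref{Lstrata2} and the concluding remark there), the combinatorial isomorphism type of $P(\lambda, J)$ depends on $\lambda$ only through $J \cap \lambda^\perp$, hence is constant along each stratum of $D_J$, so $f_F(q)$ does not depend on the choice of $s$.

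Next I would invoke Lemma \ref{Lstrata}: the faces of $D_J$ are exactly the loci ${}^\perp K$ for $K \subset J$, and $\relint {}^\perp K$ is precisely the stratum on which $J \cap \lambda^\perp = K$. Consequently, for $F = {}^\perp K$ we have $f_F(q) = f_{P(\lambda_K, J)}(q)$ for any $\lambda_K$ in the interior of ${}^\perp K$, exactly as in the statement. It then remains to identify $\dim {}^\perp K$ with the exponent $2\dim_\C \h^* - \rk \g - |K|$ appearing in \eqref{Efweyl}. This is an elementary computation from the realization of $\g$, and is the one spot where the paper's (nonstandard) convention for $D_J$ flagged in Remark \ref{weird} is used: since $\{\halpha_j : j \in J\}$ is linearly independent, evaluation against the $\halpha_j$ is a surjection of $\h^*$ onto $\C^{J}$, so $D_J$ has nonempty interior in the real subspace $\h^*_\R(J)$, and then cutting by the $|K|$ further independent real hyperplanes $\halpha_k^\perp$, $k \in K$, drops the dimension by exactly $|K|$; one reads off that $\dim {}^\perp K$ equals the claimed exponent. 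Substituting $\dim {}^\perp K$ and $f_F(q) = f_{P(\lambda_K,J)}(q)$ into the displayed sum and factoring out the common power $q^{2\dim_\C \h^* - \rk \g}$ produces \eqref{Efweyl}.

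I do not expect a genuine obstacle: the real content has already been carried out in Proposition \ref{PFpoly}, Lemma \ref{Lstrata}, and the stratum-constancy of fibers from Section \ref{Srealfaces}, so the argument is essentially bookkeeping. The only point requiring care — and the reason Remark \ref{weird} is cited in the statement — is the dimension count for the faces of $D_J$, which differs from what the smaller real form of $\h^*$ used in \cite{Kac,Kumar} would yield; fixing that normalization correctly is precisely what pins down the prefactor $q^{2\dim_\C\h^* - \rk\g}$.
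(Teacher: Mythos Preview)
Your approach is exactly the paper's: the proposition is stated there without a separate proof, being a direct specialization of Proposition \ref{PFpoly} to the tautological family $\calp_J \to D_J$ combined with Lemma \ref{Lstrata} to enumerate the faces of the base. The only additional content is the dimension count for ${}^\perp K$, which you correctly identify as the spot where the convention of Remark \ref{weird} matters.
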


\begin{re}
Under the normal convention that the $J$ dominant chamber $D_J$ lies in a
real form of $\lie{h}^*$, Equation \eqref{Efweyl} applies provided one
replaces the leading exponent with $\dim_\C \h^*$.
\end{re}

\begin{example}
Figure \ref{Fig7} shows the two universal Weyl polyhedra for $\g =
\lie{sl}_2$. In particular, $f_{\calp_I}(q) = (1+q)^2$ for the first
polyhedron. For the second, i.e.~the case of empty integrability $J =
\emptyset$, Proposition \ref{PFpoly} implies more generally for any $\g$
that:
\[
f_{\calp_\emptyset}(q) = q^{2 \dim_\C \h^* - \rk \g} (1+q)^{\rk \g}.
\]
\end{example}

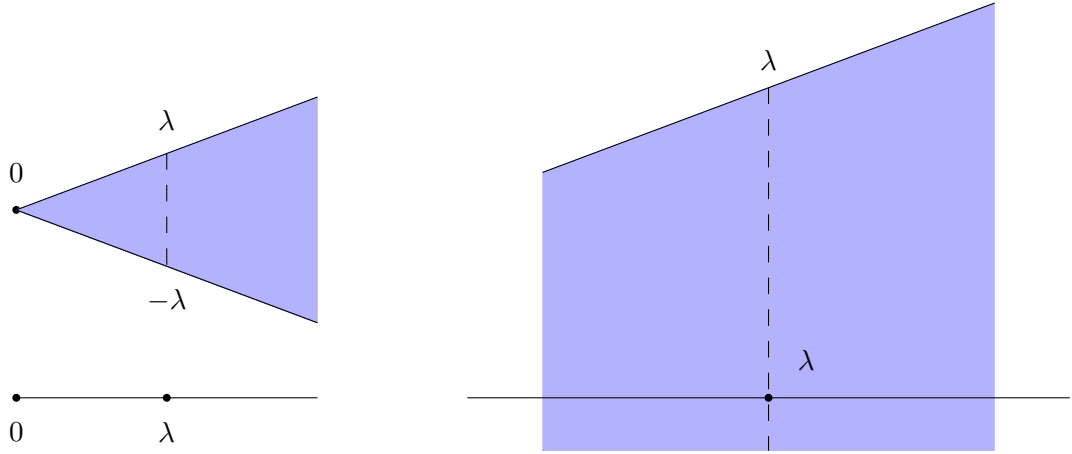
\begin{figure}[ht]
\begin{tikzpicture}[line cap=round,line join=round,>=triangle 45,x=1.0cm,y=1.0cm]
\draw (-2,1.5)-- (2,1.5);
\fill (-2,1.5) circle (1.5pt);
\draw (-2,1.05) node {$0$};
\fill (-2,4) circle (1.5pt);
\draw (-2,4.5) node {$0$};
\fill (0,1.5) circle (1.5pt);
\draw (0,1.05) node {$\lambda$};
\coordinate (LS1) at (-2,4); 
\coordinate (LS2) at (2,5.5);
\coordinate (LS3) at (2,2.5);
\draw (LS1) -- (LS3);
\draw [fill=blue!30,blue!30] (LS1) -- (LS2) -- (LS3) -- cycle;
\draw (-2,4)-- (2,5.5);
\draw (-2,4)-- (2,2.5);
\draw [dash pattern=on 6pt off 6pt] (0,4.75)-- (0,3.25); 
\draw (0,5.2) node {$\lambda$};
\draw (0,2.8) node {$-\lambda$};
\coordinate (RS1) at (11,6.75);
\coordinate (RS2) at (5,4.5); 
\coordinate (RS3) at (5,0.8);
\coordinate (RS4) at (11,0.8);
\draw (RS1) -- (RS3);
\draw [fill=blue!30,blue!30] (RS1) -- (RS2) -- (RS3) -- (RS4) -- cycle;
\draw (5,4.5)-- (11,6.75);
\draw (8,6) node {$\lambda$};
\draw (4,1.5)-- (12,1.5);
\fill (8,1.5) circle (1.5pt);
\draw (8.5,2) node {$\lambda$};
\draw [dash pattern=on 6pt off 6pt] (8,5.625)-- (8,0.8); 
\end{tikzpicture}
\caption{Universal Weyl polyhedra and their fibers over the $J$ dominant
chambers. Here $\mathfrak{g} = \mathfrak{sl}_2$, and $J = I, \emptyset$,
i.e.~full and empty integrabilities.}\label{Fig7}
\end{figure}

Of course, strata of the base $D_J$ parametrize classes of highest
weights that are `equisingular'.
Equation \eqref{Efweyl} is a numerical shadow of the fact that in the
classification of faces of $\calp_{univ}$, there is a contribution from
the combinatorial isomorphism type of each class of equisingular highest
weights exactly once. Thus, the classification of faces for
$\calp_{univ}$ is equivalent to the classification of faces of all
highest weight modules with integrability $J$.
Moreover, the combinatorial isomorphism type of $\calp_{univ}$,
i.e.~inclusions of faces, stores the data of how parabolic faces
degenerate as one specializes the highest weight.

\section{Weak families of Weyl polyhedra and concavity}\label{Sweak}

We continue to fix an arbitrary Kac--Moody algebra $\g$ and a subset $J
\subset I$ of simple roots. Recall that in the definition of a family of
Weyl polyhedra $\pi: \calp \rightarrow S$, we asked that for $s_0, s_1
\in S, 0 \leqslant t \leqslant 1$, we have:
\[
\pi^{-1} (ts_0 + (1-t)s_1) = t \pi^{-1}s_0 + (1-t) \pi^{-1} s_1.
\]

\noindent In this section, we relax this condition, and relate a more
flexible notion of a family of Weyl polyhedra to certain maps to the $J$
dominant chamber. Along the way, we prove new results even about the
convex hulls of highest weight modules, cf.~Proposition \ref{rslice}.

\begin{defn}
Let $S$ be a convex set. By a {\em weak $S$-family} of Weyl polyhedra we
mean a subset $\calp \subset S \times \h^*$ such that (i) under the
natural projection $\pi: \calp \rightarrow S$, for each $s \in S$ the
fiber $\pi^{-1}s$ is a Weyl polyhedron, and (ii) $\calp$ is convex.
\end{defn}

See Figure \ref{Fig9} for an example of an $S$-family and a weak
$S$-family.

\begin{figure}[ht]
\begin{tikzpicture}[line cap=round,line join=round,>=triangle 45,x=1.0cm,y=1.0cm]
\draw (-1,0.7) node[anchor=north west] {$S$};
\draw (-1,4) node[anchor=north west] {$\mathcal{P}$};

\draw (0,0)-- (3,0); 
\draw (3,0)-- (4,1);
\draw (4,1)-- (1,1);
\draw (1,1)-- (0,0);

\coordinate (L11) at (0,3); 
\coordinate (L12) at (0,4);
\coordinate (L13) at (1,5);
\coordinate (L14) at (1,4);
\draw [fill=blue,blue] (L11) -- (L12) -- (L13) -- (L14) -- (L11) -- cycle;
\draw (0,3)-- (0,4);
\draw (0,4)-- (1,5);
\draw[dash pattern=on 6pt off 6pt] (1,5)-- (1,4);
\draw[dash pattern=on 6pt off 6pt] (1,4)-- (0,3);

\coordinate (R11) at (3,2); 
\coordinate (R12) at (4,3);
\coordinate (R13) at (4,6);
\coordinate (R14) at (3,5);
\draw [fill=blue!40,blue!40] (R11) -- (R12) -- (R13) -- (R14) -- (R11) -- cycle;
\draw (3,2)-- (4,3);
\draw (4,3)-- (4,6);
\draw (4,6)-- (3,5);
\draw (3,5)-- (3,2);

\draw (0,3)-- (3,2);
\draw (0,4)-- (3,5);
\draw (1,5)-- (4,6);
\draw[dash pattern=on 6pt off 6pt] (1,4)-- (4,3);

\fill (2,0.5) circle (1pt); 
\draw (2,0.6) node[anchor=north west] {$s$};
\fill (2,3) circle (1pt);
\fill (2,5) circle (1pt);
\draw[dash pattern=on 6pt off 6pt] (2,3)-- (2,5);

\draw (6,0)-- (9,0); 
\draw (9,0)-- (10,1);
\draw (10,1)-- (7,1);
\draw (7,1)-- (6,0);

\coordinate (L21) at (6,3); 
\coordinate (L22) at (6,4);
\coordinate (L23) at (7,5);
\coordinate (L24) at (7,4);
\draw [fill=blue,blue] (L21) -- (L22) -- (L23) -- (L24) -- (L21) -- cycle;
\draw (6,3)-- (6,4);
\draw (6,4)-- (7,5);
\draw[dash pattern=on 6pt off 6pt] (7,5)-- (7,4);
\draw[dash pattern=on 6pt off 6pt] (7,4)-- (6,3);

\coordinate (R21) at (9,2); 
\coordinate (R22) at (10,3);
\coordinate (R23) at (10,6);
\coordinate (R24) at (9,5);
\draw [fill=blue!40,blue!40] (R21) -- (R22) -- (R23) -- (R24) -- (R21) -- cycle;
\draw (9,2)-- (10,3);
\draw (10,3)-- (10,6);
\draw (10,6)-- (9,5);
\draw (9,5)-- (9,2);

\draw [shift={(9,0)}] plot[domain=1.57:2.213,variable=\t]({5*cos(\t r)},{5*sin(\t r)}); 
\draw [shift={(10,1)}] plot[domain=1.57:2.213,variable=\t]({5*cos(\t r)},{5*sin(\t r)});
\draw [shift={(9,7)}] plot[domain=-1.57:-2.213,variable=\t]({5*cos(\t r)},{5*sin(\t r)});
\draw [dash pattern=on 6pt off 6pt, shift={(10,8)}] plot[domain=-1.57:-2.213,variable=\t]({5*cos(\t r)},{5*sin(\t r)});

\fill (8,0.5) circle (1pt); 
\draw (8,0.6) node[anchor=north west] {$s$};
\fill (8,2.6) circle (1pt);
\fill (8,5.2) circle (1pt);
\draw[dash pattern=on 6pt off 6pt] (8,2.6)-- (8,5.2);
\end{tikzpicture}
\caption{Examples of an $S$ family and a weak $S$ family of Weyl
polyhedra, respectively, for $\mathfrak{g} = \mathfrak{sl}_2$ and $J =
I$. The fiber in either case is the segment between $(s,\lambda(s))$ and
$(s,-\lambda(s))$.}\label{Fig9}
\end{figure}
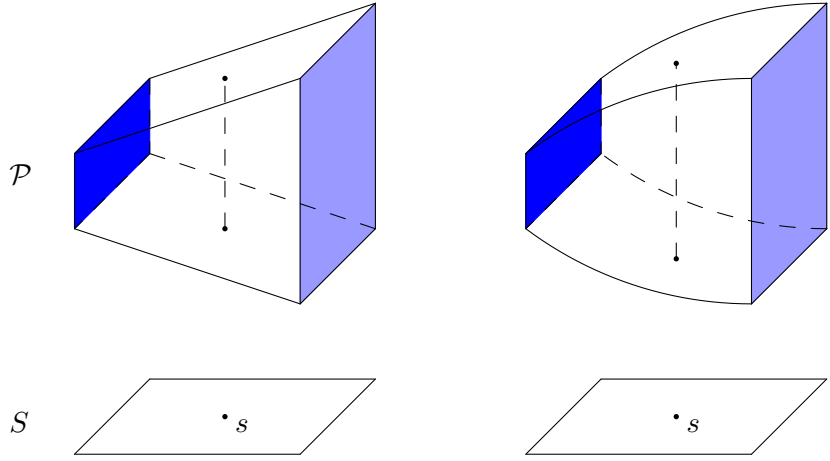

As before, we have:

\begin{lem}
If $S' \rightarrow S$ is a morphism of convex sets, and $\calp
\rightarrow S$ a weak $S$-family of Weyl polyhedra, then the pullback
$\calp \times_S S'$ is a weak $S'$-family of Weyl polyhedra.
\end{lem}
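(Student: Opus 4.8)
The plan is to verify the two defining properties of a weak $S'$-family directly, reducing everything to the fact that a morphism of convex sets preserves convex combinations. First I would unwind the pullback. Writing $\phi \colon S' \to S$ for the given morphism, and recalling $\calp \subset S \times \h^*$ with its projection to $S$, the fiber product $\calp \times_S S'$ is naturally identified with
\[
\{ (s', x) \in S' \times \h^* : (\phi(s'), x) \in \calp \} \subset S' \times \h^*.
\]
Under this identification the fiber over $s' \in S'$ is precisely $\pi^{-1}(\phi(s'))$, which is a Weyl polyhedron since $\calp \to S$ is a weak family; this is property (i).

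For property (ii), I would show $\calp \times_S S'$ is convex by a one-line computation: given points $(s'_0, x_0), (s'_1, x_1)$ of the pullback and $t \in [0,1]$, the morphism $\phi$ satisfies $\phi(t s'_0 + (1-t) s'_1) = t\,\phi(s'_0) + (1-t)\,\phi(s'_1)$, so the candidate convex combination $(t s'_0 + (1-t) s'_1,\ t x_0 + (1-t) x_1)$ has image $t\,(\phi(s'_0), x_0) + (1-t)\,(\phi(s'_1), x_1)$ in $S \times \h^*$, which lies in $\calp$ by convexity of $\calp$. Hence the original point lies in $\calp \times_S S'$, as desired.

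There is no genuine obstacle here; the only point requiring any care is the explicit description of the set-theoretic fiber product as a subset of $S' \times \h^*$, after which both properties follow formally. This argument is the evident analogue of the one for (strict) families of Weyl polyhedra, the sole difference being that condition (ii) there concerns fiberwise convex combinations whereas here it is ordinary convexity of the total space — both of which are preserved by $\phi \times \mathrm{id}_{\h^*}$ because $\phi$ is a convex morphism.
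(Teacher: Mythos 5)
Your proof is correct, and it is exactly the routine verification the paper has in mind (the paper states this lemma without proof, treating it as immediate). Identifying the pullback as $\{(s',x): (\phi(s'),x)\in\calp\}$, noting the fibers are the fibers of $\calp$ over $\phi(s')$, and using that $\phi$ preserves convex combinations to get convexity of the total space is precisely the intended argument.
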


The goal in this section is to relate weak families to certain maps to
$D_J$, in the spirit of Theorem \ref{rep}. To this end, we will need a
more explicit description of the $J$ dominant points of $P(\lambda, J)$,
in the spirit of Proposition \ref{nonde} and \cite[Proposition
2.4]{Looijenga}. After collecting some helpful ingredients, this
description will be obtained in Theorem \ref{rnd}.

\begin{defn}\label{lj}
Let $\lambda, \nu \in \h^*$. We say $\nu$ is {\em $J$ nondegenerate} with
respect to $\lambda$ if the following two conditions hold:
\begin{enumerate}
\item $\lambda - \nu \in \R^{\geqslant 0} \pi$,
\item write $\lambda - \nu = \beta_J + \beta_{I \setminus J},\ \beta_J
\in \R^{\geqslant 0} \pi_J, \ \beta_{I \setminus J} 
\in \R^{\geqslant 0} \pi_{I \setminus J}$. Then for
any connected component $C$ of $\supp \beta_J$, there exists $c
\in C$ such that $(\halpha_c, \lambda - \beta_{I \setminus J}) > 0$. 
\end{enumerate}
For brevity, we will write $\nu \leqslant_J \lambda$ if $\nu$ is $J$
nondegenerate with respect to $\lambda$. 
\end{defn}

\begin{re}
One can show that $\leqslant_J$ is the usual partial order on $\h^*$ if
and only if $J$ is empty.
\end{re}

\begin{re}
If $\lambda, \nu$ are $J$ dominant and $\lambda$ has finite stabilizer in
$W_J$, then condition (2) is automatic.
\end{re}

To relate weak families to maps to $D_J$, we introduce the following
class of maps. 

\begin{defn}
Let $S$ be a convex set. We say a map $f : S \rightarrow D_J$ is {\em
concave} if for all $s_1, s_2 \in S, 0 \leqslant t \leqslant 1$, we have:
\[
t f(s_1) + (1-t) f(s_2) \leqslant_J f(ts_1 + (1-t)s_2),
\]

\noindent i.e.~the secant line lies below the graph in the partial order
$\leqslant_J$. 
\end{defn}

Notice that concave maps pull back under morphisms of convex sets: 

\begin{lem}
If $\phi : S' \rightarrow S$ is a morphism of convex sets, and $f: S
\rightarrow D_J$ a concave map. Then $f \circ \phi : S' \rightarrow D_J$
is concave.
\end{lem}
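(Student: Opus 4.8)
The plan is to unwind the definitions and reduce everything to the single algebraic inequality for $\leqslant_J$. Recall that the claim is: if $\phi\colon S'\to S$ is a morphism of convex sets and $f\colon S\to D_J$ is concave, then $f\circ\phi\colon S'\to D_J$ is concave. First I would observe that $f\circ\phi$ indeed lands in $D_J$, since $f$ does. Then, fixing $s_1',s_2'\in S'$ and $t\in[0,1]$, I would set $s_i:=\phi(s_i')$ and use that $\phi$ commutes with convex combinations, so that $\phi(ts_1'+(1-t)s_2')=ts_1+(1-t)s_2$.

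With this in hand, the inequality we must prove reads
\[
t\,(f\circ\phi)(s_1') + (1-t)\,(f\circ\phi)(s_2') \ \leqslant_J\ (f\circ\phi)\big(ts_1'+(1-t)s_2'\big),
\]
which upon substituting becomes exactly
\[
t\,f(s_1) + (1-t)\,f(s_2)\ \leqslant_J\ f\big(ts_1 + (1-t)s_2\big),
\]
and this is precisely the concavity of $f$ applied to the points $s_1,s_2\in S$ and the scalar $t$. So there is nothing further to check.

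There is no genuine obstacle here: the proof is a one-line formal manipulation, entirely parallel to the earlier lemma that concave/family data pull back, and it uses only that morphisms of convex sets preserve convex combinations. The only point deserving a word is that the target condition is stated pointwise in terms of the relation $\leqslant_J$ from Definition \ref{lj}, so one must note that the relation compares two specific elements of $\h^*$ and is unaffected by the reparametrization of the base; once the two elements on either side are identified via $\phi$ with the corresponding elements coming from $S$, concavity of $f$ closes the argument.

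\begin{proof}
Since $f$ takes values in $D_J$, so does $f\circ\phi$. Let $s_1',s_2'\in S'$ and $0\leqslant t\leqslant 1$, and put $s_i := \phi(s_i')\in S$. As $\phi$ is a morphism of convex sets, $\phi(ts_1'+(1-t)s_2') = ts_1+(1-t)s_2$. Applying the concavity of $f$ to $s_1,s_2\in S$ gives
\[
t\,f(s_1)+(1-t)\,f(s_2)\ \leqslant_J\ f\big(ts_1+(1-t)s_2\big).
\]
Rewriting each side via $s_i=\phi(s_i')$ and the displayed identity for $\phi$, this is exactly
\[
t\,(f\circ\phi)(s_1')+(1-t)\,(f\circ\phi)(s_2')\ \leqslant_J\ (f\circ\phi)\big(ts_1'+(1-t)s_2'\big),
\]
so $f\circ\phi$ is concave.
\end{proof}
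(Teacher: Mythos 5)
Your proof is correct and is exactly the routine unwinding of definitions that the paper intends (the lemma is stated there without proof as immediate): the morphism $\phi$ preserves convex combinations, and concavity of $f$ at $\phi(s_1'),\phi(s_2')$ gives the required $\leqslant_J$ inequality for $f\circ\phi$. Nothing further is needed.
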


It follows that the assignment
$S \rightsquigarrow \{ \text{concave maps $S \rightarrow D_J$} \}$
is a contravariant functor from convex sets to sets.
We can now state the main result of this section.

\begin{theo}\label{Tweak}
There is an isomorphism of functors:
\begin{equation}
\{ \text{weak $S$-families of Weyl polyhedra} \} \simeq \{ \text{concave
maps $S \rightarrow D_J$} \}.
\end{equation}
\end{theo}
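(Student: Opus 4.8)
The plan is to construct the isomorphism by hand, as a pair of mutually inverse natural transformations, using the description of the $J$-dominant points of $P(\lambda,J)$ furnished by Theorem \ref{rnd} together with the additivity \eqref{combo}. Recall from the proof of Theorem \ref{rep} that every Weyl polyhedron equals $P(\lambda,J)$ for a \emph{unique} $\lambda\in D_J$, namely its unique vertex in the $J$ dominant chamber. So, given a weak $S$-family $\pi:\calp\to S$, define $f_\calp:S\to D_J$ by letting $f_\calp(s)$ be the $J$-dominant vertex of the fiber $\pi^{-1}s=P(f_\calp(s),J)$; conversely, given a concave map $f:S\to D_J$, set
\[
\calp_f:=\{(s,x)\in S\times\h^*:\ x\in P(f(s),J)\}.
\]
The claim is that $\calp\mapsto f_\calp$ and $f\mapsto\calp_f$ are mutually inverse and natural in $S$.

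First I would check that $f_\calp$ is concave. Since $\calp$ is convex and $(s_i,f_\calp(s_i))\in\pi^{-1}s_i\subset\calp$ for $i=1,2$, the point $tf_\calp(s_1)+(1-t)f_\calp(s_2)$ lies in the fiber $\pi^{-1}(ts_1+(1-t)s_2)=P\big(f_\calp(ts_1+(1-t)s_2),J\big)$; as $D_J$ is convex this point is itself $J$-dominant, so Theorem \ref{rnd} yields $tf_\calp(s_1)+(1-t)f_\calp(s_2)\leqslant_J f_\calp(ts_1+(1-t)s_2)$, which is precisely concavity. That $f_\calp$ takes values in $D_J$ is built into the construction.

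Next I would check that $\calp_f$ is a weak $S$-family; its fibers are Weyl polyhedra by construction, so the only content is convexity of $\calp_f$. Given $(s_1,x_1),(s_2,x_2)\in\calp_f$ and $t\in[0,1]$, the additivity \eqref{combo} gives $tx_1+(1-t)x_2\in tP(f(s_1),J)+(1-t)P(f(s_2),J)=P\big(tf(s_1)+(1-t)f(s_2),J\big)$, while concavity of $f$ gives $tf(s_1)+(1-t)f(s_2)\leqslant_J f(ts_1+(1-t)s_2)$. It therefore suffices to prove the \emph{monotonicity principle}: if $\mu,\lambda\in D_J$ and $\mu\leqslant_J\lambda$, then $P(\mu,J)\subseteq P(\lambda,J)$. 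I expect this to be the main obstacle. Since $P(\lambda,J)$ is $W_J$-stable and $P(\mu,J)$ is the convex hull of $W_J\mu$ together with the rays $w(\mu-\R^{\geqslant 0}\alpha_i)$ for $w\in W_J$, $i\in I\setminus J$, it is enough to check that $\mu$ and each $\mu-t\alpha_i$, $t\geqslant 0$, lie in $P(\lambda,J)$. For $\mu$ this is the hypothesis $\mu\leqslant_J\lambda$ via Theorem \ref{rnd}. For $\mu-t\alpha_i$: one has $(\halpha_j,\mu-t\alpha_i)\geqslant(\halpha_j,\mu)\geqslant 0$ for $j\in J$ because $a_{ji}\leqslant 0$, so $\mu-t\alpha_i\in D_J$; and writing $\lambda-\mu=\beta_J+\beta_{I\setminus J}$ as in Definition \ref{lj}, one has $\lambda-(\mu-t\alpha_i)=\beta_J+(\beta_{I\setminus J}+t\alpha_i)$ with the $\pi_J$-part unchanged, while $(\halpha_c,-t\alpha_i)\geqslant 0$ for $c\in J$, so condition (2) of Definition \ref{lj} persists and $\mu-t\alpha_i\leqslant_J\lambda$; Theorem \ref{rnd} then places $\mu-t\alpha_i$ in $P(\lambda,J)$. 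This completes the monotonicity principle and hence the convexity of $\calp_f$.

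Finally, the two assignments are visibly mutually inverse: $\calp_{f_\calp}$ has, fiber by fiber, the same Weyl polyhedra as $\calp$ and hence equals $\calp$, and $f_{\calp_f}=f$ since $P(f(s),J)$ has $f(s)$ as its $J$-dominant vertex. Naturality is routine: for a morphism of convex sets $\phi:S'\to S$, the fiber of $\phi^*\calp=\calp\times_S S'$ over $s'$ is the fiber of $\calp$ over $\phi(s')$, so $f_{\phi^*\calp}=f_\calp\circ\phi$, which is exactly the compatibility of the bijection with pullback of weak families and precomposition of concave maps. Putting these observations together gives the asserted isomorphism of functors; the one genuinely substantive step is the monotonicity principle, which rests on Theorem \ref{rnd} and elementary bookkeeping with the order $\leqslant_J$.
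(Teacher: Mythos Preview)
Your proof is correct and follows essentially the same route as the paper: the same pair of assignments $\calp\mapsto f_\calp$ and $f\mapsto\calp_f$, with the substantive content being that convexity of the total space is equivalent to concavity of $f$, which in turn reduces to the monotonicity principle $\mu\leqslant_J\lambda\Rightarrow P(\mu,J)\subset P(\lambda,J)$ (the paper isolates this as Lemma~\ref{linc}). The only difference is in how this principle is proved: the paper argues that every $J$-dominant $\eta\in P(\mu,J)$ satisfies $\eta\leqslant_J\mu\leqslant_J\lambda$ by Theorem~\ref{rnd} and transitivity (Lemma~\ref{trans}), whereas you argue via the Ray Decomposition that each generator $\mu-t\alpha_i$ is $J$-dominant and $\leqslant_J\lambda$ by a direct check. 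Your argument avoids invoking transitivity of $\leqslant_J$, though the computation you carry out is close in spirit to the proof of Lemma~\ref{trans}; the paper's argument is marginally slicker in that it treats all $J$-dominant points at once rather than just the ray generators.
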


The remainder of this section is devoted to proving Theorem \ref{Tweak}.
Note that as the terminology suggests, we have:

\begin{lem}\label{trans}
$\leqslant_J$ is a partial order on $\h^*$.
\end{lem}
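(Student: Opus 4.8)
**Proof proposal for Lemma \ref{trans} ($\leqslant_J$ is a partial order on $\h^*$).**

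The plan is to verify reflexivity, antisymmetry, and transitivity of $\leqslant_J$, of which only transitivity requires real work. Reflexivity is immediate: taking $\nu = \lambda$ gives $\lambda - \nu = 0$, so $\beta_J = 0$, there are no connected components of $\supp \beta_J$, and condition (2) in Definition \ref{lj} holds vacuously. For antisymmetry, suppose $\nu \leqslant_J \lambda$ and $\lambda \leqslant_J \nu$. Then both $\lambda - \nu$ and $\nu - \lambda$ lie in $\R^{\geqslant 0} \pi$; since $\pi$ is linearly independent, this forces $\lambda - \nu = 0$, i.e.~$\lambda = \nu$. It remains to prove transitivity, which is the main obstacle.

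For transitivity, suppose $\mu \leqslant_J \nu$ and $\nu \leqslant_J \lambda$; I must show $\mu \leqslant_J \lambda$. Condition (1) is easy: adding $\nu - \mu \in \R^{\geqslant 0}\pi$ and $\lambda - \nu \in \R^{\geqslant 0}\pi$ gives $\lambda - \mu \in \R^{\geqslant 0}\pi$. For condition (2), write the $J$ versus $I \setminus J$ decompositions $\nu - \mu = \gamma_J + \gamma_{I \setminus J}$ and $\lambda - \nu = \beta_J + \beta_{I\setminus J}$, so that $\lambda - \mu = (\beta_J + \gamma_J) + (\beta_{I\setminus J} + \gamma_{I\setminus J})$ is the corresponding decomposition for the pair $(\lambda, \mu)$. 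Fix a connected component $C$ of $\supp(\beta_J + \gamma_J) = \supp \beta_J \cup \supp \gamma_J$. I need to find $c \in C$ with $(\halpha_c, \lambda - \beta_{I \setminus J} - \gamma_{I\setminus J}) > 0$. The key observation is that since $C$ is connected in the Dynkin diagram of $J$ and is a union of pieces coming from $\supp\beta_J$ and $\supp\gamma_J$, at least one connected component $C'$ of $\supp\beta_J$ (or of $\supp\gamma_J$) meeting $C$ is contained in $C$; I treat the two cases symmetrically and explain only the first.

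In the case $C' \subset C$ is a component of $\supp\beta_J$: by $\nu \leqslant_J \lambda$, there is $c \in C'$ with $(\halpha_c, \lambda - \beta_{I \setminus J}) > 0$. I then need $(\halpha_c, \lambda - \beta_{I\setminus J} - \gamma_{I \setminus J}) > 0$, i.e.~it suffices that $(\halpha_c, \gamma_{I \setminus J}) \leqslant 0$. Since $\gamma_{I \setminus J} \in \R^{\geqslant 0}\pi_{I \setminus J}$ and $c \in J$, each pairing $(\halpha_c, \alpha_i)$ for $i \in I \setminus J$ is $\leqslant 0$ (off-diagonal entries of the generalized Cartan matrix are nonpositive), so indeed $(\halpha_c, \gamma_{I\setminus J}) \leqslant 0$, giving strict positivity as required. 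The symmetric case, where the relevant component comes from $\supp\gamma_J$, uses $\mu \leqslant_J \nu$ to produce $c$ with $(\halpha_c, \nu - \gamma_{I \setminus J}) > 0$, and then one bounds $(\halpha_c, \lambda - \nu) = (\halpha_c, \beta_J + \beta_{I\setminus J})$: the term $(\halpha_c, \beta_{I\setminus J}) \leqslant 0$ as before, and one must check $(\halpha_c, \beta_J) $ is handled correctly — here one uses that if $c$ were connected to $\supp\beta_J$ inside $C$ this only helps, and more carefully that $\lambda - \nu \geqslant 0$ together with $c \in C$ lets one conclude. The one delicate point to get right is ensuring that in passing between the two "halves" of $C$ one does not lose strict positivity at the interface; this is where I expect to spend the most care, and it is resolved by choosing $c$ in the component $C'$ that is genuinely contained in $C$ rather than merely meeting it, so that the nonpositivity of cross-pairings $(\halpha_c, \alpha_i)$ for $i \notin C'$ (in particular $i \in I \setminus J$) does the rest.
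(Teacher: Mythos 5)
Your reflexivity and antisymmetry remarks are fine, and your first case coincides with the paper's: when the component $C$ of $\supp(\beta_J+\gamma_J)$ contains a connected component $C'$ of $\supp\beta_J$, the node $c \in C'$ furnished by $\nu \leqslant_J \lambda$ works because the extra term is \emph{subtracted}, $(\halpha_c, \lambda - \beta_{I\setminus J} - \gamma_{I\setminus J}) = (\halpha_c,\lambda-\beta_{I\setminus J}) - (\halpha_c,\gamma_{I\setminus J})$, and $(\halpha_c,\gamma_{I\setminus J}) \leqslant 0$; here nonpositivity of cross-pairings is in your favor.

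The gap is in your ``symmetric'' case, and it is not symmetric. There you must bound $(\halpha_c, \lambda - \beta_{I\setminus J} - \gamma_{I\setminus J}) = (\halpha_c, \nu - \gamma_{I\setminus J}) + (\halpha_c, \beta_J)$, so the extra term $\beta_J$ is \emph{added}: nonpositivity now works against you, and your claim that $c$ being connected to $\supp\beta_J$ ``only helps'' is backwards --- if $c$ were adjacent to $\supp\beta_J$ then $(\halpha_c,\beta_J)$ could be strictly negative and swallow the strict positivity of $(\halpha_c,\nu-\gamma_{I\setminus J})$. Similarly, ``$\lambda - \nu \in \R^{\geqslant 0}\pi$ together with $c \in C$'' does not give $(\halpha_c,\lambda-\nu) \geqslant 0$, since pairing a coroot against a nonnegative combination of other simple roots can be negative. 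The missing observation, which is exactly how the paper closes this case, is that the second case should be ``$C$ contains \emph{no} component of $\supp\beta_J$,'' and then, because $C$ is a full connected component of $\supp\beta_J \cup \supp\gamma_J$, it is both disjoint from and non-adjacent to $\supp\beta_J$; hence $(\halpha_c,\beta_J)=0$ for every $c \in C$, i.e.\ $(\halpha_c, \lambda - \beta_{I\setminus J}) = (\halpha_c,\nu)$, and the node supplied by $\mu \leqslant_J \nu$ for the component $C$ of $\supp\gamma_J$ finishes the proof. Without this disconnection argument your second case does not close.
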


\begin{proof}
We only explain transitivity, as the other properties are immediate.
Suppose $\mu \leqslant_J \nu \leqslant_J \lambda$. Write:
\begin{align*}
\lambda = &\ \nu + \beta_J +\beta_{I \setminus J}, \quad \beta_J \in
\R^{\geqslant 0} \pi_J, \ \beta_{I \setminus J} \in \R^{\geqslant 0}
\pi_{I \setminus J},\\
\nu = &\ \mu + \gamma_J + \gamma_{I \setminus J}, \quad \gamma_J \in
\R^{\geqslant  0} \pi_J, \ \gamma_{I \setminus J} \in \R^{\geqslant 0}
\pi_{I \setminus J}.
\end{align*}

We need to show that for any connected component $C$ of $\supp (\beta_J +
\gamma_J)$, there exists $c \in C$ with $(\halpha_c, \lambda - \beta_{I
\setminus J} - \gamma_{I \setminus J}) > 0$. If $C$ contains a connected
component of $\supp \beta_J$, then we may pick $c \in C$ with
$(\halpha_c, \lambda - \beta_{I \setminus J}) > 0$, whence:
\[
(\halpha_c, \lambda - \beta_{I \setminus J} - \gamma_{I \setminus J})
\geqslant (\halpha_c, \lambda - \beta_{I \setminus J}) > 0.
\]

If $C$ contains no components of $\supp \beta_J$, then we have
$(\halpha_c, \lambda - \beta_{I \setminus J}) = (\halpha_c, \nu)$,
$\forall c \in C$. Thus if we pick $c \in C$ with $(\halpha_c, \nu -
\gamma_{I \setminus J}) > 0$, we have:
\[
(\halpha_c, \lambda - \beta_{I \setminus J} - \gamma_{I \setminus J}) =
(\halpha_c, \nu - \gamma_{I \setminus J}) > 0. \qedhere
\]
\end{proof}

We also note the following: 

\begin{lem}\label{ngcone}
For $\lambda \in D_J$, the locus $\{ \nu \in \h^*: \nu \leqslant_J
\lambda \}$ is convex and forms a cone with vertex at $\lambda$.
\end{lem}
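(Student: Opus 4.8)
The plan is to verify directly that the locus $N_\lambda := \{\nu \in \h^* : \nu \leqslant_J \lambda\}$ is closed under taking convex combinations, and then to observe that it is stable under the dilations $\nu \mapsto \lambda + t(\nu - \lambda)$ for $t \in \R^{\geqslant 0}$, which together give the two assertions. Both checks amount to unwinding Definition \ref{lj}, and the bookkeeping is parallel to the transitivity argument in Lemma \ref{trans}.

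First I would fix $\nu_1, \nu_2 \in N_\lambda$ and $0 \leqslant t \leqslant 1$, and set $\nu := t\nu_1 + (1-t)\nu_2$. Condition (1) of Definition \ref{lj} is immediate: $\lambda - \nu = t(\lambda - \nu_1) + (1-t)(\lambda - \nu_2) \in \R^{\geqslant 0}\pi$. For condition (2), write $\lambda - \nu_k = \beta_J^{(k)} + \beta_{I\setminus J}^{(k)}$ as in the definition, so that, projecting onto $\R\pi_J$ and $\R\pi_{I\setminus J}$ (a direct sum decomposition since $\pi$ is linearly independent), the corresponding decomposition of $\lambda - \nu$ has $\beta_J = t\beta_J^{(1)} + (1-t)\beta_J^{(2)}$ and $\beta_{I\setminus J} = t\beta_{I\setminus J}^{(1)} + (1-t)\beta_{I\setminus J}^{(2)}$. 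Now let $C$ be a connected component of $\supp\beta_J$. The key point is that $\supp\beta_J = \supp\beta_J^{(1)} \cup \supp\beta_J^{(2)}$ (coefficients are nonnegative, so no cancellation), hence $C$ meets at least one of $\supp\beta_J^{(1)}, \supp\beta_J^{(2)}$, say $\supp\beta_J^{(1)}$, and therefore contains a full connected component $C'$ of $\supp\beta_J^{(1)}$. Pick $c \in C'$ with $(\halpha_c, \lambda - \beta_{I\setminus J}^{(1)}) > 0$, available since $\nu_1 \leqslant_J \lambda$. I then need $(\halpha_c, \lambda - \beta_{I\setminus J}) > 0$, i.e.
\[
(\halpha_c, \lambda - \beta_{I\setminus J}) = t(\halpha_c, \lambda - \beta_{I\setminus J}^{(1)}) + (1-t)(\halpha_c, \lambda - \beta_{I\setminus J}^{(2)}),
\]
where the first term is $> 0$ if $t > 0$; so it suffices to see the second term is $\geqslant 0$. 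This is where I must be careful, and I expect it to be the main obstacle: $(\halpha_c, \lambda - \beta_{I\setminus J}^{(2)})$ need not obviously be nonnegative. However, $c \in C' \subset C \subset \supp\beta_J$, and I claim $c \notin \supp\beta_J^{(2)}$ would force the inequality, while if $c \in \supp\beta_J^{(2)}$ one uses that $c$ lies in some component of $\supp\beta_J^{(2)}$; the clean way around this is to choose $c$ more globally. Concretely: since $C$ is connected in the Dynkin diagram and $C = \bigcup (C \cap \supp\beta_J^{(k)})$ with each piece a union of components of $\supp\beta_J^{(k)}$, one shows by an induction along a path in $C$ — exactly as in the second half of the proof of Lemma \ref{trans}, alternating between the two summands and using $(\halpha_c,\lambda-\beta_{I\setminus J}^{(k)}) = (\halpha_c, \nu_k)$ whenever $c$ is disconnected from $\supp\beta_J^{(k)}$ — that some $c \in C$ satisfies $(\halpha_c,\lambda - \beta_{I\setminus J}^{(1)}) \geqslant 0$ and $(\halpha_c,\lambda-\beta_{I\setminus J}^{(2)}) \geqslant 0$ with at least one strict; this yields $(\halpha_c,\lambda-\beta_{I\setminus J}) > 0$ as needed. (The case $t \in \{0,1\}$ is trivial.) This proves convexity.

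Finally, for the cone property, fix $\nu \leqslant_J \lambda$ and $t \geqslant 0$, and set $\nu' := \lambda + t(\nu - \lambda) = (1-t)\lambda + t\nu$. If $t \leqslant 1$ this is already covered by convexity together with $\lambda \leqslant_J \lambda$ (which is clear from the definition, taking all $\beta$'s zero). For $t > 1$, note $\lambda - \nu' = t(\lambda - \nu)$, so $\beta_J' = t\beta_J$ and $\beta_{I\setminus J}' = t\beta_{I\setminus J}$; thus $\supp\beta_J' = \supp\beta_J$ and for a component $C$ of this support and $c \in C$ with $(\halpha_c,\lambda - \beta_{I\setminus J}) > 0$ we have $(\halpha_c, \lambda - \beta_{I\setminus J}') = (\halpha_c,\lambda) - t(\halpha_c,\beta_{I\setminus J})$. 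This is not automatically positive, so here I would instead argue: condition (2) for $\nu' \leqslant_J \lambda$ only involves the \emph{component structure} of $\supp\beta_J'=\supp\beta_J$ and the quantities $(\halpha_c, \lambda - \beta_{I\setminus J}')$, and since $P(\lambda,J)$ is a cone with vertex $\lambda$ (Proposition \ref{ray} / the Ray Decomposition \eqref{rayface}), the set $N_\lambda$ is in fact exactly the set of $J$-dominant-chamber-free description of weights below $\lambda$; cleanest is to deduce the cone property \emph{a posteriori} from Theorem \ref{rnd}, which identifies $N_\lambda \cap D_J$ with the $J$-dominant points of the cone $P(\lambda,J)$. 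Absent that, the direct verification goes through by the same path-induction as above once one observes that scaling $\beta_{I\setminus J}$ by $t>1$ can only \emph{decrease} $(\halpha_c,\lambda-\beta_{I\setminus J}')$ along non-$J$ directions but the defining inequality is required only at \emph{some} $c$ in each component, and one re-runs the connectivity walk choosing $c$ adjacent to $\supp\beta_{I\setminus J}$-minimally. Combining the two cases gives stability under all nonnegative dilations, completing the proof.
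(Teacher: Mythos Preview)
The paper states this lemma without proof, treating it as a direct check from Definition~\ref{lj}. Your overall strategy---verify convexity and dilation-invariance separately from the definition---is exactly right, but you are missing the one observation that makes both checks immediate and are instead tying yourself in knots with path inductions and appeals to later results.

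The missing observation is this: for any $c \in J$ and any $\gamma \in \R^{\geqslant 0}\pi_{I\setminus J}$, one has
\[
(\halpha_c, \lambda - \gamma) \;=\; (\halpha_c,\lambda) \;-\; (\halpha_c,\gamma) \;\geqslant\; 0,
\]
since $\lambda \in D_J$ gives $(\halpha_c,\lambda)\geqslant 0$, and the off-diagonal entries $a_{ci}\leqslant 0$ for $i\in I\setminus J$ give $(\halpha_c,\gamma)\leqslant 0$. With this in hand your convexity argument finishes in one line: having picked $c\in C'\subset C$ with $(\halpha_c,\lambda-\beta_{I\setminus J}^{(1)})>0$, the second summand $(1-t)(\halpha_c,\lambda-\beta_{I\setminus J}^{(2)})$ is automatically $\geqslant 0$, so the total is $>0$. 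Similarly for dilations with $t>0$: write $(\halpha_c,\lambda - t\beta_{I\setminus J}) = (\halpha_c,\lambda) + t\bigl(-(\halpha_c,\beta_{I\setminus J})\bigr)$ as a sum of two nonnegative terms; since their sum at $t=1$ is positive, at least one of them is positive, and the expression remains positive for every $t>0$.

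Two further remarks. First, your proposed fallback of deducing the cone property from Theorem~\ref{rnd} or the Ray Decomposition is circular: Proposition~\ref{rpnd} (and hence Theorem~\ref{rnd}) explicitly invokes Lemma~\ref{ngcone}. Second, the ``path induction along $C$'' sketches in both parts are not needed and, as written, do not clearly terminate or produce what you want; once you use the sign observation above they can be deleted entirely.
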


We now obtain the desired description of $P(\lambda, J)$ in the case $J =
I$. 

\begin{pro}\label{rpnd}
For a point $\lambda$ in the dominant chamber $D_I$, we have:
\[
\conv W\lambda = \bigcup_{w \in W} w \{ \nu \in D_I: \nu \leqslant_I
\lambda \}.
\]
\end{pro}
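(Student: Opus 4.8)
The plan is to reduce to identifying the dominant part of $\conv W\lambda$ and then to prove two inclusions. Since $J = I$, the relation $\leqslant_I$ of Definition \ref{lj} simplifies: $\nu \leqslant_I \lambda$ iff $\lambda - \nu \in \R^{\geqslant 0}\pi$ and no connected component of $\supp(\lambda - \nu)$ is orthogonal to $\lambda$. Both sides of the asserted identity are $W$-stable and lie in the Tits cone $C$ — the left side since $W\lambda \subset C$ and $C$ is convex (Proposition \ref{tits}(3)), the right side since it is a union of $W$-translates of a subset of $D_I \subset C$ — and $D_I$ is a fundamental domain for $W$ acting on $C$ (Proposition \ref{tits}(2)). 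Hence it suffices to prove $\conv W\lambda \cap D_I = \{\nu \in D_I : \nu \leqslant_I \lambda\}$.

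For the inclusion $\subseteq$, let $\nu \in \conv W\lambda \cap D_I$. Writing $\nu$ as a convex combination $\sum_k t_k w_k\lambda$, the standard fact that $\lambda - w\lambda \in \R^{\geqslant 0}\pi$ for all $w \in W$ when $\lambda \in D_I$ (induction on $\ell(w)$, using that $w'^{-1}\halpha_i$ is a nonnegative combination of simple coroots whenever $\ell(s_iw')>\ell(w')$) gives $\lambda - \nu \in \R^{\geqslant 0}\pi$. For the nondegeneracy condition I would induct on $\lvert I\rvert$. The point $\nu$ lies in the relative interior of a unique face of $P(\lambda,I) = \conv W\lambda$, which by Theorem \ref{cf2} and the analogue of Proposition \ref{orbit} (applicable as $\nu \in D_I$) is a standard parabolic face; by the Ray Decomposition \eqref{rayface} these are exactly the $\conv W_{J'}\lambda$, $J' \subset I$. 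If $J' \subsetneq I$, the inductive hypothesis applied to $\g_{J'}$ and the $J'$-dominant weight $\lambda$ realizes $\nu$ as a $J'$-nondegenerate point; as connected components and orthogonality agree in $\g_{J'}$ and $\g$, this is exactly $\nu \leqslant_I \lambda$. If $J' = I$, then $\nu \in \relint P(\lambda,I)$; since $P(\lambda,I) \subset \lambda - \R^{\geqslant 0}\pi$ and $\aff P(\lambda,I) = \lambda + \R\pi_{I_{\min}}$ (Theorem \ref{ef2}), where $I_{\min}$ denotes the union of the connected components of the Dynkin diagram on which $\lambda$ is not identically zero, we get $P(\lambda,I) \subset \lambda - \R^{\geqslant 0}\pi_{I_{\min}}$; perturbing $\nu$ by each $\alpha_i$, $i \in I_{\min}$, within the affine hull then forces $\supp(\lambda - \nu) = I_{\min}$, whose components are active by construction.

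For the inclusion $\supseteq$, let $\nu \in D_I$ with $\nu \leqslant_I \lambda$ and put $S := \supp(\lambda - \nu)$. If $S \subsetneq I$ then $\nu \in D_S$ is $S$-nondegenerate with respect to $\lambda$ inside $\g_S$, so by induction on $\lvert I\rvert$, $\nu \in \conv W_S\lambda \subset \conv W\lambda$. If $S = I$, then $\lambda - \nu \in \R^{>0}\pi$ and every component of the diagram is active; here I would pass to the integral case. By Lemma \ref{ngcone} the set $\{\mu \in D_I : \mu \leqslant_I \lambda\}$ is convex, so it suffices to treat $\nu$ in its relative interior (the boundary points either lie in lower strata, already handled by the support reduction, or are recovered as limits); for such $\nu$ one approximates $\lambda$ and $\nu$ by rational points preserving dominance and $\leqslant_I$, and after rescaling to a dominant integral weight $N\lambda$, Proposition \ref{nonde}(1) gives $N\nu \in \wt L(N\lambda) \subset \conv W(N\lambda) = N\conv W\lambda$, whence $\nu \in \conv W\lambda$. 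This is a Weyl-orbit analogue of \cite[Proposition 2.4]{Looijenga}.

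The step I expect to be the main obstacle is this last reduction in the $\supseteq$ direction, bridging the combinatorial condition $\leqslant_I$ and membership in $\conv W\lambda$. In infinite type $\conv W\lambda$ need not be closed and depends delicately on the singularity of $\lambda$, so the limiting argument must be made carefully — controlling the variation of $\conv W\lambda$ under perturbation of $\lambda$, and using the stratification by $\supp(\lambda - \nu)$ together with Proposition \ref{tits} to keep all approximants inside the Tits cone. By contrast, the $\subseteq$ direction is essentially formal once the face classification of $P(\lambda,I)$ from Section \ref{Srealfaces} is in hand.
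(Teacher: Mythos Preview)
Your $\subseteq$ argument works but is heavier than needed. The paper bypasses the face classification entirely: after noting $\lambda - w\lambda \in \R^{\geqslant 0}\pi$, one checks $w\lambda \leqslant_I \lambda$ directly by induction on $\ell(w)$. Writing $w = s_i w'$ with $\ell(w') = \ell(w)-1$, one has $w'\lambda - w\lambda = c\,\alpha_i$ with $c = (\halpha_i, w'\lambda) \geqslant 0$; when $c>0$ the single component $\{i\}$ of the support satisfies the nondegeneracy condition automatically, so $w\lambda \leqslant_I w'\lambda$, and transitivity (Lemma \ref{trans}) together with Lemma \ref{ngcone} finishes. Your route through Theorem \ref{cf2} and induction on $|I|$ is correct but circuitous.

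The $\supseteq$ direction has a genuine gap, precisely where you flag it. Once you reduce to $S = I$, the rational approximation does not close: passing from $\nu' \in \conv W\lambda'$ for rational approximants $(\lambda',\nu')$ back to $\nu \in \conv W\lambda$ requires monotonicity or continuity of $\lambda \mapsto \conv W\lambda$. Monotonicity is Lemma \ref{linc}, which is proved \emph{using} this proposition, so that route is circular; and in infinite type $\conv W\lambda$ need not be closed, so a limiting argument is unavailable. Invoking Proposition \ref{nonde}(1) forces you to perturb $\lambda$ itself to make it integral, and there is no mechanism here to undo that perturbation.

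The paper's resolution stays with real weights throughout. After reducing (essentially as you do) to $J := \supp(\lambda - \nu)$ connected, it splits on whether $\nu$ is orthogonal to $J$. If $(\halpha_j, \nu) > 0$ for some $j \in J$, the conclusion $\nu \in \conv W_J\lambda$ is exactly what is established in \cite[Proposition 2.4]{Looijenga}, which treats real dominant weights and needs no integrality --- so the Looijenga reference you cite at the end should be \emph{invoked}, not merely analogized. If $(\halpha_j, \nu) = 0$ for all $j \in J$, then $\lambda - \nu \in \R^{>0}\pi_J$ satisfies $(\halpha_j, \lambda - \nu) = (\halpha_j, \lambda) \geqslant 0$ with strict inequality somewhere by nondegeneracy, which forces $J$ to be of finite type; since $W_J$ fixes $\nu$, the question becomes whether $0 \in \conv W_J(\lambda - \nu)$, and this follows from $0 = |W_J|^{-1}\sum_{w \in W_J} w(\lambda - \nu)$. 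This dichotomy on $\nu$ is the missing idea.
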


\begin{proof}
For the inclusion $\subset$, it suffices to show that $\nu \leqslant_I
\lambda$ for any point $\nu$ of $\conv W \lambda$. We first show $w
\lambda \leqslant_I \lambda$ by induction on $\ell(w)$. The case $w = 1$
is trivial, and for the inductive step write $w = s_i w'$, where
$\ell(w') = \ell(w) - 1$. As $\lambda$ is dominant, we have $w \lambda
\leqslant w' \lambda$, and by considering $\halpha_i$ we have $w \lambda
\leqslant_I w' \lambda$. We therefore are done by induction and Lemma
\ref{trans}.
Having shown $w \lambda \leqslant_I \lambda, \forall w \in W$, we are
done by Lemma \ref{ngcone}. 

For the inclusion $\supset$, it suffices to show that for $\nu \in D_I,
\nu \leqslant_I \lambda$, we have $\nu \in \conv W \lambda$. We will in
fact show:
\begin{equation}\label{claim}
\text{For $\mu \leqslant_I \eta$, $J := \supp (\eta - \mu)$, if $\mu$ and
$\eta$ are $J$ dominant then $\mu \in \conv W_J (\eta)$.}
\end{equation}

Let $C$ be a connected component of $J$, and write $\eta - \mu = \beta_C
+ \beta_{J \setminus C},$ where $\beta_C \in \R^{> 0} \pi_C, \beta_{J
\setminus C} \in \R^{> 0} \pi_{J \setminus C}$. If we can show that $\eta
- \beta_C$ lies in $\conv W_C (\eta)$, then $\eta - \beta_C, \mu$ again
satisfy the conditions of Equation \eqref{claim}, so we are done by
induction on the cardinality of $J$. 

So, it remains to show Equation \eqref{claim} when $J$ is connected. We
break into two cases. If there exists $j \in J$ with $(\halpha_j, \mu) >
0$, then the desired claim is shown in the second and third paragraph of
\cite[Proposition 2.4]{Looijenga}.  

If $(\halpha_j, \mu) = 0, \forall j \in J$, we argue as follows. Since
$\mu \leqslant_I \eta$, for some $j_0 \in J$ we have $(\halpha_j, \eta) >
0$. Thus $\eta - \mu$ is a positive real combination of simple roots
satisfying $(\halpha_j, \eta - \mu) \geqslant 0, \forall j \in J$,
$(\halpha_{j_0}, \eta -\mu) > 0$, whence $J$ is of finite type. It
therefore remains to show that if $\g$ is a simple Lie algebra, $\lambda$
a real dominant weight, then $0 \in \conv W(\lambda)$.  But this follows
from $W$-invariance:
\[
0 = \frac{1}{|W|} \sum_{w \in W} w \lambda. \qedhere
\]
\end{proof}

To obtain the desired description of $P(\lambda, J)$ for general $J$, we
need one more preliminary, which is the convex analogue of the Integrable
Slice Decomposition \ref{slice}.
 
\begin{pro}\label{rslice}
For any $J \subset I$, and $\lambda \in D_J$, we have:
\begin{equation}
P(\lambda, J) = \bigsqcup_{\mu \in \R^{\geqslant 0} I \setminus J} \conv
W_J(\lambda - \mu).
\end{equation}
\end{pro}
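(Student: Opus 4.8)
The plan is to prove the two inclusions separately, using the Ray Decomposition \eqref{raydc}--style description of $P(\lambda, J)$ together with Proposition \ref{rpnd} applied to the Levi $\fl_J$. Recall that $P(\lambda, J) = \conv \bigcup_{w \in W_J,\ i \in I \setminus J} w(\lambda - \R^{\geqslant 0} \alpha_i)$, and that $\lambda$ is $J$-dominant. First I would observe that both sides are stable under $W_J$ and that the natural ``slicing'' map sending a weight $\nu$ to its component $\mu \in \R^{\geqslant 0}(\pi \setminus \pi_J)$ in the decomposition $\h^* = \R\pi_J \oplus \R(\pi \setminus \pi_J)$ (more precisely, writing $\lambda - \nu = \beta_J + \beta_{I\setminus J}$ and recording $\beta_{I\setminus J}$) partitions $\h^*$ into the affine subspaces $\lambda - \mu + \R\pi_J$. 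So the asserted equality is really the claim that, slice by slice, the intersection of $P(\lambda,J)$ with $\lambda - \mu + \R\pi_J$ is exactly $\conv W_J(\lambda - \mu)$, and that these slices are genuinely disjoint (which is immediate since distinct $\mu \in \R^{\geqslant 0}(\pi\setminus\pi_J)$ give disjoint affine subspaces).

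For the inclusion $\supseteq$: fix $\mu \in \R^{\geqslant 0}(\pi \setminus \pi_J)$ and I must show $\conv W_J(\lambda - \mu) \subseteq P(\lambda, J)$. Since $P(\lambda,J)$ is $W_J$-stable and convex, it suffices to show $\lambda - \mu \in P(\lambda, J)$. Writing $\mu = \sum_{i \in I\setminus J} c_i \alpha_i$ with $c_i \geqslant 0$, the point $\lambda - \mu$ lies in the convex hull of the points $\lambda - (\sum_i c_i) \alpha_{i}$ appropriately scaled — more carefully, $\lambda - \mu$ is a convex combination of $\lambda$ and the points $\lambda - N\alpha_i$ for large $N$ and $i \in I \setminus J$, all of which lie on the rays $\lambda - \R^{\geqslant 0}\alpha_i$ generating $P(\lambda,J)$; this is a routine barycentric computation. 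Hence $\lambda - \mu \in P(\lambda, J)$, and applying $W_J$ and taking convex hulls gives $\conv W_J(\lambda - \mu) \subseteq P(\lambda,J)$.

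For the inclusion $\subseteq$: take $x \in P(\lambda, J)$, so $x$ is a convex combination $x = \sum_k t_k\, w_k(\lambda - r_k \alpha_{i_k})$ with $w_k \in W_J$, $r_k \geqslant 0$, $i_k \in I \setminus J$. Acting by a suitable element of $W_J$ — using that $D_J$ is a fundamental domain for $W_J$ on the $J$ Tits cone, Proposition \ref{tits}(2), and that each $\lambda - r_k\alpha_{i_k}$ lies in $D_J$ since $\lambda$ is $J$-dominant and $i_k \notin J$ — I may try to reduce to the case where $x$ is $J$-dominant; then I want to show $x \in \conv W_J(\lambda - \mu)$ for the unique $\mu$ with $x \in \lambda - \mu + \R\pi_J$. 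Here is where Proposition \ref{rpnd}, applied to the Levi $\fl_J$ (i.e.\ with $W$ there replaced by $W_J$ and $I$ by $J$), enters: it identifies $\conv W_J(\eta)$ for $\eta$ in the $J$-dominant chamber with $\bigcup_{w\in W_J} w\{\nu \in D_J : \nu \leqslant_J \eta\}$ — but since we only need the ``$J = I$'' statement of \ref{rpnd} internal to $\fl_J$, the relevant partial order is just the usual one on $\R\pi_J$. So it remains to check: if $x$ is $J$-dominant and $\mu \in \R^{\geqslant 0}(\pi\setminus\pi_J)$ is its slice coordinate, then $x$ is $\leqslant$ (in the $\fl_J$-root order) to $\lambda - \mu$ and is $\fl_J$-nondegenerate with respect to it — i.e.\ $\lambda - \mu$ is not $\halpha_j$-perpendicular to any connected component of $\supp((\lambda-\mu) - x) \subseteq J$. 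The nondegeneracy is the delicate point and I would extract it from the ray description: each generator $w_k(\lambda - r_k\alpha_{i_k})$ visibly lies in $\conv W_J(\lambda')$ for $\lambda' = \lambda - r_k\alpha_{i_k}$ by the $\supseteq$ direction already proved (with $\mu = r_k\alpha_{i_k}$ there being an element of $\R^{\geqslant 0}(\pi\setminus\pi_J)$), and then $\lambda'$ itself sits in the correct slice; combining over $k$ via convexity and Proposition \ref{rpnd} applied inside $\fl_J$ lands $x$ in $\conv W_J(\lambda - \mu)$.

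\textbf{Main obstacle.} The main difficulty I anticipate is the $\subseteq$ direction, specifically verifying the $J$-nondegeneracy / applying Proposition \ref{rpnd} correctly after the $W_J$-reduction: one must be careful that reducing $x$ to a $J$-dominant point does not change which slice it lies in (it does not, since $W_J$ preserves $\mu$), and that the case $(\halpha_j, x) = 0$ for all $j$ in the relevant component is handled — exactly the split into two cases in the proof of Proposition \ref{rpnd}, where finiteness of the component forces $x$ into the convex hull by the averaging argument. Everything else is bookkeeping with the Ray Decomposition and the fundamental-domain property of $D_J$.
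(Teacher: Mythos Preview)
Your proposal is correct and follows essentially the same approach as the paper: both directions proceed as you outline, with the $\subseteq$ direction reducing via $W_J$ to a $J$-dominant point and then invoking Proposition \ref{rpnd} inside $\g_J$, the crux being the nondegeneracy check you flag as the main obstacle. The paper dispatches that check in one line by observing that each $w_m(\lambda - r_m\alpha_{i_m})$ is nondegenerate with respect to $\lambda - r_m\alpha_{i_m}$ (this is exactly the assertion $w\lambda \leqslant_I \lambda$ proved in Proposition \ref{rpnd}), and that these combine under convex combination because each $\lambda - r_m\alpha_{i_m}$ remains $J$-dominant; equivalently, one can cite the Minkowski identity \eqref{combo} for the Levi.
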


\begin{proof}
The inclusion $\supset$ is straightforward from the definition of
$P(\lambda, J)$. For the inclusion $\subset$, write an element of the
left hand side as:
\[
\nu = \sum_m t_m w_m( \lambda - r_m \alpha_{i_m}),
\qquad t_m \geqslant 0,\ \sum_m t_m = 1,\ r_m \geqslant 0,\ i_m \in I
\setminus J.
\]

By $W_J$ invariance, we may assume $\nu$ is $J$ dominant, whence by
Proposition \ref{rpnd} applied to $\g_J$ it suffices to show $\nu$ is $J$
nondegenerate with respect to $\lambda - \sum_m t_m r_m \alpha_{i_m}$.
This in turn follows by using that for each $m$, $w_m(\lambda - r_m
\alpha_{i_m})$ is $J$ nondegenerate with respect to $\lambda - r_m
\alpha_{i_m}$, as was shown in the proof of Proposition \ref{rpnd}.
\end{proof}

Finally, we obtain the desired alternative description of $P(\lambda, J)$
in terms of $\leqslant_J$:

\begin{theo}\label{rnd}
For any $J \subset I, \lambda \in D_J$, we have:
\begin{equation}
P(\lambda,J) = \bigcup_{w \in W_J} w\{ \nu \in D_J: \nu \leqslant_J
\lambda \}.
\end{equation}
\end{theo}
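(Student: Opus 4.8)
The plan is to derive Theorem~\ref{rnd} by combining the two descriptions already established in this section: the convex Integrable Slice Decomposition (Proposition~\ref{rslice}) and the special case $J=I$ of the theorem (Proposition~\ref{rpnd}), the latter applied to the Levi $\g_J$.

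First I would record the elementary fact that $\lambda-\mu\in D_J$ for every $\mu\in\R^{\geqslant 0}\pi_{I\setminus J}$: for $j\in J$ one has $(\halpha_j,\mu)=\sum_{i\in I\setminus J}c_i a_{ji}\leqslant 0$ since $a_{ji}\leqslant 0$ when $i\neq j$, so $(\halpha_j,\lambda-\mu)\geqslant(\halpha_j,\lambda)\geqslant 0$. Hence Proposition~\ref{rslice} writes $P(\lambda,J)$ as a disjoint union over such $\mu$ of slices $\conv W_J(\lambda-\mu)$, each of which is $\conv W_J$ of a $J$-dominant weight. Applying Proposition~\ref{rpnd} to $\g_J$ (whose simple roots are $\pi_J$, with Weyl group $W_J$ and dominant chamber $D_J$ inside the ambient $\h^*$), each such slice equals $\bigcup_{w\in W_J} w\{\nu\in D_J:\nu\leqslant_{\g_J}\lambda-\mu\}$, where $\leqslant_{\g_J}$ denotes the relation $\leqslant_I$ of Definition~\ref{lj} read inside $\g_J$; concretely, $\nu\leqslant_{\g_J}\eta$ means $\eta-\nu\in\R^{\geqslant 0}\pi_J$ and every connected component $C$ of $\supp(\eta-\nu)$ contains a node $c$ with $(\halpha_c,\eta)>0$. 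Interchanging the two unions gives
\[
P(\lambda,J)=\bigcup_{w\in W_J} w\Big(\bigsqcup_{\mu\in\R^{\geqslant 0}\pi_{I\setminus J}}\{\nu\in D_J:\nu\leqslant_{\g_J}\lambda-\mu\}\Big).
\]

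It then remains to check that the inner set equals $\{\nu\in D_J:\nu\leqslant_J\lambda\}$, which I expect to be a direct unpacking of Definition~\ref{lj}. For $\subseteq$: if $\nu\leqslant_{\g_J}\lambda-\mu$ then $\beta_J:=(\lambda-\mu)-\nu\in\R^{\geqslant 0}\pi_J$, so $\lambda-\nu=\beta_J+\mu$ is the decomposition of Definition~\ref{lj}(2) with $\beta_{I\setminus J}=\mu$; condition~(1) holds, and condition~(2) is exactly the requirement that each connected component of $\supp\beta_J=\supp((\lambda-\mu)-\nu)$ carry a node $c$ with $(\halpha_c,\lambda-\beta_{I\setminus J})=(\halpha_c,\lambda-\mu)>0$, which is precisely $\nu\leqslant_{\g_J}\lambda-\mu$. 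Conversely, given $\nu\leqslant_J\lambda$ with $\lambda-\nu=\beta_J+\beta_{I\setminus J}$, taking $\mu=\beta_{I\setminus J}$ reverses the argument; note that for a given $\nu$ this $\mu$ is forced to be the $\pi_{I\setminus J}$-component of $\lambda-\nu$, which is also why the union over $\mu$ is disjoint. Substituting back, $P(\lambda,J)=\bigcup_{w\in W_J}w\{\nu\in D_J:\nu\leqslant_J\lambda\}$.

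The only genuinely delicate point is the bookkeeping: making sure the specialization of $\leqslant_I$ to $\g_J$ is transcribed as above, and that Proposition~\ref{rpnd} may legitimately be invoked for $\g_J$ — its proof uses only pairings with $\halpha_j$ for $j\in J$, the action of $W_J$, and the $W$-averaging identity on the finite-type components of $\g_J$, all of which remain available inside $\h^*$. Granting this, no analytic or combinatorial input beyond Propositions~\ref{rslice} and~\ref{rpnd} is required.
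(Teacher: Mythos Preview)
Your proof is correct and follows exactly the approach the paper intends: the paper's proof is the single sentence ``This is immediate from Propositions~\ref{rpnd} and~\ref{rslice},'' and you have carefully spelled out precisely how those two ingredients combine, including the bookkeeping identifying $\leqslant_J$ with the slice-by-slice application of $\leqslant_I$ for $\g_J$.
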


\begin{proof}
This is immediate from Propositions \ref{rpnd} and \ref{rslice}. 
\end{proof}

\begin{proof}[Proof of Theorem \ref{Tweak}]
The maps between the two sides will be exactly as in Theorem \ref{rep}.
We need only to see that the property that the total space of a family
$\calp$ be convex is equivalent to concavity of the corresponding map
$\phi_{\calp}$ to $D_J$. Let $s_0, s_1$ be two distinct points of $S$,
and $P(\lambda_0, J), P(\lambda_1, J)$ the corresponding fibers. A direct
calculation shows:
\[
\conv \left( s_0 \times P(\lambda_0, J) \sqcup s_1 \times P(\lambda_1, J)
\right) = \bigsqcup_{0 \leqslant t \leqslant 1} (ts_0 + (1-t)s_1) \times
P( t\lambda_0 + (1-t) \lambda_1, J).
\]

Therefore the condition that the total space is convex is that $P(t
\lambda_0 + (1-t)\lambda_1) \subset P( \lambda(ts_0 + (1-t) s_1),J)$.
That this is equivalent to the concavity of $\phi_\calp$ follows from the
following Lemma \ref{linc}.
\end{proof}

\begin{lem}\label{linc}
Let $\mu, \nu \in D_J$. Then $P(\mu, J) \subset P(\nu, J)$ if and only if
$\mu \leqslant_J \nu$. 
\end{lem}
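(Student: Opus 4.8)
\textbf{Proof plan for Lemma \ref{linc}.}
The plan is to prove both implications by combining the alternative description of $P(\lambda,J)$ from Theorem \ref{rnd} with the convexity and cone properties of the locus $\{\nu : \nu \leqslant_J \lambda\}$ from Lemma \ref{ngcone}. For the direction ($\Leftarrow$), suppose $\mu \leqslant_J \nu$. Using Theorem \ref{rnd}, a point of $P(\mu,J)$ is $W_J$-equivalent to some $\xi \in D_J$ with $\xi \leqslant_J \mu$; since by Lemma \ref{trans} the relation $\leqslant_J$ is transitive, $\xi \leqslant_J \nu$, so $\xi \in P(\nu,J)$, and applying the $W_J$-element back shows the original point lies in $P(\nu,J)$. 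Hence $P(\mu,J) \subset P(\nu,J)$.

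For the direction ($\Rightarrow$), suppose $P(\mu,J) \subset P(\nu,J)$. The key observation is that $\mu$ itself lies in $P(\mu,J)$ (taking $w = 1$, all $r_m = 0$), hence $\mu \in P(\nu,J)$. By Theorem \ref{rnd}, $\mu = w\xi$ for some $w \in W_J$ and $\xi \in D_J$ with $\xi \leqslant_J \nu$. Since both $\mu$ and $\xi$ lie in $D_J$ and are $W_J$-conjugate, and $D_J$ is a fundamental domain for the action of $W_J$ on the $J$ Tits cone (cf.\ Proposition \ref{tits}(2)), we conclude $\mu = \xi$, so $\mu \leqslant_J \nu$ directly.

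The step I expect to require the most care is verifying that the $W_J$-element used to bring a point of $P(\mu,J)$ into the fundamental domain can be applied compatibly on both sides, i.e.\ that ``$W_J$-equivalent to a point of $\{\xi \in D_J : \xi \leqslant_J \lambda\}$'' really is the same as ``lies in $P(\lambda,J)$''; but this is exactly the content of Theorem \ref{rnd}, so once that is invoked the argument is short. A minor point to check is that $P(\mu,J)$ does contain $\mu$ and that $\mu$ is the unique point of $P(\mu,J)$ in $D_J$ (as noted after Equation \eqref{combo} in the proof of Theorem \ref{rep}), which pins down $w$ and $\xi$ in the ($\Rightarrow$) direction. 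No new ingredients beyond Theorem \ref{rnd}, Lemma \ref{trans}, and Proposition \ref{tits}(2) are needed.
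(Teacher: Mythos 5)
Your proof is correct and follows essentially the same route as the paper: the reverse direction uses $W_J$-invariance of $P(\nu,J)$ to reduce to $J$-dominant points and then Theorem \ref{rnd} plus transitivity (Lemma \ref{trans}), while the forward direction extracts $\mu \in P(\nu,J)$ and applies Theorem \ref{rnd} together with the fundamental domain property of $D_J$ (Proposition \ref{tits}(2)). One small correction: $\mu$ is not the unique point of $P(\mu,J)$ lying in $D_J$ (it is only the unique \emph{vertex} there), but your argument never needs this claim, since the fundamental domain property already forces $\xi = \mu$.
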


We note that for $\g$ of finite type, and $\mu, \nu$ dominant integral
weights, Lemma \ref{linc} can be found in \cite[Proposition 2.2]{KLV}.

\begin{proof}
If $P(\mu, J) \subset P(\nu, J)$, in particular $\mu \in P(\nu, J)$,
whence we are done by Theorem \ref{rnd}. Conversely, suppose $\mu
\leqslant_J \nu$. By $W_J$ invariance it suffices to show that every $J$
dominant weight $\eta$ of $P(\mu, J)$ lies in $P(\nu, J)$. To see this,
again by Theorem \ref{rnd} $\eta \leqslant_J \mu$, whence $\eta
\leqslant_J \nu$  by Lemma \ref{trans}, and so $\eta \in P(\nu, J)$ by
Theorem \ref{rnd}. 
\end{proof}

\begin{re}
Using results from related work \cite{DK}, specifically, Propositions
\ref{nonde} and \ref{slice}, one similarly obtains a description of the
weights of parabolic Verma modules $M(\lambda,J)$.
Let us write $\leqslant'_J$ for the variant on $\leqslant_J$ given by
replacing $\R^{\geqslant 0} \pi$ by $\Z^{\geqslant 0} \pi$ in Definition
\ref{lj}. Then:
\begin{equation}
\wt M(\lambda, J) = \bigcup_{w \in W_J} w \{\nu \in P^+_J: \nu
\leqslant_J' \lambda\}.
\end{equation}

\noindent By another result of \textit{loc.~cit.}, for $L(\lambda)$ a
simple highest weight module, and $I_{L(\lambda)} := \{ i \in I:
(\halpha_i, \lambda) \in \Z^{\geqslant 0} \}$, we have:
\begin{equation}
\wt L(\lambda)= \bigcup_{w \in W_{I_{L(\lambda)}} } w \{ \nu \in
P^+_{I_{L(\lambda)}}: \nu \leqslant_{I_{L(\lambda)}}' \lambda\}.
\end{equation}
\end{re}

\section{Closure, polyhedrality, and integrable isotropy}\label{S8}

In this section we turn to the question of polyhedrality of $\conv V$. As
we have seen, $\conv V = \conv M(\lambda, I_V)$, so it suffices to
consider parabolic Verma modules. If the Dynkin diagram $I$ of $\g$ can
be factored as a disconnected union $I = I' \sqcup I''$, in the obvious
notation we have corresponding factorizations:
\[
M(\lambda' \oplus \lambda'', J' \sqcup J'') \simeq M(\lambda', J')
\boxtimes M(\lambda'', J''),
\]

\noindent so it suffices to consider $\g$ indecomposable. As is clear
from the Ray Decomposition \ref{ray}, in finite type $\conv V$ is always
a polyhedron, so {\em throughout this section we take $\g$ to be
indecomposable of infinite type}. 

We first show that $\conv V$ is almost never a polytope. Let us call $V$
{\em trivial} if $[\g, \g]$ acts by 0, i.e.~$V$ is one-dimensional.

\begin{pro}\label{ptop}
$\conv V$ is a polytope if and only if $V$ is trivial. 
\end{pro}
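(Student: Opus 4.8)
The plan is to derive everything from the Ray Decomposition (Proposition~\ref{ray}) together with Kac's trichotomy for indecomposable generalized Cartan matrices \cite[Theorem~4.3]{Kac}. The direction ``$V$ trivial $\Rightarrow$ $\conv V$ a polytope'' is immediate: if $[\g,\g]$ acts by $0$ then $\n^-$ annihilates the highest weight line, so $V = V_\lambda$ and $\conv V = \{\lambda\}$ is a (zero-dimensional) polytope.

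For the converse, assume $\conv V$ is a polytope, hence compact. I would first show $I_V = I$: otherwise, picking $i \in I \setminus I_V$, the Ray Decomposition exhibits $\lambda - \R^{\geqslant 0}\alpha_i$ as a face --- in particular a subset --- of $\conv V$, contradicting compactness. Thus $\lambda \in P^+$, and the Ray Decomposition now reads $\conv V = \conv(W\lambda)$. It then remains to prove that $(\halpha_i, \lambda) = 0$ for all $i \in I$; granting this, $I_V = I$ forces $f_i V_\lambda = 0$ for all $i$, so $V = V_\lambda$ and $[\g,\g]$ acts by $0$, i.e.\ $V$ is trivial.

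To produce this relation I would exploit a $W$-fixed point of the polytope $\conv(W\lambda)$. Since $W$ acts on $\h^*$ by linear automorphisms preserving $\conv(W\lambda)$, it permutes the finitely many vertices of this polytope; hence the barycenter $\mu$ of the vertex set lies in $\conv(W\lambda)$ and is $W$-fixed, so that $(\halpha_i,\mu) = 0$ for all $i$. On the other hand $\mu \in \conv V \subseteq \lambda - \R^{\geqslant 0}\pi$, so we may write $\lambda - \mu = \sum_{i \in I} c_i \alpha_i$ with all $c_i \geqslant 0$; setting $c := (c_i)_{i \in I}$, for every $j \in I$ we get $(Ac)_j = (\halpha_j, \lambda - \mu) = (\halpha_j,\lambda) \geqslant 0$ since $\lambda \in P^+$. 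Thus $c \geqslant 0$ and $Ac \geqslant 0$ coordinatewise. As $\g$ is indecomposable of infinite type, $A$ is of affine or of indefinite type, and in either case Kac's trichotomy forces $Ac = 0$ (indeed $c = 0$ in the indefinite case); hence $(\halpha_j,\lambda) = (Ac)_j = 0$ for all $j$, which completes the argument.

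Once the Ray Decomposition is available the proof is essentially formal, so I do not expect a serious obstacle; the two points needing mild care --- neither difficult --- are that a linear automorphism of a polytope permutes its extreme points (so that the vertex barycenter is genuinely $W$-invariant), and the correct bookkeeping of the coordinatewise inequalities in \cite[Theorem~4.3]{Kac} that separates the affine and indefinite cases.
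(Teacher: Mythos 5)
Your proof is correct, but it takes a genuinely different route from the paper's at the key step. Both arguments begin the same way: the trivial direction is immediate, and compactness together with the Ray Decomposition (Proposition \ref{ray}) forces $I_V = I$, hence $\conv V = \conv(W\lambda)$ with $\lambda \in P^+$. From there the paper argues representation-theoretically: a polytope has finitely many weights, so $V$ is finite-dimensional; for simple $V$ one passes to $\overline{\g}$ and uses that every ideal of $\overline{\g}$ is either central or contains $[\overline{\g},\overline{\g}]$, so the finite-codimension kernel of $\overline{\g} \to \operatorname{End}(V)$ must contain the derived subalgebra; the general integrable case is then handled via Proposition \ref{nonde} (unique weight) and the fact that $[\g,\g] \cap \h$ is spanned by the $\halpha_i$. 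You instead stay within convex geometry and linear algebra: the $W$-fixed barycenter of the (finite) vertex set of $\conv(W\lambda)$ produces $\mu$ with $(\halpha_i,\mu)=0$, and writing $\lambda - \mu = \sum_i c_i\alpha_i$ with $c \geqslant 0$, $Ac \geqslant 0$, Kac's trichotomy \cite[Theorem 4.3]{Kac} for an indecomposable matrix of affine or indefinite type forces $Ac = 0$ (resp.\ $c=0$), i.e.\ $(\halpha_i,\lambda)=0$ for all $i$, which gives triviality exactly as you say. Each step you use checks out, including the standard fact that a linear automorphism of a polytope permutes its extreme points, and the correct reading of the (Aff) and (Ind) cases. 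What each approach buys: the paper's argument is short given the ideal structure of $\overline{\g}$ and the weight theory of integrable modules, but needs the detour through $\overline{\g}$ and a separate treatment of simple versus general $V$; yours is uniform in $V$, needs nothing beyond the Ray Decomposition on the representation side, and trades the Lie-algebraic input for Kac's classification of generalized Cartan matrices.
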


\begin{proof}
One direction is tautological. For the reverse, if $\conv V$ is a
polytope, then $V$ is integrable. Assume first that $V$ is simple; then
$V$ is the inflation of a $\overline{\g}$-module, where $\overline{\g}$
is as in Section \ref{rat}. Now recall that if $\mathfrak{i}$ is an ideal
of $\overline{\g}$, then either $\mathfrak{i}$ is contained in the
(finite-dimensional) center of $\overline{\g}$ or contains
$[\overline{\g}, \overline{\g}]$. Since the kernel of $\overline{\g}
\rightarrow {\rm End}(V)$ has finite codimension, the claim follows. For
the case of general integrable $V$, we still know $V$ has a unique weight
by Proposition \ref{nonde}. Now the result follows by recalling that
$[\g,\g] \cap \h$ is spanned by $\halpha_i, \ i \in I$.
\end{proof}

Even if we relax the assumption of boundedness, it transpires that $\conv
V$ is only a polyhedron in fairly degenerate circumstances:

\begin{pro}\label{Pclo}
Let $V$ be a non-trivial module. Then $\conv V$ is a polyhedron if and
only if $I_V$ corresponds to a Dynkin diagram of finite type. 
\end{pro}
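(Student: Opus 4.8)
The plan is to prove both implications by combining the Ray Decomposition (Proposition~\ref{ray}) with the Weyl--Minkowski description of polyhedra (Proposition~\ref{Tdecomp}), and with the structure of the $I_V$ Tits cone (Proposition~\ref{tits}).

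First suppose $I_V$ is of finite type. Then $W_{I_V}$ is a finite group, so the Ray Decomposition presents $\conv V$ as the convex hull of finitely many rays $w(\lambda - \R^{\geqslant 0}\alpha_i)$, $w \in W_{I_V}$, $i \in I \setminus I_V$, translated to a common apex only up to the finite set $W_{I_V}\lambda$. Concretely, $\conv V = \conv(W_{I_V}\lambda) + \R^{\geqslant 0}\{ w(-\alpha_i) : w \in W_{I_V},\ i \in I \setminus I_V \}$ is a Minkowski sum of a polytope and a finitely generated cone, hence a polyhedron by Proposition~\ref{Tdecomp}. (One should check the displayed identity against \eqref{raydc}, but it is immediate since each ray $w(\lambda - \R^{\geqslant 0}\alpha_i)$ equals $\{w\lambda\} + \R^{\geqslant 0} w(-\alpha_i)$ and taking convex hulls of unions of such rays is exactly forming this Minkowski sum.)

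For the converse, suppose $\conv V$ is a polyhedron; I must show $I_V$ is of finite type. Since $V$ is non-trivial, $\conv V$ is unbounded (by Proposition~\ref{ptop} it is not a polytope), so it has genuine recession directions. The key point is to locate the line-free part: as $\g$ is indecomposable of infinite type and $V$ non-trivial, $\conv V$ contains no affine line (this should follow because $\conv V \subset \lambda - \R^{\geqslant 0}\pi$ and $\pi$ is linearly independent, so the only possible lineality would come from the radical of the form, but a line in $\lambda - \R^{\geqslant 0}\pi$ is impossible). Hence by the uniqueness clause of Proposition~\ref{Tdecomp}, $\conv V = \conv X' + \R^{\geqslant 0}X''$ for \emph{finite} $X', X''$. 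Now I intersect with the $I_V$ Tits cone: by the Integrable Slice Decomposition the weights of $M(\lambda, I_V)$ lie in the $I_V$ Tits cone $C_{I_V}$, and in fact $\conv V \subset C_{I_V}$. If $I_V$ were of infinite type, then $\lambda$ would lie on the boundary of $C_{I_V}$ (its stabilizer in $W_{I_V}$, being trivial on the relevant coweights, is finite only when... — more precisely, by Proposition~\ref{tits}(4) an interior point of $C_{I_V}$ has finite $W_{I_V}$-isotropy, and the vertex $\lambda$ of $\conv V$ has the recession cone $T_\lambda \conv V \supset$ all of $W_{I_V}$-translates of the rays, which cannot be finitely generated when $W_{I_V}$ is infinite). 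The cleanest route: the recession cone of $\conv V$ at $\lambda$ is $T_\lambda V = \conv M(\lambda, I_V \cap \lambda^\perp)$ by Lemma~\ref{tcone}; when $I_V$ is of infinite type this cone has infinitely many extremal rays (the distinct rays $w(\lambda - \R^{\geqslant 0}\alpha_i)$ as $w$ ranges over the infinite group $W_{I_V}$, which are pairwise distinct and extremal by Proposition~\ref{ray}), contradicting finiteness of $X''$.

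The main obstacle is the converse direction, and within it the step of showing that when $I_V$ is of infinite type the cone of recession directions of $\conv V$ genuinely has infinitely many extremal rays — i.e.\ that the rays $w(\lambda - \R^{\geqslant 0}\alpha_i)$ for distinct $w \in W_{I_V}$ do not collapse onto finitely many directions and remain extremal in the whole of $\conv V$. This requires knowing that $W_{I_V}$ acts with infinite orbits on the relevant directions, which follows from the infinitude of $W_{I_V}$ together with the fact (Proposition~\ref{ray}, last sentence) that each such ray is a face of $\conv V$; two faces that coincide must have equal affine hulls, forcing the directions to be distinct. I would also need to handle the degenerate possibility $I \setminus I_V = \emptyset$ separately (then $V$ is integrable, $\conv V = \conv W\lambda$, and infinite type forces infinitely many vertices, so it is not a polyhedron), closing the argument.
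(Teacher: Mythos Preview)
Your forward direction is correct and matches the paper's: when $I_V$ is of finite type, $W_{I_V}$ is finite and the Ray Decomposition immediately gives polyhedrality.

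The converse has a genuine gap. You invoke Lemma~\ref{tcone} to write $T_\lambda V = \conv M(\lambda, I_V \cap \lambda^\perp)$ and then claim this cone has infinitely many extremal rays ``as $w$ ranges over the infinite group $W_{I_V}$''. But the Ray Decomposition for $M(\lambda, J)$ with $J = I_V \cap \lambda^\perp$ is indexed by $W_J$, not $W_{I_V}$, and $J$ can easily be of finite type even when $I_V$ is not (e.g.\ whenever $\lambda$ is $I_V$-regular, $J = \emptyset$ and $T_\lambda V$ is simply the simplicial cone $\lambda - \R^{\geqslant 0}\pi$). So the tangent-cone route fails exactly in the generic case. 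Your fallback via the recession cone also breaks: the rays $w(\lambda - \R^{\geqslant 0}\alpha_i)$ for different $w$ emanate from different apices $w\lambda$, and two such rays can be distinct $1$-faces (distinct affine hulls) while sharing the same direction $-w\alpha_i$; so ``distinct faces $\Rightarrow$ distinct directions'' is false, and you have not shown $X''$ must be infinite.

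The paper's argument avoids the recession cone entirely and instead counts $0$- and $1$-faces directly. If $W_{I_V}\lambda$ is infinite, then $\conv V$ has infinitely many vertices and is not a polyhedron. If $W_{I_V}\lambda$ is finite, then the integrable $\fl_{I_V}$-module $U(\fl_{I_V})V_\lambda$ has polytopal convex hull, so by the reasoning of Proposition~\ref{ptop} (applied to $\fl_{I_V}$) it is one-dimensional and $W_{I_V}\lambda = \{\lambda\}$. Now use indecomposability of $\g$ and non-triviality of $V$ to pick $i \in I \setminus I_V$ adjacent to an infinite-type component of $I_V$; the same Proposition~\ref{ptop} reasoning applied to $U(\fl_{I_V})V_{\lambda - \alpha_i}$ shows $W_{I_V}(\lambda - \alpha_i)$ is infinite, whence the rays $\lambda - \R^{\geqslant 0} w\alpha_i$ (all through the fixed apex $\lambda$) are infinitely many distinct $1$-faces. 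This two-case split is what your sketch is missing.
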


\begin{proof}
If $I_V$ corresponds to a Dynkin diagram of finite type, we are done by
the Ray Decomposition \ref{ray}. Now suppose $I_V$ is not a diagram of
finite type. If the orbit $W_{I_V} \lambda$ is infinite, we deduce that
$\conv V$ has infinitely many $0$-faces, whence it is not a polyhedron.
If instead the orbit is finite, we deduce that the $\fl_{I_V}$ module of
highest weight $\lambda$ is finite-dimensional, whence by the same
reasoning as Proposition \ref{ptop}, the orbit is a singleton. In this
case, by the non-triviality of $V$ and indecomposability of $\g$ we may
choose $i \in I \setminus I_V$ which is connected to a component of $I_V$
of infinite type. It follows by the previous case that the orbit of
$\lambda - \alpha_i$ is infinite. Hence by the Ray Decomposition
\ref{ray}, $\conv V$ has infinitely many $1$-faces, whence it is not a
polyhedron.
\end{proof}

In particular, we have shown:

\begin{cor}
For any $V$ in finite type, $\conv V$ is the Minkowski sum of $\conv
W_{I_V}(\lambda)$ and the cone $\R^{\geqslant 0} W_{I_V}(\pi_{I \setminus
I_V})$.
\end{cor}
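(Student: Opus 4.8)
The plan is to read the statement directly off the Ray Decomposition (Proposition \ref{ray}), the only real work being to convert ``convex hull of a union of rays'' into the stated Minkowski sum; this conversion is valid precisely because $\conv V$ is closed in finite type. Concretely, since $\g$ is of finite type, $W_{I_V}$ is a finite group and $I\setminus I_V$ a finite set, so by Proposition \ref{ray} and $\conv(\lambda-\Z^{\geqslant 0}\alpha_i)=\lambda-\R^{\geqslant 0}\alpha_i$,
\[
\conv V \;=\; \conv \bigcup_{\substack{w\in W_{I_V}\\ i\in I\setminus I_V}} \bigl(w\lambda + \R^{\geqslant 0}(-w\alpha_i)\bigr),
\]
a convex hull of \emph{finitely many} rays. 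Write $\mathcal C := \R^{\geqslant 0}W_{I_V}(\pi_{I\setminus I_V})$ for the recession cone in the statement, generated by the ray directions $-w\alpha_i$, and $Q := \conv W_{I_V}(\lambda) + \mathcal C$ for the asserted Minkowski sum; by the Weyl--Minkowski theorem (Proposition \ref{Tdecomp}), $Q$ is a polyhedron, in particular closed. The goal is to show $\conv V = Q$.

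The inclusion $\conv V \subseteq Q$ is immediate: $Q$ is convex and contains each ray $w\lambda + \R^{\geqslant 0}(-w\alpha_i)$, since $w\lambda\in\conv W_{I_V}(\lambda)$ and $-w\alpha_i$ generates a ray of $\mathcal C$; hence $Q$ contains their convex hull. For the reverse inclusion I would first record the elementary fact that the rays emanating from a fixed vertex $w\lambda$ together span the cone $w\lambda + \R^{\geqslant 0}\{-w\alpha_i : i\in I\setminus I_V\}$: any $w\lambda + \sum_i d_i(-w\alpha_i)$ with $D:=\sum_i d_i>0$ equals the convex combination $\sum_i (d_i/D)\bigl(w\lambda + D(-w\alpha_i)\bigr)$ of points on those rays, and for $D=0$ it is the vertex itself. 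Consequently $w\lambda + \R^{\geqslant 0}\{-w\alpha_i : i\}\subseteq \conv V$ for every $w\in W_{I_V}$. Now take an arbitrary $q = \sum_{w} t_w(w\lambda) + \sum_{w} v_w \in Q$, with $t_w\geqslant 0$, $\sum_w t_w=1$, and $v_w\in\R^{\geqslant 0}\{-w\alpha_i : i\}$; set $t_w^\varepsilon := (1-\varepsilon)t_w + \varepsilon/|W_{I_V}|$, so that $t_w^\varepsilon>0$ for $\varepsilon\in(0,1)$. Then
\[
q_\varepsilon \;:=\; \sum_w t_w^\varepsilon(w\lambda) + \sum_w v_w \;=\; \sum_w t_w^\varepsilon\bigl(w\lambda + v_w/t_w^\varepsilon\bigr)
\]
is a convex combination of points of $\conv V$ by the previous paragraph, hence lies in $\conv V$; letting $\varepsilon\to 0$ gives $q\in\overline{\conv V}$. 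Since $\conv V$ is a polyhedron in finite type (cf.\ the discussion preceding Proposition \ref{ptop}, or the companion work \cite{DK}), it is closed, so $q\in\conv V$. This yields $Q\subseteq\conv V$, hence equality.

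The one point that requires input beyond formal manipulation is the closedness of $\conv V$: the convex hull of finitely many rays based at distinct points need not be closed — two suitably placed rays in the plane already show this — so one cannot conclude $\conv V=Q$ from Proposition \ref{ray} alone. I would dispatch this by invoking that in finite type $\conv V$ is a polyhedron, which is exactly where the finite-type hypothesis enters; the perturbation argument above then closes the gap. As a sanity check, when $I_V=I$ the set $I\setminus I_V$ is empty, $\mathcal C=\{0\}$, and the statement degenerates to the classical identity $\conv V=\conv W\lambda$ for the Weyl polytope, consistent with the convention in Proposition \ref{ray}.
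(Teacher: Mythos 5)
Your proof is correct and follows the paper's (implicit) route: the corollary is read off the Ray Decomposition \ref{ray}, which is exactly how the paper derives it (``In particular, we have shown''), with your perturbation argument making precise the conversion from a hull of rays to the Minkowski sum. The extra care about closure --- observing that the convex hull of finitely many rays need not be closed, and patching this via the finite-type polyhedrality of $\conv V$ from \cite{DK} --- is a legitimate, non-circular refinement of a step the paper treats as immediate; just rely on \cite{DK} rather than on the discussion preceding Proposition \ref{ptop}, since that remark is essentially the assertion being proved.
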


This corollary extends the notion of the Weyl polytope of a
finite-dimensional simple representation, to the Weyl polyhedron of any
highest weight module $V$ in finite type.

Recall that a set $X \subset \h^*$ is \textit{locally polyhedral} if its
intersection with every polytope is a polytope. It turns out that for
generic $V$, $\conv V$ is locally polyhedral, as we show in the main
theorem of this section.

\begin{theo}\label{Tclo}
Let $V$ be a non-trivial module. The following are equivalent: 
\begin{enumerate}
\item $\conv V$ is locally polyhedral.

\item The tangent cone $T_\lambda V$ is closed, and
$\displaystyle \conv V = \bigcap_{w \in W_{I_V}} T_{w \lambda} V$.

\item $\conv V$ is closed.

\item The stabilizer of $\lambda$ in $W_{I_V}$ is finite.

\item $\lambda$ lies in the interior of the $I_V$ Tits cone.
\end{enumerate}
\end{theo}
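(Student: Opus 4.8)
The plan is to prove $(4)\Leftrightarrow(5)$ separately and then run the cycle $(5)\Rightarrow(2)\Rightarrow(3)\Rightarrow(5)$ together with $(5)\Rightarrow(1)\Rightarrow(3)$. The equivalence $(4)\Leftrightarrow(5)$ is immediate from Proposition~\ref{tits}(4): $\lambda$ is $I_V$-dominant, so $\lambda\in C_{I_V}$, and it is an interior point exactly when its stabilizer in $W_{I_V}$ is finite. Two edges of the cycle are formal. For $(1)\Rightarrow(3)$: a subset of $\h^*$ is closed if and only if its intersection with every closed ball is closed, and polytopes are closed. For $(2)\Rightarrow(3)$: since the weights of $V$ are $W_{I_V}$-stable, $T_{w\lambda}V=w\,T_\lambda V$ for $w\in W_{I_V}$, so if $T_\lambda V$ is closed then each $T_{w\lambda}V$ is closed and $\conv V=\bigcap_{w\in W_{I_V}}T_{w\lambda}V$ is an intersection of closed sets. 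The content therefore lies in $(3)\Rightarrow(5)$ and the two implications out of $(5)$.

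For the implications out of $(5)$, suppose $\lambda$ has finite stabilizer in $W_{I_V}$. By Theorem~\ref{main1} we may take $V=M(\lambda,I_V)$, and Theorem~\ref{rnd} then gives $\conv V=\bigcup_{w\in W_{I_V}}wK_0$, where $K_0:=\{\nu\in D_{I_V}:\lambda-\nu\in\R^{\geqslant 0}\pi\}$; here the second clause of Definition~\ref{lj} is automatic precisely because $\lambda$ has finite stabilizer, so $K_0$ is a genuine polyhedron. The crucial geometric fact I would establish is that $K_0\subset\operatorname{int}C_{I_V}$ and that the family $\{wK_0:w\in W_{I_V}\}$ is locally finite there, since a compact subset of the interior of the $I_V$-Tits cone meets only finitely many chambers $wD_{I_V}$. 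Granting this, $\conv V$ is a locally finite union of polyhedra, so $\conv V$ is closed, which is $(5)\Rightarrow(3)$, and its intersection with any polytope, meeting only finitely many $wK_0$, is a polytope, which is $(5)\Rightarrow(1)$. For $(5)\Rightarrow(2)$: Lemma~\ref{tcone} gives $T_\lambda V=\conv M(\lambda,\lambda^\perp)$ with $\lambda^\perp:=\{i\in I_V:(\halpha_i,\lambda)=0\}$ of finite type, and since $\g$ is of infinite type we have $\lambda^\perp\subsetneq I$, so $M(\lambda,\lambda^\perp)$ is non-trivial and Proposition~\ref{Pclo} makes $T_\lambda V$ a polyhedron, in particular closed. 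The inclusion $\conv V\subseteq\bigcap_w T_{w\lambda}V$ is automatic; for the reverse inclusion a point $x\notin\conv V$ is separated from the closed set $\conv V$ by a functional which, by the classification of faces in Sections~\ref{S6}--\ref{S82}, attains its maximum on $\conv V$ along a standard parabolic face at some vertex $w\lambda$, and is then bounded above on $T_{w\lambda}V=w\,T_\lambda V$, contradicting $x\in T_{w\lambda}V$.

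For $(3)\Rightarrow(5)$ I argue contrapositively: if $\lambda$ has infinite stabilizer in $W_{I_V}$, so $\lambda\in\partial C_{I_V}$ by Proposition~\ref{tits}(4), then $\conv V$ is not closed. By Lemma~\ref{arefaces}, exposed faces of $\conv V$ of the form $\conv U(\fl_J)V_\lambda$ are parabolic-Verma hulls over $\fl_J$, which reduce the claim either to a Weyl-orbit hull $\conv W_J\lambda$ with $\lambda$ of infinite stabilizer and infinite orbit in $W_J$, or, via a proper Levi $\fl_J$ with $I_V\subsetneq J\subsetneq I$, to the same statement for a lower-rank algebra; the residual case $I=I_V\cup\{i_0\}$ with $\lambda^\perp=I_V$ is handled directly, where the Ray Decomposition~\ref{ray} presents $\conv V=\lambda-\operatorname{cone}\bigl(W_{I_V}\alpha_{i_0}\bigr)$. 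In each case one must show a convex conical or orbit hull built from a $W$-orbit of a real root or weight fails to close up along $\partial C_{I_V}$: taking an infinite-type component $K$ of the relevant singular directions and a node $i_0$ joined to it, iterating $s_{i_0}$ against $W_K$ drives the orbit to infinity along directions converging to a ray in $\partial C_{I_V}$, and the presentation $\conv W_J\lambda=\bigcup_{w\in W_J}w\{\nu\in D_J:\nu\leqslant_J\lambda\}$ of Theorem~\ref{rnd} shows the limiting boundary point is not attained. The hard part will be precisely this boundary analysis of how orbit hulls fail to be closed along the edge of the Tits cone, which can alternatively be extracted from Looijenga's work on root systems.

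Combining the above, $(5)\Rightarrow(2)\Rightarrow(3)\Rightarrow(5)$ and $(5)\Rightarrow(1)\Rightarrow(3)$ show that (1), (2), (3) and (5) are equivalent, and $(4)\Leftrightarrow(5)$ completes the equivalence of (1)--(5).
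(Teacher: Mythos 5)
Your skeleton ((4)$\Leftrightarrow$(5), (1)$\Rightarrow$(3), (2)$\Rightarrow$(3)) is fine, and your starting point for the implications out of (5) --- writing $\conv V=\bigcup_{w\in W_{I_V}}wK_0$ with $K_0=\{\nu\in D_{I_V}:\nu\leqslant_{I_V}\lambda\}$ an honest polyhedron via Theorem \ref{rnd} (which precedes this theorem, so no circularity) --- is legitimate. The genuine gap is the inference ``$K_0\subset\operatorname{int}C_{I_V}$ and $\{wK_0\}$ is locally finite \emph{there}, hence $\conv V$ is closed and meets every polytope in a polytope.'' Local finiteness of the chambers, and hence of the translates $wK_0$, is only available at points of the \emph{interior} of the $I_V$ Tits cone; it therefore yields only that $\conv V$ is relatively closed in that open cone. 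The entire difficulty in infinite type is to exclude limit points of $\bigcup_w wK_0$ on the boundary of the Tits cone, and likewise an arbitrary polytope need not lie in the interior, so the claim that it meets only finitely many $wK_0$ does not follow from the compactness fact you cite. This is exactly where the paper has to work: it invokes Looijenga's closedness theorem \cite[Corollary 2.5]{Looijenga} for $\conv W_{I_V}(K)$ with $K$ compact in the interior, applied to cutoffs $K_t$ of the Ray Decomposition \ref{ray} together with a filtration by half-spaces, and then a segment argument to show $\bigcap_{w}T_{w\lambda}V$ stays inside the Tits cone; nothing in your sketch replaces this step. A smaller but real issue in your (5)$\Rightarrow$(2): a separating functional for $x\notin\conv V$ is bounded above on the closed but \emph{unbounded} set $\conv V$ and need not attain its supremum, so ``attains its maximum along a standard parabolic face at some vertex $w\lambda$'' requires justification; the paper instead proves directly that $T_\lambda V\cap D_{I_V}\subset\conv V$ (a dilation/rationality argument reducing to non-degeneracy, Proposition \ref{nonde}(2)) and then handles the Tits-cone containment as above.

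The direction (3)$\Rightarrow$(5) (the paper proves (3)$\Rightarrow$(4), equivalent by (4)$\Leftrightarrow$(5)) is also left essentially unproved in your proposal: your reduction through faces and the heuristic that ``iterating $s_{i_0}$ against $W_K$ drives the orbit to infinity along directions converging to a ray in $\partial C_{I_V}$'' never exhibits a finite point of $\overline{\conv V}\setminus\conv V$, which is what non-closedness requires, and you yourself defer ``the hard part.'' The paper's argument here is short and concrete: with $J=\{j\in I_V:(\halpha_j,\lambda)=0\}$ of infinite type, pick an infinite-type connected component $J'\subset J$, a node $i\in I\setminus J$ adjacent to $J'$ (possible by non-triviality and indecomposability), and an imaginary positive root $\delta$ of $\fl_{J'}$ with $\supp\delta=J'$; then $\lambda-\alpha_i-\Z^{\geqslant 0}\delta\subset\wt V$ by Proposition \ref{nonde}, so $\lambda-\Z^{\geqslant 0}\delta$ lies in the closure of $\conv V$, while Equation \eqref{Enoholes} shows it does not lie in $\conv V$. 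So while your reductions are reasonable, the two analytic cruxes --- closedness across the Tits-cone boundary under (5), and producing a missing finite limit point under infinite stabilizer --- are precisely the points your proposal leaves open.
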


\begin{re}
The equivalence of (4) and (5) was already recalled in Proposition
\ref{tits}, and we include (5) only to emphasize the genericity of these
conditions. In the integrable case, i.e.~$I_V = I$, Borcherds in
\cite{bor} considers the very same subset of the dominant chamber, which
he calls $C_{\Pi}$, and writes:
\begin{quotation}
The Tits cone can be thought of as obtained from $W_{\Pi} C_{\Pi}$ by
``adding some boundary components''.
\end{quotation}
\end{re}

\begin{proof}[Proof of Theorem \ref{Tclo}]
We may assume $V$ is a parabolic Verma module. As remarked above, (4) and
(5) are equivalent. We will show the following sequence of implications:
\[
(1) \implies (3) \implies (4) \implies (2) \implies (3), \quad \text{and}
\quad (2)-(5) \implies (1).
\]

\noindent It is easy to see (1) implies (3) by intersecting with cubes of
side-length tending to infinity. We now show the equivalence of (2), (3),
and (4). The implication (2) implies (3) is tautological.
To show that (3) implies (4), write $J = \{ j \in I_V: (\halpha_j,
\lambda) = 0 \}$ as in Lemma \ref{tcone}, and recall by Proposition
\ref{tits} that the stabilizer of $\lambda$ is $W_J$. Suppose that $W_J$
is infinite, and choose a connected sub-diagram $J' \subset J$ of
infinite type. By the non-triviality of $V$, and the indecomposability of
$\g$, we may choose $i \in I \setminus J$ which shares an edge with $J'$.
It follows that $\lambda - \alpha_i$ is a weight of $V$, and $U(\fl_{J'})
V_{\lambda - \alpha_i}$ is a non-trivial integrable highest weight
$\fl_{J'}$-module. We may choose an imaginary positive root $\delta$ of
$\fl_{J'}$ with $\supp \delta = J'$, cf.~\cite[Theorem 5.6 and \S
5.12]{Kac}. By Proposition \ref{nonde}, $\lambda - \alpha_i -
\Z^{\geqslant 0} \delta$ lies in $\wt V$. It follows that $\lambda -
\Z^{\geqslant 0} \delta$ lies in the closure of $\conv V$, but cannot lie
in $\conv V$ by Equation \eqref{Enoholes}. It may be clarifying for the
reader to visualize this argument using Figure \ref{Fig1}.

We next show that (4) implies (2). By Lemma \ref{tcone} and the Ray
Decomposition \ref{ray}, our assumption on $\lambda$ implies the tangent
cone $T_{\lambda} V$ is a polyhedral cone, whence closed. It therefore
remains to show:
\begin{equation}\label{dog}
\conv V = \bigcap_{w \in W_{I_V}} T_{w \lambda} V.
\end{equation}

\noindent The inclusion $\subset$ is tautological. For the reverse,
writing $D$ for the $I_V$ dominant chamber, we first show $T_\lambda V
\cap D \subset \conv V$, i.e.~the left-hand side coincides with the
intersection of the right-hand side with the $I_V$ Tits cone. Since
$(\halpha_j, \lambda) \in \Z, \forall j \in I_V$, it is straightforward
to see that the translate $(T_{\lambda} V \cap D) - \lambda$ is a
rational polyhedron (cf.~Proposition \ref{crat}). In particular, it
suffices to consider $v \in T_{\lambda} V \cap D$ of the form $v \in
\lambda - \Q^{\geqslant 0} \pi$. As both sides of Equation \eqref{dog}
are dilated by $n$, when we pass from $M(\lambda, I_V)$ to $M(n \lambda,
I_V)$, for any $n \in \Z^{> 0}$, it suffices to consider $v \in
T_{\lambda} V \cap D$ of the form $v \in \lambda - \Z^{\geqslant 0} \pi$.
Writing $v = \lambda - \mu' - \mu''$, where $\mu' \in \Z^{\geqslant 0}
\pi_{I_V}, \mu''  \in \Z^{\geqslant 0} \pi_{I \setminus I_V}$, it
suffices to see that $v$ is non-degenerate with respect to $\lambda -
\mu''$. But this follows since the stabilizer of $\lambda - \mu''$ is
again of finite type by Proposition \ref{nonde}(2).

It remains to show the right-hand side lies in the $I_V$ Tits cone.
Looijenga has shown in \cite[Corollary 2.5]{Looijenga} that if $K \subset
D$ is a compact subset of the interior of the $I_V$ Tits cone, then
$\conv W_{I_V}(K)$ is closed. To apply this in our situation, introduce
the following `cutoffs' of the Ray Decomposition \ref{ray}:
\[
K_t := \bigcup_{i \in I \setminus I_V} \lambda - [0,t] \alpha_i, \qquad t
\geqslant 0.
\]

\noindent By filtering $\lie{h}^*$ by half spaces of the form $\{ (\mu,-)
\geqslant t \}$, where $\mu = \sum_{i \in I \setminus I_V}
\check\omega_i$, it suffices to know that $\conv W_{I_V}(K_t)$ is closed
for all $t \geqslant 0$; but this follows from Looijenga's result.

Continuing with the proof, if the right-hand side of Equation \eqref{dog}
contained a point $x$ outside of the $I_V$ Tits cone, consider the line
segment $\ell = [\lambda,x]$. The intersection of $\ell$ with the $I_V$
Tits cone is closed and convex, whence of the form $[\lambda, y]$. But
now $y$ is a point of $\conv V$ which lies on the boundary of the $I_V$
Tits cone. In particular, $\conv V$ is not contained in the interior of
the $I_V$ Tits cone, which contradicts (5).

It remains to see the equivalent conditions (2)--(5) imply (1). For this
we will introduce a convexity-theoretic analogue of the principal
gradation, cf.~\cite[\S 10.8]{Kac}. Choose $\hrho$ in the real dual of
$\h^*$ satisfying $(\hrho, \alpha_i) = 1, \forall i \in I$, and for $A
\subset \R$ write $H_A := \hrho^{-1} (A)$. In particular, write $H_{[k,
\infty)}$ for the half-space $\hrho^{-1}([k, \infty))$, and $V_{[k,
\infty)} := (\conv V) \cap H_{[k, \infty)}$.  It suffices to check that
$V_{[k, \infty)}$ is a polytope, for all $k \in \R$. Fix $k \in \R$,
which we may assume to satisfy $k \leqslant \hrho(\lambda)$, and note
$\wt V \cap H_{[k, \infty)}$ consists of finitely many points.
Let $v_i, 1 \leqslant i \leqslant n$ be an enumeration of
$W_{I_V}(\lambda) \cap H_{(k, \infty)}$. For each $v_i$, by (4) and the
Ray Decomposition \ref{ray}, let $f_{ij}$ denote the finitely many
$1$-faces containing $v_i$. For example, by Theorem \ref{cf} if $v_i =
\lambda$, these $1$-faces are
\[
w\{\lambda - \R^{\geqslant 0} \alpha_i\} \ \text{and}\ 
w [\lambda, s_j(\lambda)], \quad \text{where} \quad
w \in \St_{W_{I_V}}(\lambda), i \in I \setminus I_V, j \in I_V.
\]

\noindent Here $[a,b]$ denotes the closed line segment joining $a$ and
$b$. Each $f_{ij}$ intersects $H_{[k, \infty)}$ in a line segment, we
define $e_{ij}$ to be the other endpoint, i.e.~$[v_i, e_{ij}] := f_{ij}
\cap H_{[k, \infty)}$. By removing some $e_{ij}$, we may assume $e_{ij}
\neq v_k, \forall i,j,k$. It suffices to show:
\[
\conv V_{[k, \infty)} = \conv \{ v_i, e_{ij}\}.
\]

Writing $H_k$ for the affine hyperplane $\hrho^{-1}(k)$, we claim that
$H_k \cap \conv V = \conv e_{ij}$. We first observe that the intersection
is a bounded subset of $H_k$. Indeed, it suffices to check $H_k \cap
T_{\lambda} V$ is bounded, but $T_{\lambda} V$ by Lemma \ref{tcone} is of
the form $\conv \{ \lambda - \R^{\geqslant 0} r_k \}_{1 \leqslant k
\leqslant m},$ with $\hrho(r_k) > 0, 1 \leqslant k \leqslant m$. So $H_k
\cap \conv V$ is bounded, and closed by (2), whence it is the convex hull
of its $0$-faces, cf.~\cite[\S 2.4.5]{Grun}. But if $v$ is an interior
point of a $d$-face of $\conv V$, $d \geqslant 2$, then $v$ cannot be a
$0$-face of a hyperplane section. Hence the $0$-faces of $H_k \cap \conv
V$ are found by intersecting the $0$-faces and $1$-faces of $\conv V$
with $H_k$, which is exactly the collection $e_{ij}$. 

We are ready to show that $V_{[k, \infty)} = \conv \{ v_i, e_{ij} \}$.
The inclusion $\supset$ is immediate. For the inclusion $\subset$, write
$v \in V_{[k, \infty)}$ as a convex combination:
\[
v = \sum_{r' \in R'} t_{r'} r' + \sum_{r'' \in R''} t_{r''} r'',
\]
where $r', r''$ are points along the rays occurring in the Ray
Decomposition \ref{ray} satisfying $\hrho(r') \geqslant k > \hrho(r'')$,
respectively. For $r' \in R'$, by construction we can write $r'$ as a
convex combination of $v_i, e_{ij}$.
Observe $R'$ must be non-empty, else $\hrho(v) < k$. If $R''$ is also
non-empty, define the partial averages:
\[
v' := \frac{1}{\sum_{s' \in R'} t_{s'}} \sum_{r' \in R'} t_{r'} r', \quad
\quad v'' := \frac{1}{\sum_{s'' \in R''} t_{s''}} \sum_{s'' \in R''}
t_{r''} r''.
\]

\noindent The line segment $[v', v'']$ is not a single point, as
$\hrho(v'') < k$. Writing $v'''$ for its intersection with $H_k$, we have
$v$ lies in $[v', v''']$. But we already saw $v'$ lies in $\conv \{v_i,
e_{ij} \}$, and $v''' \in \conv \{ e_{ij} \}$ by the preceding paragraph;
hence, $v \in \conv \{ v_i, e_{ij} \}$ as desired.
\end{proof}

\begin{re}
It would be interesting to know whether the intersection formula in
Theorem \ref{Tclo}(2) holds without finite stabilizers.
\end{re}

As an application of Theorem \ref{Tclo}, we provide a half-space
representation of $\conv V$ for generic highest weights, valid in any
type.

\begin{pro}\label{Phalfspace}
When it is closed, $\conv V$ coincides with the intersection of the
half-spaces
\[
H_{i,w} := \{ \mu \in \lambda - \R \pi : (w \lambda - \mu, w
\check\omega_i) \geqslant 0 \}, \qquad \forall w \in W_{I_V},\ i \in I.
\]
\end{pro}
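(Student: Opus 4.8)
The plan is to deduce the statement from the intersection formula of Theorem~\ref{Tclo}(2). We may assume $V$ is non-trivial; as $\conv V = \conv M(\lambda, I_V)$ is closed, Theorem~\ref{Tclo} applies. First I would set $J := I_V \cap \lambda^\perp$ as in Lemma~\ref{tcone}, giving $T_\lambda V = \conv M(\lambda, J) = P(\lambda, J)$. By Theorem~\ref{Tclo} the stabilizer of $\lambda$ in $W_{I_V}$ is finite; since this stabilizer equals $W_J$ (Proposition~\ref{tits}(1)), $W_J$ is finite. The same theorem gives $\conv V = \bigcap_{w \in W_{I_V}} T_{w\lambda} V$, and as $W_{I_V}$ stabilizes $\conv V$ (Theorem~\ref{main1}) one may rewrite each $T_{w\lambda}V = w\,T_\lambda V$, so $\conv V = \bigcap_{w \in W_{I_V}} w\,T_\lambda V$. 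I would then record the two elementary identities $\bigcap_{i\in I} H_{i,1} = \lambda - \R^{\geqslant 0}\pi$ and $wH_{i,1} = H_{i,w}$ for $w \in W_{I_V}$ (direct computation, using $w\lambda - \lambda \in \Z\pi$), which together identify the set in the proposition with $\bigcap_{w \in W_{I_V}} w(\lambda - \R^{\geqslant 0}\pi)$.

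The inclusion $\conv V \subseteq \bigcap_{w,i} H_{i,w}$ is then immediate: $\wt V \subseteq \lambda - \Z^{\geqslant 0}\pi$ forces $\conv V \subseteq \lambda - \R^{\geqslant 0}\pi = \bigcap_i H_{i,1}$, and applying $w \in W_{I_V}$ with $\conv V = w\,\conv V$ and $wH_{i,1} = H_{i,w}$ gives $\conv V \subseteq H_{i,w}$ for all $w, i$. For the opposite inclusion, the crucial reduction is that it suffices to prove the ``local'' identity $T_\lambda V = \bigcap_{w' \in W_J} w'(\lambda - \R^{\geqslant 0}\pi)$. Indeed, since $W_J \subseteq W_{I_V}$ this gives $\bigcap_{w \in W_{I_V}} w(\lambda - \R^{\geqslant 0}\pi) \subseteq T_\lambda V$, and applying each $\sigma \in W_{I_V}$ to both sides --- the left-hand side being $W_{I_V}$-invariant --- yields $\bigcap_{w \in W_{I_V}} w(\lambda - \R^{\geqslant 0}\pi) \subseteq \bigcap_{\sigma \in W_{I_V}} \sigma\,T_\lambda V = \conv V$.

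To prove the local identity: the inclusion $\subseteq$ is formal, since $P(\lambda, J) \subseteq \lambda - \R^{\geqslant 0}\pi$ and $P(\lambda, J)$ is $W_J$-invariant. For $\supseteq$, I would take $\mu$ in the right-hand side; as $W_J$ is finite, Proposition~\ref{tits}(3) gives $C_J = \h^*_\R(J) \supseteq \lambda - \R^{\geqslant 0}\pi \ni \mu$, so some $W_J$-translate $\mu_0$ of $\mu$ lies in $D_J$, and $\mu_0$ remains in the ($W_J$-invariant) right-hand side, so in particular $\lambda - \mu_0 \in \R^{\geqslant 0}\pi$. Since $\mu_0, \lambda \in D_J$ and $\lambda$ has finite stabilizer $W_J$ in $W_J$, the second Remark following Definition~\ref{lj} shows the nondegeneracy condition~(2) there holds automatically, so $\mu_0 \leqslant_J \lambda$; then $\mu_0 \in P(\lambda, J)$ by Theorem~\ref{rnd}, and $\mu \in P(\lambda, J)$ by $W_J$-invariance.

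The step I expect to be the main obstacle is precisely this verification of condition~(2) of Definition~\ref{lj} for $\mu_0$. If one does not wish to invoke the cited Remark, it can be checked by hand: writing $\lambda - \mu_0 = \beta_J + \beta_{I\setminus J}$, if condition~(2) failed for some connected component $C$ of $\supp\beta_J$, then necessarily $C \subseteq \lambda^\perp$ and $C$ is disjoint, in the Dynkin diagram, from $\supp\beta_{I\setminus J}$; as $\lambda^\perp$ is of finite type, $W_C$ is finite, and its longest element $w_0^C \in W_C \subseteq W_J$ then fixes $\lambda$ and every simple root occurring in $\lambda - \mu_0$ outside $C$, while carrying the $C$-component $\beta_C \in \R^{>0}\pi_C$ into $-\R^{\geqslant 0}\pi_C \setminus \{0\}$; hence $w_0^C(\lambda - \mu_0)$ would have a strictly negative simple-root coefficient, contradicting $\mu_0 \in w_0^C(\lambda - \R^{\geqslant 0}\pi)$.
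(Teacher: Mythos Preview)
Your proof is correct, and it takes a genuinely different route from the paper's. Both arguments start from Theorem~\ref{Tclo}(2), reducing the reverse inclusion to showing $\bigcap_{i,w} H_{i,w} \subset T_\lambda V$. The paper then argues abstractly: since $T_\lambda V$ is a polyhedral cone (Lemma~\ref{tcone}), it equals the intersection of its facet-defining half-spaces, and one ``observes'' that each such facet half-space is one of the $H_{i,w}$ --- an observation which, unpacked, rests on the face classification of Theorem~\ref{cf} (a codimension-one face of $\conv M(\lambda,J)$ is some $w'\wt_{\fl_K}$ with $|K_{\min}| = |I|-1$, whose supporting half-space is precisely $H_{i_0,w'}$ for $K_{\min} = I \setminus \{i_0\}$). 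You instead bypass the face classification entirely and establish the sharper local identity $T_\lambda V = \bigcap_{w' \in W_J} w'(\lambda - \R^{\geqslant 0}\pi)$ directly from the $\leqslant_J$ description of $P(\lambda,J)$ in Theorem~\ref{rnd}. Your approach is longer but more self-contained, drawing only on the Section~\ref{Sweak} machinery rather than on Theorem~\ref{cf}; the paper's is terser but leaves the identification of facets implicit. Your backup argument for the nondegeneracy condition via the longest element $w_0^C$ is also correct and makes the invocation of the unproved Remark after Definition~\ref{lj} unnecessary.
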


The above half-space representation extends results in finite type proved
in \cite{cm,Kh2}; moreover, our proof uses tangent cones and is thus
different from \textit{loc.~cit.}

\begin{proof}
One inclusion is clear, since $\wt V$ is contained in each half-space
$H_{i,w}$, by $W_{I_V}$-invariance. Conversely, by Theorem \ref{Tclo} we
need to show that the intersection
\[
\bigcap_{i \in I} \bigcap_{w \in W_{I_V}} H_{i,w}
\]

\noindent is contained in the tangent cone $T_\lambda V$. Since $\lambda$
has finite stabilizer in $W_{I_V}$, the cone $T_\lambda V$ is a convex
polyhedron (e.g.~by Lemma \ref{tcone}). Hence it is the intersection of
the half-spaces that define its facets. Now observe that each of these
half-spaces appears in the above intersection.
\end{proof}

\section{A question of Brion on localization of convex
hulls}\label{Sbrion}

We now return to the case of an arbitrary Kac--Moody Lie algebra $\g$
and answer the question of Brion \cite{Brion2} mentioned in Section
\ref{S2}.

\begin{theo}
Suppose $\g$ is arbitrary. Fix $\lambda, V$ with $(\halpha_i,\lambda) >
0$ for all $i \in I_V$. Given a standard face $F = \wt_{\fl_J}$ for some
$J \subset I$, and setting $J' := I_V \cap J$, we have
\begin{equation}\label{Elocal2}
\Loc_J \conv V = \bigcap_{w \in W_{J'}} T_{w \lambda} V = \conv
M(\lambda, J').
\end{equation}
\end{theo}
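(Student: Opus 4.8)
The statement packages two equalities, $\Loc_F\conv V = \bigcap_{w\in W_{J'}}T_{w\lambda}V$ and $\bigcap_{w\in W_{J'}}T_{w\lambda}V = \conv M(\lambda,J')$, and my plan is to treat them in turn after a common reduction, where throughout $J' := I_V\cap J$.

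\smallskip\noindent\emph{Reduction and the tangent cones $T_{w\lambda}V$.} By Theorem \ref{main1} we have $\conv V = \conv M(\lambda,I_V)$, so we may take $V = M(\lambda,I_V)$; this leaves $J'$ unchanged. Restricting to $\fl_J$ and invoking Theorem \ref{main1} over $\fl_J$, the $\fl_J$-module $U(\fl_J)V_\lambda$ has integrability $J'$, so $F = \conv U(\fl_J)V_\lambda$ is $W_{J'}$-stable, contains $W_{J'}\lambda$, and the Ray Decomposition \ref{ray} applied over $\fl_J$ gives
\begin{equation}\label{Efdecomp}
F = \conv\bigcup_{v\in W_{J'},\ i\in J\setminus J'} v(\lambda - \R^{\geqslant 0}\alpha_i).
\end{equation}
Since $(\halpha_i,\lambda)>0$ for all $i\in I_V$, we have $I_V\cap\lambda^\perp = \emptyset$, so Lemma \ref{tcone} yields $T_\lambda V = \conv M(\lambda)$; as $\conv V$ is $W_{I_V}$-stable, $T_{w\lambda}V = w\,\conv M(\lambda)$ for all $w\in W_{J'}$.

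\smallskip\noindent\emph{The equality $\bigcap_{w\in W_{J'}}T_{w\lambda}V = \conv M(\lambda,J')$.} By the above this amounts to $\bigcap_{w\in W_{J'}} w\,\conv M(\lambda) = \conv M(\lambda,J')$. The inclusion $\supseteq$ is clear, since $\conv M(\lambda,J')$ is $W_{J'}$-stable and contained in $\conv M(\lambda)$. For $\subseteq$, I would apply Theorem \ref{Tclo}(2) to the module $M(\lambda,J')$, whose integrability is $J'$ and whose highest weight has trivial stabilizer in $W_{J'}$ by hypothesis: this gives $\conv M(\lambda,J') = \bigcap_{w\in W_{J'}}T_{w\lambda}M(\lambda,J')$, and each term equals $w\,\conv M(\lambda)$ by Lemma \ref{tcone}. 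If $\g$ is of finite type (where Theorem \ref{Tclo} is not literally available) one argues directly via Theorem \ref{rnd}: regularity of $\lambda$ makes condition (2) of Definition \ref{lj} vacuous, since $(\halpha_j, \lambda - \beta_{I\setminus J'})\geqslant(\halpha_j,\lambda)>0$ for $j\in J'$, and the claim reduces to a short check.

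\smallskip\noindent\emph{The equality $\Loc_F\conv V = \bigcap_{w\in W_{J'}}T_{w\lambda}V$.} Because $W_{J'}\lambda\subseteq F$, intersecting over all of $F$ only shrinks $\Loc$, which gives $\subseteq$. For the reverse inclusion the goal is to show $\conv M(\lambda,J')\subseteq T_f\conv V$ for every $f\in F$, and the tool is the elementary fact that if $f = \sum_m t_m p_m$ is a finite convex combination with $p_m\in\conv V$ and $x\in T_{p_m}\conv V$ for all $m$, then $x\in T_f\conv V$ (choose $s_0>0$ with $p_m + s_0(x-p_m)\in\conv V$ for all $m$, and average). Together with \eqref{Efdecomp} this reduces the problem, via the $W_{J'}$-stability of $\conv V$ and of $\conv M(\lambda,J')$, to proving
\begin{equation}\label{Ecrux}
\conv M(\lambda,J')\subseteq T_{\lambda - q\alpha_i}\conv V,\qquad i\in J\setminus J',\ q\geqslant 0.
\end{equation}
For $q = 0$ this is again Lemma \ref{tcone}. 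For $q>0$, since $T_{\lambda-q\alpha_i}\conv V$ is a convex cone it is enough to place each generating ray $v(\lambda - \R^{\geqslant 0}\alpha_k)$ of $\conv M(\lambda,J')$ ($v\in W_{J'}$, $k\in I\setminus J'$) inside it: when $k\notin I_V$ this is immediate, as $v(\lambda - \R^{\geqslant 0}\alpha_k)\subseteq\conv V$, and the case $k\in I_V\setminus J$ is the crux. Concretely one must check $(1-\epsilon)(\lambda - q\alpha_i) + \epsilon\,v(\lambda - c\alpha_k)\in\conv V$ for all small $\epsilon>0$, even when $c > (\halpha_k,\lambda)$ so that $v(\lambda - c\alpha_k)\notin\conv V$; the plan is to exhibit this point as an explicit convex combination of weights of $V$ via the Ray Decomposition \ref{ray}, the decisive input being that regularity forces the $\fl_{\{k\}}$-string through $\lambda - q\alpha_i$ to have length $(\halpha_k,\lambda) + q|a_{ki}| > 0$, so $\lambda - q\alpha_i$ and its $W_{J'}$-translates genuinely sit below the $\alpha_k$-wall and leave room to move in the $-v\alpha_k$ direction. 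Passing from these strings to the full extended rays $v(\lambda - \R^{\geqslant 0}\alpha_k)$ in the directions $I_V\setminus J$ is exactly the delicate bookkeeping I expect to be the main obstacle; granting \eqref{Ecrux}, both equalities follow and the theorem is proved.
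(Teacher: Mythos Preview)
Your argument for $\bigcap_{w\in W_{J'}}T_{w\lambda}V = \conv M(\lambda,J')$ via Lemma \ref{tcone} and Theorem \ref{Tclo}(2) is exactly the paper's, and your care about finite type is appropriate (there $\conv M(\lambda,J')$ is polyhedral by the Ray Decomposition, hence closed, and the same computation goes through).

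There is, however, a genuine gap in your treatment of $\Loc_F\conv V = \bigcap_{w\in W_{J'}}T_{w\lambda}V$: you explicitly leave the crux inclusion $\conv M(\lambda,J')\subset T_{\lambda-q\alpha_i}\conv V$ unproven, and your sketch via root strings and explicit convex combinations would be painful to carry out. The paper bypasses all of this with a one-line convexity observation you are missing: \emph{if $a,b\in C$ and the segment $[a,b]$ extends slightly past $b$ inside $C$, then $T_aC\subset T_bC$} (a computation in the plane spanned by $b-a$ and $x-a$). This already resolves your crux: since $i\in J\setminus J'\subset I\setminus I_V$, the segment $[\lambda,\lambda-q\alpha_i]$ extends past $\lambda-q\alpha_i$ in $\conv V$, whence $\conv M(\lambda,J')\subset T_\lambda\conv V\subset T_{\lambda-q\alpha_i}\conv V$. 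In fact the observation lets one skip your convex-combination lemma and the reduction to ray generators entirely: by the Ray Decomposition \ref{ray} the extreme points of $F$ are exactly $W_{J'}\lambda$, so any $f\in F$ that is not a vertex lies in the relative interior of a positive-dimensional face of $F$ containing some $w\lambda$, the segment $[w\lambda,f]$ extends past $f$ there, and hence $T_{w\lambda}\conv V\subset T_f\conv V$. Thus $\Loc_F\conv V = \bigcap_{f\in F}T_f\conv V = \bigcap_{w\in W_{J'}}T_{w\lambda}\conv V$ in one stroke.
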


\noindent Note that the localization of $\conv V$ at other faces $F$ is
computed by Theorem \ref{cf}.

\begin{proof}
Given distinct points $a,b$ in any convex set $C$ in a real vector space,
if the line segment $[a,b]$ can be slightly extended in $C$ past $b$,
namely $a + (1+\epsilon)(b-a) \in C$ for some $\epsilon>0$, then $T_a C
\subset T_b C$. This assertion can be checked by a computation in the
plane.

By the above observation and the Ray Decomposition \ref{ray}, $\Loc_J
\conv V$ equals the intersection over the tangent cones at the vertices
in the face, i.e.~$W_{J'}(\lambda)$. Since $\lambda$ is $I_V$-regular,
these coincide with the tangent cones for $M(\lambda,J')$ at these
points, cf.~Lemma \ref{tcone}. We are then done by Theorem \ref{Tclo}(2).
\end{proof}

\begin{re}
While the results in this paper are developed for $\g$-modules, they also
hold for $\overline{\g}$-modules, where $\overline{\g}$ is as in Section
\ref{rat}. That the results cited from the literature and our previous
work \cite{DK} apply for $\overline{\g}$ was explained in
\textit{loc.~cit.} With this, the arguments in the present paper apply
verbatim for $\overline{\g}$, except for Propositions \ref{ptop} and
\ref{Pclo}, which were shown for both $\g$ and $\overline{\g}$.
\end{re}

\section{Highest weight modules over symmetrizable quantum
groups}\label{Squantum}

The goal of this section is to prove Theorems \ref{incl} and \ref{eq} for
modules over quantum groups $U_q(\lie{g})$, for $\lie{g}$ a Kac--Moody
algebra. Given a generalized Cartan matrix $A$, as for $\g = \lie{g}(A)$,
to write down a presentation for the algebra $U_q(\lie{g})$ via
generators and explicit relations, one uses the symmetrizability of $A$.
When $\g$ is non-symmetrizable, even the formulation of $U_q(\g)$ is
subtle and is the subject of active research \cite{Kas,Fa}. In light of
this, we restrict to $U_q(\g)$ where $\lie{g}$ is symmetrizable.

\subsection{Notation and preliminaries}\label{Sqnotation}

We begin by reminding standard definitions and notation. Fix $\lie{g} =
\lie{g}(A)$ for $A$ a symmetrizable generalized Cartan matrix. Fix a
diagonal matrix $D = {\rm diag}(d_i)_{i \in I}$ such that $DA$ is
symmetric and $d_i \in \mathbb{Z}^{>0},\ \forall i \in I$. Let
$(\h, \pi, \hpi)$ be a realization of $A$ as before; further fix a
lattice $P^\vee \subset \h$, with $\Z$-basis $\halpha_i, \check\beta_l,\
i \in I, \ 1 \leqslant l \leqslant |I| - \rk(A)$, such that $P^\vee
\otimes_\Z \C \simeq \h$ and $(\check\beta_l, \alpha_i) \in \Z,\ \forall
i \in I,\ 1 \leqslant l \leqslant |I|-\rk(A)$.
Set $P := \{ \lambda \in \lie{h}^* : (P^\vee, \lambda) \subset \Z \}$ to
be the weight lattice.
We further retain the notations $\rho, \lie{b}, \lie{h}, \lie{l}_J,
P^+_J, M(\lambda,J), I_{L(\lambda)}$ from previous sections; note we may
and do choose $\rho \in P$. 
We normalize the Killing form $(\cdot,\cdot)$ on $\lie{h}^*$ to satisfy:
$(\alpha_i, \alpha_j) = d_i a_{ij}$ for all $i,j \in I$.

Let $q$ be an indeterminate. Then the corresponding quantum Kac--Moody
algebra $U_q(\lie{g})$ is a $\C(q)$-algebra, generated by elements $f_i,
q^h, e_i,\ i \in I,\ h \in P^\vee$, with relations given in
e.g.~\cite[Definition 3.1.1]{HK}. Among these generators are
distinguished elements $K_i = q^{d_i \halpha_i} \in q^{P^\vee}$. Also
define $U_q^\pm$ to be the subalgebras generated by the $e_i$ and the
$f_i$, respectively.

A \textit{weight} of the quantum torus $\mathbb{T}_q :=
\C(q)[q^{P^\vee}]$ is a $\C(q)$-algebra homomorphism $\chi : \mathbb{T}_q
\to \C(q)$, which we identify with an element $\mu_q \in
(\C(q)^\times)^{2|I| - \rk(A)}$ given an enumeration of $\halpha_i,\ i
\in I$. We will abuse notation and write $\mu_q(q^h)$ for
$\chi_{\mu_q}(q^h)$. There is a partial ordering on the set of weights,
given by: $q^{-\nu} \mu_q \leqslant \mu_q$, for all weights $\nu \in
\Z^{\geqslant 0} \pi$.

Given a $U_q(\lie{g})$-module $V$ and a weight $\mu_q$, the
corresponding \textit{weight space} of $V$ is:
\[
V_{\mu_q} := \{ v \in V : q^h v = \mu_q(q^h) v\ \forall h \in P^\vee \}.
\]

\noindent Denote by $\wt V$ the set of weights $\{ \mu_q : V_{\mu_q} \neq
0 \}$.
We say a $U_q(\g)$-module is \textit{highest weight} if there exists a
nonzero weight vector which generates $V$ and is killed by $e_i,\ \forall
i \in I$. If we write $\lambda_q$ for the highest weight, the
\textit{integrability} of $V$ equals:
\begin{equation}
I_V := \{ i \in I : \dim \C(q)[f_i] V_{\lambda_q} < \infty \}.
\end{equation}

\subsection{Inclusions between faces of highest weight modules}

For highest weight modules over quantum groups, notice convex hulls only
make their appearance after specializing at $q = 1$ \cite{Lus88}.
Nonetheless, the above results give information on the action of
quantized Levi subalgebras:
\[
\wt_J V := \wt U_q(\fl_J) V_{\lambda_q} =
(\wt V) \cap q^{-\Z^{\geqslant 0} \pi_J} \lambda_q,
\]

\noindent where $U_q(\fl_J)$ is the subalgebra of $U_q(\g)$ generated by
$q^{P^\vee}$ and $e_j, f_j, j \in J$.

To prove the analogues of Theorems \ref{incl} and \ref{eq} over quantum
groups, we first define the sets $J_{\min}, J_{\max}$ in the quantum
setting. For $J \subset I$, call a node $j \in J$ {\em active} if either
(i) $j \notin I_V$, or (ii) $j \in I_V$ and $\lambda_q(q^{\halpha_j})
\neq \pm 1$. Now decompose the Dynkin diagram corresponding to $J$ as a
disjoint union of connected components, and let $J_{\min}$ denote the
union of all components containing an active node. Finally, define:
\begin{align}
\begin{aligned}
J_{d} := &\ \{ j \in I_V: \lambda_q(q^{\halpha_j}) = \pm 1 \text{ and } j
\text{ is not connected to } J_{\min} \},\\
J_{\max} := &\ J_{\min} \sqcup J_d.
\end{aligned}
\end{align}

\begin{theo}
Let $\lambda_q$ be arbitrary. Fix subsets $J, J' \subset I$.
\begin{enumerate}
\item $\wt_J V \subset \wt_{J'} V$ if and only if $J_{\min} \subset J'$.
\item $\wt_J V = \wt_{J'} V$ if and only if $J_{\min} \subset J' \subset
J_{\max}$.
\end{enumerate} 
\end{theo}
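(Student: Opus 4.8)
The plan is to transcribe the proofs of Theorems~\ref{incl} and~\ref{eq} to the level of weight \emph{sets}, since over $U_q(\g)$ no convex geometry is available at generic $q$. The two geometric inputs of the classical argument---the identity $\wt_{\fl}=\conv V\cap(\lambda+\R\pi_J)$ and the affine-span computation~\eqref{Eaffine}---will be replaced, respectively, by the identity $\wt_J V=(\wt V)\cap q^{-\Z^{\geqslant 0}\pi_J}\lambda_q$ recorded before the theorem (which follows from the triangular decomposition of $U_q(\fl_J)$, from $e_iV_{\lambda_q}=0$, and from the quantum PBW theorem giving $U_q^{-}(\g)_{-\beta}=U_q^{-}(\fl_J)_{-\beta}$ whenever $\supp\beta\subset J$), and by an explicit description of which weights $q^{-\beta}\lambda_q$ with $\supp\beta\subset J$ actually occur in $\wt_J V$.

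First I would dispatch the ``if'' directions. The key claim is $\wt_J V=\wt_{J_{\min}}V$. Writing $J=J_{\min}\sqcup(J\setminus J_{\min})$, every node of $J\setminus J_{\min}$ is inactive and the two sets are disconnected in the Dynkin diagram, so the $q$-Serre relation with $a_{jk}=0$ gives $f_jf_k=f_kf_j$ for $j\in J_{\min}$, $k\in J\setminus J_{\min}$, whence $U_q^{-}(\fl_J)=U_q^{-}(\fl_{J_{\min}})\,U_q^{-}(\fl_{J\setminus J_{\min}})$. Each inactive node $k$ lies in $I_V$ with $\lambda_q(q^{\halpha_k})=\pm1$, so the finite-dimensional $U_q(\mathfrak{sl}_2)_k$-module generated by $V_{\lambda_q}$ has $K_k$-highest weight $\pm1$, hence is one-dimensional and $f_kV_{\lambda_q}=0$; thus $U_q^{-}(\fl_{J\setminus J_{\min}})V_{\lambda_q}=\C(q)V_{\lambda_q}$ and therefore $U_q(\fl_J)V_{\lambda_q}=U_q(\fl_{J_{\min}})V_{\lambda_q}$, so $\wt_J V=\wt_{J_{\min}}V$. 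If $J_{\min}\subset J'$ this gives $\wt_J V=\wt_{J_{\min}}V\subset\wt_{J'}V$, which is the ``if'' direction of~(1); and if moreover $J'\subset J_{\max}=J_{\min}\sqcup J_d$, the same computation applied to $J'$ (every node of $J_d$ being inactive and disconnected from $J_{\min}$) gives $\wt_{J'}V=\wt_{J_{\min}}V=\wt_J V$, the ``if'' direction of~(2).

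For the ``only if'' direction of~(1), suppose $\wt_J V\subset\wt_{J'}V$, and imitate the proof of Theorem~\ref{incl}: given $i\in J_{\min}$, pick a path $i_1,\dots,i_n=i$ inside $J_{\min}$ with $i_1$ active and $i_k$ inactive and adjacent to $i_{k-1}$ for $k\geqslant2$, and show by induction on $m$ that $q^{-(\alpha_{i_1}+\dots+\alpha_{i_m})}\lambda_q\in\wt_J V$. The base case is $f_{i_1}V_{\lambda_q}\neq0$, which holds since $i_1$ is active (either $f_{i_1}$ is not locally nilpotent on $V_{\lambda_q}$, or $i_1\in I_V$ and the finite-dimensional $U_q(\mathfrak{sl}_2)_{i_1}$-module generated by $V_{\lambda_q}$ is not one-dimensional). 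For the step, a nonzero $w\in U_q(\fl_J)V_{\lambda_q}$ of weight $q^{-\beta}\lambda_q$, $\beta=\alpha_{i_1}+\dots+\alpha_{i_{m-1}}$, lies in a finite-dimensional $U_q(\mathfrak{sl}_2)_{i_m}$-submodule since $i_m\in I_V$, and its $K_{i_m}$-eigenvalue is $\lambda_q(K_{i_m})\,q^{-d_{i_m}(\halpha_{i_m},\beta)}$, a signed positive power of $q$ because $\lambda_q(K_{i_m})=\pm1$ and $(\halpha_{i_m},\beta)\leqslant a_{i_m,i_{m-1}}<0$; hence this eigenvalue is not the lowest weight of that submodule, so $f_{i_m}w\neq0$ and $q^{-\beta-\alpha_{i_m}}\lambda_q\in\wt_J V$. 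Finally $\wt_J V\subset\wt_{J'}V\subset q^{-\Z^{\geqslant 0}\pi_{J'}}\lambda_q$ and the linear independence of simple roots force $\{i_1,\dots,i_n\}\subset J'$, so $i\in J'$; as $i\in J_{\min}$ was arbitrary, $J_{\min}\subset J'$. For the ``only if'' direction of~(2): from $\wt_J V=\wt_{J'}V$ and~(1) we get $J_{\min}\subset J'$ and $J'_{\min}\subset J$, and then a connected-component chase---exactly as in the passage from Theorem~\ref{incl} to Theorem~\ref{eq}---shows that any $j\in J'\setminus J_{\max}$ would lie in $J_{\min}$, a contradiction.

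The main obstacle I anticipate is the ``only if'' direction of~(1), namely checking that the successive weight spaces $V_{q^{-(\alpha_{i_1}+\dots+\alpha_{i_m})}\lambda_q}\cap U_q(\fl_J)V_{\lambda_q}$ are nonzero. Classically this is ``basic $\mathfrak{sl}_2$ representation theory'', but over $U_q(\g)$ one must run the corresponding rank $\leqslant2$ computation while carefully tracking the type $\epsilon\in\{\pm1\}$ of each integrable node---there being no clean nilpotency bound to stand in for it---and confirm that the type never prevents $f_{i_m}$ from lowering the weight.
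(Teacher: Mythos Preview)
Your proposal is correct and is precisely what the paper does: it declares that ``the arguments of Theorems~\ref{incl} and~\ref{eq} apply verbatim, where one uses the representation theory of highest weight $U_q(\lie{sl}_2)$-modules instead of that of $U(\lie{sl}_2)$,'' and you have spelled out what this means---replacing the affine-span identity~\eqref{Eaffine} by the weight-set identity $\wt_J V=\wt_{J_{\min}}V$, and replacing the $\lie{sl}_2$ step in the path argument by the corresponding $U_q(\lie{sl}_2)$ computation. Your anticipated obstacle is not a real one: since $q$ is an indeterminate, finite-dimensional $U_q(\lie{sl}_2)$-modules are semisimple with weights $\epsilon q^n,\epsilon q^{n-2},\dots,\epsilon q^{-n}$, and your computed $K_{i_m}$-eigenvalue $\epsilon q^r$ with $r\in\Z^{>0}$ can never coincide with a lowest weight $\epsilon' q^{-n}$, so $f_{i_m}w\neq 0$ as needed.
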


The arguments of Theorems \ref{incl} and \ref{eq} apply verbatim, where
one uses the representation theory of highest weight
$U_q(\lie{sl}_2)$-modules instead of that of $U(\lie{sl}_2)$,
cf.~\cite{Jantzen-book}.

In particular, the two applications to branching, Corollaries
\ref{Cbranch1} and \ref{Cbranch2}, also have quantum counterparts.



%
%


\def\cprime{$'$}


\end{document}